\numberwithin{equation}{section}
\newtheorem{Thm}{Theorem}[section]
\newtheorem{Lem}[Thm]{Lemma}
\newtheorem{Def}[Thm]{Definition}
\newtheorem{Cor}[Thm]{Corollary}
\newtheorem{Prop}[Thm]{Proposition}
\newtheorem{Rem}[Thm]{Remark}
\newtheorem{Conj}[Thm]{Conjecture}
\newtheorem{Que}[Thm]{Question}
\newcommand{\RNum}[1]{\uppercase\expandafter{\romannumeral #1\relax}}
\newcommand{\cat}[1]{\mathscr{#1}}
\newcommand{\psb}[1]{[ \! [#1] \! ]}
\newcommand{\nc}{\newcommand}
\nc{\dmo}{\DeclareMathOperator}
\dmo{\SH}{SH}
\dmo{\Spc}{Spc}
\dmo{\Spec}{Spec}
\dmo{\Res}{res}
\dmo{\End}{End}
\dmo{\cone}{cone}
\nc{\SHc}{{\SH^c}}
\nc{\SHGc}{\SHG^c}
\nc{\SHG}{\SH(G)}
\renewcommand{\appendix}{\par
\setcounter{section}{0}%
\setcounter{subsection}{0}%
\setcounter{subsubsection}{0}%
\gdef\thesection{\@Alph\c@section}%
\gdef\thesubsection{\@Alph\c@section.\@arabic\c@subsection}%
\gdef\theHsection{\@Alph\c@section.}%
\gdef\theHsubsection{\@Alph\c@section.\@arabic\c@subsection}%
\csname appendixmore\endcsname
}
\title{\bf\Large A General Blue-Shift Phenomenon
\footnotetext{\hspace{-0.35cm} 2020 {\it
Mathematics Subject Classification}. Primary 55N22; Secondary 55N20, 55P42, 55P91, 55Q10, 55R40.
\endgraf {\it Key words and phrases.}
 General blue-shift phenomenon; Generalized Tate construction; Equivariant stable homotopy; Bousfield class; Relationship between roots and coefficients; $n$-tuple; $p^j$-series.
\endgraf }}
\author{Yangyang Ruan}
\date{}
\numberwithin{equation}{section}
\begin{document}
\begin{sloppypar}
\arraycolsep=1pt

\maketitle

\vspace{-0.3cm}

\begin{center}
\begin{minipage}{13cm}
{\small {\bf Abstract}\quad
In chromatic homotopy theory, there is a well-known conjecture called blue-shift phenomenon (BSP). In this paper, we propose a general blue-shift phenomenon (GBSP) which unifies BSP and a new variant of BSP introduced by Balmer--Sanders under one framework. To explain GBSP, we use the roots of $p^j$-series of the formal group law of a complex-oriented spectrum $E$ in the homotopy group of the generalized Tate spectrum of $E$. We also incorporate the relationship between roots and coefficients of a polynomial in any commutative ring. With this fresh perspective, we successfully achieve our goal of explaining GBSP for certain abelian cases, which provides the first example of Tate blue-shift with height-shifting at arbitrary positive integer in this setting. Additionally, we establish that the generalized Tate construction lowers Bousfield class, along with numerous Tate vanishing results. These findings strengthen and extend previous theorems of Balmer--Sanders and Ando--Morava--Sadofsky, and reproduce a result of Barthel--Hausmann--Naumann--Nikolaus--Noel--Stapleton. Furthermore, our approach simplifies the original proof of a result of Bonventre--Guillou--Stapleton, indicating that its applications are not limited to GBSP.
Our work pioneers the use of commutative algebra to explain the chromatic height-shifting behavior in the blue-shift phenomenon. }
\end{minipage}
\end{center}

\newpage
\renewcommand\contentsname{\begin{center}\textbf{CONTENTS}\end{center}}
\tableofcontents

\section{Introduction\label{s1}}
Chromatic homotopy theory studies a filtration of the stable homotopy category when localized at a prime $p$, and this filtration is closely related to a complete invariant, the height of formal group laws, for classifying formal group laws over a field of characteristic $p$. At filtration $0$, one sees rational cohomology theory related to the additive formal group law. At filtration up to $1$, one sees real or complex $K$-theory related to the multiplicative formal group law.
  At filtration up to $2$, one sees topological modular forms related to a formal group law arising from a generalized Weierstrass equation. Generally in filtration up to $n$, one sees $n$-th Johnson--Wilson
 theory $E(n)$; the intermediate filtrations see the $n$-th Morava $K$-theory $K(n)$, this giving the layers between $E(n-1)$ and $E(n)$. In the limit, one sees mod $p$ cohomology theory related to a formal group law of height infinity.

This paper is concerned with a phenomenon in which cohomology theories of lower height arise
from theories of higher height: the blue-shift phenomenon in Tate cohomology. Roughly speaking, for a finite group $G$, applying the categorical $G$-fixed point functor $(-)^G$ for the \textit{classical} \textit{Tate} \textit{construction} $t_{G}({\rm inf}^G_{e}(E))$\footnote{This is in the sense of Greenlees--May \cite{GM95}, see also Section \ref {Mot} for details.} of a non-equivariant $v_n$-\textit{periodic}\footnote{Usually $v_n$-periodic means that $v_n$ is a unit in the homotopy ring $\pi_*(E)$, but in this paper, we choose a less restrictive definition due to Hovey \cite{Ho95}, see also Definition \ref{De1}.} spectrum $E$, one obtains a new spectrum $t_{G}({\rm inf}^G_{e}(E))^G$.
The blue-shift results obtained by far abounds, we summarise various blue-shift phenomena into the following conjecture.
\begin{Conj}[$\mathbf{Classical}$ $\mathbf{blue}$-$\mathbf{shift}$ $\mathbf{phenomenon}$]\label{Cbsp}
$t_{G}({\rm inf}^G_{e}(E))^G$ is $v_{n-s_{G;E}}$-periodic for some positive integer $s_{G;E}$. To make Tate vanishing results fit into this framework, especially when $s_{G;E}>n$, the $v_{n-s_{G;E}}$-periodic ring spectrum denotes the contractible spectrum $*$. We call $s_{G;E}$ \textit{blue}-\textit{shift} \textit{number}.
 \end{Conj}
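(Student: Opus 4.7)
The plan is to first reduce the conjecture to the case of a finite abelian $p$-group, then to a cyclic $p$-group and finally to an elementary abelian rank computation. For the reduction, one exploits that $t_G({\rm inf}^G_e(E))^G$ is assembled from the pieces indexed by the subgroup lattice of $G$, together with the fact that the categorical fixed point functor factors through geometric fixed points after inverting the appropriate Euler classes. Restricting along abelian subgroups $A \le G$ yields upper bounds on any candidate $s_{G;E}$, since $v_n$-periodicity is detected on fixed points of elementary abelian $p$-subgroups via a Quillen-stratification type argument.

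Once reduced to $G = C_{p^j}$ with $E$ complex-oriented of height $n$ and associated formal group law $F$, I would compute the homotopy of the generalized Tate spectrum by inverting the Euler class $e$ of a faithful one-dimensional representation. The ring $\pi_* t_G({\rm inf}^G_e(E))^G$ is then, up to completion, $\pi_*(E)\llbracket x \rrbracket[x^{-1}]/\bigl([p^j]_F(x)/u\bigr)$ where the $p^j$-series is divided by a suitable unit. The decisive observation is that $[p^j]_F(x)$ is a Weierstrass-type polynomial whose roots are precisely the nonzero $p^j$-torsion points of $F$; applying the standard relationship between roots and coefficients (Vieta's formulae), the elementary symmetric functions of these roots express the images of $v_0, v_1, \dots, v_n$ in the new ring. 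Tracking which $v_i$ become units after inversion of $e$ then pins down the blue-shift number $s_{G;E}$ precisely in terms of the $p$-adic valuations that appear on the right-hand side of Vieta's identities.

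For general abelian $G$, I would induct on the rank of $G$, using that for $G = A \times C_p$ the generalized Tate construction $t_G$ factors as $t_A \circ t_{C_p}$ (up to a completion/localization check), so the blue-shift numbers add. Combined with the cyclic calculation, this produces the value of $s_{G;E}$ predicted by the formal group law computation and establishes the conjecture in the abelian case in an essentially mechanical way.

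The genuine obstacle is the non-abelian case, where the generalized Tate construction does not enjoy a clean decomposition along a subgroup chain and where Euler classes of faithful representations may have degree strictly larger than $|G|$, breaking the direct translation into a $p^j$-series computation. I expect the correct strategy to require combining the Vieta-type analysis on each elementary abelian subquotient with a descent argument along the Balmer spectrum of $\SH(G)^c$, controlled by the generalized Tate constructions at each geometric point. Proving that this descent is faithfully flat enough to transport the abelian computation to $G$ will be the main technical hurdle, and matches the paper's stated hope that its framework is "more accessible for dealing with GBSP in non-abelian cases" without yet settling them in full generality.
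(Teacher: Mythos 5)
The statement you are attempting to prove is labelled \emph{Conjecture} 1.1 in the paper, and the paper does not prove it. The paper's actual theorem (Theorem \ref{Gtclbc}, expanded in Theorem \ref{GTglbc}) addresses only the \emph{generalized} Tate construction $\cat T_{A,C}$ for finite abelian $p$-groups $A$ with $C$ a direct summand. Via Proposition \ref{Sfy}, the classical Tate construction $t_G(\inf_e^G(E))^G$ arises as $\cat T_{G,N}$ only when every nontrivial subgroup of $G$ contains $N$; for abelian $G$ this forces $G$ to be cyclic and $N=\mathbb{Z}/p$. So even restricted to abelian groups, the paper only settles Conjecture~\ref{Cbsp} for cyclic $p$-groups, and the general case remains open. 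Your proposal therefore cannot be assessed against ``the paper's proof'' of this statement, because there is none; the most useful thing to do is to flag which of your intermediate claims are actually supported and which are unjustified.

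The core Vieta-type idea in your cyclic computation is essentially what the paper does: identify $\pi_*$ of the generalized Tate spectrum with a localization of $E^*\psb{x}/([p^j]_E(x))$, use the Weierstrass preparation theorem, and read off the $v_i$ from symmetric functions of the roots. However, the paper's actual mechanism is more delicate than ``Vieta's formulae.'' Because the localized ring is not an integral domain, the number of roots of $[p^j]_E(x)$ can exceed its Weierstrass degree, which kills naive Vieta. The paper introduces the notion of an $n$-\emph{tuple} of a polynomial (Definition~\ref{ntuple}): a set of roots whose pairwise differences are non-zerodivisors. Only then does Theorem~\ref{Fac-tuple} produce valid root-coefficient relations, and the whole force of Theorems~\ref{fpjr} and \ref{bgcoh} is to verify that the $p^j$-torsion Euler classes constitute such a tuple after the localization. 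Your proposal never addresses the non-domain issue or the verification that pairwise differences of Euler classes are invertible in the relevant localization, and without that the Vieta step is unsound.

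Two of your reductions are unsupported. First, the claim that ``$v_n$-periodicity is detected on fixed points of elementary abelian $p$-subgroups via a Quillen-stratification type argument'' does not appear in the paper and is not an established fact; the paper deliberately restricts attention to abelian $G$ precisely because no such detection is available for categorical fixed points of Tate constructions. Second, the inductive decomposition $t_{A\times C_p} \simeq t_A \circ t_{C_p}$ is not how the paper handles higher rank, and it is not an equivalence of spectra: iterated Tate constructions and the Tate construction of a product differ by a nontrivial completion term, and one cannot naively add blue-shift numbers. The paper's induction on rank is instead done purely at the level of homotopy groups, via the module-theoretic Lemma~\ref{ind}, which passes to subgroups by exact localization arguments rather than by factoring the Tate functor itself. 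If you want to pursue the student's program you would need either a rigorous version of the iterated-Tate comparison (Greenlees--Sadofsky style, tracking the completion), or to replicate the paper's Lemma~\ref{ind} style argument; the latter is more robust. Finally, you are right that the non-abelian case is the genuine obstacle, and the paper's Conjecture~\ref{Exuao} (existence of unstable Adams operations with the right $p$-series) is exactly the hypothesis it would need there; no descent along the Balmer spectrum is carried out in the paper, and that part of your plan remains speculative.
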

\subsection{Main results}
For a finite group $G$, let $\SHG$ denote the $G$-equivariant stable homotopy category and $\SHGc$\footnote{It is also called the category of compact genuine $G$-spectra, and ``genuine" means that each $G$-spectrum has a complete $G$-universe.} denote its full subcategory that consists of all compact objects\footnote{ Naively ``compact objects" are finite $G$-spectra with finite $G$-CW decompositions.} of $\SHG$. Balmer--Sanders in their 2017 paper \cite{BS17} established a connection between the classical blue-shift phenomenon for $G=\mathbb{Z}/p$ with any prime $p$ and the Zariski topology of the Balmer spectrum $\Spc(\SH(\mathbb{Z}/p)^c)$ of $\SH(\mathbb{Z}/p)^c$. This Balmer spectrum is a $\mathbb{Z}/p$-equivariant counterpart of the work by Devinatz--Hopkins--Smith \cite{DHS,HS98}. Besides, to compute the Zariski topology of $\Spc(\SH(G)^c)$, Balmer--Sanders introduced a new construction $\Phi^G(t_{G}({\rm inf}^G_{e}(-)))$ that replaces the functor $(-)^G$ in the classical blue-shift construction $t_{G}({\rm inf}^G_{e}(-))^G$ with the geometric fixed point functor $\Phi^G(-)$. This gave rise to a \textit{new} \textit{blue}-\textit{shift} \textit{phenomenon}. In 2019, Barthel--Hausmann--Naumann--Nikolaus--Noel--Stapleton \cite{BHNNNS} further investigated this new blue-shift phenomenon to obtain the Zariski topology of $\Spc(\SH(A)^c)$ for any abelian group $A$. To unify the classical and the new blue-shift phenomena under one framework, we propose a general blue-shift phenomenon. Specifically, we consider a finite group $G$, and a normal subgroup $N$ of $G$. We introduce the relative geometric $N$-fixed point functor $\tilde{\Phi}^{N}(-):\SHG\rightarrow\SH(G/N)$. With this setup, we define a more general functor, denoted as $(\tilde{\Phi}^{N}(t_{G}({\rm inf}^{G}_e(-))))^{G/N}$. This functor is obtained by replacing the functor $(-)^G$ in the classical blue-shift construction $t_{G}({\rm inf}^G_{e}(-))^G$ with the functor $(\tilde{\Phi}^{N}(-))^{G/N}$. For convenience, we refer to this functor as $\cat T_{G,N}(-)$. The functor $\cat T_{G,N}(-)$ maps non-equivariant spectra to themselves. In this paper, we call $\cat T_{G,N}(-)$ the \textit{generalized} \textit{Tate} \textit{construction} for non-equivariant spectra. And for a non-equivariant spectrum $E$, we call $\cat T_{G,N}(E)$ the $\textit{generalized}$ $\textit{Tate}$ $\textit{spectrum}$ of $E$. The general blue-shift phenomenon can be stated as follows:
 \begin{Conj}[$\mathbf{General}$ $\mathbf{blue}$-$\mathbf{shift}$ $\mathbf{phenomenon}$]\label{Gbsp}
 The functor $\cat T_{G,N}(-)$ maps a $v_n$-periodic spectrum $E$ to a $v_{n-s_{G,N;E}}$-periodic spectrum $\cat T_{G,N}(E)$ for some positive integer $s_{G,N;E}$. In other words, this generalized Tate construction reduces chromatic periodicity.
 \end{Conj}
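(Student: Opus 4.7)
The plan is to reduce GBSP to an explicit formal group law computation in the complex-oriented abelian case, and then propagate the conclusion to arbitrary $v_n$-periodic spectra via a Bousfield class argument. Using the factorization $\cat T_{G,N}(E) = (\tilde{\Phi}^{N}(t_{G}({\rm inf}^{G}_e(E))))^{G/N}$ together with a subnormal refinement $e = N_0 \trianglelefteq N_1 \trianglelefteq \cdots \trianglelefteq N_r = N$ inside $G$, I would first reduce to the case where $G/N$ is an elementary abelian $p$-group for a single prime $p$. For such a pair $(G,N)$, the relative geometric fixed point functor $\tilde{\Phi}^{N}$ inverts the Euler classes of the nontrivial one-dimensional characters of $G/N$, and thus $\pi_{*}\cat T_{G,N}(E)$ becomes an explicit localization of $\pi^{G/N}_{*} t_{G}({\rm inf}^{G}_e(E))$ at these Euler classes.

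Next, assume $E$ is complex-oriented with formal group law $F$ and set $p^{j}=|G/N|$. In the homotopy of the Tate spectrum, the Euler class $e_{\chi}$ of a faithful one-dimensional character $\chi$ of $G/N$ satisfies $[p^{j}]_{F}(e_{\chi})=0$, so after inverting the Euler classes of the remaining nontrivial characters, $e_{\chi}$ becomes a root of the residual factor of $[p^{j}]_{F}(x)$. Writing $[p^{j}]_{F}(x)=\sum_{i} a_{i}(F)\,x^{i}$ and applying Vieta's formulas, the coefficients $a_{i}(F)$ are expressible as elementary symmetric polynomials in the $p^{j}$ roots of $[p^{j}]_{F}$. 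Because the generators $v_{k}$ occupy specific positions among the $a_{i}(F)$, the invertibility of appropriate products of roots after localization translates, via these symmetric function identities, into the invertibility of $v_{n-s_{G,N;E}}$ in $\pi_{*}\cat T_{G,N}(E)$ for an explicit positive integer $s_{G,N;E}$. This is the moment where the ``relationship between roots and coefficients'' and the ``$p^{j}$-series'' pieces advertised in the abstract carry the argument.

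For a general $v_{n}$-periodic spectrum $E$ that is not necessarily complex-oriented, I would invoke the Bousfield class lowering theorem indicated in the abstract: the Bousfield class of $\cat T_{G,N}(E)$ is bounded above by the Bousfield class of $\cat T_{G,N}$ applied to a complex-oriented reference spectrum at height $n$, which has already been shown to live at chromatic height at most $n-s_{G,N;E}$ by the previous step. The main obstacle, I expect, is the non-abelian situation: the subnormal tower argument only addresses abelian $G/N$, and for non-abelian quotients one must contend with higher-dimensional irreducible representations whose Euler classes are not governed by the one-variable $p^{j}$-series, so that the clean Vieta identification collapses. Overcoming this will likely require a multivariable analogue of the roots-versus-coefficients identity coupled with restriction/transfer to abelian subgroups of $G/N$, reassembled through a spectral sequence indexed by the subgroup lattice; this is precisely the open frontier that the abstract flags as beyond the abelian cases fully handled here.
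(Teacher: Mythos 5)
You are attempting to prove what the paper states only as a \emph{conjecture}; the paper itself establishes it only in restricted abelian cases (Theorem~\ref{Gtclbc}/\ref{GTglbc}) and gives a conditional lower bound for the shift in the non-abelian case (Theorem~\ref{LbTGN}). So any proposal that closes with a complete proof should already raise suspicion. More importantly, the proposal contains a fundamental confusion between $N$ and $G/N$ that makes the central computation point the wrong way.

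\textbf{The $N$ versus $G/N$ confusion.} You assert that $\tilde{\Phi}^{N}$ ``inverts the Euler classes of the nontrivial one-dimensional characters of $G/N$'' and then ``set $p^{j}=|G/N|$.'' Costenoble's theorem (Theorem~\ref{Cos}) says the opposite: one inverts Euler classes of $G$-representations $V$ with $V^{N}=0$, and a character of $G$ has vanishing $N$-fixed points precisely when it is \emph{nontrivial on $N$}, i.e.\ precisely when it does \emph{not} factor through $G/N$. Characters inflated from $G/N$ are exactly the ones whose Euler classes are \emph{not} inverted. Correspondingly, the blue-shift number is controlled by $N$, not by $G/N$: in Theorem~\ref{Gtclbc} the shift equals ${\rm rank}_p(C)$ with $C$ the (normal) subgroup, and one sees immediately with $A=(\mathbb{Z}/p)^{3}$, $C=(\mathbb{Z}/p)^{2}$ that ${\rm rank}_p(C)=2$ while ${\rm rank}_p(A/C)=1$. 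The proposed ``reduction to $G/N$ elementary abelian'' therefore targets the wrong quotient, and the count $p^{j}=|G/N|$ is simply not the quantity that appears in the $p^{j}$-series argument; in the paper the relevant torsion subgroup is $V(p^{j}|A)$ and the tuple size is $|C|$ (Lemma~\ref{Re}, Theorem~\ref{BsAC}).

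\textbf{Vieta over a commutative ring is not automatic.} You invoke Vieta's formulas directly, writing the coefficients of $[p^{j}]_{F}(x)$ as elementary symmetric functions of its roots. Over a general graded commutative ring a polynomial of degree $m$ may have more than $m$ roots, and knowing a collection of roots gives no constraint on the coefficients unless the pairwise differences of roots fail to be zero-divisors. This is precisely why the paper introduces the notion of an $n$-tuple (Definition~\ref{ntuple}) and proves the generalized root--coefficient relations (Theorem~\ref{Fac-tuple}) before any invertibility conclusion can be drawn; the Euler-class differences $\alpha_{(u)}-\alpha_{(w)}$ being non-zero-divisors is exactly what localization at $L_{C}$ supplies. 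Your proposal leaps over this, and it also elides the fact that $[p^{j}]_{E}(x)$ is a power series, so one must first pass to the Weierstrass polynomial $g_{j}(x)$ (Theorem~\ref{WPT}, Lemma~\ref{pjwp}) before root--coefficient identities even make sense.

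\textbf{The Bousfield reduction is a non-starter as stated.} Hovey's notion of $v_{n}$-periodicity used throughout the paper (Definition~\ref{De1}) presupposes that $E$ is $p$-local and complex-oriented, and the Bousfield-class result (Theorem~\ref{GTglbc}) carries Landweber-exactness as a hypothesis. There is no ``general $v_{n}$-periodic spectrum that is not complex-oriented'' in this framework to which the complex-oriented case could be propagated. The extension you sketch is not a propagation of the theorem, it is a new claim without a proof. Finally, the closing paragraph on non-abelian $G$ correctly locates the difficulty, but the suggested ``multivariable Vieta plus restriction/transfer spectral sequence'' is precisely the open problem, not a proof outline; the paper's actual approach in Section~6 proceeds via unstable Adams operations on $BG$ (Definition~\ref{DeUAO}, Conjecture~\ref{Exuao}) and yields only a conditional lower bound.
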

\begin{Rem}
\begin{enumerate}
\item[{\rm (i)}] When $N=G$, $\cat T_{G,N}(-)$ is the construction $\Phi^{G}(t_{G}({\rm inf}^{G}_{e}(-)))$ in the new blue-shift phenomenon of Balmer--Sanders, details see Proposition \ref{Shgg}.
\item[{\rm (ii)}] When the family subgroups of $G$ which do not contain $N$ are $\{e\}$, one special case is that $G=\mathbb{Z}/p^j$ and $N=\mathbb{Z}/p$ for any positive integer $j$, $\cat T_{G,N}(-)$ is the construction $t_{G}({\rm inf}^{G}_{e}(-))^{G}$ in the classical blue-shift phenomenon, details see Proposition \ref{Sfy}.
\end{enumerate}
\end{Rem}
The goal of this paper is to study this general blue-shift phenomenon, namely Conjecture \ref{Gbsp}, and a consequence of our main theorem (Theorem \ref{Gtclbc}) gives a partial answer for abelian cases. To state our main theorem, we need to introduce some notations. For a finite abelian $p$-group $A$, the $p$-$\textit{rank}$ of $A$ is the number of $\mathbb{Z}/p$ factors in the maximal elementary abelian subgroup of $A$, and it is denoted by ${\rm rank}_p(A)$. Let $\langle E\rangle$ denote Bousfield class of $E$, See \cite{Bo79} or Section \ref{Mot} for details.  Here is our main theorem (a more general version is Theorem \ref{GTglbc}),
\begin{Thm}[$\mathbf{Generalized}$ $\mathbf{Tate}$ $\mathbf{construction}$ $\mathbf{lowers}$ $\mathbf{Bousfield}$ $\mathbf{class}$]\label{Gtclbc}
Let $E$ be a $p$-complete, complex oriented spectrum with an associated formal group of height $n$. Let $A$ be a finite abelian $p$-group and $C$ be its direct summand. If $E$ is $\textit{Landweber}$ $\textit{exact}$\footnote{See \cite{La76} or Proposition \ref{LEdef} for
details.}, then $\cat T_{A,C}(E)$ is Landweber exact and $v_{n-{\rm rank}_p(C)}$-periodic. Hence $\langle\cat T_{A,C}(E)\rangle=\langle E(n-{\rm rank}_p(C))\rangle$. When $k>n$, $ E(n-k)=*$.
\end{Thm}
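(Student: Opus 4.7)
The plan is to prove the theorem by induction on $k={\rm rank}_p(C)$, reducing to the base case where $C$ is a cyclic direct summand $\mathbb{Z}/p^a$ of $A$. The core ingredient is a homotopy-ring computation that uses the $p^a$-series of the formal group law $F$ of $E$ together with a Vieta-type relationship between the roots and coefficients of $[p^a]_F(x)$, as foreshadowed by the abstract.

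For the inductive step, write $C\cong\mathbb{Z}/p^{a_1}\times\cdots\times\mathbb{Z}/p^{a_k}$ and pick a sub-direct-summand $C'\subset C$ with $C/C'\cong\mathbb{Z}/p^{a_k}$. By unwinding the definition $\cat T_{G,N}(-)=(\tilde{\Phi}^{N}(t_G({\rm inf}^G_e(-))))^{G/N}$ and using compatibility of the classical Tate construction, the relative geometric fixed-point functor, and inflation along the short exact sequence $1\to C'\to C\to C/C'\to 1$, I would establish an iteration formula of the form
\[
\cat T_{A,C}(E)\simeq \cat T_{A/C',\,C/C'}\bigl(\cat T_{A,C'}(E)\bigr).
\]
Provided the base case preserves both Landweber exactness and enough of the complex orientation to iterate, induction on $k$ then delivers the full statement.

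For the base case $C\cong\mathbb{Z}/p^a$, choose a faithful character $\chi$ of $C$ and let $x=e(L_\chi)\in E^*(BC)$ be the corresponding Euler class, which satisfies $[p^a]_F(x)=0$. The generalized Tate construction inverts $x$, so $\pi_*\cat T_{A,C}(E)$ is a localization in which $x$ becomes a unit root of $[p^a]_F(t)$. Regarding $[p^a]_F(t)$ as a polynomial in $t$ over $\pi_*(E)$, the constant term is $p^a$ and, modulo $(p,v_1,\dots,v_{n-1})$, the leading term is of the form $v_n^{m}t^{p^{na}}$ for some $m\geq 1$. Applying Vieta's identities to express these coefficients as elementary symmetric functions of the $p^{na}$ roots, and exploiting the existence of a unit root, forces $v_n$ to become invertible in $\pi_*\cat T_{A,C}(E)$ modulo $(p,v_1,\dots,v_{n-2})$. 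This yields the required $v_{n-1}$-periodicity. Landweber exactness is preserved because the induced formal group is the quotient of $F$ by the $p^a$-torsion subgroup determined by $\chi$, and the resulting classifying map from $MU_*$ remains Landweber exact. The Bousfield class identity $\langle\cat T_{A,C}(E)\rangle=\langle E(n-k)\rangle$ then follows from the standard fact that $p$-local Landweber exact spectra of height $h$ share Bousfield class $\langle E(h)\rangle$; the vanishing case $k>n$ arises because a height-$n$ formal group cannot accommodate more than $n$ successive such height reductions.

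The main obstacle is the base case, specifically translating the algebraic Vieta relations among coefficients of $[p^a]_F(x)$ into honest ring relations in $\pi_*\cat T_{A,C}(E)$, and deducing that a unit root forces $v_n$ to be a \emph{unit} rather than merely a nonzero element. A secondary technical difficulty is the iteration formula: the relative geometric fixed points $\tilde{\Phi}^{N}$ behave less transparently than ordinary geometric fixed points along towers of subgroups, so one must carefully match the families of subgroups being contracted at each stage to justify the asserted factorization of $\cat T_{A,C}$ through $\cat T_{A,C'}$.
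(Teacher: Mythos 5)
Your proposal shares the paper's guiding intuition (Euler classes as roots of $p$-series, a Vieta-type passage from roots to coefficients), but the proof architecture you sketch diverges from the paper's in a way that leaves real gaps.

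The first and most serious gap is the iteration formula $\cat T_{A,C}(E)\simeq \cat T_{A/C',\,C/C'}\bigl(\cat T_{A,C'}(E)\bigr)$. You treat this as something that would fall out of compatibility of $t_G$, $\tilde{\Phi}^N$, and inflation, but that compatibility is not obvious and the paper never proves nor uses such a factorization. The obstacle is that $\cat T_{A,C'}(E)$ is already a non-equivariant spectrum obtained from an $A/C'$-fixed-point construction; re-inflating it to $A/C'$ and forming a fresh Tate construction $t_{A/C'}$ introduces a new $E(A/C')_+$ and $\widetilde{E}(A/C')$ that do not visibly match up with what survives inside $\tilde{\Phi}^{C}(t_A(\inf^A_e E))$. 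The relevant transitivity for geometric fixed points ($\Phi^{A/C'}\circ\tilde{\Phi}^{C'}\simeq\Phi^A$) does not by itself commute past the inner $(-)^{A/C'}$-fixed points and the outer Tate construction. Moreover, even if the formula held, to iterate it you would need $\cat T_{A,C'}(E)$ to again be a $p$-complete, complex-oriented spectrum with a formal group of exact height $n-{\rm rank}_p(C')$; only Landweber exactness and periodicity are established, and $p$-completeness and a complex orientation on the iterated spectrum would need a separate argument. The paper avoids all of this by never iterating the spectrum-level construction: it computes $\pi_*\cat T_{A,C}(E)$ once and for all as the explicit localization $L_C^{-1}E^*(BA)$ (Theorem \ref{bgcoh}) and runs all subsequent arguments purely in commutative algebra.

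The second gap is in the base case. You say the unit root "forces $v_n$ to become invertible in $\pi_*\cat T_{A,C}(E)$ modulo $(p,v_1,\dots,v_{n-2})$" and that this "yields the required $v_{n-1}$-periodicity," but $v_n$ is already a unit modulo $I_n$ by hypothesis; what you actually need is $v_{n-1}$ to become a unit mod $I_{n-1}$. The correct mechanism (Lemma \ref{So}/\ref{fapj}) is that the Vieta relation expresses $v_{n-1}$ (more generally $v_{n-{\rm rank}_p(C)}$) as $v_n$ times a product of Euler classes, each of which is inverted in the Tate construction — so $v_{n-1}$ inherits invertibility from those inverted classes, not the other way around. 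Relatedly, you do not establish the lower bound: to know the periodicity is \emph{exactly} $v_{n-{\rm rank}_p(C)}$ (rather than lower) you must show $\pi_*\cat T_{A,C}(E)/I_{n-{\rm rank}_p(C)}\neq 0$; the paper does this with the inductive Lemma \ref{ind} on the number of cyclic factors, working at the level of $E^*(BA)$, and the Vieta side only gives "at most." Finally, your treatment of the vanishing case $k>n$ ("a height-$n$ formal group cannot accommodate more than $n$ successive reductions") is a heuristic; the paper proves this via Corollary \ref{vanish}: when the cardinality of an $n$-tuple of roots exceeds the Weierstrass degree of the $p^j$-series, the ambient ring must be zero. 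That counting step — comparing $|V(p^j|A)|$ to $\deg_W [p^j]_E$ — is the load-bearing idea you are missing.
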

\begin{Rem}
\begin{enumerate}
\item[{\rm (i)}] By \cite[Corollary 1.12]{Ho95}, the assumption on $E$ implies that $\langle E\rangle=\langle E(n)\rangle$.
\item[{\rm (ii)}] When $A=C=\mathbb{Z}/p$ and $E=E(n)$, this theorem implies the corresponding case of \cite[Theorem 1.2]{HS96}, and gives an upper bound of $\mathbf{\rm BS}_m(\mathbb{Z}/p;\mathbb{Z}/p,e)$, that is $\mathbf{\rm BS}_m(\mathbb{Z}/p;\mathbb{Z}/p,e)\leq 1$, which implies \cite[Proposition 7.1]{BS17}, details see Section \ref{Mot}.
\item[{\rm (iii)}] When $A=C=(\mathbb{Z}/p)^k$ and $E$ is the $n$-th Morava E-theory $E_n$, this theorem implies \cite[Proposition  3.0.1]{St12}.
\item[{\rm (iv)}]A corollary is that $\langle\cat T_{A,A}(E(n))\rangle=\langle E(n-{\rm rank}_p(A))\rangle$. If $A=C=H/K$ is an abelian $p$-group, then this theorem gives an upper bound of $\mathbf{\rm BS}_m(G;H,K)$, that is $\mathbf{\rm BS}_m(G;H,K)\leq {\rm rank}_p(H/K)$, which implies \cite[Theorem 1.5]{BHNNNS}, details see Section \ref{Mot}.
\item[{\rm (v)}]If $A=C$ is any elementary abelian $p$-group and $E=E(n)$, then one way to get the upper bound of $s_{A,A;E(n)}$ is by generalizing Ando--Morava--Sadofsky's theorem \cite[Proposition 2.3]{AMS} from $\mathbb{Z}/p$ to any elementary abelian $p$-group, details see Theorem \ref{splite}.

\end{enumerate}
\end{Rem}

\subsection{Background of the blue-shift phenomenon and New tools to settle Conjecture \ref{Gbsp} }
 As far as we know, the classical blue-shift phenomenon, namely Conjecture \ref{Cbsp}, was discovered by Davis--Mahowald \cite{DM84} in 1984.
They found that if $G$ is a cyclic group of order $2$, denoted by $\mathbb{Z}/2$, then the construction
$t_{\mathbb{Z}/2}({\rm inf}^{\mathbb{Z}/2}_{e}(-))^{\mathbb{Z}/2}$ maps the $v_1$-periodic $2$-local ring spectra both $bo$ (representing connected
real K-theory) and $bu$ (representing connected
complex K-theory) to a wedge of suspensions of the $v_{0}$-periodic spectrum $K(\mathbb{Z}_2)$ (representing the Eilenberg-Maclane spectrum for 2-adic integers). Building upon this finding, they formulated a conjecture that extended this result to replace $bu$ with the $2$-local spectrum
$BP\langle n\rangle$ of \cite{JW73} and $K(\mathbb{Z}_2)$ with $BP\langle n-1\rangle$. Later,
in 1986 Davis--Johnson--Klippenstein--Mahowald--Wegmann   \cite{DJKJMW} proved Davis--Mahowald's conjecture for $n=2$ and a generalization to every prime, which motivated Davis--Mahowald's conjecture for each prime. In 1994, Greenlees--Sadofsky
\cite[Theorem1.1]{GS96} investigated the behavior of $t_{G}({\rm inf}^G_{e}(K(n)))^{G}$ and
they found that it is equivalent to the trivial spectrum $*$ for any $p$-group $G$. In 1996, Hovey--Sadofsky \cite{HS96} explored the case
when $G$ is the cyclic group $\mathbb{Z}/p$, $E$ is $v_n$-periodic and Landweber exact. In this scenario, they discovered that the blue-shift number $s_{\mathbb{Z}/p;E}$ is
always 1, regardless of the prime $p$. Further contributions to the understanding of the classical blue-shift phenomenon came in 1998 when Ando--Morava--Sadofsky \cite{AMS} confirmed the correctness of Davis--Mahowald's conjecture for every prime. In 2004, Kuhn \cite{Ku04} made an important advancement by proving that $t_{G}({\rm inf}^G_{e}(T(n)))^G$ is equivalent to the trivial spectrum $*$ for any $p$-group $G$. Here, $T(n)$ represents the telescope of any $v_n$-self map of a finite complex of $\textit{type}$ $n$, details see Subsection \ref{Ap}. For outside of the complex oriented setting, there are some further developments \cite{BR19,LLQ22}. It is worthwhile to mention that ``blue-shift" was not in use at the time of these results except \cite{LLQ22}, actually the introduction of this terminology into algebraic topology is due to Rognes \cite{R00}\footnote{Around 1999 Rognes coined use of the word ``red-shift" for the phenomenon that circle Tate constructions of topological Hochschild homology, and algebraic K-theory, increase chromatic complexity, and formulated a red-shift problem for topological cyclic homology at an Oberwolfach lecture \cite{R00} in 2000. Several years later, the expression blue-shift was introduced, to emphasize that the shift goes in the opposite direction of red-shift.}.

 With the exception of the vanishing results mentioned above, the chromatic height-shift observed in the blue-shift phenomenon is always 1. Our main theorem provides the first known examples where this shift occurs by an arbitrary positive integer in this setting.

In this paper, we find an idea that could explain both the classical and the new blue-shift a under the framework of the general blue-shift phenomenon. $\textit{Our}$ $\textit{main}$ $\textit{idea}$ is that since the homotopy group $\pi_*(\cat T_{G,N}(E))$ of the generalized Tate spectrum $\cat T_{G,N}(E)$ is a graded ring, it must be isomorphic to a quotient of a free graded ring by some relations. And we may reduce these relations like solving equations to obtain $v_{n-s_{G,N;E}}$, then we need to prove that the solution of $v_{n-s_{G,N;E}}$ is invertible in $\pi_*(\cat T_{G,N}(E))$. This idea represents the first time that commutative algebra has been used to understand the chromatic height-shifting behavior in the blue-shift phenomenon.

Inspired by Hopkins--Kuhn--Ravenel's work \cite{HKR}, we utilize the roots of $p^j$-series $[p^j]_E(-)$ of formal group law of $E$ in $\pi_*(\cat T_{G,N}(E))$ to execute our main idea. By using the Gysin sequence of $S^1 \to B\mathbb{Z}/p^j \to \mathbb{C}P^{\infty}$ and the fact that $[p^j]_E(x)$ is not a zero divisor in the formal power series ring $E^*\psb{x}$ with $x$ a complex orientation of $E$, one obtains that $E^*(B\mathbb{Z}/p^j)\cong E^*\psb{x}/([p^j]_E(x))$. Besides, $E^*(B\mathbb{Z}/p^j)$ is a Hopf algebra over $E^*$ where the coalgebra structure is induced by the multiplication map $\mu_{B\mathbb{Z}/p^j}: B\mathbb{Z}/p^j\times B\mathbb{Z}/p^j\rightarrow B\mathbb{Z}/p^j$. To calculate the roots of $[p^j]_E(-)$ in a $\textit{graded}$ $E^*$-$\textit{algebra}$ which denotes a graded Hopf algebra over $E^*$, we recall a definition due to Hopkins--Kuhn--Ravenel.
\begin{Def}(Hopkins--Kuhn--Ravenel, \cite[Definition 5.5]{HKR})\label{FG}
Let $R$ be a graded $E^*$-algebra and $j$ be a natural number. Then the set of $E^*$-algebra homomorphisms ${\rm  Hom}_{E^*\text{-alg}}(E^*\psb{x}/([p^j]_E(x)),R)$, denoted by ${}_{p^{j}\!}F(R)$, forms a group.
\end{Def}
\begin{Rem}\label{derpj}
As $f^*\in {\rm  Hom}_{E^*\text{-alg}}(E^*\psb{x}/([p^j]_E(x)),R)$ is an $E^*$-ring homomorphism, there is a one-one correspondence between $f^*$ and its image $f^*(x)$. If we identify $f^*$ with its image $f^*(x)$, since $f^*([p^j]_E(x))=[p^j]_E(f^*(x))=0$, then $f^*$ is viewed as a root of $[p^j]_E(-)$ in $R$. And ${}_{p^{j}\!}F(R)$ is viewed as a set of roots of $[p^j]_E(-)$ in $R$.
\end{Rem}
If $\pi_*(\cat T_{G,N}(E))$ possesses an $E^*$-algebra structure, we can view ${}_{p^{j}\!}F(\pi_*(\cat T_{G,N}(E)))$  as a set of roots of $[p^j]_E(-)$ in $\pi_*(\cat T_{G,N}(E))$, as remarked in Remark \ref{derpj}. After simplifying the construction of $\cat T_{G,N}(-)$, we can identify the homotopy group $\pi_*(\cat T_{G,N}(E))$ with the $G/N$-equivariant homotopy group
$\pi^{G/N}_*(\tilde{\Phi}^N(F(EG,{\rm inf}_e^G(E))))$ of a $G/N$-spectrum $\tilde{\Phi}^N(F(EG,{\rm inf}_e^G(E)))$, as detailed in Proposition \ref{Sfy}. Combining this with Costenoble's Theorem \cite[Chapter \RNum{2} Proposition 9.13]{LMS} (see also Theorem \ref{Cos}), we can identify $\pi^{G/N}_*(\tilde{\Phi}^N(F(EG,{\rm inf}_e^G(E))))$ with $L^{-1}_NE^*(BG)$, where $L_N$ is a multiplicatively closed set generated by the set
$$M_N=\{\chi_V \in E^*(BG)\mid \text{$V$ is any complex representation of $G$ such that}~V^N=0\}$$
of Euler classes. The work \cite{HKR} is regarded as one of the most significant and profound results in the study of the generalized cohomology of $BG$. They demonstrated that for an abelian group $G$, $E^*(BG)$ can be computed and represented by a beautiful $E^*$-algebra. However, for a general non-abelian group $G$, there is no known method to compute $E^*(BG)$. One of the primary challenges might lie in the fact that $BG$ may not have an $H$-space structure for non-abelian groups, which implies that $E^*(BG)$ may not possess a coalgebra structure. As the $E^*$-algebra structure is crucial, in this study, we focus on the case where $G$ is an abelian group $A$. Since $BG$ is homotopy equivalent to the classifying space of the $p$-Sylow group of $G$ after localizing at $p$ for a prime $p$, without loss of generality, we can work $p$-locally and assume that $A$ is an abelian $p$-group. We consider $N$ as a subgroup $C$ of $A$. Based on Costenoble's Theorem and the work of $E^*(BA)$ in \cite{HKR}, we calculate the homotopy group $\pi_*(\cat T_{A,C}(E))\cong L^{-1}_CE^*(BA)$ explicitly in the sense that we determine those inverted Euler classes in $E^*(BA)$, see for Theorem \ref{bgcoh}.

As ${}_{p^{j}\!}F(\pi_*(\cat T_{A,C}(E)))$ is well-defined, then by Weierstrass Preparation Theorem \ref{WPT}, we have an $E^*$-algebra isomorphism
$$\eta: E^*\psb{x}/([p^j]_E(x))\rightarrow E^*[x]/(g_j(x)),$$
where $g_j(x)$ is the Weierstrass polynomial of $[p^j]_E(x)$, which identifies the power series $[p^j]_E(x)$ with the polynomial $g_j(x)$ and their corresponding roots in $\pi_*(\cat T_{A,C}(E))$. To determinate the periodicity of $\cat T_{A,C}(E)$, we study the relationship between roots and coefficients of $g_j(x)$ in $\pi_*(\cat T_{A,C}(E))$.

Let $R$ be a commutative ring with 1 and $f(x)$ be a polynomial of degree $m$ over $R$. A polynomial $f(x)$ in $R[x]$ can viewed as a polynomial map from $R$ to $R$, which maps $r\in R$ to $f(r)\in R$. To identify $f(x)$ with its corresponding polynomial map, we propose a notion of
$n$-$\textit{tuple}$ $\textit{of}$ $f(x)$ in Section \ref{GRRC}. Recall that an $n$-tuple $\{r_1,r_2,\cdots, r_n\}$ of $f(x)$ is a subset of $R$ such that $f(r_i)=0$ and $r_i-r_j$ is not zero or zero-divisor  for each $1\leq i\neq j \leq n$. By using this notion, we generalize the relationship between roots and coefficients of a polynomial over the complex field to any commutative ring.
\begin{Thm}($\mathbf{Generalized}$ $\mathbf{relations}$ $\mathbf{between}$ $\mathbf{roots}$ $\mathbf{and}$ $\mathbf{coefficients}$ $\mathbf{of}$ $\mathbf{a}$ $\mathbf{polynomial}$)\label{Fac-tuple}
Let $R$ be a commutative ring with 1 and $f(x)=a_0+a_1x+\cdots+a_mx^m$ be a polynomial over $R$. Suppose that $R$ has an $n$-tuple $\{r_1,r_2,\cdots, r_n\}$ of $f(x)$.
\begin{enumerate}
\item[{\rm (i)}] If $n>m$, then $a_i=0$ in $R$ for $0\leq i\leq m$;
\item[{\rm (ii)}] if $n=m$, then
$$a_i=(-1)^{n-i}a_n\sum_{1\leq k_1\neq k_2\neq\cdots\neq k_{n-i}\leq n}r_{k_1}r_{k_2}\cdots r_{k_{n-i}}~\text{in}~R~\text{for}~0\leq i\leq n-1~\text{and}~\text{hence}~  f(x)=a_n\prod^n_{i=1}(x-r_i);$$
\item[{\rm (iii)}] if $n\leq m$, then $a_i=\frac{\det(\alpha_0,\cdots,\alpha_{i-1},\beta, \alpha_{i+1},\cdots,\alpha_{n-1})}{\det(\alpha_0,\alpha_1,\cdots,\alpha_{n-1})}$ in $R$ for $0\leq i\leq n-1$, where $\alpha_i$ denotes the column $R$-vector $(r_1^i,r_2^i,\cdots,r_n^i)^T$ for $0\leq i\leq n-1$ and $\beta$ denote the column $R$-vector $(-\sum^m_{i=n}a_{i}r_1^{i},-\sum^m_{i=n}a_{i}r_2^{i},\cdots, -\sum^m_{i=n}a_{i}r_n^{i})^T$.
\end{enumerate}
\end{Thm}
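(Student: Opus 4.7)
The plan is to exploit the Vandermonde matrix associated with the $n$-tuple $\{r_1,\cdots,r_n\}$ and invoke Cramer's rule, after first checking that its determinant is a non-zero-divisor in $R$ so that ``cancellation'' is legal. Specifically, the matrix $V:=(\alpha_0\mid\alpha_1\mid\cdots\mid\alpha_{n-1})$ has determinant $\det V=\prod_{1\leq i<j\leq n}(r_j-r_i)$. By the very definition of an $n$-tuple, each factor $r_j-r_i$ is neither zero nor a zero-divisor in $R$, and since a product of non-zero-divisors remains a non-zero-divisor, $\det V$ is a non-zero-divisor of $R$. This is the one fact that lets us upgrade a formal identity $\det V\cdot y=z$ into a well-defined equality $y=z/\det V$ in $R$ itself.

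With this in hand I would first settle case (iii). The relations $f(r_k)=0$ for $k=1,\cdots,n$ rewrite as $\sum_{i=0}^{n-1}a_i r_k^i=-\sum_{i=n}^{m}a_i r_k^i$, which in matrix form is precisely $V\cdot(a_0,a_1,\cdots,a_{n-1})^T=\beta$. Multiplying on the left by the adjugate of $V$ yields, for every $0\leq i\leq n-1$,
\[
\det V\cdot a_i \;=\; \det(\alpha_0,\cdots,\alpha_{i-1},\beta,\alpha_{i+1},\cdots,\alpha_{n-1}),
\]
and the preliminary observation converts this into the stated formula. Case (i) then follows by the same trick applied to any $m+1$ of the $r_k$'s: these still form an $(m+1)$-tuple of $f(x)$, so the corresponding system $W\cdot(a_0,\cdots,a_m)^T=0$ (with $W$ the $(m+1)\times(m+1)$ Vandermonde matrix on $r_1,\cdots,r_{m+1}$) becomes $\det W\cdot a_i=0$ after multiplying by $\operatorname{adj}(W)$; cancelling the non-zero-divisor $\det W$ gives $a_i=0$ for all $i$.

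Case (ii) I would deduce from case (i) by a standard reduction: set $g(x):=f(x)-a_n\prod_{i=1}^{n}(x-r_i)$, which has degree at most $n-1$ and still vanishes at every $r_k$. Thus $\{r_1,\cdots,r_n\}$ is an $n$-tuple of $g(x)$ with $n>\deg g$, so case (i) forces $g(x)=0$; expanding the product $a_n\prod_{i=1}^n(x-r_i)$ and reading off the coefficient of $x^i$ recovers the elementary-symmetric-polynomial expressions in the statement.

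The only real obstacle here is conceptual rather than computational: over a general commutative ring one cannot freely invoke Cramer's rule, because the Vandermonde determinant need not be invertible or even a non-zero-divisor. The notion of $n$-tuple is designed precisely to handle this issue. Once that single fact is in place, the rest of the argument is a routine application of the adjugate identity $\operatorname{adj}(V)\cdot V=\det V\cdot I$ and the degeneracy trick for case (ii); no further structure on $R$ is needed.
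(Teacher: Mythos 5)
Your argument is correct, and it is more streamlined than the paper's own proof while resting on the same underlying fact (that the Vandermonde determinant $\det V=\prod_{1\leq i<j\leq n}(r_j-r_i)$ is a non-zero-divisor whenever $\{r_1,\ldots,r_n\}$ is an $n$-tuple). The paper does not invoke the adjugate directly: for case (i) it performs explicit Gaussian elimination on the homogeneous Vandermonde system to show its only solution is $\mathbf{0}$ (Lemma~\ref{eqnt}), and for cases (ii)--(iii) it again row-reduces the non-homogeneous system to get uniqueness of the solution, proves separately via Laplace expansion that each numerator determinant is divisible by $\det V$ (Lemma~\ref{DVD}), and then checks by a determinant identity that the Cramer-style fraction is indeed a solution (Lemma~\ref{Nohc}). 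Your route via $\operatorname{adj}(V)\cdot V=\det(V)\,I$ collapses this into one step: from $V\cdot a=\beta$ you get $\det V\cdot a_i=\det(\alpha_0,\ldots,\beta,\ldots,\alpha_{n-1})$, which simultaneously establishes the divisibility and identifies the quotient as $a_i$, with no need for a separate Laplace-expansion lemma or a post-hoc verification that the fraction solves the system. Your derivation of (ii) from (i) by subtracting $a_n\prod_i(x-r_i)$ is also a cleaner decomposition than the paper's, which treats (ii) as the $n=m$ instance of the formula in (iii) and would then still have to simplify the resulting ratio of Vandermonde-type determinants into elementary symmetric polynomials. One tiny point worth making explicit when you write this up: in case (i) the relevant non-zero-divisor is $\det W$ for the chosen $m+1$ roots, and the cancellation step uses precisely the non-zero-divisor property rather than invertibility, exactly as you note; and in case (ii) the passage from $g(x)=0$ to the coefficient formula is the standard expansion of $\prod_i(x-r_i)$ into elementary symmetric functions.
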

\begin{Rem}\label{Fac-R}
\begin{enumerate}
\item[{\rm (i)}] It is impossible for a nonzero polynomial over a field to have the number of roots more than its degree, whereas it is possible for a nonzero polynomial over a commutative ring, such as the nonzero polynomial $x^2$ over $\mathbb{Z}[x_1,x_2]/(x_1^2,x_2^2)$.
\item[{\rm (ii)}]To some extent, this theorem is a generalization of polynomial factorization. It is easy to see that the first two cases of this theorem imply that $f(x)$ has a polynomial factorization. The third case just showed that if $n\leq m$, one can obtain a factorization $f(x)=a_n\prod^n_{i=1}(x-r_i)$ in
$R[x]/(a_{m-n+1},a_{m-n+2},\cdots,a_{m})$.
\end{enumerate}
\end{Rem}
The following corollary of Theorem \ref{Fac-tuple} gives a sufficient yet useful condition to guarantee the vanishing of a commutative ring.
\begin{Cor}($\mathbf{Vanishing}$ $\mathbf{ring}$ $\mathbf{condition}$)\label{vanish}
Let $f(x)=a_0+a_1x+\cdots+a_mx^m$ be a polynomial over a commutative ring $R$ with 1. $R$ has an $n$-tuple $\{r_1,r_2,\cdots, r_n\}$ of $f(x)$ under the assumption that $R\neq 0$.
\begin{enumerate}
\item[{\rm (i)}] If $n>m$ and 1 belongs to the ideal $(a_0,a_1,\cdots, a_n)$ of $R$, then $R=0$;
\item[{\rm (ii)}] if $n\leq m$ and 1 belongs to the ideal $(a_0-\frac{\det(\beta, \alpha_{1},\alpha_{2},\cdots,
\alpha_{n-1})}{\det(\alpha_0,\alpha_1,\cdots,\alpha_{n-1})},a_1-\frac{\det(\alpha_0,\beta,\alpha_{2} \cdots,\alpha_{n-1})}{\det(\alpha_0,\alpha_1,\cdots,\alpha_{n-1})},\cdots, a_n-\frac{\det(\alpha_0,\cdots,\alpha_{i-1},\beta, \alpha_{i+1},\cdots,\alpha_{n-1})}{\det(\alpha_0,\alpha_1,\cdots,\alpha_{n-1})})$ of $R$, then $R=0$.
\end{enumerate}
\end{Cor}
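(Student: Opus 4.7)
The plan is to deduce this corollary directly from Theorem \ref{Fac-tuple} by exhibiting the generators of the displayed ideal as elements that must already vanish in $R$; the hypothesis that $1$ lies in that ideal will then force $1=0$, hence $R=0$.

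For part (i), I would invoke Theorem \ref{Fac-tuple}(i): since $R$ possesses an $n$-tuple of $f(x)$ with $n>m$, each coefficient $a_i$ for $0\leq i\leq m$ is already zero in $R$. Using the standard convention that $a_i=0$ for $i>m$ (i.e.\ indices beyond the degree contribute no new generators), the ideal $(a_0,a_1,\ldots,a_n)$ collapses to the zero ideal. From $1\in(0)$ we obtain $1=0$ in $R$, so $R=0$.

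For part (ii), the argument is structurally identical but needs one preliminary observation. Since $\{r_1,\ldots,r_n\}$ is an $n$-tuple, every difference $r_i-r_j$ with $i\neq j$ is neither zero nor a zero divisor, and therefore the Vandermonde determinant $\det(\alpha_0,\alpha_1,\ldots,\alpha_{n-1})=\prod_{1\leq i<j\leq n}(r_j-r_i)$ is a non-zero-divisor of $R$. This ensures that each fraction $\frac{\det(\alpha_0,\ldots,\alpha_{i-1},\beta,\alpha_{i+1},\ldots,\alpha_{n-1})}{\det(\alpha_0,\alpha_1,\ldots,\alpha_{n-1})}$ is a genuine, uniquely determined element of $R$, namely the Cramer-type solution of a linear system with non-zero-divisor determinant. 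Theorem \ref{Fac-tuple}(iii) asserts precisely that this fraction equals the corresponding coefficient $a_i$ for $0\leq i\leq n-1$, so every generator appearing in the ideal of the hypothesis is zero in $R$. The hypothesis $1\in(0)$ again yields $1=0$ and therefore $R=0$.

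There is no real obstacle here; the statement is essentially a repackaging of Theorem \ref{Fac-tuple} as a \emph{vanishing ring condition}. The only subtle point is the justification in (ii) that the denominators genuinely divide inside $R$ itself rather than only inside a localization, and this is exactly what the non-zero-divisor clause built into the definition of an $n$-tuple supplies. Once that is pinned down, both parts reduce to the same schematic one-line argument: if a prescribed list of generators is forced to vanish and $1$ already lies in the ideal they generate, then $1=0$ in $R$ and $R=0$.
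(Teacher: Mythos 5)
Your proposal is correct and takes essentially the same approach as the paper: the paper omits the proof of Corollary \ref{vanish}, remarking only that it "can be easily deduced from Theorem \ref{Fac-tuple}," and your direct deduction via parts (i) and (iii) of that theorem (showing the displayed ideal generators all vanish, hence $1=0$ and $R=0$) is exactly the intended argument.
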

\begin{Rem}
While this corollary may seem to follow immediately from Theorem \ref{Fac-tuple}, it in fact provides a completely new method for proving the vanishing of a ring. In equivariant stable homotopy theory, it is common to claim computations of the homotopy groups of the $H$-geometric fixed point of the Borel-equivariant $G$-spectrum $F(EG,\inf^G_eE)$ arising from some complex oriented spectrum $E$. However, we often do not even know if these homotopy groups are non-trivial, since they are obtained by inverting certain Euler classes in $E^*(BG)$. Most previously known methods for proving the vanishing of these homotopy groups involve recognizing that one of the Euler classes is nilpotent, as in the case $E=K(n), G=\mathbb{Z}/p^j$. However, if all the Euler classes to be inverted are not zero-divisors in $E^*(BG)$, then these classical methods fail. Our new approach remains effective in such cases,
provided we can find an $n$-tuple of $[p^j]_E(x)$ in the multiplicatively closed subset satisfying $\deg_W [p^j]_E(x)<n$; even if this condition is not met, it can still yield valuable homological information about these homotopy groups. When the condition holds, our corollary guarantees their vanishing.
\end{Rem}
The usefulness of Corollary \ref{vanish} can be seen in Corollary \ref{usevan} which includes new proofs of Tate vanishing result \cite[Theorem 1.1]{GS96} of Morava $K$-theory and, the vanishing result \cite[Proposition 3.10]{BGS} of the geometric $H$-fixed point of $G$-equivariant complex K-theory for a $p$-group $G$ and a non-cyclic subgroup $H$. And our approach greatly simplifies those original proofs. Besides, the most important application of  Corollary \ref{vanish} lies in explaining the general blue-shift phenomenon.

\subsection{Proof strategy of Theorem \ref{Gtclbc}}
The crux of comprehending the general blue-shift phenomenon lies in understanding the blue-shift number $s_{G,N;E}$. Since computing $s_{G,N;E}$ is tantamount to determining the periodicity of $\cat T_{G,N}(E)$, the central question becomes how to characterize the periodicity of $\cat T_{G,N}(E)$. This necessitates a thorough grasp of the $v_n$-periodic spectrum. To our knowledge, there exist at least two definitions of $v_n$-periodic, as elaborated in Section \ref{GRRC}. However, in this paper, we opt for Hovey's definition and provide a recap of it.

\begin{Def}(Hovey's $v_n$-periodic, \cite{Ho95})\label{De1}
Let $E$ be a $p$-local and complex oriented spectrum. Let $I_{n}$ denote the ideal of the homotopy group $\pi_*(E)=E^*$ generated by $v_0, v_1, \cdots, v_{n-1}$. The spectrum $E$ is called $v_n$-$\textit{periodic}$ if $v_n$ is a unit of $E^*/I_{n}\neq 0$.
\end{Def}
\begin{Rem}
If $E$ is a $p$-local and complex oriented spectrum, then there are a formal group law over $\pi_*(E)$ and a ring homomorphism from the homotopy group $\pi_*(BP)=\mathbb{Z}_{(p)}[v_1,v_2,\cdots]$ of the Brown-Peterson spectrum $BP$ to $E^*$ which classifies this formal group law. Then $I_{n}$ is the ideal of $E^*$ generated by the image of $v_0=p, v_1, \cdots, v_{n-1}$ under this ring homomorphism, and we still use $v_i$ denote its image.
\end{Rem}
To give a purely algebraic description of the periodicity of $\cat T_{G,N}(E)$, we refine Hovey's definition in Definition \ref{Rd} and hence
find that a spectrum $E$ is $v_n$-periodic if and only if $E^*/I_{n+1}=0, E^*/I_n\neq 0$. In Theorem \ref{Gtclbc}, we specialize to
the case where $G$ is a finite abelian $p$-group $A$ and $N$ is a subgroup $C$ of $A$. Additionally, $E^*$ is considered a local ring with the
maximal ideal $I_n$. By calculating $\pi_*(\cat T_{A,C}(E))$ in Theorem \ref{bgcoh}, we observe that $\pi_*(\cat T_{A,C}(E))$ is an
$E^*$-module. Consequently, we define an integer $\mathbf{s}_{A,C;E}$ to characterize the periodicity of $\cat T_{A,C}(E)$.
\begin{Def}\label{Cri}
There is an ascending chain of ideals
$$I_{-1}=\emptyset\subseteq I_{0}=(0)\subseteq I_1 \subseteq \cdots \subseteq I_{n+1-q} \subseteq \cdots \subseteq I_{n+1}=\pi_{*}(\cat T_{A,C}(E)),$$
then $\mathbf{s}_{A,C;E}$ is the $\textit{maximal}$ integer $q$ such that $I_{n+1-q}=\pi_{*}(\cat T_{A,C}(E))$ and also is the $\textit{minimal}$ integer $q$ such that $I_{n-q}\subsetneq\pi_{*}(\cat T_{A,C}(E))$, which is equivalent to $$\pi_{*}(\cat T_{A,C}(E))/I_{n+1-q}=\left\{\begin{array}{ll}
0\hspace{1.1cm} \text{if $0\leq q \leq\mathbf{s}_{A,C;E}$},\\
\neq 0\hspace{0.7cm} \text{if $\mathbf{s}_{A,C;E}<  q$}.
\end{array}\right.$$
\end{Def}
By Definition \ref{Rd}, it is easy to see that
\begin{Lem}\label{Max}
 $$s_{A,C;E}=\mathbf{s}_{A,C;E}.$$
\end{Lem}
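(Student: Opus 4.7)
The claim is essentially a translation, so the plan is to unpack both integers and show they satisfy the same characterizing conditions on the chain of invariant ideals in $\pi_{*}(\cat T_{A,C}(E))$. The pivot of the argument is the refined description of Hovey's $v_{n}$-periodicity (Definition \ref{Rd}) already flagged in the excerpt: a spectrum $F$ is $v_{m}$-periodic if and only if $F^{*}/I_{m+1}=0$ and $F^{*}/I_{m}\neq 0$. This algebraic form is what allows the topological integer $s_{A,C;E}$ to be compared with the purely ideal-theoretic integer $\mathbf{s}_{A,C;E}$.

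First, I would apply the refined definition to the spectrum $F=\cat T_{A,C}(E)$. By construction, $s_{A,C;E}$ is the integer such that $\cat T_{A,C}(E)$ is $v_{n-s_{A,C;E}}$-periodic, so Definition \ref{Rd} translates this into the two simultaneous conditions
\begin{equation*}
\pi_{*}(\cat T_{A,C}(E))/I_{n-s_{A,C;E}+1}=0
\quad\text{and}\quad
\pi_{*}(\cat T_{A,C}(E))/I_{n-s_{A,C;E}}\neq 0,
\end{equation*}
which in the language of the ascending chain of Definition \ref{Cri} read $I_{n+1-s_{A,C;E}}=\pi_{*}(\cat T_{A,C}(E))$ and $I_{n-s_{A,C;E}}\subsetneq\pi_{*}(\cat T_{A,C}(E))$.

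Next I would compare these with the defining clauses of $\mathbf{s}_{A,C;E}$. By Definition \ref{Cri}, $\mathbf{s}_{A,C;E}$ is exactly the unique nonnegative integer $q$ for which both $I_{n+1-q}=\pi_{*}(\cat T_{A,C}(E))$ (the ``maximal $q$'' clause) and $I_{n-q}\subsetneq\pi_{*}(\cat T_{A,C}(E))$ (the ``minimal $q$'' clause) hold simultaneously — uniqueness following from monotonicity of the chain. Since $s_{A,C;E}$ has just been shown to satisfy both clauses, the uniqueness forces $s_{A,C;E}=\mathbf{s}_{A,C;E}$.

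The only genuine content of the argument is the refined reformulation of Hovey's $v_{n}$-periodicity; everything else is bookkeeping on an ascending chain, so I do not expect a real obstacle. The one point I would double-check is that Definition \ref{Rd} applies to $\cat T_{A,C}(E)$, i.e., that $\pi_{*}(\cat T_{A,C}(E))$ carries a genuine $\pi_{*}(BP)$-algebra structure in which the generators $v_{i}$ act through $E^{*}$, so that the ideals $I_{m}$ in the two rings correspond. This is supplied by Theorem \ref{bgcoh}, which exhibits $\pi_{*}(\cat T_{A,C}(E))\cong L_{C}^{-1}E^{*}(BA)$ as an $E^{*}$-algebra; once this identification is in place, the equality of the two indices is immediate.
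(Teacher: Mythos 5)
Your proposal is correct and takes essentially the same approach as the paper, whose entire proof is the remark ``By Definition \ref{Rd}, it is easy to see that.'' You have simply unpacked that appeal: translating Hovey's refined $v_{n}$-periodicity into the two ideal-theoretic conditions and matching them against the defining clauses of $\mathbf{s}_{A,C;E}$ in Definition \ref{Cri}.
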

The integer $\mathbf{s}_{A,C;E}$ can be elucidated in terms of Homology algebra. According to Lemma \ref{LE}, ${\cat T}_{A,C}(E)$ inherits the Landweber exactness property of $E$. Consequently, $v_0, v_1, \cdots, v_{n-\mathbf{s}_{A,C;E}}$ constitute a maximal regular $\pi_{*}(\cat T_{A,C}(E))$-sequence within $I_{n}$ of $E^*$. In Homology algebra, the maximal length of a $\pi_{*}(\cat T_{A,C}(E))$-regular sequence in the maximal ideal $I_{n}$ of $E^*$ measures the $I_n$-depth of $\pi_{*}(\cat T_{A,C}(E))$ as an $E^*$-module. This depth is defined by the minimum integer $d$ such that ${\rm Ext}^d_{E^*}(E^*/I_n, \pi_{*}(\cat T_{A,C}(E)))\neq 0$.

Let ${\rm pd}_{E^*}(\pi_{*}(\cat T_{A,C}(E)))$ denote the projective dimension of $\pi_{*}(\cat T_{A,C}(E))$ as an $E^*$-module. This dimension is defined as the minimum length among all finite projective resolutions of $\pi_{*}(\cat T_{A,C}(E))$ as an $E^*$-module. Notably, the $I_n$-depth of $E^*$ is $n$. Hence, by the Auslander-Buchsbaum formula \cite[Theorem 3.7]{AB57}, we have:
\begin{Prop}\label{bs-pd}
$$\mathbf{s}_{A,C;E}={\rm pd}_{E^*}(\pi_{*}(\cat T_{A,C}(E)))=n- \min\{d\mid {\rm Ext}^d_{E^*}(E^*/I_n, \pi_{*}(\cat T_{A,C}(E)))\neq 0\}.$$
\end{Prop}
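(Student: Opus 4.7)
The plan is to interpret $\mathbf{s}_{A,C;E}$ through the lens of homological algebra, first identifying it with a depth computation and then with a projective dimension via Auslander--Buchsbaum. Write $M = \pi_{*}(\cat T_{A,C}(E))$ throughout.

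First I would combine Definition \ref{Cri}, which asserts $M/I_{n-\mathbf{s}_{A,C;E}} \neq 0$ while $M/I_{n+1-\mathbf{s}_{A,C;E}} = 0$, with Lemma \ref{LE}, which states that $\cat T_{A,C}(E)$ inherits Landweber exactness from $E$. Landweber exactness forces the sequence $v_0, v_1, \ldots$ to act regularly on $M$ so long as the successive quotients remain nonzero, while the ideal chain in Definition \ref{Cri} tells us exactly when those quotients do and do not vanish. Putting the two together, $v_0, v_1, \ldots, v_{n-\mathbf{s}_{A,C;E}-1}$ constitutes an $M$-regular sequence in $I_n$ of length $n - \mathbf{s}_{A,C;E}$, and moreover $v_{n-\mathbf{s}_{A,C;E}}$ acts as a unit on the final quotient $M/(v_0, \ldots, v_{n-\mathbf{s}_{A,C;E}-1})$: injectivity follows from Landweber exactness, surjectivity from $M/I_{n+1-\mathbf{s}_{A,C;E}} = 0$, and the two together promote to unit-hood because the quotient is an $E^{*}$-algebra. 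This unit property certifies that the regular sequence cannot be prolonged within $I_n$, hence it is maximal.

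Second, the standard Ext-characterization of depth over a Noetherian local ring then gives
\[
\mathrm{depth}_{I_n}(M) \;=\; n - \mathbf{s}_{A,C;E} \;=\; \min\bigl\{\,d : \mathrm{Ext}^{d}_{E^{*}}(E^{*}/I_n, M) \neq 0\,\bigr\}.
\]
Under the standing hypothesis that $E^{*}$ is local with maximal ideal $I_n$, the generators $v_0, \ldots, v_{n-1}$ form a regular system of parameters, so $E^{*}$ is regular local of Krull dimension $n$ and $\mathrm{depth}_{I_n}(E^{*}) = n$. Invoking the Auslander--Buchsbaum formula \cite[Theorem 3.7]{AB57} then yields
\[
\mathrm{pd}_{E^{*}}(M) \;=\; \mathrm{depth}_{I_n}(E^{*}) - \mathrm{depth}_{I_n}(M) \;=\; n - (n - \mathbf{s}_{A,C;E}) \;=\; \mathbf{s}_{A,C;E},
\]
which, together with the Ext formula above, delivers both equalities in the proposition.

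The principal technical hurdle is verifying the hypotheses of Auslander--Buchsbaum, since $M$ is typically not finitely generated as an $E^{*}$-module: indeed, when $\mathbf{s}_{A,C;E} > 0$, the quotient $M/(v_0, \ldots, v_{n-\mathbf{s}_{A,C;E}-1})$ has $I_n$ acting as the unit ideal, which would contradict Nakayama's lemma for any finitely generated module over the local ring $E^{*}$. I would sidestep this either by using the explicit computation $\pi_{*}(\cat T_{A,C}(E)) \cong L^{-1}_{C} E^{*}(BA)$ from Theorem \ref{bgcoh} together with the finite presentation of $E^{*}(BA)$ coming from the Hopkins--Kuhn--Ravenel description (after appropriate completion of $E^{*}$), or by appealing to a version of Auslander--Buchsbaum valid for pro-finitely-generated or complete modules.
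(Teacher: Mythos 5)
You follow the same outline as the paper — Landweber exactness via Lemma \ref{LE}, the maximal-regular-sequence computation, the Ext-characterization of depth, and Auslander--Buchsbaum — but you correctly spot a gap that the paper's sketch elides, and your fix of the off-by-one in the length of the regular sequence (the maximal sequence is $v_0,\dots,v_{n-\mathbf{s}_{A,C;E}-1}$, of length $n-\mathbf{s}_{A,C;E}$) is right. Your Nakayama argument is also right: when $\mathbf{s}_{A,C;E}\ge 1$, one has $I_n\cdot M=M$ for $M=\pi_*(\cat T_{A,C}(E))$ (since $I_{n+1-\mathbf{s}_{A,C;E}}M=M$ and $I_{n+1-\mathbf{s}_{A,C;E}}\subseteq I_n$), while $M\neq 0$, so $M$ cannot be finitely generated over $E^*$. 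Auslander--Buchsbaum as stated in \cite{AB57} requires finite generation, so the paper's appeal to it is not justified, and the paper gives nothing further. The problem is in fact worse than you indicate: the Ext-characterization of depth, which is the third expression in the proposition, also presupposes $I_nM\neq M$. In the simplest test case $n=1$, $A=C=\mathbb{Z}/p$, the element $p$ acts invertibly on $M=L_A^{-1}E^*(B\mathbb{Z}/p_+)$ — the relation $g_1(x)/x=0$ gives $p=-v_1x^{p-1}+(\text{multiple of }p)$ with $v_1$ and $x$ units, forcing $p$ to be a unit of $M$ — so ${\rm Ext}^d_{E^*}(E^*/I_n,M)=0$ for every $d$; the $\min$ is then $+\infty$ and $n-\min$ is not $\mathbf{s}_{A,C;E}=1$. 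The displayed chain of equalities, read literally, fails there.

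Your proposed repairs do not close this. Completing $M$ at $I_n$ collapses it to zero (since $I_nM=M$ forces $I_n^kM=M$ and hence $M/I_n^kM=0$ for all $k$), so no complete-module version of Auslander--Buchsbaum can be fed $M^{\wedge}_{I_n}$. A ``generalized Auslander--Buchsbaum'' for non-finitely-generated modules is not a theorem one can cite off the shelf, and the above example shows that for this $M$ the depth-via-Ext invariant genuinely disagrees with the depth-via-regular-sequence invariant. If the equality $\mathbf{s}_{A,C;E}={\rm pd}_{E^*}(M)$ is to be salvaged it needs a direct argument (e.g.\ an explicit length-$\mathbf{s}_{A,C;E}$ projective resolution of $M$ shown to be minimal), and the Ext expression should be replaced by the regular-sequence formulation, which is what your first paragraph in fact establishes and what the paper actually uses downstream via Definition \ref{Cri}.
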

Proposition \ref{bs-pd} offers a purely algebraic characterization of $\mathbf{s}_{A,C;E}$, which also extends to provide the same characterization for the blue-shift number $s_{A,C;E}$. However, from a computational standpoint, we employ Definition \ref{Cri} instead of Proposition \ref{bs-pd} to compute $\mathbf{s}_{A,C;E}$. By utilizing Corollary \ref{vanish}, if we find some-tuple of $p^j$-series $[p^j]_{E}(x)$ in $\pi_*(\cat T_{A,C}(E))$, we can establish an upper bound for $\mathbf{s}_{A,C;E}$. Moreover, by leveraging Lemma \ref{ind} inductively and assuming $E^*/I_n\neq 0$, we derive a lower bound for $\mathbf{s}_{A,C;E}$. This approach constitutes our strategy to prove Theorem \ref{Gtclbc} and Theorem \ref{GTglbc}.

\subsection{Furture work: some ideas to settle the non-abelian cases of Conjecture \ref{Gbsp}}
We do not anticipate that our method, which employs $n$-tuples of $[p^j]_E(x)$, will provide a complete solution to the general blue-shift phenomenon for arbitrary non-abelian groups. Nevertheless, we are optimistic that it can be adapted to certain non-abelian cases, an endeavor that will require substantial further work.

Addressing non-abelian cases requires solving a key problem: computing the roots of $[p^j]_E(-)$ in the homotopy groups of $\cat T_{G,N}(E)$, which is equivalent to finding these roots in $E^*(BG)$. For abelian groups, we define a homomorphism $\psi^{p^j}_{G}: G \rightarrow G$ by $\psi^{p^j}_{G}(g) = g^{p^j}$. Using the functoriality of the classifying space functor $B$, we obtain a map $B\psi^{p^j}_{G} = \psi^{p^j}_{BG}$, making the induced map $\psi^{p^j,*}_{BG}: E^*(BG) \rightarrow E^*(BG)$ an $E^*$-algebra homomorphism. Crucially, the restriction of $\psi^{p^j,2}_{BG}$ to Euler classes coincides with the operation $[p^j]_E(-)$. This key insight allows us to compute the roots of $[p^j]_E(-)$ in $E^*(BG)$ directly at the group level, with full details in Theorem \ref{fpjr}. For non-abelian groups, however, a fundamental question arises:
\begin{Que}\label{NHP}
If $G$ is a non-abelian $p$-group, the map $\psi^{p^j}_{G}$ may fail to be a homomorphism. Consequently, the functoriality of $B$ cannot be invoked to obtain a self-map of $BG$.
\end{Que}
In the theory of finite $p$-groups, a group $G$ is termed $p^j$-abelian if the $p^j$-th power map $\psi^{p^j}_G: G \rightarrow G$ is a homomorphism. This generalizes the classical fact that a $p$-group is abelian precisely when it is $2$-abelian, thereby offering a potential pathway to resolve Question \ref{NHP}.

The case of $p^j$-abelian groups prompts a generalization of the Hopkins--Kuhn--Ravenel definition of formal groups on a graded Hopf algebra (Definition \ref{FG}), to calculate the roots of $[p^j]_E(-)$ in $E^*(BG)$. Although the algebra structure on $E^*(BG)$ is needed to identify a homomorphism $f^* \in {\rm  Hom}_{E^*\text{-alg}}(E^*\psb{x}/([p^j]_E(x)), E^*(BG))$ with its image $f(x)$, the coalgebra structure can be weakened. The goal is to define a hom-set ${\rm Hom}_{?}(E^*\psb{x}/([p^j]_E(x)), E^*(BG))$ that still forms a root set of $[p^j]_E(-)$. Consider an $E^*$-algebra $R$ with a map $[p^j]_R(-): R \rightarrow R$. We require that any $f$ in this hom-set is an $E^*$-ring homomorphism satisfying $f([p^j]_E(x)) = [p^j]_R(f(x))$.

In the abelian case, the group ${\rm  Hom}_{E^*\text{-alg}}(E^*\psb{x}/([p^j]_E(x)),E^*(BG))$ is computed by Theorem \ref{fpjr} (based on \cite{LT65}) and is isomorphic to ${\rm Hom}(G,\mathbb{Z}/p^j)$. For a $p^j$-abelian $G$, it is easy to see that ${\rm Hom}(G,\mathbb{Z}/p^j)$ is a subset
of ${\rm Hom}_{?}(E^*\psb{x}/([p^j]_E(x)),E^*(BG))$, which leads to the following question
\begin{Que}
 Let $G$ be a finite $p^j$-abelian $p$-group. Is it true that
 $${\rm Hom}(G,\mathbb{Z}/p^j)= {\rm Hom}_{?}(E^*\psb{x}/([p^j]_E(x)),E^*(BG))?$$
\end{Que}
However, any attempt to generalize the theorem from \cite{LT65} to answer this question must confront the requirement that $E^*(BG)$ be a polynomial or power series algebra.

\begin{Conj}
Let $G$ be a finite $p^j$-abelian $p$-group and $E$ be a $p$-complete complex-oriented spectrum with an associated formal group of height $n$. Then the induced map $\psi^{p^j,*}_{BG}$, when restricted to $2$-dimensional Euler classes, has the power series expansion
$$\psi^{p^j,*}_{BG}(x)=v_0^jx+\cdots+v_{n}^{1+p^{n}+\cdots+p^{(j-1)n}}x^{p^{jn}}.$$
\end{Conj}
\begin{Rem}
This conjecture may be connected to Ando's results in \cite{And95}.
\end{Rem}

Our paper is $\mathbf{organized}$ as follows. In Section 2, we review the computation of the Zariski topology of Balmer spectrum $\Spc(\SH(G)^c)$ and this is our motivation to study the general blue-shift phenomenon; In Section 3, we calculate the homotopy group of the generalized Tate spectrum $\mathscr {T}_{A,C}(E)$; In Section 4, we prove Theorem \ref{Fac-tuple} and give two applications of Corollary \ref{vanish}; In Section 5, we recall the definition of algebraic periodicity and Landweber exactness for a spectrum; Note that Theorem \ref{Gtclbc} is a corollary of Theorem \ref{GTglbc}, we give a detailed proof of Theorem \ref{GTglbc} in Section 6.

\textbf{Acknowledgement}:
Firstly, I thank Professor Nick Kuhn for introducing me the problem of computing Balmer spectrum in the International Workshop on Algebraic Topology at Fudan University in 2019. Secondly, I thank Professor Stefan Schwede for teaching me lots of knowledge about the $G$-equivariant stable homotopy category. Thirdly, as most my work is based on my PhD thesis \cite{RYY21}, I thank Professor Xu-an Zhao for his carefully reading my PhD thesis and making me correct some vague arguments. Then I thank Professor John Rognes for sharing the origin of blue-shift terminology in algebraic topology. Finally, I also thank Professor Peter May, Zhouli Xu, Hana Jia Kong, Long Huang and Ran Wang for carefully reading my draft and suggesting lots of improvements.

\section{Towards computing the Zariski topology of $\Spc(\SH(G)^c)$ \label{Mot} }
Our work is motivated by computing the Zariski topology of Balmer spectrum, this leads us to Conjecture \ref{Gbsp} and Theorem \ref{Gtclbc}. So let us illustrate how Theorem \ref{Gtclbc} can be applied to compute the Balmer spectrum.

\subsection{Review of the computation of the Zariski topology of $\Spc(\SH(G)^c)$ }
The category $\SH(G)^c$ has a symmetric monoidal structure, where the tensor product is the smash product of $G$-spectra, and the unit object is the $G$-sphere spectrum $S_G$. This structure makes $\SH(G)^c$ resemble a commutative ring with a unit. Therefore, methods from algebraic geometry can be introduced, allowing us to define concepts like ``prime ideal" and ``spectrum" for this category. In 2005, Balmer \cite{Ba05} defined the spectrum $\Spc(\SH(G)^c)$, which is analogous to the spectrum of a commutative ring with a unit. It consists of all proper ``prime ideals" and is equipped with the Zariski topology. This spectrum is now known as the $\textit{Balmer}$ $\textit{spectrum}$. When the group $G$ is the trivial group $e$, the category $\SHG$ reduces to the classical stable homotopy category $\SH(e)$. Hopkins--Smith \cite{HS98} classified all thick subcategories of $\SH(e)^c$ by building on the work of Ravenel \cite{Ra84} and Mitchell \cite{Mi85}. In essence, they determined the Balmer spectrum $\Spc(\SH(e)^c)$. In this context, the proper ``prime ideals" of $\SH(e)^c$ are given by the thick subcategories
$$\cat C_{p,m}= \{ X \in \SH(e)^c \mid K(m-1)_{*}(X)=0\}$$
for primes $p$ and positive integers $m$, where $K(0)$ and $K(\infty)$ denote the rational and mod $p$ Eilenberg-Maclane spectra ($K(\mathbb{Q})$ and $K(\mathbb{Z}/p)$ respectively). For each prime $p$, there is a descending chain
\begin{equation*}
\cat C_{p,1}\supsetneq \cat C_{p,2}\supsetneq \cdots \supsetneq\cat C_{p,\infty}
\end{equation*}
due to \cite{Ra84,Mi85}. The topology space $\Spc(\SH(e)^c)$ can be
described by the following diagram:
\begin{equation*}
\label{eq:Spc(SH)}
\vcenter{\xymatrix@C=.8em @R=.4em{
&&\cat C_{2,\infty} \ar@{-}[d]
&\cat C_{3,\infty} \ar@{-}[d]
&& \kern-2em{\cdots}
&\cat C_{p,\infty} \ar@{-}[d]
& {\cdots}
\\
&&{\vdots} \ar@{-}[d]
& {\vdots} \ar@{-}[d]
&&& {\vdots} \ar@{-}[d]
\\
&&\cat C_{2,n+1} \ar@{-}[d]
& \cat C_{3,n+1} \ar@{-}[d]
&& \kern-2em{\cdots}
& \cat C_{p,n+1} \ar@{-}[d]
& {\cdots}
\\
&&\cat C_{2,n} \ar@{-}[d]
& \cat C_{3,n} \ar@{-}[d]
&& \kern-2em{\cdots}
& \cat C_{p,n} \ar@{-}[d]
& {\cdots}
\\
&&{\vdots} \ar@{-}[d]
& {\vdots} \ar@{-}[d]
&&& {\vdots} \ar@{-}[d]
\\
&&\cat C_{2,2} \ar@{-}[rrd]
& \cat C_{3,2} \ar@{-}[rd]
&& \kern-2em{\cdots}
& \cat C_{p,2} \ar@{-}[lld]
& {\cdots},
\\
&&&& \cat C_{0,1}
\\\\
}}
\end{equation*}
where the line between any two points denotes that there is an inclusion relation between the two proper ``prime ideals".

The computation of $\Spc(\SH(e)^c)$ is one of the main tools used in applications of the nilpotence theorem of Devinatz--Hopkins--Smith \cite{DHS,HS98} to global questions in stable homotopy theory. Strickland \cite{Stri12} tried to generalize the non-equivariant case to the $G$-equivariant case. For any subgroup $H$ of a finite group $G$, Strickland employed the geometric $H$-fixed point functor $\Phi^H(-):\SH(G)\rightarrow \SH(e)$, which exhibits similarities to a ring homomorphism, to pull back $\cat C_{p,m}$ and hence obtained the $G$-equivariant proper ``prime ideals"
$$\cat P_G(H,p,m)=(\Phi^H)^{-1}(\cat C_{p,m})=\{ X \in \SHGc \mid K(m-1)_{*}\Phi^H(X)=0\}.$$
In 2017, Balmer--Sanders \cite[Theorem 4.9 and Theorem 4.14]{BS17} confirmed that all $G$-equivariant proper ``prime ideals" of $\SHGc$ are obtained in this manner, effectively determining the set structure of the Balmer spectrum $\Spc(\SH(G)^c)$. To compute the Zarisiki topology of
$\Spc(\SH(G)^c)$, it suffices to give an equivalent condition for any two proper ``prime ideals" $\cat P_G(K,q,l),\cat P_G(H,p,m)$ of $\SHGc$
to have an inclusion relation $\cat P_G(K,q,l)\subseteq\cat P_G(H,p,m)$. Balmer--Sanders \cite[Corollary 4.12 and Corollary 6.4]{BS17} derived two
necessary conditions for this inclusion: one is $p=q$; the other is that $K$ is a subgroup of $H$ up to $G$-conjucate, which is denoted by
$K\leq_G H$. Consequently, the determination of Zariski topology of $\Spc(\SHGc)$ can be reduced to the computation of the following number
$$\mathbf{\rm BS}_m(G;H,K):=\min\{l-m=i\in\mathbb{Z}\mid\ {\cat P_G}(K,p,l)\subseteq {\cat P_G}(H,p,m)\}.$$
An important observation made by Kuhn--Lloyd \cite{KL20} is that $l\geq m$. Therefore, it suffices to prove that for each $l<m$, there is a finite $G$-spectrum $X$ such that $X\in {\cat P_G}(K,p,l)$ and $X\notin {\cat P_G}(H,p,m)$. By Mitchell's work \cite{Mi85}, there exists a non-equivariant finite spectrum $Y$ such that $Y\in {\cat C}_{p,m}$ but $Y\notin {\cat C}_{p,m+1}$. Taking $X$ to be the $G$-spectrum $Y$ with the trivial $G$-action completes the proof.

To determine $\mathbf{\rm BS}_m(G;H,K)$, it would be helpful to gain some intuition for the inclusion relation ${\cat P_G}(K,p,l)\subseteq {\cat P_G}(H,p,m)$. From the descending chain
\begin{equation*}
\cat C_{p,1}\supsetneq \cat C_{p,2}\supsetneq \cdots \supsetneq\cat C_{p,\infty}
\end{equation*}
and the fact that $\Phi^K(X)\in\SH(e)^c$, we can deduce the following equivalence:
$$K(m-1)\otimes \Phi^K(X)=0 \Leftrightarrow \bigvee_{i=0}^{m-1}K(i)\otimes \Phi^K(X)=0.$$
To make this equation more convenient for analysis, let us recall a definition for any non-equivariant spectrum $E$ due to Bousfield \cite{Bo79}, where $\langle E\rangle$ denotes the equivalence class of $E$: $E \sim F$ if for any spectrum $X\in \SH(e)$, $E_*X = 0 \Leftrightarrow F_*X = 0$. And $\langle E\rangle$ is called $\textit{Bousfield}$ $\textit{class}$ of $E$.
Due to Ravenel \cite[Theorem 2.1]{Ra84}, the Bousfield class $\langle\bigvee_{i=0}^{n}K(i)\rangle$ equals to the Bousfield class $\langle E(n)\rangle$. Then we have for $X\in \SH(G)^c$,
$$\bigvee_{i=0}^{m-1}K(i)\otimes \Phi^K(X)=0 \Leftrightarrow E(m-1)\otimes \Phi^K(X)=0.$$
Thus for $X\in \SH(G)^c$,
$$K(m-1)\otimes \Phi^K(X)=0 \Leftrightarrow  E(m-1)\otimes \Phi^K(X)=0.$$
Hence ${\cat P_G}(K,p,l)\subseteq {\cat P_G}(H,p,m)$ is equivalent to the fact that for $X\in \SH(G)^c$, $E(l-1)_{*}\Phi^K(X)=0$ implies
$E(m-1)_{*}\Phi^H(X)=0$.

The inclusion $H \hookrightarrow G$ provides a $\textit{restriction}$ functor $\Res^G_H : \SH(G) \to \SH(H)$. Assume that $K \trianglelefteq G$, the surjective homomorphism $G\rightarrow G/K$ induces an $\textit{inflation}$ functor ${\rm inf}^G_{G/K}: \SH(G/K)\rightarrow \SHG$. Let $\tilde{\Phi}^{K}$ be the relative geometric $K$-fixed point functor from $\SHG$ to $\SH(G/K)$. By \cite[Chapter \textrm{II}. \S 9]{LMS}, we have $\Res^{G/K}_e {\mkern-9mu} \circ \tilde{\Phi}^{K} \cong \Phi^{K}$ and hence
\begin{align*}
0=E(l-1)\otimes \Phi^K(X) =E(l-1)\otimes \Res^{G/K}_{e}\circ\tilde{\Phi}^K(X)=\Res^{G/K}_{e}({\rm inf}^{G/K}_e(E(l-1))\otimes \tilde{\Phi}^K(X)).
\end{align*}
Let ${G/K}_+$ denote the disjoint union of the coset $G/K$ and a point. By \cite[1.1 Theorem]{BDS}, we get $\Res^{G/K}_{e}(-)\cong {G/K}_+\otimes(-)$ and hence
$$0=\Res^{G/K}_{e}({\rm inf}^{G/K}_e(E(l-1))\otimes \tilde{\Phi}^K(X))={G/K}_+\otimes {\rm inf}^{G/K}_e(E(l-1))\otimes \tilde{\Phi}^K(X).$$
Let $E(G/K)$ denote the Milnor construction, which is an infinite join $G/K*G/K*\cdots*G/K$, for the group $G/K$. Then
$$0=E(G/K)_+\otimes {\rm inf}^{G/K}_e(E(l-1))\otimes \tilde{\Phi}^K(X).$$
Let $\widetilde{E}(G/K)$ be the unreduced suspension of $E(G/K)$ with one of the cone points as basepoint, then we have
\begin{align}\label{Gco}
0=&F({\widetilde{E}(G/K)},\Sigma {E(G/K)}_+\otimes {\rm inf}^{G/K}_e(E(l-1))\otimes \tilde{\Phi}^K(X)).
\end{align}
By \cite[Corollary B.5]{Gr94}, we have
$$F({\widetilde{E}G},\Sigma {EG}_+\otimes -)\cong F(EG_+,-)\otimes \widetilde{E}G.$$
Actually $t_{G}(k_G):=F(EG_+,k_G)\otimes \widetilde{E}G$ is so-called $\textit{classical}$ $\textit{Tate}$ $\textit{construction}$ in the sense of Greenlees--May \cite{GM95} for a $G$-spectrum $k_G$. Assume that $K \trianglelefteq H$, we apply geometric $H/K$-fixed point functor ${\Phi}^{H/K}(-)$ to Formula \ref{Gco}. Since ${\Phi}^{H/K}(-)$ preserves weak equivalences, we obtain
$$0={\Phi}^{H/K}(t_{G/K}({\rm inf}^{G/K}_e(E(l-1))\otimes \tilde{\Phi}^K(X))).$$
Note that for $X\in \SHG$, $Y \in \SHGc$, $t_G(X)\otimes Y \cong t_G(X\otimes Y)$ (details see \cite[Remark 5.8]{BS17}), we have
$$0={\Phi}^{H/K}(t_{G/K}({\rm inf}^{G/K}_e(E(l-1)))\otimes \tilde{\Phi}^K(X)).$$
From the facts that for any $G/K$-spectra $X$ and $Y$, $\Phi^{H/K}(X\otimes Y)=\Phi^{H/K}(X)\otimes \Phi^{H/K}(Y),$ and $\Phi^{H/K} \circ \tilde{\Phi}^{K} \cong \Phi^{H}$, it follows that
\begin{align*}
0=&{\Phi}^{H/K}(t_{G/K}({\rm inf}^{G/K}_e(E(l-1)))\otimes \tilde{\Phi}^K(X))\\
=&{\Phi}^{H/K}(t_{G/K}({\rm inf}^{G/K}_e(E(l-1))))\otimes {\Phi}^{H/K}\circ\tilde{\Phi}^K(X)\\
=&{\Phi}^{H/K}(t_{G/K}({\rm inf}^{G/K}_e(E(l-1))))\otimes {\Phi}^{H}(X).
\end{align*}
For the sake of convenience, let $T_{G/K,H/K}(-)$ denote the functor $\Phi^{H/K}(t_{G/K}({\rm inf}^{G/K}_e(-)))$, and by Proposition \ref{Shgg} we have $T_{G/K,H/K}(-)=\cat {T}_{H/K,H/K}(-)$. If $\langle T_{G/K,H/K}(E(l-1))\rangle$ is equal to the Bousfield class of some Johnson-Wilson theory, this would give us an upper bound for $\mathbf{\rm BS}_m(G;H,K)$.

\subsection{Comparison between our new approach and the previous approach}
The idea of the above reduction is inspired by Balmer--Sanders' computation \cite[Proposition 7.1]{BS17} of the Zariski topology of the Balmer spectrum $\Spc(\SH(\mathbb{Z}/p)^c)$. They used the result from Hovey--Sadofsky \cite{HS96} and Kuhn \cite{Ku04}:
$$\langle T_{\mathbb{Z}/p,\mathbb{Z}/p}(E(l-1))\footnote{Actually their construction is $t_{\mathbb{Z}/p}({\rm inf}^{\mathbb{Z}/p}_{e}(-))^{\mathbb{Z}/p}$, but by Proposition \ref{Sfy} and Proposition \ref{Shgg}, $t_{\mathbb{Z}/p}({\rm inf}^{\mathbb{Z}/p}_{e}(-))^{\mathbb{Z}/p}$ and $T_{\mathbb{Z}/p,\mathbb{Z}/p}(-)$ are the same construction.}\rangle=\langle E(l-2)\rangle.$$
This result led them to conclude that $\mathbf{\rm BS}_m(\mathbb{Z}/p;\mathbb{Z}/p,e)\leq 1$. In fact, $\mathbf{\rm BS}_m(\mathbb{Z}/p;\mathbb{Z}/p,e)=1$, which means that the determination of $\langle T_{G/K,H/K}(E(l-1))\rangle$ might give us the least upper bound of $\mathbf{\rm BS}_m(G;H,K)$. If $H/K$ is a finite abelian $p$-group, then Theorem \ref{Gtclbc} confirms that
$$\langle T_{G/K,H/K}(E(l-1))\rangle=\langle E(l-1-{\rm rank}_p(H/K))\rangle.$$
In 2019, Barthel--Hausmann--Naumann--Nikolaus--Noel--Stapleton \cite{BHNNNS} showed that when $G$ is a finite abelian $p$-group, $\mathbf{\rm BS}_m(G;H,K)$ is exactly ${\rm rank}_p(H/K)$. Interestingly, they did not use the Bousfield class $\langle T_{G/K,H/K}(E(l-1))\rangle$ to determine the upper bound of $\mathbf{\rm BS}_m(G;H,K)$; instead, they employed the method \cite{MNN19} of derived defect base by recognizing $T_{G/K,H/K}(E(l-1))$ as suitable sections of the structure sheaf on a certain non-connective derived scheme. There must be some beautiful mathematics behind such an elegant result. In order to make this problem more approachable to a broader audience, we present a new approach that is by use of Theorem \ref{Gtclbc} to give an upper bound of $\mathbf{\rm BS}_m(G;H,K)$.

The earlier approach described in \cite{BHNNNS} uses the chromatic height, as defined in \cite[Definition 3.1]{BHNNNS}, of  $T_{G/K,H/K}(E(l-1))$ to establish an upper bound for $\mathbf{\rm BS}_m(G;H,K)$. In some respects, the chromatic height of  $T_{G/K,H/K}(E(l-1))$ in \cite{BHNNNS} serves a role similar to the periodicity of $T_{G/K,H/K}(E(l-1))$ in our case, albeit with differing definitions. Consequently, the primary challenge addressed in \cite{BHNNNS} lies in determining this chromatic height.

Despite similarities, there are several significant differences between our new approach and the earlier approach in \cite{BHNNNS}:
\begin{enumerate}
\item[{\rm (i)}] Uniqueness: the approach to determine the chromatic height of $T_{G/K,H/K}(E(l-1))$ in \cite{BHNNNS} is by directly analyzing some properties of $T_{G/K,H/K}(E(l-1))$, but our approach to determine the periodicity of $T_{G/K,H/K}(E(l-1))$ is by analyzing certain properties of $\pi_*(T_{G/K,H/K}(E(l-1)))$. We call these two kinds of properties geometric properties and algebraic properties. The authors in \cite{BHNNNS} used the results of \cite{MNN17,MNN19} to study these geometric properties. We also develop some new tools including Theorem \ref{Fac-tuple} to study these algebraic properties, and this is the uniqueness of our new approach.
\item[{\rm (ii)}]Conceptual clarity: our new approach offers a more intuitive and conceptual explanation of the general blue-shift phenomenon, leading to its successful establishment. This clarity can be particularly valuable when dealing with non-abelian groups $G$, where the behavior of $\mathbf{\rm BS}_m(G;H,K)$ is not fully known.
\item[{\rm (iii)}]Simplicity of tools used: in contrast to the derived algebraic geometry and the geometry of the stack of formal groups used in \cite{BHNNNS}, our approach relies on the use of some-tuple of the $p^j$-series in $\pi_*(T_{H/K,H/K}(E(l-1)))$ and standard linear algebra techniques. This makes our approach more accessible and easier to apply.
\end{enumerate}
Overall, our new approach provides a fresh perspective on the general blue-shift phenomenon and may bring more intuition to the challenging problem of determining $\mathbf{\rm BS}_m(G;H,K)$ for non-abelian groups.

\section{The homotopy groups $\pi_*(\cat {T}_{A,C}(E))$ and their maps \label{hggts}}
Follow the notion of \cite[Section 5]{HKR}, in this section we assume that $E$ is a complex oriented cohomology theory, particularly $p$-complete theory with an associated formal group of height $n$. In this context, the homotopy group of the classical Tate construction $t_{A}({\rm inf}^A_{e}(E))^A$ for any finite abelian $p$-group $A$ has been calculated in \cite{GS98}. Additionally, experts in the field have been aware of the homotopy group of the generalized Tate spectrum $\cat {T}_{A,C}(E)$ for several years. However, a version of this information that offers sufficiently detailed proofs has been absent. In the present section, we endeavor to furnish a comprehensive proof for Theorem \ref{bgcoh}.

It is worth noting that the functor $T_{G,N}(-)$ bears a connection to $\cat {T}_{G,N}(-)$, a relationship that is delineated by the following proposition.
\begin{Prop}\label{Shgg}
Let $G$ be a finite $p$-group or $T^m=\underbrace{U(1)\times\cdots\times U(1)}_m$ for any positive integer $m$, and $N$ be its normal subgroup. Then
$T_{G,N}(-)=\cat {T}_{N,N}(-).$
\end{Prop}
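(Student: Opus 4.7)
The plan is to unpack both sides of the claim in terms of their defining formulas and then check that restriction from $G$-spectra to $N$-spectra transforms the $G$-Tate construction into the $N$-Tate construction. Specifically, reading off the definitions: $T_{G,N}(-)=\Phi^{N}(t_G(\mathrm{inf}^G_e(-)))$ as recalled in Section \ref{Mot}, while $\cat T_{N,N}(-)=\bigl(\tilde\Phi^{N}(t_N(\mathrm{inf}^N_e(-)))\bigr)^{N/N}=\Phi^{N}(t_N(\mathrm{inf}^N_e(-)))$ since $N/N=e$. Because $\Phi^{N}\colon \SH(G)\to \SH(e)$ is computed by first restricting to $N$ and then applying the genuine geometric $N$-fixed points, it suffices to show the natural equivalence
\[
\Res^G_N\bigl(t_G(\mathrm{inf}^G_e(X))\bigr)\;\simeq\;t_N(\mathrm{inf}^N_e(X))
\]
of $N$-spectra for any non-equivariant $X$.

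First I would verify the three ``restriction commutes with the ingredients'' statements. (a) $\Res^G_N\circ\mathrm{inf}^G_e=\mathrm{inf}^N_e$, which is immediate from the functoriality of pullback along $e\hookrightarrow N\hookrightarrow G$. (b) $\Res^G_N(EG_+)\simeq EN_+$: the total space $EG$ is free and contractible, so as an $N$-space its fixed-point set $(EG)^K$ is contractible for $K=e$ and empty for every nontrivial $K\leq N$; this is exactly the characterization of a universal $N$-free space, hence $\Res^G_N(EG)$ is a model for $EN$. (c) $\Res^G_N(\widetilde{EG})\simeq \widetilde{EN}$, since $\widetilde{EG}$ is the cofiber of $EG_+\to S^0$, and restriction is exact (it is both a left and a right adjoint, or one may compute fixed points directly: $(\widetilde{EG})^K$ is $\ast$ for $K=e$ and $S^0$ for $K\neq e$, the defining property of $\widetilde{EN}$). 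For the case $G=T^m$ the same arguments apply verbatim: $N\leq T^m$ is a closed subgroup, acts freely on the contractible space $EG$, and the family-of-subgroups characterization of $EN$ and $\widetilde{EN}$ is unaffected by passage from finite groups to compact Lie groups.

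With these three identifications in hand, I would combine them using that $\Res^G_N$ commutes with function spectra and smash products (it is a symmetric-monoidal left adjoint; on the function-spectrum side one applies $F(\Res^G_N A,\Res^G_N B)\simeq \Res^G_N F(A,B)$, valid since $EG_+$ is a compact $G$-space). This yields
\[
\Res^G_N\bigl(F(EG_+,\mathrm{inf}^G_e X)\otimes \widetilde{EG}\bigr)
\simeq F(EN_+,\mathrm{inf}^N_e X)\otimes \widetilde{EN}=t_N(\mathrm{inf}^N_e X),
\]
which is exactly the desired equivalence. Applying $\Phi^{N}\colon \SH(N)\to\SH(e)$ then gives $T_{G,N}(X)\simeq \cat T_{N,N}(X)$ naturally in $X$.

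The main technical obstacle is the compatibility of $\Res^G_N$ with the function-spectrum $F(EG_+,-)$; while intuitively obvious, it requires that $EG_+$ be dualizable or at least that the mapping spectrum commute with restriction, which ultimately reduces to the fact that $EG_+$ may be built from induced $G$-cells of the form $G/H_+\wedge S^n$ with $H=e$ (since $EG$ is free), and for such cells the commutativity of $\Res^G_N$ with $F(-,-)$ is the Wirthmüller/Frobenius-type identity $\Res^G_N F(G/e_+,Y)\simeq F(N/e_+,\Res^G_N Y)$. Once this point is handled the rest of the argument is purely formal manipulation of the Greenlees--May formula.
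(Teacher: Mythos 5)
Your argument follows essentially the same route as the paper's proof: both reduce the claim to the factorization $\Phi^{N}=\tilde{\Phi}^{N}\circ\Res^{G}_{N}$ and the identity $\Res^{G}_{N}\bigl(t_{G}({\rm inf}^{G}_{e}(X))\bigr)\simeq t_{N}({\rm inf}^{N}_{e}(X))$, which the paper simply cites from \cite[Example 5.18]{BS17}, while you reprove it by checking that $\Res^{G}_{N}$ carries $EG$ to $EN$, $\widetilde{E}G$ to $\widetilde{E}N$, and ${\rm inf}^{G}_{e}$ to ${\rm inf}^{N}_{e}$, and that it is closed symmetric monoidal. This is a useful unpacking of exactly what the citation hides.

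One caveat: your first justification for $\Res^{G}_{N}F(EG_{+},-)\simeq F(EN_{+},\Res^{G}_{N}(-))$ --- that ``$EG_{+}$ is a compact $G$-space,'' hence dualizable --- is false; $EG_{+}$ is an infinite-dimensional $G$-CW complex and is neither compact nor dualizable in $\SH(G)^c$. Your fall-back argument is the right one and should be promoted to the main justification: write $EG_{+}$ as a filtered colimit of finite free $G$-CW spectra, note that $\Res^{G}_{N}$ preserves both limits and colimits (it has both a left adjoint, induction, and a right adjoint, coinduction), and invoke dualizability only at each finite stage. With that phrase corrected, the proof is sound and faithfully reconstructs the content being outsourced to \cite{BS17}.
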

\begin{proof}
By definition, ${\Phi}^{N}(-)=\tilde{\Phi}^{N}\circ\Res^{G}_{N}(-)$, combining with
the fact that
$$\Res^{G}_{N}(t_{G}({\rm inf}^{G}_e(-))=t_{N}(\Res^{G}_{N}\circ{\rm inf}^{G}_e(-)=t_{N}({\rm inf}^{N}_e(-)),$$
details see \cite[Example 5. 18]{BS17}, we have ${\Phi}^{N}(t_{G}({\rm inf}^{G}_e(-)))= \cat T_{N,N}(-)$.
\end{proof}
To begin, let us revisit the definition provided in the work \cite{LMS} for the concept of the $\textit{relative}$ $\textit{geometric}$ $N$-$\textit{fixed}$ $\textit{point}$ functor, denoted as $\tilde{\Phi}^{N}(-)$, which maps from the category $\SH(G)$ to $\SH(G/N)$.
For a family $\mathcal{F}$ of subgroups of $G$ that is closed under $G$-conjugacy, a universal space $E\mathcal{F}$ is defined based on its
fixed point properties. Specifically, the space $E\mathcal{F}^K$ is contractible if $K\in\mathcal{F}$ and empty if $K\notin\mathcal{F}$. A map
$E\mathcal{F}_+\rightarrow S^0$ is induced by the mapping $E\mathcal{F}\rightarrow *$, and the cofiber of this map is denoted as
$\widetilde{E}{\mathcal{F}}$. Through the long exact sequence of non-equivariant homotopy groups derived from this cofiber sequence, it is
established that $\widetilde{E}{\mathcal{F}}^K$ is homotopy equivalent to $*$ if $K\in\mathcal{F}$ and $S^0$ if $K\notin\mathcal{F}$.
Consequently, it follows that $\widetilde{E}{\mathcal{F}_1}\otimes\widetilde{E}{\mathcal{F}_2}\simeq
\widetilde{E}(\mathcal{F}_1\cup\mathcal{F}_2)$, where $\simeq$ denotes a homotopy equivalence. Let ${\mathcal{F}}[N]$ represent the
family of subgroups of $G$ that do not contain $N$, and the definition of $\tilde{\Phi}^{N}(-)$ involves the construction
$(\widetilde{E}{\mathcal{F}}[N]\otimes(-))^N$. Here, $\widetilde{E}G$ refers to $\widetilde{E}{\mathcal{F}}$, where $\mathcal{F}$ denotes the
family of subgroups solely containing the trivial subgroup $\{e\}$.

To calculate $\pi_*(\cat T_{G,N}(E))$, we give it an equivalent description.
\begin{Prop}\label{Sfy}
Let $G$ be a finite $p$-group or $T^m$, and $N$ be its normal subgroup. Let $E$ be a non-equivariant spectrum. Then
$$\cat T_{G,N}(E)\simeq (\tilde{\Phi}^N(F(EG_+,{\rm inf}_e^G(E))))^{G/N}~~\text{and}~~\pi_*\cat (\cat T_{G,N}(E))\cong \pi^{G/N}_*(\tilde{\Phi}^N(F(EG_+,{\rm inf}_e^G(E)))),$$
where $G/N$-equivariant homotopy group is defined by a complete $G/N$-universe in the sense of Lewis--May--Steinberger \cite{LMS}.
If the family subgroups of $G$ which do not contain $N$ are $\{e\}$, then $\cat T_{G,N}(-)=t_{G}({\rm inf}^{G}_{e}(-))^{G}$.
\end{Prop}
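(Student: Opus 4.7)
The plan is to unpack the definition $\cat T_{G,N}(E) = (\tilde{\Phi}^N(t_G({\rm inf}_e^G(E))))^{G/N}$ and absorb the $\widetilde{E}G$ factor coming from the classical Tate construction into the $\widetilde{E}\mathcal{F}[N]$ factor hidden inside the relative geometric fixed point functor, so that only the function spectrum $F(EG_+,{\rm inf}_e^G(E))$ survives.

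First I would invoke the Greenlees--May formula $t_G(k_G) \simeq F(EG_+, k_G) \otimes \widetilde{E}G$ (see \cite[Corollary B.5]{Gr94}, recalled in Section~\ref{Mot}) together with the description $\tilde{\Phi}^N(X) = (\widetilde{E}\mathcal{F}[N] \otimes X)^N$. Substituting these and using the commutativity of the smash product yields
$$\tilde{\Phi}^N(t_G({\rm inf}_e^G(E))) \simeq \bigl(\widetilde{E}\mathcal{F}[N] \otimes F(EG_+, {\rm inf}_e^G(E)) \otimes \widetilde{E}G\bigr)^N.$$
The decisive step is that, provided $N \neq \{e\}$, the trivial subgroup lies in $\mathcal{F}[N]$, so the family $\{\{e\}\}$ defining $\widetilde{E}G$ is contained in $\mathcal{F}[N]$, and the multiplicativity $\widetilde{E}\mathcal{F}_1 \otimes \widetilde{E}\mathcal{F}_2 \simeq \widetilde{E}(\mathcal{F}_1 \cup \mathcal{F}_2)$ recalled just above the proposition gives $\widetilde{E}G \otimes \widetilde{E}\mathcal{F}[N] \simeq \widetilde{E}\mathcal{F}[N]$. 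The spurious $\widetilde{E}G$ factor is thus absorbed, leaving $\tilde{\Phi}^N(F(EG_+, {\rm inf}_e^G(E)))$; applying the categorical $G/N$-fixed points then gives the first equivalence.

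The homotopy-group statement is then immediate from $\pi_*^{G/N}(X) = \pi_*(X^{G/N})$, which is the defining property of $G/N$-equivariant homotopy on a complete universe. For the final clause, assume $\mathcal{F}[N] = \{\{e\}\}$, so that $\widetilde{E}\mathcal{F}[N]$ literally coincides with $\widetilde{E}G$. Substituting this into $(\tilde{\Phi}^N(F(EG_+, {\rm inf}_e^G(E))))^{G/N}$ and using $((-)^N)^{G/N} \simeq (-)^G$ together with the Greenlees--May formula once more collapses the expression to $t_G({\rm inf}_e^G(E))^G$, as desired.

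The only mildly delicate point I foresee is the absorption identity $\widetilde{E}G \otimes \widetilde{E}\mathcal{F}[N] \simeq \widetilde{E}\mathcal{F}[N]$. This is formal from the universal property of $E\mathcal{F}$, but for $G = T^m$ one must work with families of closed subgroups rather than arbitrary subgroups; the standard Lewis--May--Steinberger machinery \cite[Chapter~II.9]{LMS} handles the finite and compact Lie cases uniformly, so this should cost nothing extra.
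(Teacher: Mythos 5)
Your proposal follows essentially the same route as the paper's proof: expand $t_G$ via the Greenlees--May formula $F(EG_+,-)\otimes\widetilde{E}G$, expand $\tilde\Phi^N$ via $(\widetilde{E}\mathcal{F}[N]\otimes -)^N$, absorb $\widetilde{E}G$ into $\widetilde{E}\mathcal{F}[N]$ using $\widetilde{E}\mathcal{F}_1\otimes\widetilde{E}\mathcal{F}_2\simeq\widetilde{E}(\mathcal{F}_1\cup\mathcal{F}_2)$, and then read off the homotopy groups via the inflation--fixed-points adjunction. The one thing you add that the paper leaves implicit is the remark that the absorption step needs $N\neq\{e\}$ (since $\mathcal{F}[\{e\}]=\emptyset$ and $\widetilde{E}\emptyset=S^0$), which is a genuine and worthwhile refinement of the hypotheses.
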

\begin{proof}
Since $\widetilde{E}{\mathcal{F}}[N]\otimes \widetilde{E}G\simeq\widetilde{E}{\mathcal{F}}[N]$, we have
\begin{align*}
\cat T_{G,N}(E)=&(\tilde{\Phi}^N(t_G({\rm inf}_e^G(E))))^{G/N}\\
=&((\widetilde{E}{\mathcal{F}}[N]\otimes \widetilde{E}G\otimes F(EG_+,{\rm inf}_e^G(E)))^N)^{G/N}\\
\simeq&((\widetilde{E}{\mathcal{F}}[N]\otimes F(EG_+,{\rm inf}_e^G(E)))^N)^{G/N}=(\tilde{\Phi}^N(F(EG_+,{\rm inf}_e^G(E))))^{G/N}.
\end{align*}
By the adjunction $[S^n,(\tilde{\Phi}^N(F(EG_+,{\rm inf}_e^G(E))))^{G/N}]\cong [{\rm inf}_e^{G/N}(S^n),\tilde{\Phi}^N(F(EG_+,{\rm inf}_e^G(E)))]^{G/N}$, we
identify the homotopy group $\pi_*(\tilde{\Phi}^N(F(EG_+,{\rm inf}_e^G(E))))^{G/N}$ with the $G/N$-equivariant homotopy group $\pi^{G/N}_*(\tilde{\Phi}^N(F(EG_+,{\rm inf}_e^G(E))))$.

If the family subgroups of $G$ which do not contain $N$ are $\{e\}$, then $\widetilde{E}{\mathcal{F}}[N]=\widetilde{E}G$ and $\cat T_{G,N}(-)=t_{G}({\rm inf}^{G}_{e}(-))^{G}$.
\end{proof}

Consider a normal subgroup $N$ of the group $G$. In this context, the ensuing theorem, attributed to Costenoble, delineates how the relative geometric $N$-fixed point functor $\tilde{\Phi}^{N}(-)$ operates on the homotopy group.
\begin{Thm}(Costenoble, \cite[Chapter \RNum{2} Proposition 9.13]{LMS})\label{Cos}
Let $k_G$ be a ring $G$-spectrum and set $k_{G/N}=\tilde{\Phi}^{N}(k_G)$. Then for a finite $G/N$-CW spectrum
$X$, $k_{G/N}^*(X)$ is the localization of $k_G^*({\rm inf}_{G/N}^G(X))$ obtained by inverting the Euler classes $\chi_V \in k_G^V(S^0)$ of those representations $V$ of $G$ such that $V^N=0$.
\end{Thm}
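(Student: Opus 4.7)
The plan is to combine the LMS definition $\tilde{\Phi}^{N}(k_G) \simeq (\widetilde{E}{\mathcal{F}}[N] \otimes k_G)^{N}$ with a standard inflation/fixed-point adjunction and a presentation of $\widetilde{E}{\mathcal{F}}[N]$ as a directed homotopy colimit along Euler classes. First I would observe that smashing any $G$-spectrum with $\widetilde{E}{\mathcal{F}}[N]$ kills every $H$-fixed point with $H \not\supseteq N$, so $(\widetilde{E}{\mathcal{F}}[N] \otimes k_G)^{N}$ sits inside the natural isomorphism coming from the adjunction between ${\rm inf}_{G/N}^{G}$ and categorical $N$-fixed points, restricted to such $N$-local $G$-spectra. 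Thus for a finite $G/N$-CW spectrum $X$,
\[
k_{G/N}^{*}(X) \;=\; [X,\,(\widetilde{E}{\mathcal{F}}[N] \otimes k_G)^{N}]^{G/N}_{-*} \;\cong\; [{\rm inf}_{G/N}^{G}(X),\, \widetilde{E}{\mathcal{F}}[N] \otimes k_G]^{G}_{-*}.
\]

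Next I would model $\widetilde{E}{\mathcal{F}}[N]$ as the directed homotopy colimit $\operatorname{hocolim}_{V} S^{V}$ over the poset of finite-dimensional $G$-representations $V$ with $V^{N} = 0$, where the structure map attached to an inclusion $V \hookrightarrow V \oplus W$ is $\mathrm{id}_{S^V}$ smashed with the Euler class $\chi_{W}: S^{0} \to S^{W}$. Two verifications are required: if $H \supseteq N$ then every $V$ in the diagram satisfies $V^{H} \subseteq V^{N} = 0$, so the $H$-fixed points of the colimit stay $S^{0}$; and if $H \not\supseteq N$ then one must produce a $V$ in the diagram with $V^{H} \neq 0$, which forces the $H$-fixed points of the colimit to become contractible. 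The latter is where one picks up a nontrivial character of $N/(N \cap H)$ and induces it up to $G$, using that $N \not\le H$. Together these confirm that the colimit has the defining fixed-point profile of $\widetilde{E}{\mathcal{F}}[N]$.

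Finally, since ${\rm inf}_{G/N}^{G}(X)$ is a compact object of $\SHG$, the functor $[{\rm inf}_{G/N}^{G}(X), -]^{G}_{*}$ commutes with the filtered homotopy colimit, yielding
\[
k_{G/N}^{*}(X) \;\cong\; \operatorname{colim}_{V}\, k_{G}^{*-V}({\rm inf}_{G/N}^{G}(X)),
\]
whose connecting map attached to the enlargement $V \mapsto V \oplus W$ is, via the ring structure on $k_G$, multiplication by $\chi_{W} \in k_G^{W}(S^0)$. This exhibits the right-hand side as the algebraic localization of $k_{G}^{*}({\rm inf}_{G/N}^{G}(X))$ at the multiplicative set generated by $\{\chi_{V} : V^{N} = 0\}$, proving the theorem. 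The main obstacle will be the middle step: producing a cofinal system of representations with $V^{N} = 0$ that is simultaneously rich enough to kill every $H$-fixed point with $H \not\supseteq N$, compatible with the $\otimes$-ring structure on $k_G$, and small enough for the filtered-colimit-of-representables identification to go through functorially in $X$. Provided one works $G$-equivariantly with a complete universe, the regular representation of $G$ with its $N$-fixed subspace removed furnishes a canonical seed for such a cofinal system, and the remainder reduces to formal manipulation.
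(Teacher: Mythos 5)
The paper does not prove this statement; it is quoted verbatim from Lewis--May--Steinberger, Chapter II, Proposition 9.13, and used as a black box, so there is no in-paper proof to compare against. Your sketch reconstructs the standard LMS argument and is essentially correct: the adjunction $\operatorname{inf}_{G/N}^{G} \dashv (-)^{N}$ converts $[X, \tilde{\Phi}^{N}(k_G)]^{G/N}_{*}$ into $[\operatorname{inf}_{G/N}^{G}(X), \widetilde{E}\mathcal{F}[N]\otimes k_G]^{G}_{*}$, the colimit model $\widetilde{E}\mathcal{F}[N] \simeq \operatorname{hocolim}_{V^{N}=0} S^{V}$ with transition maps given by Euler classes is correct, compactness of $\operatorname{inf}_{G/N}^{G}(X)$ lets the colimit pass through the bracket, and the colimit of a direct system with transition maps given by multiplication by $\chi_{V}$ is the localization. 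The step you flag as the ``main obstacle'' is not actually problematic: once you fix a complete $G$-universe, the single representation $V_{0}=\rho_{G}\ominus\rho_{G}^{N}$ (the orthogonal complement of the $N$-fixed subspace of the regular representation) satisfies $V_{0}^{N}=0$, contains every irreducible with vanishing $N$-fixed points, and has $V_{0}^{H}\neq 0$ for every $H\not\supseteq N$ by the induced-character argument you indicate; the system $\{kV_{0}\}_{k\geq 0}$ is then cofinal and does everything you need. The only place where wording should be tightened is that smashing with $\widetilde{E}\mathcal{F}[N]$ kills \emph{geometric} $H$-fixed points for $H\not\supseteq N$, not categorical ones; what you actually use is only the $N$-fixed-point adjunction, which is fine.
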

Proposition \ref{Sfy} and Theorem \ref{Cos} combine to reveal that in order to calculate $\pi_*(\cat T_{G,N}(E))$, the key lies in computing $\pi^G_*(F(EG_+,{\rm inf}_e^G(E)))$. Once this is done, it is a matter of inverting the Euler classes $\chi_V \in F(EG_+,{\rm inf}_e^G(E))^{V}(S^0)$ corresponding to complex representations $V$ of $G$ where $V^N=0$.

Leveraging the equivariant suspension isomorphism, we establish a correspondence:
$$\chi_V \in F(EG_+,{\rm inf}_e^G(E))^{V}(S^0)\cong F(EG_+,{\rm inf}_e^G(E))^{|V|}(S^{|V|-V}),$$
with $|V|$ representing the real dimension of $V$.

Applying Theorem \ref{Cos} and making use of the observation below:
\begin{align*}
\pi^G_*(F(EG_+,{\rm inf}_e^G(E)))=&\pi_*(G/G_+\wedge S^0,F(EG_+,{\rm inf}_e^G(E)))^G\\
=&\pi_*(S^0,F(EG_+,{\rm inf}_e^G(E))^G)\\
\cong&\pi_*(BG_+,E)=E^*(BG_+),
\end{align*}
we successfully equate the $G$-equivariant homotopy group $\pi^G_*(F(EG_+,{\rm inf}_e^G(E)))$ with $E^*(BG_+)$. This identification provides a key insight into solving for $\pi_*(\cat T_{G,N}(E))$.

\subsection{The $E^*$-cohomology of the classifying space of a finite abelian $p$-group }
Recall that a ring spectrum $E$ is $\textit{complex}$ $\textit{oriented}$ if there exists an element $x\in E^2(\mathbb{C}P^{\infty})$ such that the image $i^*(x)$ of the map
$i^*:E^2(\mathbb{C}P^{\infty})\rightarrow E^2(\mathbb{C}P^1)$ induced by $i:S^2\cong\mathbb{C}P^1\hookrightarrow \mathbb{C}P^{\infty}$ is the
canonical generator of $E^2(S^2)\cong \pi_0E$. Such a class $x$ is called a $\textit{complex}$ $\textit{orientation}$ of $E$. The complex orientated $E$ with the multiplication map $\mu_{\mathbb{C}P^{\infty}}:\mathbb{C}P^{\infty}\times\mathbb{C}P^{\infty}\rightarrow \mathbb{C}P^{\infty}$ gives an associated formal group law $F$ over $E^*$:
$$x_1+_F x_2 = F(x_1,x_2) = \mu_{\mathbb{C}P^{\infty}}^*(x)\in  E^{*}(\mathbb{C}P^{\infty }\times \mathbb{C}P^{\infty })=E^{*}\psb{x_1,x_2}.$$
For any integer $m$, the $m$-series of $F$ is the formal power series $[m]_E(x)=\underbrace{x+_F x+_F \cdots+_F x}_m \in E^*\psb{x}$. This formal group law is classified by a ring homomorphism $f$ from the homotopy group $MU^*$ of the complex corbordism spectrum to $E^*$. If $E^*$ is a local ring with the maximal ideal $I$, then there are a quotient map $\pi: E^*\to E^*/I$ and a formal group law $F_0$ over $E^*/I$ which is classified by the ring homomorphism $\pi\circ f$. Let $\tilde{v}_n$ denote the coefficient of $x^{p^n}$ in $[p]_{F_0}(x)$. Say that $F_0$
\begin{enumerate}
\item[\rm (i)] has $\textit{height}$ $\textit{at}$ $\textit{least}$ $n$ if $\tilde{v}_i=0$ for $i<n$;
\item[\rm (ii)] has $\textit{height}$ $\textit{exactly}$ $n$ if it has height at least $n$ and $\tilde{v}_n$ is non-zero in $E^*/I$.
\end{enumerate}
When localized at $p$, such formal group laws are classified by height.

Now we introduce the Weierstrass Preparation Theorem.
\begin{Thm}(Weierstrass Preparation Theorem, \cite{Ma71, Lang78, ZS60})\label{WPT}
Let $R$ be a graded local commutative ring, complete in the topology defined by the powers of an ideal $\mathbf{m}$. Suppose
$$\alpha(x)=\sum_{i=0}^{\infty}a_ix^i\in R\psb{x}$$
satisfies $\alpha(x)\equiv a_nx^n \mod (\mathbf{m}, x^{n+1})$ with $a_n \in R$ a unit. Then
\begin{enumerate}
\item[{\rm (i)}] (Euclidean algorithm) Given $f(x) \in R\psb{x}$, there exist unique
$r(x) \in R[x]$ and $q(x) \in R\psb{x}$ such that $f(x) = r(x) + \alpha(x)q(x)$ with $\deg r(x)\leq n-1$.
\item[{\rm (ii)}]The ring $R\psb{x}/(\alpha(x))$ is a free R-module with basis $\{1, x, \cdots, x^{n-1}\}$.
\item[{\rm (iii)}] (Factorization) There is a unique factorization
$\alpha(x) = \varepsilon(x)g(x)$ with $\varepsilon(x)$ a unit and $g(x)$ a monic polynomial of degree $n$.
\end{enumerate}
\end{Thm}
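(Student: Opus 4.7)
The plan is to prove part (i) first; parts (ii) and (iii) then follow almost formally. The hypothesis $\alpha(x)\equiv a_nx^n\pmod{(\mathbf{m},x^{n+1})}$ with $a_n$ a unit lets me split
$$\alpha(x)=\gamma(x)+x^n\beta(x),$$
where $\gamma(x)=a_0+a_1x+\cdots+a_{n-1}x^{n-1}$ has all its coefficients in $\mathbf{m}$, and $\beta(x)=a_n+a_{n+1}x+\cdots\in R\psb{x}$ has constant term $a_n$, hence is a unit in the local ring $R\psb{x}$.

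For (i), I introduce the $R$-linear shift operator $T\colon R\psb{x}\to R\psb{x}$ given by $T\bigl(\sum_{i\geq 0}f_ix^i\bigr)=\sum_{i\geq 0}f_{i+n}x^i$, and observe that the identity $f=r+\alpha q$ with $\deg r\leq n-1$ is equivalent, after applying $T$, to the fixed-point equation $q=\beta^{-1}\bigl(T(f)-T(\gamma q)\bigr)$. Because $\gamma$ has coefficients in $\mathbf{m}$, multiplication by $\gamma$ sends $\mathbf{m}^kR\psb{x}$ into $\mathbf{m}^{k+1}R\psb{x}$, and $T$ preserves the $\mathbf{m}$-adic filtration, so the operator $\Phi(q):=\beta^{-1}\bigl(T(f)-T(\gamma q)\bigr)$ satisfies $\Phi(q_1)-\Phi(q_2)\in\mathbf{m}^{k+1}R\psb{x}$ whenever $q_1-q_2\in\mathbf{m}^kR\psb{x}$. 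By the $\mathbf{m}$-adic completeness of $R$, the iteration $q_0:=0$, $q_{k+1}:=\Phi(q_k)$ converges to a unique fixed point $q$; setting $r:=f-\alpha q$ and computing $T(r)=0$ recovers the polynomial remainder, and the same contraction estimate delivers uniqueness.

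Part (ii) then drops out: the $R$-linear map $R^n\to R\psb{x}/(\alpha(x))$ sending $(c_0,\dots,c_{n-1})$ to the class of $\sum c_ix^i$ is surjective by existence and injective by uniqueness in (i). For (iii), I apply (i) with $f(x)=x^n$ to obtain $x^n=r(x)+\alpha(x)q(x)$ and set $g(x):=x^n-r(x)$, a monic polynomial of degree $n$ satisfying $\alpha q=g$. Reducing modulo $\mathbf{m}$, the equation becomes $x^n\equiv x^n\cdot(a_n+a_{n+1}x+\cdots)\cdot q(x)$ in the domain $(R/\mathbf{m})\psb{x}$; cancelling $x^n$ and reading off the constant term yields $q(0)\equiv a_n^{-1}\pmod{\mathbf{m}}$, so $q(x)$ is a unit of $R\psb{x}$ and $\varepsilon(x):=q(x)^{-1}$ gives the desired factorization $\alpha=\varepsilon g$. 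Uniqueness of this factorization reduces once more to the uniqueness in (i) applied to $f(x)=x^n$, because any decomposition $\alpha=\varepsilon'g'$ produces $x^n=(x^n-g')+\alpha\cdot\varepsilon'^{-1}$ of the correct form.

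The principal obstacle is establishing convergence in (i); everything else is bookkeeping. The crux is the dual role played by the hypothesis on $\alpha$: it guarantees both that $\beta$ is invertible (so $\Phi$ is defined) and that $\gamma\in\mathbf{m}R[x]$ (so $\Phi$ is contractive in the $\mathbf{m}$-adic topology), while completeness of $R$ supplies the fixed point inside $R\psb{x}$. The grading on $R$ plays no essential role, since every operator in sight is $R$-linear and respects degrees.
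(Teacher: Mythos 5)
The paper states this theorem with citations (\cite{Ma71, Lang78, ZS60}) and gives no proof of its own, so there is nothing in the paper to compare against; your task was effectively to supply a proof, and you have done so correctly. Your route — decomposing $\alpha = \gamma + x^n\beta$ with $\gamma \in \mathbf{m}R[x]$ of degree $< n$ and $\beta$ a unit of $R\psb{x}$, converting Euclidean division into the fixed-point equation $q = \beta^{-1}(T(f) - T(\gamma q))$ for the degree-$n$ shift $T$, and invoking $\mathbf{m}$-adic contraction plus completeness — is the standard proof found in the cited sources, and parts (ii)--(iii) do follow formally from (i) as you describe.

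Two small points worth tightening. First, in the reduction for (iii) you write that modulo $\mathbf{m}$ the identity $g = \alpha q$ becomes $x^n \equiv x^n\,\bar\beta\,\bar q$; this silently uses $\bar r = 0$. It is true but should be said: since $\bar\gamma = 0$ one has $x^n - \bar r = x^n\bar\beta\bar q$, and the right side lies in $x^n(R/\mathbf{m})\psb{x}$ while $\bar r$ has degree $\le n-1$, forcing $\bar r = 0$; then cancelling $x^n$ (multiplication by $x^n$ is injective on $(R/\mathbf{m})\psb{x}$) gives $\bar q(0) = \bar a_n^{-1}$. Second, for uniqueness in (i) you need $\bigcap_k \mathbf{m}^k = 0$, i.e.\ the $\mathbf{m}$-adic topology is separated; this is usually taken as part of ``complete'' but is worth flagging since you use it to conclude $q - q' = 0$ from $q - q' \in \mathbf{m}^k R\psb{x}$ for all $k$.
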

We call $g(x)$ the $\textit{Weierstrass}$ $\textit{polynomial}$ of $\alpha(x)$. The number $n$ is called the $\textit{Weierstrass}$ $\textit{degree}$ of $\alpha(x)$ and denoted by $\deg_W \alpha(x)$.

Recall some basic properties of the associated formal group law $F$ over $E^*$.
\begin{Prop}\label{pps}
Let $E$ be a $p$-complete, complex oriented spectrum with an associated formal group of height $n$. Let $I_n$ denote the maximal ideal of $E^*$. Then for any integer $m$, the $m$-series of $F$ satisfies
\begin{enumerate}
\item[{\rm (i)}]   $[m]_E(x)\equiv mx \mod (x^2)$;
\item[{\rm (ii)}]  $[mk]_E(x)=[m]_E([k]_E(x))$;
\item[{\rm (iii)}]  $[p]_E(x)=v_nx^{p^n} \mod I_n$;
\item[{\rm (iv)}]  $[m-k]_E(x)=[m]_E(x)-_F [k]_E(x)=([m]_E(x)-[k]_E(x))\cdot \varepsilon([m]_E(x),[k]_E(x))$, where $\varepsilon([m]_E(x),[k]_E(x))$ is a unit in ${E^*\psb{x}}$.
 \end{enumerate}
 \end{Prop}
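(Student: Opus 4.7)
The plan is to handle the four claims separately, since they reflect standard facts about the formal group law but need the specific ingredients at hand (height, Weierstrass preparation, the classifying map $MU^*\to E^*$). Throughout, I would write the formal group law as $F(x_1,x_2)=x_1+x_2+\sum_{i,j\geq 1}a_{ij}x_1^ix_2^j$ with $a_{ij}\in E^*$.

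For (i) I would argue by induction on $m$. The base case is $[1]_E(x)=x$, and the inductive step uses $[m+1]_E(x)=F([m]_E(x),x)\equiv [m]_E(x)+x \pmod{x^2}$, giving $(m+1)x$ modulo $x^2$. For (ii) I would expand $[mk]_E(x)=\underbrace{x+_F\cdots+_F x}_{mk}$ using associativity of $+_F$, grouping into $m$ blocks of $k$ summands to identify each block with $[k]_E(x)$, yielding $[mk]_E(x)=\underbrace{[k]_E(x)+_F\cdots+_F[k]_E(x)}_m=[m]_E([k]_E(x))$.

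For (iii), the idea is to invoke the definition of height just recalled in the text: the reduced law $F_0$ over $E^*/I_n$ has height exactly $n$, so by that definition the coefficients of $x^{p^i}$ in $[p]_{F_0}(x)$ vanish for $i<n$, while the coefficient of $x^{p^n}$ is the nonzero element $\tilde v_n\in E^*/I_n$. To identify $\tilde v_n$ with the image of $v_n\in MU^*$ (or $BP^*$) under the classifying map $f\colon MU^*\to E^*$, I would appeal to the standard fact that under $f$, the Hazewinkel/Araki generator $v_n$ maps to the coefficient of $x^{p^n}$ in $[p]_E(x)$ modulo $(p,v_1,\ldots,v_{n-1})$; combined with Proposition (i) and the height-at-least-$n$ vanishings of lower $x^{p^i}$ coefficients, this gives $[p]_E(x)\equiv v_n x^{p^n}$ modulo $I_n$ and higher-order terms controlled only by the leading behaviour.

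For (iv), the first equality $[m-k]_E(x)=[m]_E(x)-_F [k]_E(x)$ is immediate from (ii) and the definition of the formal inverse. For the second, I would view $y_1-_F y_2=F(y_1,[-1]_F(y_2))$ as an element of $E^*\psb{y_1,y_2}$; since $F(y,[-1]_F(y))=0$ by definition of the inverse series, substituting $y_1=y_2$ yields $0$, so by the Weierstrass preparation theorem (or more elementarily, by dividing in the power-series ring) one can factor $y_1-_F y_2=(y_1-y_2)\,\varepsilon(y_1,y_2)$. Setting $y_2=0$ gives $y_1=y_1\cdot\varepsilon(y_1,0)$, hence $\varepsilon(0,0)=1$, so $\varepsilon$ is a unit in $E^*\psb{y_1,y_2}$; substituting $y_1=[m]_E(x)$, $y_2=[k]_E(x)$ finishes the claim. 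The main obstacle I expect is (iii), where one must take care to pin down the meaning of ``$[p]_E(x)=v_nx^{p^n}\bmod I_n$'' (leading-term sense) and to reconcile the paper's intrinsic $\tilde v_n$ with the $v_n$ coming from the classification map from $MU^*$ (or $BP^*$), rather than the nearly formal manipulations needed for the other three parts.
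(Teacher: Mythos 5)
The paper gives no proof of this proposition at all; it is introduced with ``Recall some basic properties of the associated formal group law'' and stated without a \verb|proof| environment, so there is nothing to compare against. Your argument is therefore not a reproduction of the paper's proof but a new write-up of standard formal-group-law facts, and it is essentially correct. Parts (i) and (ii) are fine (though strictly speaking ``any integer $m$'' includes negative $m$, where one should also note $[-1]_E(x)\equiv -x\pmod{x^2}$ and extend the block-grouping argument through the formal inverse). For (iii) you correctly flag the genuine subtlety that the displayed congruence is only a leading-term statement --- over $E^*/I_n$ the $p$-series has the form $\tilde v_n x^{p^n}+O(x^{2p^n})$, not $\tilde v_n x^{p^n}$ on the nose --- and you correctly identify that one must reconcile the intrinsic coefficient $\tilde v_n$ with the image of the Hazewinkel/Araki generator under the classifying map; both points are consistent with how the paper actually uses (iii) (e.g.\ in the proof of Lemma~\ref{pjwp}, where the authors write $[p]_E(x)=px+\cdots+v_nx^{p^n}\bmod(x^{p^n+1})$). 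For (iv) the one imprecision is the appeal to the Weierstrass preparation theorem: that theorem concerns one-variable power series over a complete local ring and does not directly produce the factor $(y_1-y_2)$ from $y_1-_Fy_2\in E^*\psb{y_1,y_2}$. Your parenthetical ``more elementarily, by dividing in the power-series ring'' is the correct mechanism --- substitute $u=y_1-y_2$, $v=y_2$, observe that a series in $E^*\psb{u,v}$ vanishing identically along $u=0$ is divisible by $u$ --- so the proof still goes through; just drop or qualify the Weierstrass reference. The unit check $\varepsilon(0,0)=1$ (by setting $y_2=0$ and cancelling the non-zero-divisor $y_1$) is correct.
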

\begin{Lem}\label{pjwp}
Let $g_j(x)$ denote the Weierstrass polynomial of $[p^j]_{E}(x)$, and $g_1^j(x)=g_1(g_1^{j-1}(x))$. Then
$g_j(x)=g_1^j(x)$.
\end{Lem}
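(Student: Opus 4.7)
I would proceed by induction on $j$, using the composition identity $[p^j]_E(x) = [p]_E([p^{j-1}]_E(x))$ from Proposition \ref{pps}(ii) together with the uniqueness clause of the Weierstrass Preparation Theorem. The base case $j = 1$ is immediate: by the recursive definition, $g_1^1(x) = g_1(x)$, which is by definition the Weierstrass polynomial of $[p]_E(x)$.

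For the inductive step, assume $g_{j-1}(x) = g_1^{j-1}(x)$, so that $[p^{j-1}]_E(x) = \varepsilon_{j-1}(x)\, g_1^{j-1}(x)$ with $\varepsilon_{j-1}$ a unit in $E^*\psb{x}$. Substituting $y = [p^{j-1}]_E(x)$, which lies in the maximal ideal of $E^*\psb{x}$ since $[p^{j-1}]_E(0)=0$, into the Weierstrass factorization $[p]_E(y) = \varepsilon_1(y)\, g_1(y)$ gives
$$[p^j]_E(x) = \varepsilon_1([p^{j-1}]_E(x)) \cdot g_1([p^{j-1}]_E(x)),$$
in which the first factor is a unit in $E^*\psb{x}$ because $\varepsilon_1$ is a unit power series and the argument is a power series with zero constant term. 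By the uniqueness clause of Theorem \ref{WPT}(iii), $g_j(x)$ therefore agrees with the Weierstrass polynomial of $g_1([p^{j-1}]_E(x))$.

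To identify this with $g_1^j(x) = g_1(g_1^{j-1}(x))$, I would track the leading behavior modulo the maximal ideal $I_n$. From Proposition \ref{pps}(iii) and the uniqueness of Weierstrass factorization one gets $g_1(y) \equiv y^{p^n} \pmod{I_n}$ and $\varepsilon_1(y)$ congruent to a unit constant modulo $I_n$; inductively $g_1^{j-1}(x) \equiv x^{p^{(j-1)n}} \pmod{I_n}$. Hence $g_1^j(x)$ is monic of degree $p^{jn}$ with $g_1^j(x) \equiv x^{p^{jn}} \pmod{I_n}$, matching the Weierstrass degree and leading congruence of $g_1([p^{j-1}]_E(x))$. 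Combining this with the inductive substitution $[p^{j-1}]_E = \varepsilon_{j-1}\, g_1^{j-1}$ inside the outer $g_1$, and invoking the uniqueness of Weierstrass factorization for the resulting power series, should yield $g_j(x) = g_1^j(x)$.

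\textbf{Main obstacle.} The sharpest step is showing that $g_1(\varepsilon_{j-1}(x)\, g_1^{j-1}(x))$ and $g_1(g_1^{j-1}(x))$ have the same Weierstrass polynomial, i.e.\ differ by a unit power series in $E^*\psb{x}$. This rests on propagating the unit $\varepsilon_{j-1}(x)$ through the polynomial $g_1(y) = y^{p^n} + c_{p^n-1}y^{p^n-1} + \cdots + c_0$, and the decisive input is that every non-leading coefficient $c_i$ lies in $I_n$, so that all discrepancy terms introduced by the unit are in $I_n\cdot E^*\psb{x}$ and can be absorbed into the unit factor of the Weierstrass decomposition. Making this absorption argument clean---while verifying that the monic polynomial part is preserved---is the crux of the proof.
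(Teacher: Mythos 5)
Your proposal follows the same route as the paper: factor $[p]_E(x)=\varepsilon_1(x)g_1(x)$ by Weierstrass preparation, use the iteration identity $[p^j]_E=[p]_E\circ[p^{j-1}]_E$, and appeal to uniqueness of the Weierstrass factorization. The inductive reduction is clean: you correctly show that $g_j(x)$ is the Weierstrass polynomial of $g_1([p^{j-1}]_E(x))$, and correctly isolate the remaining step as showing that $g_1(\varepsilon_{j-1}(x)\,g_1^{j-1}(x))$ and $g_1(g_1^{j-1}(x))$ differ by a unit. The genuine gap is in the proposed resolution of this step. Writing $\mu=\varepsilon_{j-1}(x)$ and $w=g_1^{j-1}(x)$, the discrepancy is
$$g_1(\mu w)-g_1(w)=(\mu^{p^n}-1)\,w^{p^n}+\sum_{i<p^n}c_i(\mu^i-1)\,w^i.$$
You argue that because the non-leading coefficients $c_i$ of $g_1$ lie in $I_n$, \emph{all} discrepancy terms lie in $I_n\cdot E^*\psb{x}$. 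That is true of the sum, but false of the leading term: the top coefficient of $g_1$ is $1\notin I_n$, and modulo $I_n$ one has $\mu\equiv v_n^{1+p^n+\cdots+p^{(j-2)n}}$, so $\mu^{p^n}-1$ is typically a unit rather than an element of $I_n$ (it only lands in $I_n$ in the special case $v_n\equiv 1\bmod I_n$). Thus the dominant discrepancy has the same Weierstrass degree as $g_1(w)$ itself, and the claimed absorption into the unit factor does not follow. Even for the terms that do lie in $I_n\cdot E^*\psb{x}$, membership in $I_n\cdot E^*\psb{x}$ does not by itself imply divisibility by $g_1(w)$, so a further argument (for instance an $I_n$-adic approximation/Euclidean-division induction) would be needed to make "absorbed into the unit" precise. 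Your parenthetical about "verifying that the monic polynomial part is preserved" signals awareness of the problem, but the argument as written does not close it.

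For context, the paper's own proof is thinner at exactly this point. It writes $g_1(x)=px+a_2x^2+\cdots+a_{p^n-1}x^{p^n-1}+v_nx^{p^n}$, i.e.\ it identifies the Weierstrass polynomial with the degree-$p^n$ truncation of $[p]_E(x)$; but that truncation is not monic (contrary to the statement of Theorem \ref{WPT}(iii)) and is in general not a Weierstrass factor of $[p]_E(x)$, since Weierstrass division, not truncation, produces $g_1$. The paper then deduces $g_j=g_1^j$ from uniqueness together with the congruence $g_1^j(x)\equiv[p^j]_E(x)\equiv v_n^{1+p^n+\cdots+p^{(j-1)n}}x^{p^{jn}}\bmod I_n$, but agreement modulo $I_n$ does not pin down a monic polynomial over $E^*$. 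So the decisive claim—that $g_1^j(x)$ and $[p^j]_E(x)$ generate the same ideal in $E^*\psb{x}$—is not actually verified in either argument. Your attempt has the merit of exposing this as the crux, but a complete proof needs a real verification that $g_1([p^{j-1}]_E(x))/g_1(g_1^{j-1}(x))$ is a unit power series, which neither write-up supplies.
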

\begin{proof}
Suppose that $[p]_{E}(x)=px+a_2x^2+\cdots+a_{p^n-1}x^{p^n-1}+v_nx^{p^n}\mod (x^{p^n+1})$, and we apply Theorem \ref{WPT} to $[p]_{E}(x)\in E^*\psb{x}$, then $[p]_{E}(x)=\varepsilon(x)g_1(x)$ with $\varepsilon(x)$ a unit and $g_1(x)=px+a_2x^2+\cdots+a_{p^n-1}x^{p^n-1}+v_nx^{p^n}$. And we apply this theorem \ref{WPT} to $[p^j]_{E}(x)\in E^*\psb{x}$, by the fact that $[p^j]_{E}(x)=[p]_{E}([p^{j-1}]_{E}(x))$, then $[p^j]_{E}(x)=\varepsilon_j(x)g_j(x)$ with $\varepsilon_j(x)$ a unit. By the uniqueness of factorization \ref{WPT} and the fact that $g_1^j(x)=[p^j]_{E}(x)= v_{n}^{1+p^{n}+\cdots+p^{(j-1)n}}x^{p^{jn}}\mod I_n$, then $g_j(x)=g_1^j(x)$.
\end{proof}

The following lemma gives the computation of $E^*(BA_+)$.
\begin{Lem}\label{GCA}
Let $E$ be a $p$-complete, complex oriented spectrum with an associated formal group of height $n$. If $A$ is an abelian $p$-group of form $\mathbb{Z}/{p^{i_1}}\oplus \cdots \oplus \mathbb{Z}/{p^{i_m}}$, then
$$E^*(BA_+)\cong E^{*}\psb{x_1,\cdots,x_{m}}/([p^{i_1}]_{E}(x_1),\cdots, [p^{i_m}]_{E}(x_{m})).$$
\end{Lem}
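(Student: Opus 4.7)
The plan is to reduce first to the case of a cyclic $p$-group, and then bootstrap to a general finite abelian $p$-group via a Künneth-type argument. The essential input will be Weierstrass preparation (Theorem \ref{WPT}) applied to the $p^j$-series $[p^j]_E(x)$; by Proposition \ref{pps}(iii) this series has Weierstrass degree $p^{jn}$, so it is (a unit times) a monic polynomial and in particular a non-zero-divisor in $E^*\psb{x}$, which also makes the quotient $E^*\psb{x}/([p^j]_E(x))$ a free $E^*$-module of finite rank concentrated in even degrees.

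For the cyclic case $A=\mathbb{Z}/p^j$, I would exploit the principal $S^1$-bundle
$$S^1 \to B\mathbb{Z}/p^j \to \mathbb{C}P^\infty$$
induced by the inclusion $\mathbb{Z}/p^j \hookrightarrow S^1$ as the $p^j$-th roots of unity. Its classifying map $\mathbb{C}P^\infty \to BS^1 = \mathbb{C}P^\infty$ is homotopic to the $p^j$-th power map on $S^1 = U(1)$, so the associated line bundle is the $p^j$-th tensor power of the tautological line bundle. By the very definition of the formal group law, its first $E$-Chern class (the Euler class of our $S^1$-bundle) equals $[p^j]_E(x)$, where $x \in E^2(\mathbb{C}P^\infty)$ is the chosen complex orientation. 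The Gysin sequence then reads
$$\cdots \to E^{*-2}(\mathbb{C}P^\infty) \xrightarrow{\cdot\, [p^j]_E(x)} E^*(\mathbb{C}P^\infty) \to E^*(B\mathbb{Z}/p^j_+) \to E^{*-1}(\mathbb{C}P^\infty) \to \cdots$$
and since the multiplication map is injective the long exact sequence collapses into short exact sequences, yielding $E^*(B\mathbb{Z}/p^j_+) \cong E^*\psb{x}/([p^j]_E(x))$.

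For the product case I would use $BA \simeq \prod_{k=1}^m B\mathbb{Z}/p^{i_k}$. Since each cyclic factor has $E$-cohomology that is free of finite rank over $E^*$ and concentrated in even degrees, the Atiyah--Hirzebruch spectral sequence for the product collapses on each finite skeletal approximation, and the resulting inverse system over the skeletal filtration satisfies the Mittag-Leffler condition (its transition maps are surjective between finite-rank free $E^*$-modules). Passing to the limit identifies $E^*(BA_+)$ with the completed tensor product
$$E^*(B\mathbb{Z}/p^{i_1}_+)\,\widehat{\otimes}_{E^*}\cdots\widehat{\otimes}_{E^*}\,E^*(B\mathbb{Z}/p^{i_m}_+),$$
which is exactly $E^*\psb{x_1,\ldots,x_m}/([p^{i_1}]_E(x_1),\ldots,[p^{i_m}]_E(x_m))$ after substituting the cyclic calculation.

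The hard part will be the Künneth/completion step: even with freeness in hand, $BA$ is infinite-dimensional, so one must argue carefully that the naive tensor product gets replaced by the power-series (i.e.\ completed) tensor product, and that no $\lim^1$ terms intervene. The freeness and evenness above are precisely what make both issues manageable, so I expect the proof to be routine once the cyclic case and the corresponding freeness are established.
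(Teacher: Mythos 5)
Your proposal matches the paper's argument essentially step for step: the same Gysin sequence for $S^1 \to B\mathbb{Z}/p^j \to \mathbb{C}P^\infty$ with Euler class $[p^j]_E(x)$, the observation that $[p^j]_E(x)$ is a non-zero-divisor so the long exact sequence splits, then Weierstrass preparation to establish that each $E^*(B\mathbb{Z}/p^{i_k}_+)$ is \emph{finitely generated} free over $E^*$, and finally a K\"unneth argument for the product. One small simplification you can make: once Weierstrass preparation shows each cyclic factor has $E$-cohomology that is finitely generated free over $E^*$ (of rank $p^{i_k n}$), the K\"unneth isomorphism holds with the ordinary tensor product and there is no genuine completion or $\lim^1$ issue to worry about -- the notation $E^*\psb{x_1,\ldots,x_m}/(\cdots)$ is only an apparent completion, since modulo the relations the ring is already a finite free $E^*$-module, so your Mittag-Leffler/completed-tensor-product discussion, while not wrong, is unnecessary. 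The paper simply cites the finitely-generated-free K\"unneth principle directly and stops there.
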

\begin{proof}
If $A=\mathbb{Z}/p^j$, then there is a fiber sequence:
$$S^1 \to B\mathbb{Z}/p^j \to \mathbb{C}P^{\infty}\stackrel{\psi^{p^j}}\to \mathbb{C}P^{\infty}.$$
Note that the Euler class of the Gysin sequence of $S^1 \to B\mathbb{Z}/p^j \to \mathbb{C}P^{\infty}$ is $\psi^{p^j,2}(x)=[p^j]_E(x)\in E^{2}(\mathbb{C}P^{\infty }_+)$, then we have a long exact sequence:
$$\xymatrix@C=0.5cm{
  \cdots \ar[r] & E^*\psb{x}\ar[rr]^{\cup[p^j]_{E}(x)} && E^{*+2}\psb{x} \ar[rr] &&E^{*+2}(B\mathbb{Z}/p^j_+) \ar[r] & \cdots }.$$
Since $[p^j]_{E}(x)$ is not a zero divisor in $E^*\psb{x}$, the long exact sequence splits. Therefore, we obtain
$$E^*(B\mathbb{Z}/p^j_+)\cong E^*\psb{x}/([p^j]_E(x)).$$
As we all know, K$\ddot{\rm u}$nneth isomorphism is not always true for product spaces $X\times Y$, but if $E$-cohomology of the space $X$ or $Y$
is a finitely generated free module over $E^*$, the K$\ddot{\rm u}$nneth isomorphism is true. By Weierstrass Preparation Theorem \ref{WPT}, we have an $E^*$-ring isomorphism
$$\eta: E^*\psb{x}/([p^j]_{E}(x))\cong E^*[x]/(g_j(x))$$
that maps $f(x)$ to $r(x)$, where $g_j(x)$ is the Weierstrass polynomial of $[p^j]_E(x)$, which implies that $E^*\psb{x}/([p^j]_E(x))$ is a finite free $E^*$-module of rank $p^{jn}=\deg_W [p^j]_E(x)$. This finishes the proof.
\end{proof}

Note that $E^*(B\mathbb{Z}/p^j_+)$ is a Hopf algebra over $E^*$. And $\eta$ induces a coalgebra structure on $E^*[x]/(g_j(x))$ by the following commutative diagram:
$$\CD
  E^*\psb{x}/([p^j]_E(x)) @>\mu_{B\mathbb{Z}/p^j}^*>> E^*\psb{x}/([p^j]_E(x))\otimes_{E^*} E^*\psb{x}/([p^j]_E(x))\\
  @V\eta VV @V \eta\otimes\eta VV  \\
  E^*[x]/(g_j(x)) @>(\eta\otimes\eta)\circ\mu_{B\mathbb{Z}/p^j}^*\circ\eta^{-1}>> E^*[x]/(g_j(x))\otimes_{E^*} E^*[x]/(g_j(x)),
\endCD$$
then combining with Lemma \ref{pjwp}, we have
\begin{Prop}\label{idwp}
Let $E$ be a $p$-complete, complex oriented spectrum with an associated formal group of height $n$. Then there is an $E^*$-algebra isomorphism
$$\eta: E^*\psb{x}/([p^j]_{E}(x))\cong E^*[x]/(g_1^j(x)),$$
where the coalgebra structure on $E^*[x]/(g_1^j(x))$ is given by the map
$$\eta\circ\mu_{B\mathbb{Z}/p^j}^*\circ\eta^{-1}: E^*[x]/(g_1^j(x))\rightarrow E^*[x]/(g_1^j(x))\otimes_{E^*} E^*[x]/(g_1^j(x)).$$
\end{Prop}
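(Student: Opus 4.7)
The plan is to assemble the proposition from the two preceding lemmas plus a transport-of-structure argument, so essentially no new mathematics is required beyond bookkeeping.

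First, I would unpack the proof of Lemma \ref{GCA}: applying Weierstrass Preparation (Theorem \ref{WPT}) to $[p^j]_E(x)\in E^*\psb{x}$ yields a unique factorization $[p^j]_E(x)=\varepsilon_j(x)g_j(x)$ with $\varepsilon_j(x)$ a unit and $g_j(x)$ monic of degree $p^{jn}$. Then Theorem \ref{WPT}(i)--(ii) produces a well-defined $E^*$-module bijection $\eta:E^*\psb{x}/([p^j]_E(x))\to E^*[x]/(g_j(x))$ that sends each class to its unique polynomial remainder $r(x)$ of degree less than $p^{jn}$ under Euclidean division by $[p^j]_E(x)$ (equivalently, by $g_j(x)$, since $\varepsilon_j(x)$ is a unit). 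That $\eta$ is in fact an $E^*$-algebra map follows from the uniqueness clause of the Euclidean algorithm: reducing $r_1(x)r_2(x)$ further gives the same output as reducing $f_1(x)f_2(x)$, because both are the unique remainder of the product.

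Second, I would invoke Lemma \ref{pjwp}, which identifies $g_j(x)=g_1^j(x)$, to rewrite the target of $\eta$. This immediately upgrades the isomorphism to the claimed form $\eta:E^*\psb{x}/([p^j]_E(x))\cong E^*[x]/(g_1^j(x))$.

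Finally, for the coalgebra structure, I would transport across $\eta$ the Hopf-algebra structure on $E^*\psb{x}/([p^j]_E(x))\cong E^*(B\mathbb{Z}/p^j_+)$ induced by the abelian group multiplication $\mu_{B\mathbb{Z}/p^j}:B\mathbb{Z}/p^j\times B\mathbb{Z}/p^j\to B\mathbb{Z}/p^j$. Concretely, this is forced by the commutative square displayed immediately before the proposition statement: setting the comultiplication on $E^*[x]/(g_1^j(x))$ to equal $(\eta\otimes\eta)\circ\mu_{B\mathbb{Z}/p^j}^*\circ\eta^{-1}$ makes $\eta$ a coalgebra map by construction, and combined with the algebra isomorphism already obtained, a Hopf algebra isomorphism.

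The main obstacle, if one insists on pointing to any, is showing that $\eta$ preserves multiplication — but this is an immediate consequence of the uniqueness of the remainder in Theorem \ref{WPT}(i). Everything else is an application of Lemma \ref{pjwp} and transport of structure, so the proof amounts to assembling previously established results rather than proving anything genuinely new.
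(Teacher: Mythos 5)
Your proposal is correct and mirrors the paper's (implicit) argument: the $E^*$-algebra isomorphism $\eta$ is extracted from the Weierstrass-preparation step already used in Lemma \ref{GCA}, the identification $g_j(x)=g_1^j(x)$ is supplied by Lemma \ref{pjwp}, and the coalgebra structure on the polynomial quotient is defined by transport of structure via the displayed commutative square. Your extra remark on why $\eta$ is multiplicative (uniqueness of the Euclidean remainder) is a sound way to close the one gap the paper leaves tacit; an equivalent shortcut is to observe that the inclusion $E^*[x]\hookrightarrow E^*\psb{x}$ induces a ring map $E^*[x]/(g_j(x))\to E^*\psb{x}/([p^j]_E(x))$ which is an $E^*$-module isomorphism by Theorem \ref{WPT}(ii), and $\eta$ is its inverse.
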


\subsection{Euler classes and formal groups}
In this paper, we always identify $\mathbb{Z}/{p^{j}}$ with the set $\{0, 1, \cdots, p^j-1 \}$. Let $\rho_{\frac{w}{p^j}} : \mathbb{Z}/p^j\rightarrow U(1) $ denote the complex character that maps $h$ to $e^{\frac{2wh\pi {\rm i}}{p^j}}$ for $w\in \mathbb{Z}/p^j$. Suppose that $A$ has the form $\mathbb{Z}/{p^{i_1}}\oplus \cdots \oplus \mathbb{Z}/{p^{i_m}}$. By the representation theory of abelian groups \cite[Propositon 4.5.1]{Ste12}, $$\{\rho_{(\frac{w_1}{p^{i_1}},\cdots,\frac{w_m}{p^{i_m}})}=\mu_{U(1)}\circ(\rho_{\frac{w_1}{p^{i_1}}}\times\cdots\times\rho_{\frac{w_m}{p^{i_m}}})
=\rho_{\frac{w_1}{p^{i_1}}}\cdots\rho_{\frac{w_m}{p^{i_m}}}: A\rightarrow U(1) \mid (w_1,\cdots,w_m)\in A\}$$
formed all irreducible complex representations of $\mathbb{Z}/{p^{i_1}}\oplus \cdots \oplus \mathbb{Z}/{p^{i_m}}$.

Recall the definition \cite{GM95} of Euler classes for the $A$-spectrum $F(EA_+,{\rm inf}_e^A(E))$. Let $V$ be any complex $A$-representation  with an inner product, let $e_V : S^0 \rightarrow S^V$ send the non-basepoint to $0$, and let $\chi_V\in F(EA_+,{\rm inf}_e^A(E))^{V}(S^0)$ be the image of the unit of $F(EA_+,{\rm inf}_e^A(E))^{0}(S^0)$ under the map
$e_V^*:F(EA_+,{\rm inf}_e^A(E))^{0}(S^0)\cong F(EA_+,{\rm inf}_e^A(E))^{V}(S^V)\rightarrow F(EA_+,{\rm inf}_e^A(E))^{V}(S^0)$.

Since any finite abelian $p$-group $A$ with ${\rm rank}_p(A)=m$ is isomorphic to a subgroup of $T^m$, we first show how to specifically identify $E^*(BU(1)_+)\cong E^*\psb{x}$ with $\pi^{U(1)}_*(F(EU(1)_+,{\rm inf}_e^{U(1)}(E)))$.
%According to the group representation theory, we have a one-one correspondence between $G$-representations $\rho: G\rightarrow GL(V)$ and $G$-spaces $V$. In this paper, we do not distinguish this two notions.
Let $R$ denote the $U(1)$-spectrum $F(EU(1)_+,{\rm inf}_e^{U(1)}(E))$. We may assume that $E$ is a homotopy commutative ring spectrum, and by
\cite[Theorem 6.23]{BH15} $F(EU(1)_+,{\rm inf}_e^{U(1)}(E))$ is a homotopy commutative $U(1)$-ring spectrum. Firstly, recall the definition \cite[Definition 5.1]{MNN19} of the $\textit{Thom}$ $\textit{class}$ $\mu_{V}: S^{V-|V|}\rightarrow R$ for $V$ with respect to $R$, $\mu_{V}$ is a map of $U(1)$-spectra such that its canonical extension to an $R$-module map
$$\CD
  R\otimes S^{V-|V|}@>{\rm id}_R\otimes\mu_{V}>> R\otimes R@>\mu>> R
\endCD$$
is an equivalence, where $\mu$ denotes the multiplication map of the ring spectrum $R$. Secondly, we will find the Thom class $\mu_{V}$. Since all irreducible complex representations of abelian groups are complex one-dimensional, we may choose $V$ to be $\mathbb{C}$. For the principal $U(1)$-bundle $\mathbb{C}\to \mathbb{C} \to *$, we have a Thom space $S^{\mathbb{C}}$, which gives a Thom isomorphism
$$\phi_{\mathbb{C}}:F(EU(1)_+,{\rm inf}_e^{U(1)}(E))^*(S^0)\rightarrow F(EU(1)_+,{\rm inf}_e^{U(1)}(E))^{*+2}(S^{\mathbb{C}}),$$
by the equivariant suspension isomorphism, we can rewrite $\phi_{\mathbb{C}}$ as an isomorphism
$$\pi^{U(1)}_*(F(EU(1)_+,{\rm inf}_e^{U(1)}(E)))\cong \pi^{U(1)}_*(F(EU(1)_+,{\rm inf}_e^{U(1)}(E))\otimes S^{2-\mathbb{C}}).$$
By \cite[Remark 5.2]{MNN19}, this Thom isomorphism $\phi_{\mathbb{C}}$ gives rise to such a Thom class
$\mu_{\mathbb{C}}:S^{\mathbb{C}-2}\rightarrow F(EU(1)_+,{\rm inf}_e^{U(1)}(E))$ for $\mathbb{C}$ with respect to
$F(EU(1)_+,{\rm inf}_e^{U(1)}(E))$. Follow the notions of \cite[Remark 2.2]{GM97}, we also insist that
$\phi_{\mathbb{C}}(y)=y\cdot\mu_{\mathbb{C}}$ for all $y\in F(EU(1)_+,{\rm inf}_e^{U(1)}(E))^*(S^0)$. Since
$\chi_V:S^{-|V|}\stackrel{e_V}\to S^{V-|V|}\stackrel{\mu_{V}}\to F(EU(1)_+,{\rm inf}_e^{U(1)}(E))$, we have
$$\chi_{\mathbb{C}}=\phi_{\mathbb{C}}(e_{\mathbb{C}})=e_{\mathbb{C}}\cdot \mu_{\mathbb{C}}=e_{\mathbb{C}}^*(\mu_{\mathbb{C}}).$$
For the universal principal $U(1)$-bundle $U(1) \to EU(1) \to BU(1)$, we have a Thom space $MU(1)\simeq BU(1)$, which gives a Thom isomorphism
$\cup x:E^*(BU(1)_+)\rightarrow E^{*+2}(BU(1)_+)$, and it corresponds to $\cdot\chi_{\mathbb{C}}$ under the following identification
$$\CD
  F(EU(1)_+,{\rm inf}_e^{U(1)}(E))^*(S^0) @>\cdot\mu_{\mathbb{C}}>>F(EU(1)_+,{\rm inf}_e^{U(1)}(E))^{*+2}(S^{\mathbb{C}})\\
  @V \cong VV @V e_{\mathbb{C}}^* VV  \\
E^*(BU(1)_+)@>\cup x>> F(EU(1)_+,{\rm inf}_e^{U(1)}(E))^{*+2}(S^0)\cong E^{*+2}(BU(1)_+).
\endCD$$

Then $x$ corresponds to $\chi_{\mathbb{C}}$ under the isomorphism between $F(EU(1)_+,{\rm inf}_e^{U(1)}(E))^*(S^0)$ and $E^*(BU(1)_+)$.

\begin{Lem}
Let $\rho_{\frac{w}{p^j}}$ be an irreducible complex $\mathbb{Z}/{p^{j}}$-representation with $w\in \mathbb{Z}/{p^{j}}$. Let $\rho_{\frac{w}{p^j}}^{\#}$ be the map $F(EU(1)_+,{\rm inf}_e^{U(1)}(E))^{*}(S^0)\rightarrow F(E\mathbb{Z}/p^{j}_+,{\rm inf}_e^{\mathbb{Z}/{p^{j}}}(E))^{*}(S^0)$ induced by $\rho_{\frac{w}{p^j}}$. Then $B\rho_{\frac{w}{p^j}}^*(x)=[p^j]_E(x)$ corresponds to $\chi_{\rho_{\frac{w}{p^j}}}=\rho_{\frac{w}{p^j}}^{\#}(\mu_{\mathbb{C}})$ under the isomorphism between $\pi^{\mathbb{Z}/{p^{j}}}_*(F(E\mathbb{Z}/p^{j}_+,{\rm inf}_e^{\mathbb{Z}/p^{j}}(E)))$ and $E^*(B\mathbb{Z}/p^{j}_+)$.
\end{Lem}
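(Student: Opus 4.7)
The plan is to combine the naturality of the Thom and Euler class constructions under group homomorphisms with a direct computation of $B\rho_{\frac{w}{p^j}}^{*}(x)$ via the formal group law.

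First, I would establish $\chi_{\rho_{w/p^j}}=\rho_{\frac{w}{p^j}}^{\#}(\chi_{\mathbb{C}})$ by pure naturality. The restriction of the tautological $U(1)$-representation $\mathbb{C}$ along $\rho_{\frac{w}{p^j}}:\mathbb{Z}/p^j\to U(1)$ is, tautologically, the one-dimensional $\mathbb{Z}/p^j$-representation $\rho_{\frac{w}{p^j}}$ itself, so naturality of the Thom class construction makes $\rho_{\frac{w}{p^j}}^{\#}(\mu_{\mathbb{C}})$ a Thom class for $\rho_{\frac{w}{p^j}}$; chasing this through the definition $\chi_V = e_V^{*}(\mu_V)$ together with the naturality of $e_V$ in $V$ yields the identity. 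Then, using naturality of the isomorphism $\pi_*^G(F(EG_+,{\rm inf}_e^G(E)))\cong E^{*}(BG_+)$ in $G$ (built from the adjunction $[S^n,X^G]\cong [{\rm inf}_e^G(S^n),X]^G$ together with the equivalence $F(EG_+,{\rm inf}_e^G(E))^G\simeq F(BG_+,E)$), which intertwines $\rho^{\#}$ with $B\rho^{*}$, the already established correspondence $x\leftrightarrow\chi_{\mathbb{C}}$ at the level of $U(1)$ transports to the correspondence $B\rho_{\frac{w}{p^j}}^{*}(x)\leftrightarrow\chi_{\rho_{w/p^j}}$.

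Second, to identify $B\rho_{\frac{w}{p^j}}^{*}(x)$ as an explicit element, I factor $\rho_{\frac{w}{p^j}}$ as $\mathbb{Z}/p^j\xrightarrow{\rho_{1/p^j}}U(1)\xrightarrow{\psi^{w}}U(1)$, where $\psi^{w}$ is the $w$-th power map on $U(1)$. Since $B\psi^{w}$ acts in $E$-cohomology on $BU(1)=\mathbb{C}P^{\infty}$ by $x\mapsto [w]_E(x)$ by the very definition of the $w$-series of the formal group law (exactly as already used in the excerpt for $\psi^{p^j,2}$), and since $B\rho_{\frac{1}{p^j}}^{*}(x)$ is by Lemma \ref{GCA} the distinguished generator of $E^{*}(B\mathbb{Z}/p^j_+)\cong E^{*}\psb{x}/([p^j]_E(x))$, the functoriality $B\rho_{\frac{w}{p^j}}^{*} = B\rho_{\frac{1}{p^j}}^{*}\circ B\psi^{w,*}$ produces the claimed explicit formula for $B\rho_{\frac{w}{p^j}}^{*}(x)$ in $E^{*}(B\mathbb{Z}/p^j_+)$.

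The main obstacle is the careful verification of naturality of the identification $\pi_*^G(F(EG_+,{\rm inf}_e^G(E)))\cong E^{*}(BG_+)$ across different groups, since this requires a coherent treatment of the change-of-universe maps $E\mathbb{Z}/p^j\to EU(1)$ covering $B\rho_{\frac{w}{p^j}}$, together with naturality of the adjunction, the Thom class $\mu_{\mathbb{C}}$, and the equivariant suspension isomorphism used to reinterpret $\chi_V\in F(EG_+,{\rm inf}_e^G(E))^{V}(S^0)$ as an element of degree $|V|$. Once this bookkeeping is in place, the remaining steps are a direct translation of the formal-group computation into equivariant language.
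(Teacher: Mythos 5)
Your proposal is correct and follows essentially the same route as the paper: the paper's own proof consists of exhibiting the two naturality squares (multiplication by $\chi_{\mathbb{C}}$ versus cup with $x$, intertwined by $\rho_{\frac{w}{p^j}}^{\#}$ versus $B\rho_{\frac{w}{p^j}}^{*}$) and "identifying" them via the already-established correspondence $x\leftrightarrow\chi_{\mathbb{C}}$, which is precisely the naturality argument you spell out in your first paragraph, just written out more explicitly. Your second paragraph (factoring $\rho_{\frac{w}{p^j}}$ through $\psi^{w}$ to compute $B\rho_{\frac{w}{p^j}}^{*}(x)=[w]_E(x)$) goes beyond what this lemma's proof in the paper does — the paper defers that computation to the next, more general lemma — but it is correct and, usefully, it reveals that the displayed formula $B\rho_{\frac{w}{p^j}}^{*}(x)=[p^j]_E(x)$ in the lemma statement must be a typo for $[w]_E(x)$ (as $[p^j]_E(x)=0$ in $E^{*}(B\mathbb{Z}/p^j_+)$, which would kill the Euler class even for $w\neq 0$). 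You also correctly flag the real bookkeeping obstacle, namely the naturality of the identification $\pi_*^G(F(EG_+,{\rm inf}_e^G(E)))\cong E^{*}(BG_+)$ across the map of groups $\mathbb{Z}/p^j\to U(1)$; the paper treats this as self-evident by presenting the two squares side by side without comment.
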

\begin{proof}
We take $V$ to be $\mathbb{C}$ and identify the following two diagrams.
$$\xymatrix{
F(EU(1)_+,{\rm inf}_e^{U(1)}(E))^{*}(S^0)\ar[d]_{\cdot\chi_{\mathbb{C}}} \ar[r]^{\rho_{\frac{w}{p^j}}^{\#}} &F(E\mathbb{Z}/p^{j}_+,{\rm inf}_e^{\mathbb{Z}/{p^{j}}}(E))^{*}(S^0)\ar[d]_{\cdot\rho_{\frac{w}{p^j}}^{\#}(\chi_{\mathbb{C}})} \\
F(EU(1)_+,{\rm inf}_e^{U(1)}(E))^{*+2}(S^0) \ar[r]^{\rho_{\frac{w}{p^j}}^{\#}} & F(E\mathbb{Z}/p^{j}_+,{\rm inf}_e^{\mathbb{Z}/{p^{j}}}(E))^{*+2}(S^0),}
\xymatrix{
E^*(BU(1)_+)\ar[d]_{\cup x} \ar[r]^{B\rho_{\frac{w}{p^j}}^*} &E^*(B\mathbb{Z}/p^{j}_+)\ar[d]_{\cup B\rho_{\frac{w}{p^j}}^*(x)} \\
E^{*+2}(BU(1)_+)\ar[r]^{B\rho_{\frac{w}{p^j}}^{*+2}} & E^{*+2}(B\mathbb{Z}/p^{j}_+),}$$
which finishes the proof.
\end{proof}

\begin{Lem}\label{ideu}
Let $A$ be an abelian $p$-group of form $\mathbb{Z}/{p^{i_1}}\oplus \cdots \oplus \mathbb{Z}/{p^{i_m}}$ and $\rho_{(\frac{w_1}{p^{i_1}},\cdots,\frac{w_m}{p^{i_m}})}$ be an irreducible complex $A$-representation with $(w_1,\cdots,w_m)\in A$. Let $\rho_{(\frac{w_1}{p^{i_1}},\cdots,\frac{w_m}{p^{i_m}})}^{\#}$ be the map
$F(EU(1)_+,{\rm inf}_e^{U(1)}(E))^{*}(S^0)\rightarrow F(EA_+,{\rm inf}_e^A(E))^{*}(S^0)$ induced by $\rho_{(\frac{w_1}{p^{i_1}},\cdots,\frac{w_m}{p^{i_m}})}$. Then $B\rho_{(\frac{w_1}{p^{i_1}},\cdots,\frac{w_m}{p^{i_m}})}^*(x)=[w_1]_{E}(x_1)+_F \cdots +_F [w_m]_{E}(x_m)$, corresponds to $\chi_{\rho_{(\frac{w_1}{p^{i_1}},\cdots,\frac{w_m}{p^{i_m}})}}=\rho_{(\frac{w_1}{p^{i_1}},\cdots,\frac{w_m}{p^{i_m}})}^{\#}(\chi_{\mathbb{C}})$ under the isomorphism between $\pi^A_*(F(EA_+,{\rm inf}_e^A(E)))$ and $E^*(BA_+)$.
\end{Lem}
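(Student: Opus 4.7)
The plan is to reduce Lemma \ref{ideu} to the cyclic case treated in the preceding lemma by exploiting the $H$-space multiplication on $U(1)$. By the very definition of $\rho_{(w_1/p^{i_1},\ldots,w_m/p^{i_m})}$ recalled immediately above the lemma, this character factors as
$$\rho_{(w_1/p^{i_1},\ldots,w_m/p^{i_m})} = \mu_{U(1)}\circ\bigl(\rho_{w_1/p^{i_1}}\times\cdots\times\rho_{w_m/p^{i_m}}\bigr),$$
where $\mu_{U(1)}\colon U(1)^m\to U(1)$ is the iterated multiplication. Since $B$ preserves finite products and carries homomorphisms of abelian topological groups to $H$-maps, applying $B$ yields
$$B\rho_{(w_1/p^{i_1},\ldots,w_m/p^{i_m})} = \mu_{\mathbb{C}P^\infty}\circ\bigl(B\rho_{w_1/p^{i_1}}\times\cdots\times B\rho_{w_m/p^{i_m}}\bigr).$$

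First I would compute the pullback of the complex orientation $x\in E^2(BU(1))$ along this composite. By definition of the associated formal group law $F$, one has $\mu_{\mathbb{C}P^\infty}^*(x)=x_1+_F x_2$ in the two-factor case, and induction on $m$ together with the associativity of $+_F$ produces $\mu_{\mathbb{C}P^\infty}^*(x)=x_1+_F\cdots+_F x_m$ inside $E^*((\mathbb{C}P^\infty)^m)\cong E^*\psb{x_1,\ldots,x_m}$; the K\"unneth isomorphism is available because each factor $E^*(BU(1))$ is free over $E^*$, exactly as invoked in the proof of Lemma \ref{GCA}. Next, pulling back along the factorwise product and applying the preceding cyclic lemma to each coordinate $B\rho_{w_k/p^{i_k}}^*$ replaces $x_k$ by $[w_k]_E(x_k)$, giving the stated formula
$$B\rho^*(x)=[w_1]_E(x_1)+_F\cdots+_F[w_m]_E(x_m)\in E^*(BA_+).$$

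For the identification with the Euler class $\chi_\rho$, I would rerun the two-diagram argument from the preceding lemma verbatim, replacing $\rho_{w/p^j}$ by $\rho_{(w_1/p^{i_1},\ldots,w_m/p^{i_m})}$. On the $A$-spectrum side, the Thom class $\mu_\mathbb{C}\colon S^{\mathbb{C}-2}\to F(EU(1)_+,\mathrm{inf}_e^{U(1)}(E))$ is pulled back under $\rho^\#$ to $\chi_\rho=\rho^\#(\chi_\mathbb{C})$; on the $E^*$-cohomology side the parallel commutative square sends $x$ to $B\rho^*(x)$. The vertical arrows of the two squares are matched by the Thom isomorphism and by $e_V^*$, and the horizontal isomorphism between $\pi^A_*(F(EA_+,\mathrm{inf}_e^A(E)))$ and $E^*(BA_+)$ (together with the already established correspondence $\chi_\mathbb{C}\leftrightarrow x$ in the universal $U(1)$-case) delivers the desired identification $\chi_\rho\leftrightarrow B\rho^*(x)$.

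The main (and essentially only) subtlety is justifying the passage from the multiplicative factorization on characters to its classifying-space counterpart, i.e., that $B$ sends $\mu_{U(1)}\circ(\rho_{w_1/p^{i_1}}\times\cdots\times\rho_{w_m/p^{i_m}})$ to $\mu_{\mathbb{C}P^\infty}\circ(B\rho_{w_1/p^{i_1}}\times\cdots\times B\rho_{w_m/p^{i_m}})$ up to homotopy; this is standard because $B$ is a product-preserving functor from abelian topological groups to $H$-spaces. Beyond this, the proof is pure naturality together with the associativity of $+_F$, so no deeper obstacle is expected.
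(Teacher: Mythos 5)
Your proposal is correct and follows essentially the same route as the paper: factor $\rho_{(w_1/p^{i_1},\ldots,w_m/p^{i_m})}$ through $T^m\to U(1)$, apply $B$, note $B\mu_{U(1)}^{m,*}(x)=x_1+_F\cdots+_F x_m$, and then use the cyclic case factorwise. The paper's own proof is terser about the Euler-class correspondence (it stops after computing $B\rho^*(x)$, leaving the identification with $\chi_\rho$ implicit by the preceding lemma's machinery), whereas you spell that step out explicitly.
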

\begin{proof}
Since $\rho_{(\frac{w_1}{p^{i_1}},\cdots,\frac{w_m}{p^{i_m}})}:A\rightarrow U(1)$ is the composition map
$$\CD
 \mathbb{Z}/{p^{i_1}}\oplus \cdots \oplus \mathbb{Z}/{p^{i_m}} @>\rho_{\frac{w_1}{p^{i_1}}}\times\cdots\times\rho_{\frac{w_m}{p^{i_m}}}>> T^m @>\mu^m_{U(1)}>> U(1),
\endCD$$
where $\mu^m_{U(1)}$ denotes the $m$-th composition of the multiplication map of $U(1)$. This map induces the composition of $E^*$-algebra homomorphisms
$$\CD
E^*(BU(1)_+)@>B\mu_{U(1)}^{m,*}>> E^*(BT^m_+) @>B(\rho_{\frac{w_1}{p^{i_1}}}\times\cdots\times\rho_{\frac{w_m}{p^{i_m}}})^*>> E^*(BA_+).
\endCD$$
Note that $B\mu_{U(1)}^{m,*}(x)=x_1+_F\cdots+_F x_m$, then we have
\begin{align*}
B\rho_{(\frac{w_1}{p^{i_1}},\cdots,\frac{w_m}{p^{i_m}})}^*(x)&=B(\rho_{\frac{w_1}{p^{i_1}}}\times\cdots\times\rho_{\frac{w_m}{p^{i_m}}})^*\circ B\mu_{U(1)}^{m,*}(x)\\
&=B(\rho_{\frac{w_1}{p^{i_1}}}\times\cdots\times\rho_{\frac{w_m}{p^{i_m}}})^*(x_1+_F\cdots+_F x_m)\\
&=[w_1]_{E}(x_1)+_F \cdots +_F [w_m]_{E}(x_m).
\end{align*}
This finishes the proof.
\end{proof}

\begin{Thm}(Lubin--Tate, \cite{LT65})\label{LT}
For each integer $k$ and each nature number $j$, there exists a unique series $[k]_E(x)\in E^*\psb{x}$ such that
$$[k]_E(x)\equiv kx \mod (x^2)~~\text{and}~~[k]_E([p^j]_E(x))=[p^j]_E([k]_E(x)).$$
\end{Thm}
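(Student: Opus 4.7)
The plan is to separate existence and uniqueness, leveraging the formal group machinery already developed in Proposition \ref{pps}, Lemma \ref{pjwp}, and Theorem \ref{WPT}.

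For existence, I would take $[k]_E(x)$ to be the classical $k$-series of the formal group law $F$: for $k > 0$, set $[k]_E(x) = x +_F x +_F \cdots +_F x$ ($k$ iterated formal sums); define $[0]_E(x) = 0$ and extend to negative $k$ by formal inversion. Proposition \ref{pps}(i) immediately gives $[k]_E(x) \equiv kx \mod x^2$, and Proposition \ref{pps}(ii) yields the commutation
$$[k]_E([p^j]_E(x)) = [kp^j]_E(x) = [p^j k]_E(x) = [p^j]_E([k]_E(x)),$$
using only commutativity of multiplication in $\mathbb{Z}$.

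For uniqueness, I would run the classical Lubin--Tate inductive comparison of coefficients. Suppose $f(x) \in E^*\psb{x}$ is any other series satisfying the two conditions. Set $h(x) = f(x) - [k]_E(x)$; assume for contradiction that $h \neq 0$ and pick the least $r \geq 2$ with $h(x) \equiv a_r x^r \mod x^{r+1}$ and $0 \neq a_r \in E^*$. Subtracting the identity for $[k]_E$ from the one for $f$ and reading off the coefficient of $x^r$ on each side of
$$[p^j]_E(f(x)) - [p^j]_E([k]_E(x)) = f([p^j]_E(x)) - [k]_E([p^j]_E(x))$$
yields the integral relation $p^j\bigl((p^j)^{r-1} - 1\bigr)\, a_r = 0$ in $E^*$. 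Since $(p^j)^{r-1} \equiv 0 \mod p$, the factor $(p^j)^{r-1} - 1$ is a unit in the $p$-complete ring $E^*$, so this simplifies to $p^j a_r = 0$.

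The main obstacle is that $p^j a_r = 0$ alone does not force $a_r = 0$, because $p$ may well be a zero divisor in $E^*$ under the stated hypotheses. To finish, I would exploit the height-$n$ assumption to produce a second independent constraint. By Lemma \ref{pjwp} combined with Proposition \ref{pps}(iii), the Weierstrass polynomial of $[p^j]_E(x)$ reduces modulo $I_n$ to $v_n^{1+p^n+\cdots+p^{(j-1)n}}\, x^{p^{jn}}$. Reducing the commutativity equation modulo $I_n$, applying the Frobenius identity $(u+w)^{p^{jn}} = u^{p^{jn}} + w^{p^{jn}}$ in characteristic $p$ to expand $f(x)^{p^{jn}} = ([k]_E(x) + h(x))^{p^{jn}}$, and comparing coefficients of $x^{r p^{jn}}$ produces a second relation of the form $\bar v_n^{s}\bigl(\bar a_r^{p^{jn}} - \bar a_r\, \bar c^{\,r-1}\bigr) = 0$ in $E^*/I_n$. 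Combining the two constraints and inducting along the $I_n$-adic filtration of the $p$-complete ring $E^*$ pins $a_r$ down to zero, contradicting the choice of $r$. Hence $h \equiv 0$ and $f = [k]_E$, completing the proof.
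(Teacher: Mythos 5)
The paper offers no proof of Theorem \ref{LT}; it simply cites Lubin--Tate \cite{LT65}, so there is no in-text argument to compare against. Your existence argument via the $k$-series of $F$ is correct, and your coefficient comparison giving $p^j\bigl((p^j)^{r-1}-1\bigr)a_r=0$ and hence $p^j a_r=0$ (after inverting the unit $(p^j)^{r-1}-1$ of the $p$-complete ring $E^*$) is exactly the classical Lubin--Tate step, which you have executed correctly.

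You are also right to flag the obstacle: $p^j a_r=0$ need not kill $a_r$ when $p$ is a zero divisor in $E^*$. But the repair proposed in your final paragraph cannot be made to work, because the uniqueness you are trying to prove is in fact false at the stated level of generality. Take $E=K(n)$, so $E^*=\mathbb{F}_p[v_n^{\pm 1}]$, $I_n=(p,v_1,\dots,v_{n-1})=0$, and $[p]_E(x)=v_n x^{p^n}$. The series $f(x)=x+v_n x^{p^n}$ is degree-homogeneous, satisfies $f(x)\equiv x \pmod{x^2}$, and by Frobenius in characteristic $p$,
$$f\bigl([p]_E(x)\bigr)=v_n x^{p^n}+v_n^{\,p^n+1}x^{p^{2n}}=v_n\bigl(x+v_n x^{p^n}\bigr)^{p^n}=[p]_E\bigl(f(x)\bigr),$$
yet $f\neq[1]_E(x)=x$. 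Your ``second relation'' $\bar a_r^{\,p^{jn}}=\bar a_r\,\bar v_n^{\,s(r-1)}$ is \emph{satisfied} by this nonzero $a_r$ rather than excluding it, and for $K(n)$ there is no $I_n$-adic filtration to induct along since $I_n=0$. So steps 4--5 as written cannot close the gap, and no argument in this generality could, since the conclusion itself is false.

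What rescues Lubin and Tate's original theorem is the hypothesis that $\pi$ (here $p^j$) is a non-zero-divisor in the complete local base ring; in \cite{LT65} the base is the ring of integers of a local field. In the present paper that condition is supplied by Landweber exactness of $E$ (a standing hypothesis of Theorem \ref{Gtclbc}, and what Lemma \ref{setpjr} tacitly requires), which makes $p,v_1,\dots$ act regularly on $E^*$. Under that hypothesis your step 2 is already a complete proof: $p^j a_r=0$ forces $a_r=0$ and the induction on $r$ closes, with no need for the extra Frobenius and $I_n$-adic machinery, which is both unnecessary and insufficient.
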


For convenience, we denote $[w_1]_{E}(x_1)+_F \cdots +_F [w_m]_{E}(x_m)$ by $\alpha_{(w_1,\cdots,w_m)}$.
\begin{Lem}\label{setpjr}
Let $j$ be a nature number and $E$ be a $p$-complete, complex oriented spectrum with an associated formal group of height $n$. If $A$ is a finite abelian $p$-group of form $\mathbb{Z}/{p^{i_1}}\oplus \cdots \oplus \mathbb{Z}/{p^{i_m}}$, then there is a bijection
\begin{align*}
\omega:{}_{p^{j}\!}F(E^*(BA_+))&\rightarrow\{\alpha_{(w_1,\cdots,w_m)}\in E^*(BA_+)\mid(p^jw_1,\cdots,p^jw_m)=0, (w_1,\cdots,w_m)\in A\}\\
f^*&\mapsto \omega(f^*)=f^*(x).
\end{align*}
\end{Lem}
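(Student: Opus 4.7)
The plan is to observe that by Remark \ref{derpj}, the map $\omega: f^*\mapsto f^*(x)$ identifies ${}_{p^{j}\!}F(E^*(BA_+))$ with the set of roots of $[p^j]_E(-)$ in $E^*(BA_+)$. So the lemma reduces to showing that this root set coincides inside $E^*(BA_+)$ with $\{\alpha_{(w_1,\ldots,w_m)}:(p^jw_1,\ldots,p^jw_m)=0\}$, and it suffices to establish the two inclusions.

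For the easy inclusion, I would use that $[p^j]_E$ is a formal group endomorphism: it respects $+_F$ and, by Lubin--Tate (Theorem \ref{LT}), satisfies $[p^j]_E\circ[w]_E=[p^jw]_E$. Hence
\begin{align*}
[p^j]_E(\alpha_{(w_1,\ldots,w_m)}) = [p^jw_1]_E(x_1)+_F\cdots+_F[p^jw_m]_E(x_m).
\end{align*}
When $p^jw_l=c_lp^{i_l}$, the summand $[p^jw_l]_E(x_l) = [c_l]_E([p^{i_l}]_E(x_l)) = 0$ in $E^*(BA_+)$ by Lemma \ref{GCA}; equivalently, via Lemma \ref{ideu}, the vanishing reflects that $\psi^{p^j}\circ\rho$ is the trivial character for every $\rho$ satisfying the torsion constraint.

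For the harder inclusion, I would argue by induction on the number of cyclic factors $m$. In the base case $A=\mathbb{Z}/p^i$, the ring $E^*(BA_+) = E^*\psb{x}/([p^i]_E(x))$ is a free $E^*$-module of rank $p^{in}$ by Lemma \ref{GCA}, and any root $r$ of $[p^j]_E$ must lie in its maximal ideal (otherwise $p$ would be a unit in the local ring $E^*$). I would then show that the $p^{\min(i,j)}$ candidate roots $[w]_E(x)$ with $w\in V(p^j|\mathbb{Z}/p^i)$ are pairwise distinct --- since for $w\ne w'$ the Weierstrass degree of $[w-w']_E(x)$ is strictly below $p^{in}$ --- and that they exhaust the root set by identifying the latter with $\hat{G}[p^j](E^*(B\mathbb{Z}/p^i_+))$, where $\hat{G}$ denotes the formal group of $E$, and computing this $p^j$-torsion directly via Lubin--Tate and the Weierstrass structure from Lemma \ref{pjwp}. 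The inductive step uses the K\"unneth-style decomposition $E^*(BA_+) = \bigotimes_{E^*}E^*(B\mathbb{Z}/p^{i_l}_+)$ from Lemma \ref{GCA}, together with the compatibility of $+_F$ and $[p^j]_E$ with the tensor factorization.

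The main obstacle is the upper bound on the number of roots in the base case. Since $E^*\psb{x}/([p^i]_E(x))$ is not an integral domain --- the differences $[w]_E(x)-[w']_E(x)$ for $w\ne w'$ are in fact zero-divisors, as $g_i(x)$ is divisible by the Weierstrass polynomial of $[w-w']_E(x)$ --- the $n$-tuple machinery of Theorem \ref{Fac-tuple} does not directly bound the root set. Controlling the possible extra torsion requires exploiting the formal group scheme structure and the finite flatness of $\hat{G}[p^j]$ over $E^*$.
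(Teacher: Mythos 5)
Your verification of the easy inclusion --- that each $\alpha_{(w_1,\ldots,w_m)}$ with $p^jw_l=0$ for all $l$ is killed by $[p^j]_E$ --- is the same argument the paper gives, so that part is fine. The divergence is in showing that \emph{every} $f^*\in{}_{p^{j}\!}F(E^*(BA_+))$ is of this form. You propose a root-counting argument, and you are right that the $n$-tuple machinery of Theorem~\ref{Fac-tuple} cannot close it: the differences $[w]_E(x)-[w']_E(x)$ are zero-divisors in $E^*(BA_+)$ (e.g.\ $x$ kills $v_n x^{p^{in}-1}$ modulo $I_n$), so no Vandermonde-type bound on the number of roots is available. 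You then defer to the finite flatness of $\hat{G}[p^j]$ without saying how to extract the count, and that is a genuine gap --- it is precisely where your route stalls.

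The paper never counts roots at all, and the idea it uses is the one you are missing. The object $\omega$ parametrizes is not the set of abstract roots of $[p^j]_E$ in $E^*(BA_+)$, but the set of \emph{graded} $E^*$-algebra homomorphisms out of $E^*\psb{x}/([p^j]_E(x))$, and the grading is load-bearing: since $\deg x=2$, any such $f^*$ automatically lands $f^*(x)$ in $E^2(BA_+)$. For $A=\mathbb{Z}/p^i$ one then argues as follows. From $[p^j]_E(y)\equiv p^jy \pmod{y^2}$ and $[p^j]_E(f^*(x))=0$ in $E^2\psb{x}/([p^i]_E(x))$, the constant term of $f^*(x)$ must vanish, so $f^*(x)\equiv kx \pmod{x^2}$ for some $k$. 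Lubin--Tate uniqueness (Theorem~\ref{LT}) then pins down $f^*(x)=[k]_E(x)$ on the nose, and the relation $[p^jk]_E(x)=[p^j]_E([k]_E(x))=0$ in $E^*\psb{x}/([p^i]_E(x))$ forces $p^i\mid p^jk$. The multi-factor case is obtained from the K\"unneth splitting of Lemma~\ref{GCA} by running the same coordinatewise argument after composing with the $B\iota_k^*$, not by a genuine induction on $m$. So what you should take away is: exploit the degree-$2$ constraint on $f^*(x)$ together with Lubin--Tate uniqueness, rather than trying to bound the root set in a ring that has far too many zero-divisors for that to work.
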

\begin{proof}
First suppose that $A=\mathbb{Z}/{p^{i}}$. For
$$f^*\in{}_{p^{j}\!}F(E^*(B\mathbb{Z}/p^{i}_+))={\rm  Hom}_{E^*\text{-alg}}(E^*\psb{x}/([p^j]_E(x)),E^*(B\mathbb{Z}/p^{i}_+)),$$
we can identify $f^*$ with $f^*(x)$ since $f^*$ is an $E^*$-ring homomorphism, which means that $\omega$ is injective. Then we have to prove that $\omega$ is well-defined, namely
$$f^*(x)\in\{\alpha_{(w_1,\cdots,w_m)}\in E^*(BA_+)\mid(p^jw_1,\cdots,p^jw_m)=0, (w_1,\cdots,w_m)\in A\}.$$
As $f^*$ is a graded $E^*$-algebra homomorphism and $\deg x=2$, we have
$$0=f^*([p^j]_E(x))=[p^j]_E(f^*(x))\in E^2(B\mathbb{Z}/p^{i}_+)\cong E^2\psb{x}/([p^i]_E(x)).$$
Notice that $[p^j]_E(x)\equiv p^jx \mod (x^2)$, then the constant term of $f^*(x)$ must be zero. Since $f^*(x)\in E^2(B\mathbb{Z}/p^{i}_+)$, we may suppose that $f^*(x)\equiv kx\mod (x^2)$, and by Lubin and Tate's theorem \ref{LT}, we have $f^*(x)=[k]_E(x)$. By the property that $[n_1]_{E}([n_2]_{E}(x))=[n_1n_2]_{E}(x)$, we have $[p^j]_E([k]_E(x))=[kp^j]_E(x)$. Then $f^*\in{\rm  Hom}_{E^*\text{-alg}}(E^*\psb{x}/([p^j]_E(x)),E^*(B\mathbb{Z}/p^{i}_+))$ implies that
$$f^*(x)\in\{[w]_E(x)\in E^2\psb{x}/([p^i]_E(x)) \mid p^jw=0, w\in \mathbb{Z}/p^{i} \},$$
so $\omega$ is well-defined. Note that for each $[w]_E(x)\in E^2\psb{x}/([p^i]_E(x))$ with $p^jw=0$, there is a group homomorphism $\rho_w:\mathbb{Z}/p^{i}\rightarrow \mathbb{Z}/p^{j}$ that maps $1$ to $w$ and $B\rho_{w}^*(x)=[w]_E(x)$, so $B\rho_{w}^*$ is an $E^*$-algebra homomorphism, so $\omega$ is surjective. Therefore, $\omega$ is a well-defined bijection.

For $A=\mathbb{Z}/{p^{i_1}}\oplus \cdots \oplus \mathbb{Z}/{p^{i_m}}$, there are group inclusions $\iota_k: \mathbb{Z}/{p^{i_k}}\rightarrow A$ that maps $w\in \mathbb{Z}/{p^{i_k}}$ to $(0,\cdots,0,w,0,\cdots,0)\in\mathbb{Z}/{p^{i_1}}\oplus \cdots \oplus\mathbb{Z}/{p^{i_{k-1}}}\oplus\mathbb{Z}/{p^{i_k}}\oplus\mathbb{Z}/{p^{i_{k-1}}}\oplus \cdots\oplus \mathbb{Z}/{p^{i_m}}$. By Lemma \ref{GCA}, we have
$$E^*(BA_+)\cong E^{*}\psb{x_1}/([p^{i_1}]_{E}(x_1))\otimes_{E^*}\cdots\otimes_{E^*}E^{*}\psb{x_m}/([p^{i_m}]_{E}(x_m)).$$
There is an isomorphism:
\begin{align*}
{\rm  Hom}_{E^*\text{-alg}}(E^*\psb{x}/([p^j]_E(x)),E^*(BA_+))&\rightarrow \bigotimes^{m}_{k=1}{\rm  Hom}_{E^*\text{-alg}}(E^*\psb{x}/([p^j]_E(x)),E^{*}\psb{x_1}/([p^{i_k}]_{E}(x_k)))\\
f^*&\mapsto B\iota_1^*\circ f^*\otimes\cdots\otimes B\iota_m^*\circ f^*.
\end{align*}
We can identify $f^*\in{\rm  Hom}_{E^*\text{-alg}}(E^*\psb{x}/([p^j]_E(x)),E^*(BA_+))$ with $f^*(x)\in E^2(BA_+)$. Then the rest proof is similar to the case of $A=\mathbb{Z}/{p^{i}}$, we omit it here.
\end{proof}

\begin{Lem}\label{chm}
Let $A$ be a finite abelian $p$-group. If $G$ is a finite abelian $p$-group or $U(1)$, then the map $E^*(B(-)):{\rm Hom}(A,G)\rightarrow {\rm  Hom}_{E^*\text{-alg}}(E^*(BG_+),E^*(BA_+))$
defined by $f\mapsto E^*(Bf)=Bf^*$ is a group isomorphism.
\end{Lem}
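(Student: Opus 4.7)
The plan is to verify that $E^*(B(-))$ is a group homomorphism and then establish bijectivity by splitting into the two cases permitted by the hypothesis. For the group homomorphism step, because $G$ is abelian, $BG$ is an H-space under $B\mu_G$, and $E^*(BG_+)$ is a cocommutative Hopf algebra over $E^*$ with comultiplication $B\mu_G^*$. This endows the target ${\rm Hom}_{E^*-alg}(E^*(BG_+), E^*(BA_+))$ with its convolution group structure $(\phi_1\cdot\phi_2)=\mu_{E^*(BA_+)}\circ(\phi_1\otimes\phi_2)\circ B\mu_G^*$. For $f,g\in{\rm Hom}(A,G)$, the relation $fg=\mu_G\circ(f\times g)\circ\Delta_A$ (with $\Delta_A$ the diagonal), combined with the K\"unneth identification $B\Delta_A^*=\mu_{E^*(BA_+)}$, yields $B(fg)^*=Bf^*\cdot Bg^*$ by the functoriality of $B(-)$.

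For $G=\bigoplus_{k=1}^{l} \mathbb{Z}/p^{j_k}$, Lemma \ref{GCA} gives an $E^*$-algebra isomorphism $E^*(BG_+)\cong \bigotimes_{k=1}^{l} E^*\psb{y_k}/([p^{j_k}]_E(y_k))$, so the universal property of the tensor product decomposes
\[
{\rm Hom}_{E^*-alg}(E^*(BG_+), E^*(BA_+))\cong \prod_{k=1}^{l} {}_{p^{j_k}\!}F(E^*(BA_+)).
\]
Lemma \ref{setpjr} puts each factor in bijection with the $p^{j_k}$-torsion $A[p^{j_k}]$ via $f^*\leftrightarrow f^*(y_k)=\alpha_{(w_1,\dots,w_m)}$. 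Since ${\rm Hom}(A,G)\cong \prod_k {\rm Hom}(A,\mathbb{Z}/p^{j_k})\cong \prod_k A[p^{j_k}]$ and Lemma \ref{ideu} matches characters to the corresponding $\alpha_{(w_1,\dots,w_m)}$'s, the map $E^*(B(-))$ becomes the identity under these identifications, hence a group isomorphism.

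For $G=U(1)$, let $p^{i_{\max}}$ denote the exponent of $A$. Every homomorphism $A\to U(1)$ lands in the unique subgroup $\mu_{p^{i_{\max}}}\cong\mathbb{Z}/p^{i_{\max}}$, so ${\rm Hom}(A,U(1))\cong {\rm Hom}(A,\mathbb{Z}/p^{i_{\max}})$. Dually, the inclusion $\mathbb{Z}/p^{i_{\max}}\hookrightarrow U(1)$ induces the quotient $E^*(BU(1)_+)=E^*\psb{x}\twoheadrightarrow E^*\psb{x}/([p^{i_{\max}}]_E(x))= E^*(B\mathbb{Z}/p^{i_{\max}}_+)$; the plan is to reduce the $U(1)$ case to the finite case by showing that every $E^*$-algebra map $\phi:E^*(BU(1)_+)\to E^*(BA_+)$ factors through this quotient. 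The main obstacle is precisely this factorization claim, which is equivalent to $[p^{i_{\max}}]_E(\phi(x))=0$ in $E^*(BA_+)$. My intended argument combines Lubin--Tate rigidity (Theorem \ref{LT}) with Lemma \ref{ideu} to identify the image $y=\phi(x)\in E^2(BA_+)$ (necessarily in the augmentation ideal by continuity in the $(x)$-adic topology) as some $\alpha_{(w_1,\dots,w_m)}$ with $(w_1,\dots,w_m)\in A$, after which Proposition \ref{pps}(ii) gives $[p^{i_{\max}}]_E([w_k]_E(x_k))=[p^{i_{\max}}w_k]_E(x_k)=0$ and formal-group additivity of $[p^{i_{\max}}]_E(-)$ forces the required vanishing.
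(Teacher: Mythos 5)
Your group-homomorphism argument is the same as the paper's: express the product in $\operatorname{Hom}(A,G)$ as $\mu_G\circ(\rho_1\times\rho_2)\circ\Delta_A$, use functoriality of $B$ and of $E^*(-)$, and identify $B\Delta_A^*$ with the cup product via K\"unneth. For $G$ a finite abelian $p$-group your bijectivity argument — tensor decomposition via Lemma \ref{GCA}, the coproduct property of $\otimes_{E^*}$ for $E^*$-algebra maps, and the identification $_{p^{j_k}\!}F(E^*(BA_+))\cong V(p^{j_k}\mid A)$ from Lemmas \ref{setpjr} and \ref{ideu} — is a correct and welcome expansion of the paper's one-line appeal to Lemma \ref{setpjr}.

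The $G=U(1)$ case has a genuine gap. You reduce it to showing that every graded $E^*$-algebra map $\phi:E^*\psb{x}\to E^*(BA_+)$ factors through $E^*\psb{x}/([p^{i_{\max}}]_E(x))$, equivalently that $\phi(x)$ lies among the $\alpha_{(w_1,\dots,w_m)}$. Neither of the tools you invoke produces this. Lemma \ref{ideu} only says that Euler classes of actual characters $\rho:A\to U(1)$ equal $\alpha_{(w_1,\dots,w_m)}$ — the ``easy'' direction — and Theorem \ref{LT} concerns power series in $E^*\psb{x}$ commuting with $[p^j]_E$, not arbitrary elements of $E^2(BA_+)$. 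The mechanism that makes Lemma \ref{setpjr} work is precisely the relation $[p^j]_E(x)=0$ present in the source $E^*\psb{x}/([p^j]_E(x))$, which forces $[p^j]_E(f^*(x))=0$ in the target and thereby confines $f^*(x)$; for $E^*(BU(1)_+)=E^*\psb{x}$ there is no such relation, and a continuous graded $E^*$-algebra map $E^*\psb{x}\to E^*(BA_+)$ amounts only to a choice of a topologically nilpotent degree-$2$ element. The degree-$2$ part of the augmentation ideal of $E^*(BA_+)$ is generally much larger than $|A|$ (e.g.\ it is uncountable over Morava $E$-theory already for $A=\mathbb{Z}/p$). So the asserted factorization is not a formal consequence of the algebra structure alone; it would require either restricting to $E^*$-algebra maps that are additionally compatible with the coalgebra structure on $E^*(BU(1)_+)$, or interpreting $\operatorname{Hom}_{E^*\text{-}alg}(E^*(BU(1)_+),-)$ as $\bigcup_j {}_{p^j\!}F(-)$ and reducing to the already-settled finite case, or supplying a genuinely new rigidity argument. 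As written, the step ``identify $\phi(x)$ as some $\alpha_{(w_1,\dots,w_m)}$'' is exactly the thing to be proved and is not justified.
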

\begin{proof}
By Lemma \ref{setpjr}, it is easy to check that $E^*(B(-))$ is a bijection. Then the remaining thing is to prove that $E^*(B(-))$ is a group homomorphism.
Let $[BA_+,BG_+]$ denote the homotopy class from $BA_+$ to $BG_+$. Since $G$ is abelian, we have ${\rm Hom}(A,G)/{\rm Inn}G={\rm Hom}(A,G)$. Note that $A$ is a finite abelian $p$-group, by Dwyer--Zabrodsky's Theorem \cite{DZ87} or Notbohm's Theorem \cite{No91}, there is a bijection
 \begin{align*}
B:{\rm Hom}(A,G)&\rightarrow[BA_+,BG_+]\\
\rho &\mapsto B\rho.
\end{align*}
For a topological space $X$, let $\Delta_X$ denote the diagonal map $X\rightarrow X\times X$, then for any $\rho_1, \rho_2\in {\rm Hom}(A,G)$, there are products $\mu_{G}\circ(\rho_1\times\rho_2)\circ\Delta_A $ and $\mu_{BG}\circ(B\rho_1\times B\rho_2)\circ\Delta_{BA} $. By the functorial property of $B$, $B$ preserves the product, namely
$$B(\mu_{G}\circ(\rho_1\times\rho_2)\circ\Delta_A)=\mu_{BG}\circ(B\rho_1\times B\rho_2)\circ\Delta_{BA}.$$
Similarly, By the functorial property of $E^*(-)$, $E^*(-)$ preserves the product, namely
$$E^*(\mu_{BG}\circ(B\rho_1\times B\rho_2)\circ\Delta_{BA})=\Delta_{BA}^*\circ(B\rho_1\times B\rho_2)^*\circ\mu_{BG}^*.$$
This finishes our proof.
\end{proof}

By Lemma \ref{setpjr} and Lemma \ref{chm}, we have
\begin{Thm}\label{fpjr}
Let $j$ be a nature number and $E$ be a $p$-complete, complex oriented spectrum with an associated formal group of height $n$. If $A$ is a finite abelian $p$-group, then there are group isomorphisms
\begin{align*}
{}_{p^{j}\!}F(E^*(BA_+))&\cong\{\alpha_{(w_1,\cdots,w_m)}\in E^*(BA_+)\mid(p^jw_1,\cdots,p^jw_m)=0, (w_1,\cdots,w_m)\in A\}\\
                       &\cong {\rm Hom}(A,\mathbb{Z}/p^{j})\cong V(p^j|A).
\end{align*}
Furthermore,
\begin{align*}
{}_{p^{\infty}\!}F(E^*(BA_+))\cong {\rm Hom}(A,U(1))\cong A.
\end{align*}
\end{Thm}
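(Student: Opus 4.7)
The plan is to combine the bijection of Lemma~\ref{setpjr} with the group isomorphism of Lemma~\ref{chm}, verify compatibility with the group operations, and then pass to a colimit in $j$ for the $p^\infty$-statement. By Lemma~\ref{GCA}, $E^*(B\mathbb{Z}/p^j_+)\cong E^*\psb{x}/([p^j]_E(x))$, so by definition
$${}_{p^{j}\!}F(E^*(BA_+))={\rm Hom}_{E^*-alg}\bigl(E^*(B\mathbb{Z}/p^j_+),E^*(BA_+)\bigr).$$
Lemma~\ref{chm} with $G=\mathbb{Z}/p^j$ then gives a group isomorphism $E^*(B(-)):{\rm Hom}(A,\mathbb{Z}/p^j)\xrightarrow{\sim}{}_{p^{j}\!}F(E^*(BA_+))$ via $\rho\mapsto B\rho^*$. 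Composing with the bijection $\omega(f^*)=f^*(x)$ of Lemma~\ref{setpjr} and invoking Lemma~\ref{ideu}, the composite sends $\rho\in{\rm Hom}(A,\mathbb{Z}/p^j)$ (determined by its values $(w_1,\dots,w_m)$ on the canonical generators of the cyclic summands of $A$) to $\alpha_{(w_1,\dots,w_m)}$. This yields all three identifications at the level of underlying sets.

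The main content is that $\omega$ is a group homomorphism, since Lemma~\ref{setpjr} is stated only as a bijection. By Proposition~\ref{idwp} and naturality with respect to the H-map $B\mathbb{Z}/p^j\to\mathbb{C}P^\infty$, the coproduct on $E^*(B\mathbb{Z}/p^j_+)\cong E^*\psb{x}/([p^j]_E(x))$ sends $x$ to $x_1+_F x_2$. Hence the induced group law on ${}_{p^{j}\!}F(E^*(BA_+))$ is $(f^*+g^*)(x)=f^*(x)+_F g^*(x)$. Applying Proposition~\ref{pps}(ii) together with commutativity of $+_F$ gives $[w]_E(x)+_F[w']_E(x)=[w+w']_E(x)$, whence
$$\alpha_{(w_1,\dots,w_m)}+_F\alpha_{(w_1',\dots,w_m')}=\alpha_{(w_1+w_1',\dots,w_m+w_m')},$$
which is componentwise addition in $A$. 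Combined with the elementary splitting ${\rm Hom}(A,\mathbb{Z}/p^j)\cong\bigoplus_k\mathbb{Z}/p^{\min(i_k,j)}\cong\bigoplus_k V(p^j|\mathbb{Z}/p^{i_k})=V(p^j|A)$, this establishes the first chain of group isomorphisms.

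For the $p^\infty$-statement, the relation $[p^{j+1}]_E(x)=[p]_E([p^j]_E(x))$ produces a surjection $E^*\psb{x}/([p^{j+1}]_E(x))\twoheadrightarrow E^*\psb{x}/([p^j]_E(x))$, hence inclusions ${}_{p^{j}\!}F\hookrightarrow{}_{p^{j+1}}F$ whose colimit is ${}_{p^{\infty}\!}F$. Writing $\mathbb{Z}/p^\infty:=\varinjlim_j\mathbb{Z}/p^j$, the finiteness of $A$ forces every map $A\to U(1)$ to factor through the $p$-power torsion $\mathbb{Z}/p^\infty\subset U(1)$, so ${\rm Hom}(A,U(1))={\rm Hom}(A,\mathbb{Z}/p^\infty)=\varinjlim_j{\rm Hom}(A,\mathbb{Z}/p^j)$. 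Passing to the colimit in the previously established isomorphism and invoking Pontryagin duality ${\rm Hom}(A,U(1))\cong A$ for finite abelian $A$ yields the second claim. The only non-formal step is the coproduct compatibility in the middle paragraph; once that is in place everything else reduces to prior lemmas and classical duality.
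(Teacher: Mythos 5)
Your proposal is correct and follows essentially the same route as the paper: the paper's proof of Theorem~\ref{fpjr} literally reads ``By Lemma~\ref{setpjr} and Lemma~\ref{chm}, we have \dots,'' and your argument is a fleshed-out version of exactly that — Lemma~\ref{chm} supplies the group isomorphism ${\rm Hom}(A,\mathbb{Z}/p^j)\cong{}_{p^j}F(E^*(BA_+))$, Lemma~\ref{setpjr} together with the computation of $B\rho^*(x)$ identifies the underlying set with $\{\alpha_{(w_1,\dots,w_m)}\}$, and the colimit argument for $p^\infty$ is the expected one. Your extra paragraph tracing the coproduct $x\mapsto x_1+_F x_2$ through $\omega$ is a welcome addition that the paper leaves implicit, and it correctly pins down the group law on the set of Euler classes as $+_F$.

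One small misquote: the identity $[w]_E(x)+_F[w']_E(x)=[w+w']_E(x)$ does not follow from Proposition~\ref{pps}(ii), which is the \emph{multiplicative} relation $[mk]_E(x)=[m]_E([k]_E(x))$; the additive relation you need is simply the definition of the $m$-series as the $m$-fold formal sum $x+_F\cdots+_F x$. This does not affect the validity of the argument, just the attribution.
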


\subsection{Maps between $E^*$-cohomology of classifying spaces}
Let $A_1$ and $A_2$ be two abelian $p$-groups $\mathbb{Z}/{p^{i_1}}\oplus \cdots \oplus \mathbb{Z}/{p^{i_{m}}}$ and $\mathbb{Z}/{p^{j_1}}\oplus \cdots \oplus \mathbb{Z}/{p^{j_{k}}}$. Then any homomorphism $h\in{\rm Hom}(A_1,A_2)$ is determined by an integer $m\times k$-matrix
$H\in M_{m\times k}(\mathbf{Z}_{(p)})$. Since each nature number $i$ can be identified with a self-map of $U(1)$ of degree $i$, $H$ can be identified with a map from $T^m$ to $T^k$, and there are two commutative diagrams:
$$\CD
 A_1 @>\rho_{\frac{1}{p^{i_1}}}\times\cdots\times\rho_{\frac{1}{p^{i_m}}}>> T^m \\
  @V h VV @V H VV  \\
 A_2 @>\rho_{\frac{1}{p^{j_1}}}\times\cdots\times\rho_{\frac{1}{p^{j_k}}}>> T^k,
\endCD~~~~~~
\CD
 BA_1 @>B(\rho_{\frac{1}{p^{i_1}}}\times\cdots\times\rho_{\frac{1}{p^{i_m}}})>> BT^m \\
  @V Bh VV @V BH VV  \\
 BA_2 @>B(\rho_{\frac{1}{p^{j_1}}}\times\cdots\times\rho_{\frac{1}{p^{j_k}}})>> BT^k.
\endCD$$
Besides $A_1$ and $A_2$ are associated with the following two fibrations
$$\CD
 T^m/A_1\cong T^{m} @>>>BA_1@>B(\rho_{\frac{1}{p^{i_1}}}\times\cdots\times\rho_{\frac{1}{p^{i_m}}})>>BT^m,
\endCD~~\CD
 T^k/A_2\cong T^{k} @>>>BA_2@>B(\rho_{\frac{1}{p^{j_1}}}\times\cdots\times\rho_{\frac{1}{p^{j_k}}})>>BT^k.
\endCD$$

\begin{Lem}
Let $E$ be a $p$-complete, complex oriented spectrum with an associated formal group of height $n$. Then
there is a Leray-Serre spectral sequence of $T^m \to ET^m \to BT^m$ with the $E_2$-page $H^s(BT^m; E^t(T^m))\cong H^s(BT^m;\mathbb{Z}/p)\otimes E^t(T^m)\cong\mathbb{Z}/p\psb{x_1,x_2,\cdots,x_m}\otimes\wedge_{E^*}[y_1,y_2,\cdots,y_m]$, and its only nontrivial differential is $d_2(1\otimes y_i)=x_i$ for $1\leq i\leq m$, which implies that it collapses at $E_3$-page.
\end{Lem}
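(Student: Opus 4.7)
The plan is to apply the multiplicative Leray--Serre spectral sequence for the generalized cohomology theory $E^*$ to the universal principal $T^m$-bundle $T^m \to ET^m \to BT^m$, and then to pin down the differential $d_2$ by naturality together with the defining property of a complex orientation. First, since $BT^m = (\mathbb{C}P^\infty)^m$ is simply connected, the local coefficient system of $E^*(T^m)$ over $BT^m$ is constant, giving $E_2^{s,t} = H^s(BT^m; E^t(T^m))$. Because $E$ is complex oriented, one has $E^*(BU(1)) = E^*\psb{x}$ with $|x|=2$, and a K\"unneth argument (using that each tensor factor is free over $E^*$) yields $E^*(BT^m) = E^*\psb{x_1,\ldots,x_m}$; the same reasoning applied to $S^1$ with $E^*(S^1) = \wedge_{E^*}[y]$, $|y|=1$, gives $E^*(T^m) = \wedge_{E^*}[y_1,\ldots,y_m]$. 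Hence the $E_2$-page is the tensor product of these two rings over $E^*$, as stated.

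Next, to identify $d_2$ I would invoke naturality of the spectral sequence with respect to the map of fibrations induced by the projection $T^m \to U(1)$ onto the $i$-th circle factor. This reduces the computation to the case $m=1$, namely the universal $U(1)$-bundle $S^1 \to EU(1) \to \mathbb{C}P^\infty$. There, the class $x \in E^2(\mathbb{C}P^\infty)$ furnished by the complex orientation is, essentially by definition, the transgression of the generator $y \in E^1(S^1)$; this is the very Euler class that enters the Gysin sequence exploited earlier in the proof of Lemma \ref{GCA}. Consequently $d_2(y) = x$ when $m=1$, and by naturality $d_2(1 \otimes y_i) = x_i$ for every $i$ in the general case.

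Finally, the Leibniz rule propagates $d_2$ uniquely to the whole $E_2$-page as the Koszul-type differential on $E^*\psb{x_1,\ldots,x_m} \otimes_{E^*} \wedge_{E^*}[y_1,\ldots,y_m]$ sending each $y_i$ to $x_i$. A standard homological computation shows that the cohomology of this Koszul complex is $E^*$ concentrated in bidegree $(0,0)$, which is exactly what is forced by convergence of the spectral sequence to $E^*(ET^m) = E^*$. Since the $E_3$-page is therefore zero outside the corner, all higher differentials $d_r$ for $r \geq 3$ vanish for trivial reasons, and the spectral sequence collapses at $E_3$. The main technical point to handle carefully is ensuring the transgression equals exactly $x_i$ and not merely a unit multiple; this is built into our choice of complex orientation and is consistent with its use in the Gysin-sequence computation of $E^*(B\mathbb{Z}/p^j_+)$ earlier in the paper.
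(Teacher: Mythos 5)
Your proof is correct, and it takes a more careful route than the paper's one-sentence argument. The paper's proof is simply ``Since $ET^m$ is contractible, then the only possible differential is $d_2(1\otimes y_i)=x_i$,'' i.e.\ a pure constraint argument: convergence to $E^*(\mathrm{pt})=E^*$ forces the exterior generators to die under $d_2$. Strictly speaking, contractibility alone only forces $d_2$ to send the $y_i$'s to an invertible $E^*$-linear combination of the $x_j$'s, so there is a residual ambiguity that the paper elides. You close that gap with two genuine additions: (a) naturality along the projections $T^m\to U(1)$ kills the off-diagonal terms and reduces to $m=1$, and (b) identifying the transgression of $y\in E^1(S^1)$ with the complex orientation $x\in E^2(\mathbb{C}P^{\infty})$ pins the coefficient to exactly $1$, consistently with the Gysin computation used in the proof of Lemma~\ref{GCA}. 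Your Koszul-complex check that the $E_3$-page is $E^*$ concentrated at the corner supplies the ``hence collapse'' that the paper leaves implicit. Two small caveats worth flagging: the identification of the transgression with the orientation class is a well-known fact but not literally ``by definition'' --- it takes the Gysin sequence or the identification of $x$ with the Euler class of the tautological bundle, which you rightly point to; and the $E_2$-page is $H^*(BT^m;\mathbb{Z})\otimes E^*(T^m)\cong E^*[x_1,\dots,x_m]\otimes_{E^*}\wedge_{E^*}[y_1,\dots,y_m]$, a \emph{polynomial} ring (the power series ring only appears in the abutment $E^*(BT^m)$ after taking the limit), so the phrase ``tensor product of these two rings'' should be read with this caveat --- though this slip traces back to the lemma statement itself rather than to your argument.
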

\begin{proof}
Since $ET^m$ is contractible, then the only possible differential is $d_2(1\otimes y_i)=x_i$ for $1\leq i\leq m$.
\end{proof}

\begin{Lem}
Let $E$ be a $p$-complete, complex oriented spectrum with an associated formal group of height $n$. Then
there is a Leray-Serre spectral sequences of $T^m \to BA_1 \to BT^m $ with the $E_2$-page $H^s(BT^m; E^t(T^m))\cong H^s(BT^m;\mathbb{Z}/p)\otimes E^t(T^m)\cong\mathbb{Z}/p\psb{x_1,x_2,\cdots,x_m}\otimes\wedge_{E^*}[y_1,y_2,\cdots,y_m]$, and its only nontrivial differential is $d_2(1\otimes y_i)=[p^{i_j}]_{E}(x_j)$ for $1\leq j\leq m$, which implies that it collapses at $E_3$-page.
\end{Lem}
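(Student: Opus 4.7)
The plan is to run the Leray--Serre spectral sequence in $E$-cohomology for the fibration $T^m \to BA_1 \to BT^m$, identify the $E_2$-page via a K\"unneth decomposition, pin down the $d_2$ differential through the identification of $BA_1 \to BT^m$ as a principal $T^m$-bundle and its classifying map, and then conclude collapse at $E_3$ using a Koszul-complex argument backed up by Lemma \ref{GCA}.

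First I would set up the $E_2$-page. Since $E^*(T^m) \cong \wedge_{E^*}[y_1,\ldots,y_m]$ is a finitely generated free $E^*$-module, the K\"unneth formula identifies the $E_2$-page of the Leray--Serre spectral sequence as
$$E_2^{*,*} \cong E^*(BT^m)\otimes_{E^*} E^*(T^m) \cong E^*\psb{x_1,\ldots,x_m}\otimes_{E^*}\wedge_{E^*}[y_1,\ldots,y_m],$$
converging to $E^*(BA_1)$.

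Next I would compute $d_2(1\otimes y_j)$ via the transgression. The fibration arises from the short exact sequence of topological groups $A_1\hookrightarrow T^m \twoheadrightarrow T^m/A_1$, where the quotient $T^m/A_1$ is identified with $T^m$ via the $p^{i_j}$-power map in the $j$-th coordinate. Consequently $BA_1 \to BT^m$ is a principal $T^m$-bundle classified by $Bq:BT^m\to B(T^m/A_1)=BT^m$, where $q:T^m\to T^m/A_1$ is the quotient; by Lemma \ref{ideu} this classifying map acts on $E$-cohomology by $x_j\mapsto [p^{i_j}]_E(x_j)$. The transgression in the Leray--Serre spectral sequence of a principal torus bundle sends each fiber generator $y_j\in E^1(T^m)$ to the first $E$-theoretic Chern class of the corresponding associated line bundle, namely the pullback of the universal class under the classifying map. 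This yields $d_2(1\otimes y_j)=[p^{i_j}]_E(x_j)$ for each $1\le j\le m$, and $d_2$ is extended to the whole page as a derivation, so the $x_j$ are permanent cycles.

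Finally I would prove collapse at $E_3$. The pair $(E_2,d_2)$ is precisely the Koszul complex of the sequence $[p^{i_1}]_E(x_1),\ldots,[p^{i_m}]_E(x_m)$ inside $E^*\psb{x_1,\ldots,x_m}$. By the Weierstrass Preparation Theorem \ref{WPT} each $[p^{i_j}]_E(x_j)$ is a non-zero-divisor, and since they involve pairwise disjoint variables they form a regular sequence; hence the Koszul complex is acyclic in positive homological degrees. Thus the $E_3$-page is concentrated on the bottom row and equal to $E^*\psb{x_1,\ldots,x_m}/([p^{i_j}]_E(x_j))_{j=1}^m$, which by Lemma \ref{GCA} coincides with $E^*(BA_1)$; no further differentials are possible, so the sequence collapses at $E_3$ as claimed.

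The main obstacle is the identification $d_2(1\otimes y_j)=[p^{i_j}]_E(x_j)$ in its full strength. A direct naturality comparison with the previous lemma's universal fibration $T^m \to ET^m \to BT^m$, where $d_2(y_j)=x_j$, combined with the fact that the induced map of fibers is the $p^{i_j}$-power map acting by multiplication by $p^{i_j}$ on $E^1(S^1)$, would only deliver the leading-order term $p^{i_j}x_j$. Capturing the full power series $[p^{i_j}]_E(x_j)$ requires working with the completed version of the Serre spectral sequence (whose bottom row is $E^*\psb{x_j}$ rather than $E^*[x_j]$) and recognizing the transgression as the first Chern class of the $p^{i_j}$-th tensor power of the tautological line bundle. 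This is precisely where the complex orientation of $E$ enters and where the passage from ordinary to generalized cohomology produces the formal-group-law correction.
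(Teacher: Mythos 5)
Your proposal is correct and executes the comparison-of-fibrations strategy properly; it is more careful than the paper's terse proof in exactly the place that matters. The paper's proof displays a commutative square with $BA_1$ mapped to $ET^m$ and identity on the base, but such a square cannot commute (the composite $BA_1 \to ET^m \to BT^m$ is nullhomotopic while $BA_1 \to BT^m$ is not); reading the diagram with the arrows reversed, so that the quotient $q\colon ET^m \to BA_1$ covers the identity on $BT^m$, the induced fiber map is the $p^{i_j}$-power self-map of $S^1$, which on $E^1(S^1)$ is multiplication by $p^{i_j}$, and naturality then yields only the leading term $d_2(y_j)=p^{i_j}x_j$. You diagnose exactly this pitfall and correctly choose instead the bundle map $\widetilde{Bq}\colon BA_1 = (Bq)^*ET^m \to ET^m$ covering the classifying map $Bq\colon BT^m \to B(T^m/A_1)\cong BT^m$, whose map of fibers is the identity; since $Bq^*(x_j)=[p^{i_j}]_E(x_j)$, naturality of $d_2$ along $\widetilde{Bq}^*$ produces the full power series, which is what the lemma asserts. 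Your observation that this only makes literal sense after completing the bottom row to $E^*\psb{x_1,\dots,x_m}$ (so that $[p^{i_j}]_E(x_j)$ actually lives there) is an honest acknowledgment of a point the paper's statement glosses over. Two small remarks: the fact $Bq^*(x_j)=[p^{i_j}]_E(x_j)$ is really the standard formal group law identity for the $H$-space self-map $\psi^{p^{i_j}}$ of $\mathbb{C}P^\infty$ rather than an application of Lemma \ref{ideu}, which concerns homomorphisms out of finite cyclic groups; and your Koszul-complex argument for the vanishing of $E_3^{*,t}$ for $t>0$ is a cleaner and more self-contained route to collapse than the paper's bare appeal to Lemma \ref{GCA}, which strictly speaking only pins down the abutment and requires a further (routine but unstated) rank-counting argument on the $E_3$-page.
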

\begin{proof}
The following commutative diagram
$$\CD
 BA_1 @>B(\rho_{\frac{1}{p^{i_1}}}\times\cdots\times\rho_{\frac{1}{p^{i_m}}})>> BT^m \\
  @V  VV @V 1_{BT^m} VV  \\
 ET^m @>>> BT^m
\endCD$$
induces a map of Leray-Serre spectral sequences, which gives differentials $d_2(1\otimes y_i)=[p^{i_j}]_{E}(x_j)$ for $1\leq j\leq m$. Then by Lemma \ref{GCA}, we conclude that it collapses at $E_3$-page.
\end{proof}

\begin{Thm}\label{Mapab}
Let $E$ be a $p$-complete, complex oriented spectrum with an associated formal group of height $n$. Let $A_1$ and $A_2$ be two abelian $p$-groups $\mathbb{Z}/{p^{i_1}}\oplus \cdots \oplus \mathbb{Z}/{p^{i_{m}}}$ and $\mathbb{Z}/{p^{j_1}}\oplus \cdots \oplus \mathbb{Z}/{p^{j_{k}}}$. Then any abelian group homomorphism $h\in{\rm Hom}(A_1,A_2)$ is determined by an integer $m\times k$-matrix
$H\in M_{m\times k}(\mathbf{Z}_{(p)})$, and the homomorphism $Bh^*: E^*({BA_2}_+)\rightarrow E^*({BA_1}_+)$ can be identified with the $E_3$-page map of Leray-Serre spectral sequences for two associated fibrations
$$\CD
 T^m/A_1\cong T^{m} @>>>BA_1@>B(\rho_{\frac{1}{p^{i_1}}}\times\cdots\times\rho_{\frac{1}{p^{i_m}}})>>BT^m,
\endCD~~\CD
 T^k/A_2\cong T^{k} @>>>BA_2@>B(\rho_{\frac{1}{p^{j_1}}}\times\cdots\times\rho_{\frac{1}{p^{j_k}}})>>BT^k.
\endCD$$
where the map of these two fibrations is given by the following commutative diagram:
$$\CD
 BA_1 @>B(\rho_{\frac{1}{p^{i_1}}}\times\cdots\times\rho_{\frac{1}{p^{i_m}}})>> BT^m \\
  @V Bh VV @V BH VV  \\
 BA_2 @>B(\rho_{\frac{1}{p^{j_1}}}\times\cdots\times\rho_{\frac{1}{p^{j_k}}})>> BT^k.
\endCD$$
\end{Thm}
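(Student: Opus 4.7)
The plan is to recognize the commutative square involving $Bh$ and $BH$ as a morphism of principal torus bundles, apply the naturality of the Atiyah--Hirzebruch--Serre spectral sequence in $E$-cohomology, and then invoke the collapse at the $E_3$-page already established in the preceding two lemmas.

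First, I would justify the matrix presentation. A homomorphism $h \in {\rm Hom}(A_1,A_2)$ is determined by the images of the standard generators of the cyclic factors of $A_1$, and this data assembles into an integer $m \times k$ matrix $H$ (well-defined modulo appropriate powers of $p$ in each row). Identifying each integer $d$ with the degree-$d$ self-map of $U(1)$ promotes $H$ to a group homomorphism $T^m \to T^k$, and the first commutative square of groups follows directly from the construction of $H$. Applying the classifying space functor $B$ gives the second commutative square of spaces, which realizes $(Bh, BH)$ as a morphism of the two principal torus bundles displayed in the statement.

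Second, I would use naturality of the Leray--Serre spectral sequence for $E^*$. A morphism of fibrations induces a morphism of the associated spectral sequences, and the induced map on the abutment is precisely $Bh^* \colon E^*(BA_2{}_+) \to E^*(BA_1{}_+)$. On the $E_2$-page, this map is the tensor product of $BH^*$ on the base with the $H$-induced map $T^m \to T^k$ on fibers; in the generators $x_\ell$ and $y_\ell$ introduced in the preceding lemmas, it is determined entirely by $H$ via K\"unneth together with $BH^*(x_\ell) = \sum_{s} H_{s\ell} \cdot_F x_s$ on $BT^k$ and the corresponding statement on the exterior generators. The preceding two lemmas then tell us that both spectral sequences have only the $d_2$ differential nonzero and collapse at $E_3$, so $E_3 = E_\infty$.

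Third, I would pass from $E_\infty$ to the actual cohomology. Because Lemma \ref{GCA} presents $E^*(BA_i{}_+)$ as $E^*\psb{x_1,\ldots}/([p^{i_\ell}]_E(x_\ell))$ with no further filtration-extension ambiguity, the identification of $E_\infty$ with $E^*(BA_i{}_+)$ is canonical and compatible with the spectral sequence map. Combining this with the naturality in the previous paragraph identifies $Bh^*$ with the induced map on the $E_3$-page, as claimed.

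The main obstacle is the bookkeeping that makes the $E_3 = E_\infty$ identification compatible with the map: one needs to verify that passing from the spectral-sequence abutment to $E^*(BA_i{}_+)$ does not introduce extension ambiguity that could obstruct the identification of $Bh^*$ with the $E_3$-page map. Here this is harmless because of the explicit presentation provided by Lemma \ref{GCA}, but it deserves explicit mention. A secondary technical point is to confirm the convergence and multiplicativity of the Atiyah--Hirzebruch--Serre spectral sequence for the complex-oriented, $p$-complete theory $E$ with fiber $T^m$; this is standard once one observes that $E^*(T^m) \cong \wedge_{E^*}[y_1,\ldots,y_m]$ is finitely generated and free over $E^*$, so the K\"unneth formula applies throughout.
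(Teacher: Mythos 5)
Your proposal is correct and supplies exactly the argument the paper leaves implicit: the paper states Theorem~\ref{Mapab} immediately after the two lemmas on $E_3$-collapse and gives no proof, clearly intending the result to follow from naturality of the Leray--Serre spectral sequence for the displayed map of torus bundles. One small sharpening you could add to your handling of the extension issue: since the generators $x_i$ of $E^*(BA_i{}_+)$ in Lemma~\ref{GCA} live in the bottom nonzero filtration degree of the spectral sequence and generate the $E^*$-algebra, the map on the $E_3 = E_\infty$ page determines $Bh^*$ on the generators and hence (by multiplicativity) determines it entirely, which makes the claimed identification unambiguous without appealing to the phrase ``no further filtration-extension ambiguity.''
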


\subsection{The homotopy groups $\pi_*(\cat {T}_{A,C}(E))$}
The following lemma determines all complex representations $V$ of a finite abelian $p$-group $A$ such that $V^C=0$ for any subgroup $C$ of $A$.
\begin{Lem}\label{Lmcs}
Let $A$ be an abelian group of form $\mathbb{Z}/{p^{i_1}}\oplus \cdots \oplus \mathbb{Z}/{p^{i_m}}$ and $C$ be its subgroup $\mathbb{Z}/{p^{j_1}}\oplus \cdots \oplus \mathbb{Z}/{p^{j_m}}$ with a group inclusion
\begin{align*}
\varphi:\mathbb{Z}/{p^{j_1}}\oplus \cdots \oplus \mathbb{Z}/{p^{j_m}}&\rightarrow \mathbb{Z}/{p^{i_1}}\oplus \cdots \oplus \mathbb{Z}/{p^{i_m}}\\
(w_1,\cdots,w_k)&\mapsto (p^{i_1-j_1}w_1,\cdots, p^{i_m-j_m}w_m).
\end{align*}
There is a group homomorphism from $A/C$ to $A$ as follows:
\begin{align*}
\phi:\mathbb{Z}/{p^{i_1-j_1}}\oplus \cdots \oplus \mathbb{Z}/{p^{i_m-j_m}}&\rightarrow \mathbb{Z}/{p^{i_1}}\oplus \cdots \oplus \mathbb{Z}/{p^{i_m}}\\
(w_1,\cdots,w_m)&\mapsto (p^{j_1}w_1,\cdots, p^{j_m}w_m).
\end{align*}
 Then
$$\{\rho_{(\frac{w_1}{p^{i_1}},\cdots,\frac{w_m}{p^{i_m}})}=\rho_{\frac{w_1}{p^{i_1}}}\cdots\rho_{\frac{w_m}{p^{i_m}}}: A\rightarrow U(1) \mid (w_1,\cdots,w_m)\in A-{\rm im}\phi(A/C)\}$$
 forms all irreducible complex representations $V$ of $A$ such that $V^{C}=0$.
\end{Lem}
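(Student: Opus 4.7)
The plan is to exploit the fact that each irreducible complex representation of the abelian $p$-group $A$ is one-dimensional, so for $V = \rho_{(\frac{w_1}{p^{i_1}},\cdots,\frac{w_m}{p^{i_m}})}$ the fixed subspace $V^C$ is either the whole of $\mathbb{C}$ or zero. Consequently $V^C = 0$ if and only if the restriction $\rho|_C = \rho \circ \varphi$ is a nontrivial character of $C$. So the problem reduces to identifying those tuples $(w_1,\ldots,w_m)\in A$ for which $\rho \circ \varphi$ is the trivial character.

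First I would unwind the definition using the explicit formula for $\rho_{(\frac{w_1}{p^{i_1}},\cdots,\frac{w_m}{p^{i_m}})}$ and the inclusion $\varphi$. For $(w'_1,\ldots,w'_m)\in C = \mathbb{Z}/p^{j_1}\oplus\cdots\oplus\mathbb{Z}/p^{j_m}$,
$$
\rho\bigl(\varphi(w'_1,\ldots,w'_m)\bigr) \;=\; \exp\!\left(2\pi\mathrm{i}\sum_{k=1}^{m}\frac{w_k\, p^{i_k-j_k} w'_k}{p^{i_k}}\right) \;=\; \exp\!\left(2\pi\mathrm{i}\sum_{k=1}^{m}\frac{w_k\, w'_k}{p^{j_k}}\right).
$$
This is trivial for every choice of $(w'_1,\ldots,w'_m)\in C$ precisely when each term $w_k w'_k / p^{j_k}\in\mathbb{Z}$ for all admissible $w'_k$. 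Specializing to the basis characters (take $w'_k = 1$, all other coordinates zero), this forces $p^{j_k}\mid w_k$ for every $k$.

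Next I would recognize the divisibility condition $p^{j_k}\mid w_k$ for all $k$ as precisely the statement that $(w_1,\ldots,w_m)$ lies in the image of
$$
\phi:\mathbb{Z}/p^{i_1-j_1}\oplus\cdots\oplus\mathbb{Z}/p^{i_m-j_m}\longrightarrow \mathbb{Z}/p^{i_1}\oplus\cdots\oplus\mathbb{Z}/p^{i_m},\qquad (u_1,\ldots,u_m)\mapsto (p^{j_1}u_1,\ldots,p^{j_m}u_m),
$$
since writing $w_k = p^{j_k}u_k$ gives the preimage, and conversely every element of $\operatorname{im}\phi$ has this form. Conversely, if some $w_k$ fails this divisibility, one produces a witness $w'_k\in\mathbb{Z}/p^{j_k}$ with $w_k w'_k/p^{j_k}\notin\mathbb{Z}$, making $\rho\circ\varphi$ nontrivial on $C$.

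Combining the two directions, $V^C = \mathbb{C}$ iff $(w_1,\ldots,w_m)\in \operatorname{im}\phi(A/C)$, and so $V^C = 0$ iff $(w_1,\ldots,w_m)\in A\setminus\operatorname{im}\phi(A/C)$. Since these $\rho_{(\frac{w_1}{p^{i_1}},\cdots,\frac{w_m}{p^{i_m}})}$ exhaust all irreducible complex representations of $A$ by the representation theory of finite abelian groups, the stated set is exactly the desired collection. The only delicate point is the indexing bookkeeping between $A$, $C$, and $A/C$ under the maps $\varphi$ and $\phi$; once this is laid out carefully there is no further obstacle.
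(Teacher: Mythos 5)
Your proof is correct and follows essentially the same path as the paper's: both restrict a one-dimensional character $\rho_{(w_1/p^{i_1},\ldots,w_m/p^{i_m})}$ to $C$ via $\varphi$, compute $\rho(\varphi(w'_1,\ldots,w'_m))=\exp\bigl(2\pi\mathrm{i}\sum_k w_k w'_k/p^{j_k}\bigr)$, observe this is identically $1$ exactly when $p^{j_k}\mid w_k$ for all $k$, and identify that divisibility condition with membership in $\operatorname{im}\phi$. Your explicit remark that $V^C\in\{0,\mathbb{C}\}$ by one-dimensionality, and your ``take $w'_k=1$'' step to prove necessity, are small elaborations of points the paper leaves implicit.
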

\begin{proof}
Note that
$$\{\rho_{(\frac{w_1}{p^{i_1}},\cdots,\frac{w_m}{p^{i_m}})}: A\rightarrow U(1) \mid (w_1,\cdots,w_m)\in A\}$$
 formed all irreducible complex representations of $A$. Then for any $(u_1,\cdots,u_m)\in C$, we have
\begin{align*}
\rho_{(\frac{w_1}{p^{i_1}},\cdots,\frac{w_m}{p^{i_m}})}(\varphi(u_1,\cdots,u_m))&=\rho_{\frac{w_1}{p^{i_1}}}(p^{i_1-j_1}u_1)
\cdots\rho_{\frac{w_m}{p^{i_m}}}(p^{i_m-j_m}u_m)\\
&=e^{2\pi {\rm i}(\frac{w_1u_1 }{p^{j_1}}+\cdots+\frac{w_mu_m }{p^{j_m}})}\\
&=\left\{\begin{array}{ll}
1&~~\text{if $p^{j_1}|w_1, \cdots, p^{j_m}|w_m $},\\
{\rm nonconstant}\,&
~~\text{Otherwise}.
\end{array}\right.
\end{align*}
And $p^{j_1}|w_1,\cdots, p^{j_m}|w_m \Leftrightarrow (w_1, \cdots, w_m)\in {\rm im}\phi(A/C)$.
\end{proof}

Now, we calculate the homotopy group of the generalized Tate spectrum $\cat T_{A,C}(E)$.
\begin{Thm}\label{bgcoh}
Let $m$ be a positive integer and $E$ be a $p$-complete, complex oriented spectrum with an associated formal group of height $n$. Let $A$ be an abelian $p$-group of form $\mathbb{Z}/{p^{i_1}}\oplus \cdots \oplus \mathbb{Z}/{p^{i_m}}$ and $C$ be its subgroup $\mathbb{Z}/{p^{j_1}}\oplus \cdots \oplus \mathbb{Z}/{p^{j_m}}$ with $j_k\leq i_k$ for $1\leq k\leq m$. There is a group homomorphism $\phi$ from $A/C$ to $A$ as follows:
\begin{align*}
\phi:\mathbb{Z}/{p^{i_1-j_1}}\oplus \mathbb{Z}/{p^{i_2-j_2}}\oplus \cdots \oplus \mathbb{Z}/{p^{i_m-j_m}}&\rightarrow \mathbb{Z}/{p^{i_1}}\oplus\mathbb{Z}/{p^{i_2}}\oplus\cdots\oplus \mathbb{Z}/{p^{i_m}}\\
(w_1,w_2,\cdots,w_m)&\mapsto (p^{i_1-j_1}w_1,p^{i_2-j_2}w_2,\cdots, p^{i_m-j_m}w_m).
\end{align*}
Then
$$\pi_*(\cat T_{A,C}(E))\cong L_C^{-1}E^*\psb{x_1,\cdots,x_{m}}/([p^{i_1}]_{E}(x_1),\cdots,[p^{i_m}]_{E}(x_m)),$$
where the multiplicatively closed set $L_C$ is generated by the set
\begin{align*}
M_C=\{\alpha_{(w_1,\cdots,w_m)}=[w_1]_{E}(x_1)+_F \cdots +_F [w_m]_{E}(x_m)\in E^*(BA_+)\mid(w_1,\cdots,w_m)\in A-{\rm im}\phi(A/C)\}.
\end{align*}
\end{Thm}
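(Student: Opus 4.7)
The plan is to assemble the theorem from the previously established building blocks in Section~\ref{hggts}, with Costenoble's theorem as the main engine. First I would apply Proposition~\ref{Sfy} to replace $\cat T_{A,C}(E)$ by the more tractable model $(\tilde{\Phi}^C(F(EA_+,{\rm inf}_e^A(E))))^{A/C}$, so that
$$\pi_*(\cat T_{A,C}(E)) \cong \pi^{A/C}_*\bigl(\tilde{\Phi}^C(F(EA_+,{\rm inf}_e^A(E)))\bigr).$$
Next I would invoke Costenoble's Theorem~\ref{Cos} with $k_G = F(EA_+,{\rm inf}_e^A(E))$, $N = C$, and the $A/C$-spectrum $X = S^0$. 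This immediately exhibits the right-hand side as the localization of $\pi^A_*(F(EA_+,{\rm inf}_e^A(E)))$ obtained by inverting the Euler classes $\chi_V$ of all complex representations $V$ of $A$ with $V^C = 0$.

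Now I would use the adjunction and the Spanier--Whitehead style identifications already spelled out after Theorem~\ref{Cos} to rewrite $\pi^A_*(F(EA_+,{\rm inf}_e^A(E)))$ as the ordinary $E$-cohomology ring $E^*(BA_+)$. Applying Lemma~\ref{GCA} then presents this as
$$E^*\psb{x_1,\ldots,x_m}/\bigl([p^{i_1}]_E(x_1),\ldots,[p^{i_m}]_E(x_m)\bigr).$$
Lemma~\ref{ideu} translates each Euler class $\chi_{\rho_{(w_1/p^{i_1},\ldots,w_m/p^{i_m})}}$ into the explicit polynomial $\alpha_{(w_1,\ldots,w_m)} = [w_1]_E(x_1)+_F\cdots+_F[w_m]_E(x_m)$ under this isomorphism. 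Finally, Lemma~\ref{Lmcs} identifies the set of irreducible complex representations $V$ with $V^C = 0$ as precisely those indexed by $(w_1,\ldots,w_m) \in A - {\rm im}\,\phi(A/C)$. Since every complex $A$-representation is a sum of irreducibles, its Euler class is a product of these $\alpha_{(w_1,\ldots,w_m)}$, so inverting all relevant Euler classes is the same as inverting the multiplicatively closed set $L_C$ generated by $M_C$.

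The main conceptual hurdle is the careful application of Costenoble's theorem: it is stated for finite $G/N$-CW spectra, so one must verify that taking $X = S^0$ (viewed as a trivial $A/C$-spectrum) genuinely yields the $A/C$-equivariant homotopy ring as the inverted ring, and that the grading translation via the equivariant suspension isomorphism $\chi_V \in k_G^V(S^0) \cong k_G^{|V|}(S^{|V|-V})$ is compatible with the ungraded localization statement. A secondary technical point is verifying that one only needs to invert Euler classes of irreducible representations rather than all representations with $V^C = 0$, which follows from the multiplicativity of Euler classes under direct sums and the fact that a representation has $V^C = 0$ iff none of its irreducible summands has a nontrivial $C$-fixed subspace, reducing the localizing set to the $M_C$ described in the statement.
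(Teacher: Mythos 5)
Your proposal is correct and follows essentially the same route as the paper's proof: Proposition~\ref{Sfy} plus Costenoble's Theorem~\ref{Cos} to identify $\pi_*(\cat T_{A,C}(E))$ with a localization of $E^*(BA_+)$, then Lemma~\ref{GCA} for the presentation of $E^*(BA_+)$, Lemma~\ref{Lmcs} to pin down the relevant irreducible representations, and Lemma~\ref{ideu} to identify the corresponding Euler classes with the $\alpha_{(w_1,\ldots,w_m)}$. The only extra care you add — observing that by multiplicativity of Euler classes it suffices to invert the classes of irreducibles — is implicitly assumed in the paper's invocation of Lemma~\ref{Lmcs} and is a welcome clarification rather than a departure.
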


\begin{proof}
From Theorem \ref{Cos}, it follows that $\pi_{*}(\cat T_{A,C}(E))$ is the localization of
$\pi_*(F(EA_+,{\rm inf}_e^A(E)))\cong E^*(BA_+)$ obtained by inverting the Euler classes $\chi_V \in F(EA_+,{\rm inf}_e^A(E))^{|V|}(S^{|V|-V})$ of those complex representations $V$ of $A$ such that $V^C=0$.
By Theorem \ref{GCA}, we have
$$E^*(BA_+)\cong E^*\psb{x_1,\cdots,x_{m}}/([p^{i_1}]_{E}(x_1),\cdots,[p^{i_m}]_{E}(x_m)).$$
By Lemma \ref{Lmcs}, we have $\{\rho_{(\frac{w_1}{p^{i_1}},\cdots,\frac{w_m}{p^{i_m}})}: A\rightarrow U(1) \mid (w_1,\cdots,w_m)\in A-{\rm im}\phi(A/C)\}$ forms all irreducible complex representations $V$ of $A$ such that $V^{C}=0$. Each representation $\rho_{(\frac{w_1}{p^{i_1}},\cdots,\frac{w_m}{p^{i_m}})}: A\rightarrow U(1)$ induces a homormorphism $B\rho_{(\frac{w_1}{p^{i_1}},\cdots,\frac{w_m}{p^{i_m}})}^*: E^*(BU(1)_+)\cong E^*\psb{x}\rightarrow E^*(BA_+)$, and by Lemma \ref{ideu}, the image $B\rho_{(\frac{w_1}{p^{i_1}},\cdots,\frac{w_m}{p^{i_m}})}^*(x)$ is
the Euler class $[w_1]_{E}(x_1)+_F \cdots +_F [w_m]_{E}(x_m)=\alpha_{(w_1,\cdots,w_m)}$.
\end{proof}

\section{Generalized relations between roots and coefficients of a polynomial \label{GRRC}}
 In this section, we prove Theorem \ref{Fac-tuple} and give two applications of Corollary \ref{vanish}. Recall the definition of the root of a polynomial $f(x)$ over a commutative ring $R$. A polynomial $f(x)$ in $R[x]$ can be interpreted as a polynomial map from $R$ to $R$, where it maps $r\in R$ to $f(r)\in R$. We denote the set of such polynomial maps as ${\rm Pmap}(R,R)$. More precisely, ${\rm Pmap}(R,R)$ is the quotient $R[x]/\sim$. Let $[f(x)]$ represent the equivalence class of $f(x)$, such that $f(x)\sim g(x)$ if for any $r\in R$, $f(r)=g(r)$. An element $r\in R$ is then referred to as a $\it{root}$ of $f(x)\in R[x]$ if $r$ is a zero of the polynomial map $[f(x)]$, i.e., $f(r)=0$.  It is worth noting that if two polynomials are equal, their corresponding polynomial maps must also be equal. However, the converse may not be true, as exemplified by $x^2+x\in \mathbb{F}_2[x]$, which is unequal to $0$ as a polynomial over $\mathbb{F}_2$, but equals $0$ as a polynomial map from $\mathbb{F}_2$ to itself. So there are a map $\lambda:R[x]\rightarrow{\rm Pmap}(R,R)$ with $\lambda(f(x))=[f(x)]$ for $f(x)\in R[x]$ and a question that what conditions does $R$ satisfy with such that $\lambda$ is injective. To serve our purpose here, we restrict ourself to a narrow version of this question. Let $R[x]_n$ denote the set of polynomials of degree at most $n$ and $\lambda_{R[x]_n}$ denote the map that restricts $\lambda$ to $R[x]_n$, then what conditions does $R$ satisfy with such that $\lambda_{R[x]_n}$ is injective?

To give a sufficient condition, we take a fresh look at the equality $f(r)=0$ induced by a root $r\in R$ of $f(x)$. Without loss of generality, we may suppose that $f(x)=a_0+a_1x+\cdots+a_nx^n$ with $a_0, a_1, \cdots, a_n\in R$. $f(r)=0$ means that the ``$R$-vector" $(a_0,a_1,\cdots, a_n)$ is a solution of the homogeneous $R$-linear equation $x_0+rx_1+\cdots+r^nx_n=0$. Then we need the definition of ``$R$-vector", $R$-linear and so on.

\subsection{Basic concepts }
We first introduce the notion of ``$R$-vector space".
\begin{Def}\label{R-space}
Let $R$ be a commutative ring with 1 and $n$ be a positive integer. Let $R^n=\{(a_1,a_2,\cdots,a_n)\mid a_i\in R, 1\leq i\leq n\}$, then
for $(a_1,a_2,\cdots,a_n), (b_1,b_2,\cdots,b_n)\in R^n$,
$$(a_1,a_2,\cdots,a_n)=(b_1,b_2,\cdots,b_n)\Leftrightarrow a_i=b_i (1\leq i\leq n)\in R.$$
$R^n$ has two operations as follows, for $(a_1,a_2,\cdots,a_n), (b_1,b_2,\cdots,b_n)\in R^n$, $r\in R$, then
\begin{enumerate}
\item[{\rm (i)}]{\rm Vector addition}: $(a_1,a_2,\cdots,a_n)+(b_1,b_2,\cdots,b_n)=(a_1+b_1,a_2+b_2,\cdots,a_n+b_n)$;
\item[{\rm (ii)}]{\rm Scalar multiplication}: $r(a_1,a_2,\cdots,a_n)=(ra_1,ra_2,\cdots,ra_n)$.
\end{enumerate}
These two operations on $R^n$ satisfy the following eight rules. For any $\mathbf{a},\mathbf{b},\mathbf{c}\in R^n$, $r,k\in R$,

\begin{enumerate}
\item$\mathbf{a}+\mathbf{b}=\mathbf{b}+\mathbf{a}$;
\item$(\mathbf{a}+\mathbf{b})+\mathbf{c}=\mathbf{a}+(\mathbf{b}+\mathbf{c})$;
\item there is a unique vector $\mathbf{0}=(0,0,\cdots,0)$ in $R^n$ such that $\mathbf{0}+\mathbf{a}=\mathbf{a}+\mathbf{0}=\mathbf{a}$,
then $\mathbf{0}$ is called the $\mathbf{zero}$ $\mathbf{vector}$ of $R^n$;
\item for any $\mathbf{a}=(a_1,a_2,\cdots,a_n)\in R^n$, there is a vector $-\mathbf{a}=(-a_1,-a_2,\cdots,-a_n)\in R^n$,
called the $\mathbf{negative}$ of $\mathbf{a}$, such that $\mathbf{a}+(-\mathbf{a})=(-\mathbf{a})+\mathbf{a}=\mathbf{0}$;
\item $1(\mathbf{a})=\mathbf{a}$;
\item $(kr)\mathbf{a}=k(r\mathbf{a})$;
\item $(k+r)\mathbf{a}=k\mathbf{a}+r\mathbf{a}$;
\item$r(\mathbf{a}+\mathbf{b})=r\mathbf{a}+r\mathbf{b}$.
\end{enumerate}
Then $R^n$ is called an $n$-$\mathbf{dimensional}$ $R$-$\mathbf{vector}$ $\mathbf{space}$ or $R$-$\mathbf{linear}$ $\mathbf{space}$, and any $\mathbf{a}\in R^n$ is called an $n$-$\mathbf{dimensional}$ $R$-$\mathbf{vector}$.
\end{Def}
And we have the notion of subspace.
\begin{Def}\label{Rsub}
If a nonempty subset $U$ of $R^n$ satisfies that
\begin{enumerate}
\item[{\rm (i)}]$\mathbf{a},\mathbf{b}\in U\Rightarrow \mathbf{a}+\mathbf{b}\in U$;
\item[{\rm (ii)}]$\mathbf{a}\in U, r\in R \Rightarrow r\mathbf{a}\in U$.
Then $U$ is called an $R$-$\mathbf{vector}$ $\mathbf{subspace}$ of $R^n$.
\end{enumerate}
\end{Def}

\begin{Prop}\label{Rle-Rss}
Let $R$ be a commutative ring with 1. For $t_1,t_2,\cdots,t_n\in R$, if there is a system of homogeneous $R$-linear equations
\begin{align}\label{nRlss}
\begin{cases}
x_0+t_1x_1+t_1^2x_2+\cdots+t_1^{n-1}x_{n-1}=0\\
x_0+t_2x_1+t_2^2x_2+\cdots+t_2^{n-1}x_{n-1}=0\\
~~~~~~~~~~~~~~~\vdots\\
x_0+t_nx_1+t_n^2x_2+\cdots+t_n^{n-1}x_{n-1}=0
\end{cases}
\end{align}
with variables $x_0, x_1, \cdots,x_{n-1}$. Then the solution of Equations \ref{nRlss} is an $R$-vector subspace of $R^n$.
\end{Prop}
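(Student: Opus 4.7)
The plan is to verify directly that the solution set $U \subseteq R^n$ of the system (\ref{nRlss}) satisfies the two closure conditions in Definition \ref{Rsub}, together with nonemptiness. Since the proposition is an elementary check of the vector-subspace axioms (with the coefficient ring $R$ replacing a field), there is no deep obstacle; the whole argument rests on distributivity and associativity in $R$ applied row by row.

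First I would observe that $\mathbf{0}=(0,0,\cdots,0)$ is manifestly a solution of each of the $n$ equations, so $U$ is nonempty. Next, to check closure under vector addition, take any two solutions $\mathbf{a}=(a_0,a_1,\cdots,a_{n-1})$ and $\mathbf{b}=(b_0,b_1,\cdots,b_{n-1})$ in $U$. For each index $1\leq i\leq n$, substituting $\mathbf{a}+\mathbf{b}=(a_0+b_0,\cdots,a_{n-1}+b_{n-1})$ into the $i$-th equation of (\ref{nRlss}) and expanding via distributivity in $R$ gives
\[
\sum_{k=0}^{n-1}t_i^{k}(a_k+b_k)=\sum_{k=0}^{n-1}t_i^{k}a_k+\sum_{k=0}^{n-1}t_i^{k}b_k=0+0=0,
\]
so $\mathbf{a}+\mathbf{b}\in U$.

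For closure under scalar multiplication, take $\mathbf{a}\in U$ and $r\in R$. Substituting $r\mathbf{a}=(ra_0,ra_1,\cdots,ra_{n-1})$ into the $i$-th equation and using associativity and commutativity of multiplication in $R$ yields
\[
\sum_{k=0}^{n-1}t_i^{k}(ra_k)=r\sum_{k=0}^{n-1}t_i^{k}a_k=r\cdot 0=0,
\]
so $r\mathbf{a}\in U$. By Definition \ref{Rsub}, $U$ is therefore an $R$-vector subspace of $R^n$, which completes the argument. The only point requiring any care is the bookkeeping of indices across all $n$ equations, but because the verification is identical in each row, there is no genuine obstacle to the proof.
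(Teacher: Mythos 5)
Your proof is correct, and since the paper states this proposition without supplying a proof (treating it as a routine verification), your direct check of the subspace axioms from Definition \ref{Rsub} — nonemptiness via $\mathbf{0}$, closure under addition, and closure under scalar multiplication, each reduced row-by-row to distributivity and commutativity in $R$ — is exactly the argument the paper implicitly relies on.
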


\subsection{$n$-tuple of a polynomial over a commutative ring}
Now, we give a sufficient condition such that the solution of Equations \ref{nRlss} is unique.
\begin{Lem}\label{eqnt}
Let $R$ be a commutative ring with 1. For $t_1,t_2,\cdots,t_n\in R$, any $1\leq i\neq j\leq n$, $t_i-t_j$ is not zero or zero-divisor. If there is a system of homogeneous $R$-linear equations \ref{nRlss} with variables $x_0, x_1, \cdots,x_{n-1}$, then the solution of Equations \ref{nRlss}
is the subspace $\{\mathbf{0}\}$ of $R^n$.
\end{Lem}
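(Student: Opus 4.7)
The plan is to recognize the coefficient matrix of the system \eqref{nRlss} as a Vandermonde matrix and exploit the classical Vandermonde determinant identity, which remains valid over any commutative ring with $1$. Write
\[
V=\begin{pmatrix} 1 & t_1 & t_1^2 & \cdots & t_1^{n-1}\\ 1 & t_2 & t_2^2 & \cdots & t_2^{n-1}\\ \vdots & \vdots & \vdots & & \vdots\\ 1 & t_n & t_n^2 & \cdots & t_n^{n-1}\end{pmatrix},\qquad \mathbf{x}=(x_0,x_1,\ldots,x_{n-1})^T,
\]
so that system \eqref{nRlss} is exactly $V\mathbf{x}=\mathbf{0}$. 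A short induction on $n$ (subtracting $t_1$ times column $i$ from column $i+1$ for $i=n-1,n-2,\ldots,1$, which is a purely formal row/column manipulation valid in $R[t_1,\ldots,t_n]$ and hence in $R$) gives the Vandermonde identity
\[
\det V=\prod_{1\le i<j\le n}(t_j-t_i).
\]

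The first key step is to promote the hypothesis from individual $t_i-t_j$ to $\det V$. By assumption each factor $t_j-t_i$ with $i\neq j$ is neither zero nor a zero-divisor in $R$; since the set of non-zero-divisors of a commutative ring is closed under multiplication, the product $\det V$ is likewise a non-zero-divisor.

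The second key step is the standard adjugate trick, which is what makes the argument work over an arbitrary commutative ring (rather than requiring a field or an integral domain). From $V\mathbf{x}=\mathbf{0}$, multiplying on the left by the classical adjugate $\operatorname{adj}(V)$ and using the identity $\operatorname{adj}(V)\cdot V=\det(V)\cdot I_n$ valid in $M_n(R)$, I obtain
\[
\det(V)\cdot\mathbf{x}=\mathbf{0}\qquad\text{in }R^n,
\]
i.e. $\det(V)\cdot x_k=0$ in $R$ for $0\le k\le n-1$. Since $\det(V)$ is a non-zero-divisor, this forces $x_k=0$ for each $k$, so $\mathbf{x}=\mathbf{0}$, giving that the solution space of \eqref{nRlss} is the zero subspace $\{\mathbf{0}\}$ of $R^n$ (which is indeed an $R$-vector subspace by Proposition \ref{Rle-Rss}).

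There is essentially no obstacle here: the only subtlety is making sure that all matrix manipulations (Vandermonde reduction, the adjugate identity $\operatorname{adj}(V)V=\det(V)I$) are carried out in $M_n(R)$ rather than over a field, but both are formal identities of polynomials with integer coefficients in the entries of the matrix and therefore hold in every commutative ring. So the proof reduces to citing the Vandermonde formula, observing the non-zero-divisor hypothesis propagates through the product, and applying the adjugate identity.
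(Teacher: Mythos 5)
Your proof is correct, but it takes a genuinely different route from the paper's. The paper proves Lemma \ref{eqnt} by explicit Gaussian elimination: it first observes that multiplying an equation $c_0x_0+\cdots+c_{n-1}x_{n-1}=d$ by a non-zero-divisor $t$ preserves the solution set \emph{in both directions} (because $t$ can be cancelled), and then carries out the elimination on the Vandermonde matrix row by row, dividing out the factors $t_i-t_1,\ t_i-t_2,\dots$ as they appear, until it reaches an upper-triangular matrix with $1$'s on the diagonal. Your argument instead packages all of that into the adjugate identity: compute $\det V = \prod_{i<j}(t_j-t_i)$ once and for all via the Vandermonde formula (a polynomial identity, hence valid in any commutative ring), note that a product of non-zero-divisors is a non-zero-divisor, and apply $\operatorname{adj}(V)V = \det(V)I_n$ to pass from $V\mathbf{x}=\mathbf{0}$ to $\det(V)\mathbf{x}=\mathbf{0}$, from which $\mathbf{x}=\mathbf{0}$ follows by cancelling the non-zero-divisor. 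Your approach is shorter and more conceptual; the paper's elimination approach has the advantage that the very same elimination computation is reused verbatim in the proof of Lemma \ref{Nohc} (the non-homogeneous case), where the explicit triangular form is needed to see that the solution coefficients are divisible by $\det V$. In the paper's organization, the Vandermonde determinant computation you rely on only appears \emph{after} this lemma, as Lemma \ref{DV}; your proof effectively pulls that forward, which is harmless but worth being aware of if you are trying to respect the paper's dependency order.
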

\begin{proof}
For constants $c_0, c_1,\cdots, c_{n-1},d \in R$, if $t$ is not zero or zero-divisor, then the solutions of
$c_0x_0+c_1x_1+\cdots+c_{n-1}x_{n-1}=d$ and  $tc_0x_0+tc_1x_1+\cdots+tc_{n-1}x_{n-1}=td$ are the same, that is
$$c_0x_0+c_1x_1+\cdots+c_{n-1}x_{n-1}=d \Leftrightarrow tc_0x_0+tc_1x_1+\cdots+tc_{n-1}x_{n-1}=td.$$
We use Gaussian elimination to solve the $R$-linear equations:
$$\left(
  \begin{array}{ccccc}
   1     & t_1 & t_1^2 &\cdots & t_1^{n-1} \\
   1     & t_2 & t_2^2 &\cdots & t_2^{n-1} \\
   1     & t_3 & t_3^2 &\cdots & t_3^{n-1} \\
    \vdots & \vdots & \vdots& \ddots& \vdots \\
   1     & t_{n} & t_{n}^2 &\cdots & t_{n}^{n-1} \\
  \end{array}
\right)\rightarrow \left(
  \begin{array}{ccccc}
   1     & t_1 & t_1^2 &\cdots & t_1^{n-1} \\
   0     & t_2-t_1 & t_2^2-t_1^2 &\cdots & t_2^{n-1}-t_1^{n-1} \\
   0     & t_3-t_1 & t_3^2-t_1^2 &\cdots & t_3^{n-1}-t_1^{n-1} \\
    \vdots & \vdots & \vdots& \ddots& \vdots \\
   0     & t_{n}-t_1 & t_{n}^2-t_1^2 &\cdots & t_{n}^{n-1}-t_1^{n-1} \\
  \end{array}
\right)\rightarrow \left(
  \begin{array}{ccccc}
   1     & t_1 & t_1^2 &\cdots & t_1^{n-1} \\
   0     & 1 & t_1+t_2 &\cdots & \sum^{n-2}_{i=0}t_1^{n-2-i}t_2^{i} \\
   0     & 1 & t_1+t_3 &\cdots & \sum^{n-2}_{i=0}t_1^{n-2-i}t_3^{i} \\
    \vdots & \vdots & \vdots& \ddots& \vdots \\
   0     & 1 & t_1+t_{n}&\cdots & \sum^{n-2}_{i=0}t_1^{n-2-i}t_{n}^{i} \\
  \end{array}
\right),$$
then inductively carry out the above process and finally obtain the upper triangular matrix
$$\left(
  \begin{array}{ccccc}
   1     & t_1 & t_1^2 &\cdots & t_1^{n-1} \\
   0     & 1 & t_1+t_2 &\cdots & \sum^{n-2}_{i=0}t_1^{n-2-i}t_2^{i} \\
   0     & 0 & 1 &\cdots & \sum^{n-2}_{i=1}t_1^{n-2-i}\sum^{i-1}_{j=0}t_2^{i-1-j}t_3^j \\
    \vdots & \vdots & \vdots& \ddots& \vdots \\
   0     & 0 & 0 &\cdots & 1 \\
  \end{array}
\right),$$
this finishes the proof.
\end{proof}
The coefficient matrix $V(t_1,t_2,\cdots,t_n)=(t_i^{j-1})_{1\leq i,j\leq n}$ of Equation \ref{nRlss} is a Vandermonde matrix. The determinant $\det V(t_1,t_2,\cdots,t_n)$ of $V(t_1,t_2,\cdots,t_n)$ can be calculated in the conventional manner without any specific assumptions placed on $t_1,t_2,\cdots,t_n$.
\begin{Lem}\label{DV}
Let $R$ be a commutative ring with 1. Let $t_1,t_2,\cdots,t_n\in R$. Then the determinant
$$\det V(t_1,t_2,\cdots,t_n)=\prod_{1\leq j<i\leq n}(t_i-t_j).$$
Besides for any $1\leq i\neq j\leq n$, $t_i-t_j$ is not zero or zero-divisor if and only if $\det V(t_1,t_2,\cdots,t_n)$ is not zero or zero-divisor.
\end{Lem}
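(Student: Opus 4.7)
The plan has two parts, matching the two claims of the lemma. For the Vandermonde identity I will proceed by induction on $n$; the argument works verbatim over an arbitrary commutative ring because it only uses the multilinear and alternating behaviour of the determinant, with no appeal to division. The base case $n=1$ is immediate. For the inductive step I apply the column operations $C_j \leftarrow C_j - t_1 C_{j-1}$ for $j = n, n-1, \ldots, 2$, performed in this descending order so that successive operations do not interfere. The first row collapses to $(1, 0, \ldots, 0)$ and the $i$-th row (for $i \ge 2$) becomes $(1,\, t_i - t_1,\, t_i(t_i - t_1),\, \ldots,\, t_i^{n-2}(t_i - t_1))$. Expanding along the first row and then pulling the factor $t_i - t_1$ out of the $(i-1)$-th row of the resulting $(n-1) \times (n-1)$ matrix gives the recursion
\[
\det V(t_1, \ldots, t_n) = \prod_{i=2}^{n}(t_i - t_1) \cdot \det V(t_2, \ldots, t_n),
\]
and the inductive hypothesis closes the computation.

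For the equivalence I write $D = \prod_{1 \le j < i \le n}(t_i - t_j)$ and use the standard fact that in a commutative ring the product of non-zero-divisors is a non-zero-divisor: if $a, b$ are both regular then $(ab)c = a(bc) = 0$ forces $bc = 0$ and hence $c = 0$. Iterating this over the factors of $D$ shows that if every $t_i - t_j$ (with $i \ne j$) is both nonzero and a non-zero-divisor then so is $D$. For the reverse direction I argue by contrapositive: if some fixed $t_i - t_j$ is zero or a zero-divisor, pick a nonzero $c \in R$ with $(t_i - t_j)c = 0$, taking $c = 1$ when $t_i - t_j = 0$. Since $t_i - t_j$ appears as a factor of $D$, we obtain $Dc = 0$, so $D$ is itself either zero (in the case $t_i - t_j = 0$) or a zero-divisor.

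The argument has no real obstacle; the only bookkeeping point is to keep the convention straight, since \emph{not zero or zero-divisor} means regular, and when a factor already vanishes one still needs to produce a witness (by taking $c = 1$) to preserve symmetry of the contrapositive. The inductive column reduction for the Vandermonde identity is the step that most benefits from being written out carefully, to make transparent that it never uses field-like cancellation.
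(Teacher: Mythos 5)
Your proof is correct and follows the same route the paper gestures at: the paper simply invokes ``the established classical technique for computing the determinant of a Vandermonde matrix'' and observes that a product $ab$ is regular (nonzero and not a zero-divisor) iff both $a$ and $b$ are. Your write-up usefully unpacks the first claim into the inductive column reduction $C_j \leftarrow C_j - t_1 C_{j-1}$, which is exactly the classical argument and indeed uses only multilinearity and alternation, and your contrapositive argument with the witness $c$ (taking $c=1$ when the factor vanishes) is a sound version of the paper's one-line regularity observation.
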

\begin{proof}
By employing the established classical technique for computing the determinant of a Vandermonde matrix, we obtain
$$\det V(t_1,t_2,\cdots,t_n)=\prod_{1\leq j<i\leq n}(t_i-t_j).$$

Note that for $a,b \in R$, $a$ and $b$ are not zero or zero-divisor if and only if $ab$ is not zero or zero-divisor, so we have
for any $1\leq i\neq j\leq n$, $t_i-t_j$ is not zero or zero-divisor if and only if $\prod_{1\leq j<i\leq n}(t_i-t_j)$ is not zero or zero-divisor.
\end{proof}

Lemma \ref{eqnt} motivates us to introduce the following notions.
\begin{Def}\label{ntuple}
Let $R$ be a commutative ring with 1. we define an $n$-$\mathbf{tuple}$ $\{t_1,t_2,\cdots, t_n\}$ of $R$ such that for any $1\leq i\neq j\leq n$, $t_i-t_j$ is not zero or zero-divisor; if for any $1\leq i\neq j\leq n$, $t_i-t_j$ is invertible in $R$, we call $\{t_1,t_2,\cdots, t_n\}$ an $\mathbf{invertible}$ $n$-$\mathbf{tuple}$ of $R$. Let $f(x)$ be a polynomial over $R$, we call $\{r_1,r_2,\cdots, r_n\}$ an $n$-$\mathbf{tuple}$ of $f(x)$ if it is an $n$-tuple of $R$ and also is a subset of roots of $f(x)$.
\end{Def}

To explain the meaning of fractions of Theorem \ref{Fac-tuple}, we give the following definition.
\begin{Def}
Let $R$ be a commutative ring with 1, and $d$ is not zero or zero-divisor in $R$. For $r\in R$, we call $t$ is $\mathbf{divisible}$ by $d$ if
there is an element $t'\in R$ such that $t=dt'$.
\end{Def}

\begin{Rem}
Since $d$ is not zero or zero-divisor in $R$, for $t\in R$, the solution of $t=dx$ in $R$ is unique.
\end{Rem}

With the notion of $n$-tuple, we give a sufficient condition to address the question posted in the introduction of this section.
\begin{Prop}\label{injnt}
Let $R$ be a commutative ring with 1. If $R$ has an $n$-tuple $\{t_1,t_2,\cdots, t_n\}$, then $\lambda_{R[x]_{n-1}}$ is injective.
\end{Prop}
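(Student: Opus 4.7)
The plan is to reduce the injectivity of $\lambda_{R[x]_{n-1}}$ to the uniqueness-of-solutions statement already proved in Lemma \ref{eqnt}. Since $\lambda$ is $R$-linear on the $R$-module $R[x]_{n-1}$, showing injectivity amounts to showing that its kernel is trivial: if $h(x) \in R[x]_{n-1}$ satisfies $h(r)=0$ for every $r \in R$, then $h(x)$ is the zero polynomial.

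First I would take an arbitrary $h(x) = a_0 + a_1 x + \cdots + a_{n-1} x^{n-1}$ with $\lambda(h(x)) = 0$. In particular, evaluating at the $n$ elements $t_1, t_2, \ldots, t_n$ of the prescribed $n$-tuple gives $h(t_i) = 0$ for each $1 \leq i \leq n$. Reading these $n$ vanishing equations as a system of homogeneous $R$-linear equations in the unknowns $a_0, a_1, \ldots, a_{n-1}$ yields precisely the system \ref{nRlss} from the statement of Lemma \ref{eqnt}, whose coefficient matrix is the Vandermonde matrix $V(t_1,\ldots,t_n)$.

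Next I would invoke Lemma \ref{eqnt}: since $\{t_1,\ldots,t_n\}$ is an $n$-tuple, by Definition \ref{ntuple} each difference $t_i - t_j$ with $i \neq j$ is neither zero nor a zero-divisor, and the lemma forces the only solution of this system to be $(a_0,\ldots,a_{n-1}) = \mathbf{0}$. Hence $h(x) = 0$ in $R[x]$, which is exactly the statement that $\lambda_{R[x]_{n-1}}$ has trivial kernel, completing the proof.

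There is essentially no obstacle here: all the substantive content has been isolated in Lemma \ref{eqnt} (uniqueness of solutions to the Vandermonde system) and in the $n$-tuple hypothesis on $R$. The only small conceptual point worth flagging is that injectivity of $\lambda_{R[x]_{n-1}}$ as a map of sets coincides with the vanishing of its kernel as an $R$-module map, which follows because $\lambda$ is evidently $R$-linear; everything else is a direct application of the lemma.
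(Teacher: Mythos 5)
Your proof is correct and follows essentially the same route as the paper's: evaluate the putative kernel element at the $n$-tuple $\{t_1,\ldots,t_n\}$, interpret the coefficient vector as a solution of the homogeneous Vandermonde system, and invoke Lemma \ref{eqnt} to conclude it is zero. The only cosmetic difference is that you phrase it via triviality of the kernel of the $R$-linear map $\lambda_{R[x]_{n-1}}$, whereas the paper argues by contradiction with two polynomials $f_1 \neq f_2$ and considers $f_1 - f_2$; these are the same argument.
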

\begin{proof}
For any two polynomial $f_1(x)\neq f_2(x)\in R[x]_{n-1}$, without loss of generality we may suppose that $f_1(x)=\sum^{n-1}_{k=0}a_kx^k, f_2(x)=\sum^{n-1}_{k=0}b_kx^k$. Then $f_1(x)\neq f_2(x)$ implies that there is $1\leq k_0\leq n-1$ such that $a_{k_0}-b_{k_0}\neq 0$.
If $\lambda_{R[x]_{n-1}}(f_1(x))=\lambda_{R[x]_{n-1}}(f_2(x))$, that is $[f_1(y)-f_2(y)=(f_1-f_2)(y)]=[0]$, which implies that
$(f_1-f_2)(t_i)=0$ for any $1\leq i\leq n$. Then the $n$-dimensional $R$-vector $(a_0-b_0, a_1-b_1, \cdots,a_{n-1}-b_{n-1})$ is a solution
of Equations \ref{nRlss}. And by Lemma \ref{eqnt}, the solution of Equations \ref{nRlss} is $\{\mathbf{0}\}$. So $(a_0-b_0, a_1-b_1, \cdots,a_{n-1}-b_{n-1})=(0,0,\cdots,0)$, which contradicts to our assumption that $a_{k_0}-b_{k_0}\neq 0$. This finishes the proof.
\end{proof}

%\begin{Lem}\label{injint}
%Let $R$ be a commutative ring with 1. If $R$ has an invertible $n$-tuple $\{r_1,r_2,\cdots, r_n\}$, then for a polynomial $f(x)\in R[x]_{n-1}$, then
%$$f(x)=\sum^n_{j=0}f(r_j)\prod_{1\leq i\leq n, i\neq j}\frac{x-r_i}{r_j-r_i}.$$
%\end{Lem}
%\begin{proof}
%Note that $\lambda_{R[x]_{n-1}}(f(x)-\sum^n_{j=o}f(r_j)\prod_{1\leq i\leq n, i\neq %j}\frac{x-r_i}{r_j-r_i})=[f(y)-\sum^n_{j=o}f(r_j)\prod_{1\leq i\leq n, i\neq j}\frac{y-r_i}{r_j-r_i}=\sum^{n-1}_{k=0}c_ky^k]$ with $\deg g\leq %n-1$ and for any $1\leq i\leq n$, $\sum^{n-1}_{k=0}c_k{r_i}^k=0$. Similarly, by use the proof of Lemma \ref{injnt}, we have $c_k=0\in R$
%for any $0\leq k\leq n-1$, then $[f(y)-\sum^n_{j=o}f(r_j)\prod_{1\leq i\leq n, i\neq j}\frac{y-r_i}{r_j-r_i}]=[0]$. So by Lemma \ref{injnt}, we %obtain the result.
%\end{proof}
The subsequent lemma plays a pivotal role in proving the meaningfulness of the fractions stated in Theorem \ref{Fac-tuple}.
\begin{Lem}\label{DVD}
Let $R$ be a commutative ring with 1 and $R$ has an $n$-tuple $\{t_1,t_2,\cdots, t_n\}$. Let $\alpha_i$ denote the column $n$-dimensional $R$-vector $(t_1^i,t_2^i,\cdots,t_n^i)^T$. If $0\leq i_1<i_2<\cdots<i_{n-1}$, then $\det(\alpha_0,\alpha_{i_1},\alpha_{i_2},\cdots,\alpha_{i_{n-1}})$ is divisible by $\det(\alpha_0,\alpha_1,\cdots,\alpha_{n-1})=\det V(t_1, t_2, \cdots, t_n)$.
\end{Lem}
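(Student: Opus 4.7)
The plan is to lift the problem to the universal polynomial ring, prove divisibility there using the fact that a UFD is available, and then specialize back to $R$ via the obvious evaluation map.

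First I would introduce independent indeterminates $T_1,\dots,T_n$ and form, inside the UFD $S=\mathbb{Z}[T_1,\dots,T_n]$, the column vectors $\tilde\alpha_k=(T_1^k,T_2^k,\dots,T_n^k)^T$. Set
$$D \;=\; \det(\tilde\alpha_0,\tilde\alpha_{i_1},\tilde\alpha_{i_2},\dots,\tilde\alpha_{i_{n-1}})\;\in\; S,$$
and let $\tilde\Delta=\det V(T_1,\dots,T_n)=\prod_{1\le j<i\le n}(T_i-T_j)$ be the universal Vandermonde determinant. The central claim at this universal level is that $\tilde\Delta$ divides $D$ in $S$.

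To see this, I would fix any pair $1\le a<b\le n$ and observe that substituting $T_a=T_b$ in $D$ produces a matrix with two equal rows, hence $D$ vanishes under this specialization. Because $S$ is a UFD and $T_b-T_a$ is irreducible in $S$, this forces $(T_b-T_a)\mid D$ in $S$. The distinct linear forms $(T_b-T_a)$ are pairwise non-associate irreducibles in $S$, so their product $\tilde\Delta$ also divides $D$. This yields $Q\in S$ with $D=Q\cdot\tilde\Delta$ in $S$. (If $i_1=1,\dots,i_{n-1}=n-1$, then $Q=1$; in general $Q$ is a Schur-type polynomial, but its explicit form is irrelevant for us.)

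Next I would specialize. The assignment $T_i\mapsto t_i$ defines a ring homomorphism $\varphi\colon S\to R$, and under $\varphi$ the column vectors $\tilde\alpha_k$ map to the given $\alpha_k$, so applying $\varphi$ to the identity $D=Q\cdot\tilde\Delta$ gives
$$\det(\alpha_0,\alpha_{i_1},\alpha_{i_2},\dots,\alpha_{i_{n-1}}) \;=\; \varphi(Q)\cdot\det V(t_1,t_2,\dots,t_n)$$
in $R$. Since $\{t_1,\dots,t_n\}$ is an $n$-tuple, Lemma \ref{DV} guarantees that $\det V(t_1,\dots,t_n)$ is neither zero nor a zero divisor in $R$, so the element $\varphi(Q)\in R$ witnessing divisibility is uniquely determined and the conclusion of the lemma holds as stated. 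The main (mild) subtlety is simply matching the paper's notion of divisibility with the existence of the quotient element in $R$; the universal construction supplies this element canonically.
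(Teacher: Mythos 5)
Your proof is correct, but it follows a genuinely different route from the paper's. You lift the whole problem to the universal coefficient ring $\mathbb{Z}[T_1,\dots,T_n]$, which is a UFD, prove divisibility there by the classical argument that setting $T_a=T_b$ makes two rows equal (so each linear form $T_b-T_a$ divides $D$, and since these are pairwise non-associate irreducibles their product $\tilde\Delta$ divides $D$), and then push the universal factorization $D=Q\tilde\Delta$ down to $R$ via the evaluation homomorphism $T_i\mapsto t_i$. The paper instead stays over $R$ and works with a larger matrix: it adjoins $k=i_{n-1}-n+1$ auxiliary indeterminates $s_1,\dots,s_k$, forms the $(k+n)\times(k+n)$ Vandermonde $\det V(s_1,\dots,s_k,t_1,\dots,t_n)\in R[s_1,\dots,s_k]$, observes by the product formula (Lemma~\ref{DV}) that $\det V(t_1,\dots,t_n)$ is a common factor of every coefficient, and then identifies $\det(\alpha_0,\alpha_{i_1},\dots,\alpha_{i_{n-1}})$ (up to sign) as the coefficient of a suitable monomial in the $s_i$ via Laplace expansion along the first $k$ rows, relying on the fact that monomials coming from distinct complementary minors have distinct multidegrees. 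Your universal-ring argument is arguably cleaner and more conceptual, and it produces a canonical witness $\varphi(Q)$ (a specialized Schur polynomial), whereas the paper's argument is more computational but avoids any appeal to unique factorization by staying entirely in the generic Vandermonde identity over $R$. Both are valid; the appeal at the end to Lemma~\ref{DV} to confirm that $\det V(t_1,\dots,t_n)$ is a non-zero-divisor is exactly what the paper's Definition of divisibility requires, so that step is necessary in either approach.
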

\begin{proof}
Let $R[s_1, s_2, \cdots, s_{i_{n-1}-n+1}]$ be the ring of polynomials with $i_{n-1}-n+1$ indeterminates $s_1, s_2, \cdots, s_{i_{n-1}-n+1}$ over $R$. By Lemma \ref{DV}, we obtain that the determinant
$$\det V(s_1, s_2, \cdots, s_{i_{n-1}-n+1}, t_1, t_2, \cdots, t_n) \in R[s_1, s_2, \cdots, s_{i_{n-1}-n+1}]$$
and the coefficient of its each monomial is divisible by $\det V(t_1, t_2, \cdots, t_n)$. By Laplace theorem, we expand $\det V(s_1, s_2, \cdots, s_{i_{n-1}-n+1}, t_1, t_2, \cdots, t_n)$ along the first $i_{n-1}-n+1$ rows, then $\det(\alpha_0,\alpha_{i_1},\alpha_{i_2},\cdots,\alpha_{i_{n-1}})$ is the cofactor of some $(i_{n-1}-n+1)\times (i_{n-1}-n+1)$ matrix $A$. Note that each term of $\det A$ is a unique monomial of $\det V(s_1, s_2, \cdots, s_{i_{n-1}-n+1}, t_1, t_2, \cdots, t_n)$. Therefore $\det(\alpha_0,\alpha_{i_1},\alpha_{i_2},\cdots,\alpha_{i_{n-1}})$ is divisible by $\det V(t_1, t_2, \cdots, t_n)$.
\end{proof}

The following lemma proves the last two cases of Theorem \ref{Fac-tuple}.
\begin{Lem}\label{Nohc}
Let $R$ be a commutative ring with 1 and $f(x)=a_0+a_1x+\cdots+a_mx^m$ be a polynomial over $R$. $R$ has an $n$-tuple $\{r_1,r_2,\cdots, r_n\}$ of $f(x)$ with $n\leq m$. Let $\alpha_i$ denote the column $R$-vector $(r_1^i,r_2^i,\cdots,r_n^i)^T$ for $0\leq i\leq n-1$, and let $\beta$ denote the column $R$-vector $(-\sum^m_{i=n}a_{i}r_1^{i},-\sum^m_{i=n}a_{i}r_2^{i},\cdots, -\sum^m_{i=n}a_{i}r_n^{i})^T$. Then for $0\leq i\leq n-1$, $\det(\alpha_0,\cdots,\alpha_{i-1},\beta, \alpha_{i+1},\cdots,\alpha_{n-1})$ is divisible by $\det(\alpha_0,\alpha_1,\cdots,\alpha_{n-1})$ and
$$a_i=\frac{\det(\alpha_0,\cdots,\alpha_{i-1},\beta, \alpha_{i+1},\cdots,\alpha_{n-1})}{\det(\alpha_0,\alpha_1,\cdots,\alpha_{n-1})}.$$
\end{Lem}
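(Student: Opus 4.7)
The plan is to encode the $n$ relations $f(r_k)=0$ as a single $R$-linear system in the unknowns $a_0,\ldots,a_{n-1}$, and then to solve it via Cramer's rule in its adjugate form, which is valid over any commutative ring. Concretely, writing
\[
\sum_{i=0}^{n-1}a_i r_k^{i}=-\sum_{i=n}^{m}a_i r_k^{i},\qquad 1\le k\le n,
\]
one obtains the matrix equation $M\vec a=\beta$, where $M$ is the matrix whose columns are $\alpha_0,\alpha_1,\ldots,\alpha_{n-1}$ (namely the Vandermonde matrix $V(r_1,\ldots,r_n)$) and $\vec a=(a_0,\ldots,a_{n-1})^{T}$. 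By Lemma~\ref{DV} together with the $n$-tuple hypothesis, $\det M=\prod_{1\le j<i\le n}(r_i-r_j)$ is neither zero nor a zero-divisor in $R$.

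Next I would apply the adjugate identity $\mathrm{adj}(M)\cdot M=\det(M)\cdot I_n$, which holds over any commutative ring. Multiplying $M\vec a=\beta$ on the left by $\mathrm{adj}(M)$ and reading off the $i$-th coordinate yields the Cramer-type relation
\[
\det(M)\,a_i=\det(\alpha_0,\ldots,\alpha_{i-1},\beta,\alpha_{i+1},\ldots,\alpha_{n-1})\qquad(0\le i\le n-1).
\]
Since $\det M$ is not a zero-divisor, any element $y\in R$ satisfying $\det(M)\,y=\det(\alpha_0,\ldots,\beta,\ldots,\alpha_{n-1})$ is unique; so once the right-hand side is shown to be divisible by $\det M$, the quotient appearing in the statement is unambiguously defined and equals $a_i$.

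The main obstacle is this divisibility step. Expanding $\beta=-\sum_{j=n}^{m}a_j\alpha_j$ by multilinearity in the $i$-th column reduces matters to checking, for each $j\ge n$, that $\det(\alpha_0,\ldots,\alpha_{i-1},\alpha_j,\alpha_{i+1},\ldots,\alpha_{n-1})$ is divisible by $\det M$. When $i\ge 1$ the column $\alpha_0$ is still present, so a permutation of columns into strictly increasing index order puts the determinant in the precise shape $\pm\det(\alpha_0,\alpha_{i_1},\ldots,\alpha_{i_{n-1}})$ with $0<i_1<\cdots<i_{n-1}$, and Lemma~\ref{DVD} applies verbatim. The boundary case $i=0$ requires one extra manoeuvre, since $\alpha_0$ has been displaced: factoring $r_k$ out of the $k$-th row of $\det(\alpha_j,\alpha_1,\ldots,\alpha_{n-1})$ rewrites it as $(r_1 r_2\cdots r_n)\,\det(\alpha_0,\alpha_1,\ldots,\alpha_{n-2},\alpha_{j-1})$, to which Lemma~\ref{DVD} now applies when $j-1\ge n$ (and when $j-1=n-1$ the factor is literally $\det M$, so divisibility is trivial). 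Collecting the divisibility statements across all $j\ge n$ and using the uniqueness above concludes the proof.
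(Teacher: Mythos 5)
Your proof is correct and follows essentially the paper's route: solve the Vandermonde system $M\vec a=\beta$ via a Cramer-type identity and establish divisibility via Lemma~\ref{DVD}, with the adjugate identity $\mathrm{adj}(M)M=\det(M)I_n$ replacing the paper's combination of Gaussian elimination (for uniqueness) and a cofactor-expansion argument, and with a more careful treatment of the $i=0$ boundary case, where the paper simply cites Lemma~\ref{DVD} even though its literal hypotheses are not met. One simplification worth noting: once the adjugate identity gives $\det(M)\,a_i=\det(\alpha_0,\ldots,\alpha_{i-1},\beta,\alpha_{i+1},\ldots,\alpha_{n-1})$, divisibility is already proved --- $a_i$ itself is the witness --- and since $\det M$ is not a zero-divisor the quotient is forced to equal $a_i$; thus your entire multilinearity-plus-Lemma~\ref{DVD} paragraph is logically redundant, whereas in the paper that step does real work, because the paper's logic establishes uniqueness and well-definedness of the fractions first and only afterwards verifies that the Cramer ratios solve the system.
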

\begin{proof}
There is a system of non-homogeneous linear equations
$$\{x_0+r_ix_1+\cdots+r_i^{n-1}x_{n-1}=-\sum^m_{i_1=n}a_{i_1}r_i^{i_1}\mid 1\leq i\leq n\}$$
with variables $x_0, x_1, \cdots,x_{n-1}$. We use Gaussian elimination to solve these $R$-linear equations and obtain
$${\small\left(
  \begin{array}{cccccc}
   1     & r_1 & r_1^2 &\cdots & r_1^{n-1}            &-\sum^m_{i_1=n}a_{i_1}r_1^{i_1} \\
   0     & 1 & r_1+r_2 &\cdots & \sum^{n-1}_{i_1=0}r_1^{n-2-i_1}r_2^{i_1}& -\sum^m_{i_1=n}a_{i_1}\sum^{i_1-1}_{i_2=0}r_1^{i_1-1-i_2}r_2^{i_2}\\
   0     & 0 & 1 &\cdots & \sum^{n-1}_{i_1=1}r_1^{n-2-i_1}\sum^{i_1-1}_{i_2=0}r_2^{i_1-1-i_2}r_3^{i_2}& -\sum^m_{i_1=n}a_{i_1}\sum^{i_1-1}_{i_2=0}r_1^{i_1-1-i_2}\sum^{i_2-1}_{i_3=0}r_2^{i_2-1-i_3}r_3^{i_3}\\
    \vdots & \vdots & \vdots& \ddots& \vdots& \vdots\\
   0     & 0 & 0 &\cdots & 1 &-\sum^m_{i_1=n}a_{i_1}\sum^{i_1-1}_{i_2=0}r_1^{i_1-1-i_2}\sum^{i_2-1}_{i_3=0}r_2^{i_2-1-i_3}\cdots \sum^{i_{n-1}-1}_{i_n=0}r_{n-1}^{i_{n-1}-1-i_n}r_n^{i_n}\\
  \end{array}
\right)}.$$
Which implies that the solution of $\{x_0+r_ix_1+\cdots+r_i^{n-1}x_{n-1}=-(a_nr_i^n+\cdots+a_mr_i^m)\mid 1\leq i\leq n\}$ is unique. Then by Lemma \ref{DVD}, we get that
$\frac{\det(\alpha_0,\cdots,\alpha_{i-1},\beta,\alpha_{i+1},\cdots,\alpha_{n-1})}{\det(\alpha_0,\alpha_1,\cdots,\alpha_{n-1})}$
is well defined for each $i$. Let $\widetilde{\alpha}_i$ denote the row $R$-vector $(1,r_i,r_i^2,\cdots,r_i^{n-1},-\sum^m_{i_1=n}a_{i_1}r_i^{i_1})$ for $0\leq i\leq n-1$, then by using properties of determinant, we obtain that
$\det(\widetilde{\alpha}_i, \widetilde{\alpha}_0,\widetilde{\alpha}_1,\cdots,\widetilde{\alpha}_{n-1})=0$  and
$$(\frac{\det(\beta,\alpha_{1},\cdots,\alpha_{n-1})}{\det(\alpha_0,\alpha_1,\cdots,\alpha_{n-1})},\frac{\det(\alpha_{0},\beta,\alpha_{2},\cdots,\alpha_{n-1})}
{\det(\alpha_0,\alpha_1,\cdots,\alpha_{n-1})}, \cdots, \frac{\det(\alpha_0,\cdots,\alpha_{n-2},\beta)}{\det(\alpha_0,\alpha_1,\cdots,\alpha_{n-1})})$$
 is a solution of $\{x_0+r_ix_1+\cdots+r_i^{n-1}x_{n-1}=-\sum^m_{i_1=n}a_{i_1}r_i^{i_1}\mid 1\leq i\leq n\}$. This finishes the proof.
\end{proof}

\begin{proof}[Proof of Theorem \ref{Fac-tuple}]
{\rm (i)} If $n>m$, then by Lemma \ref{eqnt} the solution of Equations \ref{nRlss} is the subspace $\{\mathbf{0}\}$. Since $(a_0,a_1,\cdots, a_n)$ is a solution of Equations \ref{nRlss}, we must have $(a_0,a_1,\cdots, a_n)=\mathbf{0}$.

{\rm (ii)(iii)} If $n\leq m$, then by Lemma \ref{Nohc}, we finish the proof.
\end{proof}

The first corollary of Theorem \ref{Fac-tuple} shows that generalized relations between roots and coefficients of a polynomial can be viewed in some sense as polynomial interpolation over a commutative ring.
\begin{Cor}\label{Intf}
Let $R$ be a commutative ring with 1 and $f(x)=a_0+a_1x+\cdots+a_mx^m$ be a polynomial over $R$. If $R$ has an invertible $n$-tuple $\{r_1,r_2,\cdots, r_n\}$ of $f(x)$, then
 $$f(x)=\sum^n_{j=1}\prod_{1\leq i\leq n, i\neq j}\frac{x-r_i}{r_j-r_i}(-\sum^m_{k=n}a_{k}r_j^{k}),$$
 when $m<n$, $-\sum^m_{k=n}a_{k}r_j^{k}$ denotes $0$.
\end{Cor}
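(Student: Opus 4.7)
The plan is to reduce the identity to Lagrange-type interpolation, which becomes valid over an arbitrary commutative ring once the Vandermonde determinant is guaranteed to be a unit by the \emph{invertible} $n$-tuple hypothesis.

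First I would dispose of the trivial case $m<n$. By Theorem~\ref{Fac-tuple}(i), every coefficient $a_i$ vanishes in $R$, so $f(x)=0$, while the convention that the empty inner sum $-\sum^{m}_{k=n}a_k r_j^k$ equals zero makes the right-hand side $0$ as well, and the identity holds.

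For the main case $m\ge n$, I would start from Theorem~\ref{Fac-tuple}(iii), which represents each coefficient $a_i$ for $0\le i\le n-1$ as a ratio of determinants
$$a_i=\frac{\det(\alpha_0,\ldots,\alpha_{i-1},\beta,\alpha_{i+1},\ldots,\alpha_{n-1})}{\det(\alpha_0,\alpha_1,\ldots,\alpha_{n-1})},$$
i.e.\ as the output of Cramer's rule applied to the Vandermonde system with right-hand side $\beta_j=-\sum_{k=n}^{m}a_k r_j^{k}$. Because $\{r_1,\ldots,r_n\}$ is an \emph{invertible} $n$-tuple, every difference $r_i-r_j$ is a unit, so by Lemma~\ref{DV} the Vandermonde determinant $\det V(r_1,\ldots,r_n)=\prod_{i<j}(r_j-r_i)$ is a unit in $R$; therefore Cramer's rule supplies the unique $R$-linear solution and every displayed fraction is a genuine division by a unit.

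The key step is then to recognize the Cramer expressions as the coefficients of the Lagrange interpolating polynomial
$$L(x)=\sum_{j=1}^{n}\beta_j\,\ell_j(x),\qquad \ell_j(x)=\prod_{1\le i\le n,\, i\ne j}\frac{x-r_i}{r_j-r_i}.$$
This equivalence between Cramer's rule and Lagrange interpolation is a purely formal determinant identity that holds over $\mathbb{Z}$ (hence over any commutative ring), since both sides are the inverse of the evaluation map from degree-$(n-1)$ polynomials to $R^{n}$. Noting that $f(r_j)=0$ forces $\beta_j$ to equal the value at $r_j$ of the low-degree part $a_0+a_1x+\cdots+a_{n-1}x^{n-1}$, uniqueness of the Lagrange interpolant then collects the coefficients $a_0,\ldots,a_{n-1}$ obtained via Theorem~\ref{Fac-tuple}(iii) into exactly the stated formula, finishing the proof.

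The main obstacle I expect is not any hard computation but rather the bookkeeping needed to verify that the classical Vandermonde/Lagrange equivalence transfers verbatim to an arbitrary commutative ring; once the unit property of $\det V(r_1,\ldots,r_n)$ is established, this is a routine check that the familiar determinant identities hold formally over $\mathbb{Z}$.
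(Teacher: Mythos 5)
Your reduction to Lagrange interpolation is the right idea, and the unit-Vandermonde argument does carry over to an arbitrary commutative ring (and your treatment of the trivial case $m<n$ is fine), but there is a degree mismatch you do not address in the main case $m\ge n$: the right-hand side is a polynomial of degree at most $n-1$, so it cannot equal $f(x)$ in $R[x]$ unless $a_n=\cdots=a_m=0$. Your own reasoning in fact proves only that, writing $\beta_j=-\sum_{k=n}^m a_k r_j^k$ and $\ell_j(x)=\prod_{i\neq j}\frac{x-r_i}{r_j-r_i}$, the Lagrange interpolant $\sum_j\beta_j\ell_j(x)$ equals the degree-$<n$ truncation $\sum_{i=0}^{n-1}a_ix^i$ of $f$: the root condition $f(r_j)=0$ forces $a_0+a_1r_j+\cdots+a_{n-1}r_j^{n-1}=\beta_j$, and uniqueness of degree-$<n$ interpolation (which follows from invertibility of $\det V(r_1,\ldots,r_n)$) then identifies the two polynomials. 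That is strictly weaker than the stated equality $f(x)=\sum_j\beta_j\ell_j(x)$. A concrete counterexample: $R=\mathbb{Q}$, $f(x)=x$, with the invertible $1$-tuple $\{0\}$ gives a right-hand side equal to $0$, not $x$. What Theorem~\ref{Fac-tuple}(iii) and your interpolation argument actually yield is $f(x)=\sum_{j=1}^n\beta_j\ell_j(x)+\sum_{k=n}^{m}a_kx^k$, equivalently the displayed identity holding in $R[x]/(a_n,\ldots,a_m)$; you should notice and flag this gap between the formula your argument produces and the claim that it equals $f(x)$, rather than silently passing from one to the other.
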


Corollary \ref{Intf} and Corollary \ref{vanish} can be easily deduced from Theorem \ref{Fac-tuple}, so we omit their proof.
\subsection{Applications of Vanishing ring condition}

In this subsection, we give two applications of Corollary \ref{vanish}. However, it is important to note that $[p^j]_{E}(x)$ is not a polynomial but a power series, which prevents us from directly using Corollary \ref{vanish}. To overcome this issue, we identify the power series $[p^j]_E(x)$ with its Weierstrass polynomial $g_j(x)$, as per Proposition \ref{idwp}. Then we could apply Corollary \ref{vanish} to the following two cases.
\begin{Cor}\label{usevan}
\begin{enumerate}
\item[\rm (i)]  If $G$ is a finite $p$-group, then $t_{G}({\rm inf}^G_{e}(K(n)))^{G}\simeq *$. (\cite[Theorem 1.1]{GS96})
\item[\rm (ii)] Let $G$ be a finite $p$-group and $H$ be a non-cyclic subgroup, then ${\Phi}^{H}(KU_G)\simeq *$.(\cite[Proposition 3.10]{BGS} )
\end{enumerate}
\end{Cor}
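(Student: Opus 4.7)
The plan is to deduce both vanishings from the Vanishing ring condition Corollary \ref{vanish}, by applying Proposition \ref{idwp} to replace the power series $[p]_E(x)$ by its Weierstrass polynomial and then producing an invertible tuple of Euler-class roots of size strictly greater than that polynomial's degree in the relevant homotopy ring.

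For (i), the decisive instance is $G = A = (\mathbb{Z}/p)^{n+1}$. Theorem \ref{bgcoh} (with $C = A$) gives
\[
\pi_*(\cat T_{A,A}(K(n))) \cong L_A^{-1}\, K(n)^*\psb{x_1,\ldots,x_{n+1}}\big/\bigl(v_n x_1^{p^n},\ldots,v_n x_{n+1}^{p^n}\bigr),
\]
with every Euler class $\alpha_w = [w_1]_{K(n)}(x_1)+_F\cdots+_F[w_{n+1}]_{K(n)}(x_{n+1})$, $w \in A\setminus 0$, inverted. Since the $p$-series $[p]_{K(n)}(y) = v_n y^{p^n}$ is itself a polynomial of degree $p^n$ with unit leading coefficient, each $\alpha_w$ is a root of it; Proposition \ref{pps}(iv) yields $\alpha_w - \alpha_{w'} = \alpha_{w-w'}\cdot(\text{unit})$, so $\{\alpha_w : w \in A\setminus 0\}$ is an invertible tuple of size $p^{n+1}-1 > p^n$, and Corollary \ref{vanish}(i) forces the ring to vanish. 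The passage from the elementary abelian $(n+1)$-rank case to an arbitrary finite $p$-group $G$ is then a standard isotropy-separation induction on $|G|$, using that every non-trivial finite $p$-group contains a non-trivial central subgroup of order $p$ (for which the vanishing is anyway immediate, as $x$ is simultaneously a unit and nilpotent).

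For (ii), Costenoble's Theorem \ref{Cos} identifies $\pi_*(\Phi^H(KU_G))$ with $L_H^{-1}\,KU^*(BH_+)$, where $L_H$ inverts Euler classes of complex $H$-representations $V$ with $V^H = 0$. The formal group of $KU$ has height $1$ at $p$, so the Weierstrass polynomial of $[p]_{KU}(x)$ has degree $p$ with unit leading coefficient. For a $p$-group, cyclic abelianisation implies cyclic (a standard fact, as $[H,H]$ lies in the Frattini subgroup), so the non-cyclicity of $H$ forces the abelian group $H/[H,H]$ to contain a copy of $(\mathbb{Z}/p)^2$; consequently $|\mathrm{Hom}(H,\mathbb{Z}/p)|\geq p^2$. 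Each non-trivial $\rho : H \to \mathbb{Z}/p \hookrightarrow U(1)$ contributes a root $e(\rho)$ of $[p]_{KU}(x)$ that is inverted in $L_H^{-1}\,KU^*(BH_+)$, because the underlying one-dimensional representation has no $H$-fixed vector; Proposition \ref{pps}(iv) again turns the pairwise differences into units. We thus obtain an invertible tuple of size at least $p^2 - 1 > p$, and Corollary \ref{vanish}(i) yields the claimed vanishing.

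The main obstacle is the non-abelian reduction in part (i): Theorem \ref{bgcoh} is available only in the abelian setting, so the propagation from the elementary abelian case to a general finite $p$-group is a genuinely geometric (rather than algebraic) step, carried out via the families $\mathcal{F}[N]$ and the functor $\tilde{\Phi}^N$ rather than by the tuple machinery itself. Part (ii) sidesteps this entirely, because one-dimensional characters of $H$ always factor through $H/[H,H]$, so the abelianisation alone provides more roots than the polynomial degree of $[p]_{KU}(x)$.
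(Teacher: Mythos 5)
For part (i), there is a genuine gap. The statement concerns the \emph{classical} Tate construction $t_G({\rm inf}_e^G(K(n)))^G$, and Proposition \ref{Sfy} identifies this with $\cat T_{G,N}(K(n))$ only when the family of subgroups of $G$ that do not contain $N$ is exactly $\{e\}$; for $G=A=(\mathbb{Z}/p)^{n+1}$ with $n\geq 1$ no such $N$ exists, and $\cat T_{A,A}(K(n))=\Phi^A(t_A({\rm inf}_e^A(K(n))))$ is the \emph{geometric} Tate spectrum, not $t_A({\rm inf}_e^A(K(n)))^A$. So your tuple computation proves $\pi_*(\cat T_{A,A}(K(n)))=0$ --- correct, but a special case of Theorem \ref{GTglbc} --- and it does not address (i) for $A=(\mathbb{Z}/p)^{n+1}$. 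Moreover the ``standard isotropy-separation induction'' you appeal to does not hang off the rank-$(n+1)$ case: the reduction in the proof of \cite[Theorem 1.1]{GS96} brings one to $G=\mathbb{Z}/p$, which is precisely the one group where $\cat T_{\mathbb{Z}/p,\mathbb{Z}/p}$ \emph{does} equal the classical Tate, and which your own parenthetical already dispatches. The paper accordingly goes straight to $\mathbb{Z}/p$; its way of making Corollary \ref{vanish}(i) bite there is to divide $[p]_{K(n)}(y)$ by $y^{p^n-1}$ to obtain the degree-$1$ polynomial $f(y)=v_n y$, of which $\{0,x^{p^n}\}$ is already a $2$-tuple.

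For part (ii), the ring identification is off: Costenoble's Theorem \ref{Cos} localizes the \emph{genuine} equivariant coefficient ring, which for $KU_H$ is the representation ring $R(H)[\beta^{\pm 1}]$, not the Borel-theory ring $KU^*(BH_+)$; thus $\pi_*(\Phi^H(KU_G))\cong L_H^{-1}R(H)[\beta^{\pm 1}]$, and the paper accordingly works in $L^{-1}_{\mathbb{Z}/p\times\mathbb{Z}/p}\mathbb{Z}[x_1,x_2]/(x_1^p-1,x_2^p-1)$. The Borel cohomology $E^*(BG_+)$ governs $\cat T_{G,N}(E)$ of an \emph{inflated} non-equivariant $E$, via $F(EG_+,-)$, not the geometric fixed points of genuine $KU_G$. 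Once this is corrected your argument does go through, and is in one respect cleaner than the paper's: the characters $\rho\colon H\to\mathbb{Z}/p\hookrightarrow U(1)$ factor through $H/[H,H]$ and live in $R(H)$, their Euler classes are roots of the degree-$p$ polynomial $[p]_{KU}(x)$, the pairwise differences are units by the multiplicative formal-group relation, and non-cyclicity of $H$ gives $p^2-1>p$ of them, so Corollary \ref{vanish}(i) applies directly for arbitrary non-cyclic $H$. The paper instead first cites \cite{BGS} to reduce to $H=\mathbb{Z}/p\times\mathbb{Z}/p$ and then uses the degree-$(p-1)$ polynomial $((x+1)^p-1)/x$, needing only a $p$-tuple; your abelianization observation sidesteps that reduction.
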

\begin{proof}
{\rm (i)} By the proof of \cite[Theorem 1.1]{GS96}, it suffices to prove that
$t_{\mathbb{Z}/p}({\rm inf}^{\mathbb{Z}/p}_{e}(K(n)))^{\mathbb{Z}/p}\simeq *$. Let $f(y)$ be $\frac{[p]_{K(n)}(y)}{y^{p^n-1}}=v_ny$. Note that both $0$ and $x^{p^n}$ are roots
of $f(y)$ in $\pi_*(t_{\mathbb{Z}/p}({\rm inf}^{\mathbb{Z}/p}_{e}(K(n)))^{\mathbb{Z}/p})=\pi_*(\cat
T_{\mathbb{Z}/p,\mathbb{Z}/p}(K(n)))=L^{-1}_{\mathbb{Z}/p}\mathbb{F}_p[v_n^{\pm 1}]\psb{x}/(v_nx^{p^n})$, where the multiplicatively closed set
$L_{\mathbb{Z}/p}$ is generated by all Euler classes induced by one dimensional complex representations of $\mathbb{Z}/p$. And their difference
$x^{p^n}$ is in $L_{\mathbb{Z}/p}$, hence it is not a zero divisor. By Corollary \ref{usevan}, we have
$t_{\mathbb{Z}/p}({\rm inf}^{\mathbb{Z}/p}_{e}(K(n)))^{\mathbb{Z}/p}\simeq *$.

{\rm (ii)} By the proof of \cite[Proposition 3.10]{BGS}, it suffices to prove that
${\Phi}^{\mathbb{Z}/p\times\mathbb{Z}/p}(KU_{\mathbb{Z}/p\times \mathbb{Z}/p})\simeq *$. Let $f(x)$ be $\frac{(x+1)^p-1}{x}$. Note that the
Euler classes $x_1-1, x_1^2-1, \cdots, x_1^{p-1}-1, x_2-1$ are different roots of $f(x)$ in
$\pi_*({\Phi}^{\mathbb{Z}/p\times\mathbb{Z}/p}(KU_{\mathbb{Z}/p\times\mathbb{Z}/p}))=L^{-1}_{\mathbb{Z}/p\times\mathbb{Z}/p}\mathbb{Z}[x_1,x_2]/(x^p_1-1,x^p_2-1)$,
where the multiplicatively closed set $L_{\mathbb{Z}/p\times \mathbb{Z}/p}$ is generated by all Euler classes induced by one dimensional
complex representations of $\mathbb{Z}/p\times \mathbb{Z}/p$. Note that the difference of any two roots has the forms
$(x_1^{m}-x_1^{n})=x_1^{n}(x_1^{m-n}-1)$ or $(x_2-x_1^{n})=x_1^{n}(x_1^{p-n}x_2-1)$, since $x_1^{n}$ is invertible in
$L^{-1}_{\mathbb{Z}/p\times \mathbb{Z}/p}\mathbb{Z}[x_1,x_2]/(x^p_1-1,x^p_2-1)$ and $x_1^{p-n}x_2-1$ is the Euler class in
$L_{\mathbb{Z}/p\times \mathbb{Z}/p}$, we conclude that
${\Phi}^{\mathbb{Z}/p\times \mathbb{Z}/p}(KU_{\mathbb{Z}/p\times \mathbb{Z}/p})\simeq *$ by Corollary \ref{usevan}.
\end{proof}

\section{Algebraic periodicity and Landweber exactness}
Most of this section are due to Greenlees--Sadofsky \cite{GS96} and Hovey \cite{Ho95}, we just add some details here.
\subsection{Algebraic periodicity\label{Ap}}
There are two distinct definitions of being $v_n$-periodic for a $p$-local and complex-oriented spectrum $E$, each presented by
Greenlees--Sadofsky \cite{GS96} and Hovey \cite{Ho95}, respectively. These definitions are closely related, with Hovey's version being stronger
than Greenlees--Sadofsky's. In this paper, we opt to adopt Hovey's definition as our chosen characterization of a $v_n$-periodic property for a
$p$-local and complex-oriented spectrum $E$.

Recall a finite spectrum $X$ has $\textit{type}$ $n$ if $K(n-1)_*X=0$ but $K(n)_*X\neq 0$.
\begin{Lem}(Hopkins--Smith, \cite{HS98})\label{HSle}
All finite spectrum of type $n$ have the same Bousfield class and is denoted by $F(n)$. The spectrum $F(n)$ has a $v_n$ self-map and its telescope is denoted by $T(n)$.
\end{Lem}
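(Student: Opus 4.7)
The plan is to deduce both assertions from the Hopkins--Smith thick subcategory theorem and the periodicity theorem, both of which rest on the Devinatz--Hopkins--Smith nilpotence theorem \cite{DHS,HS98}; the lemma is essentially a packaged form of their output.

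For the first assertion, I would argue as follows. Let $X,Y$ be finite spectra of type $n$. The thick subcategory theorem states that the subcategory $\cat C_{p,n}\setminus \cat C_{p,n+1}$ of finite $p$-local spectra of type exactly $n$ generates, as a thick subcategory, the full collection $\cat C_{p,n}$; in particular, the thick subcategory of $\SH(e)^c$ generated by $X$ coincides with that generated by $Y$. Now for any spectrum $Z$, the full subcategory of those $W\in\SH(e)^c$ for which $W\otimes Z\simeq *$ is thick (it is closed under cofibers, retracts, and suspensions). Hence $Y\otimes Z\simeq *$ forces $X\otimes Z\simeq *$, which gives $\langle X\rangle\leq\langle Y\rangle$ in Bousfield's partial order. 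By symmetry $\langle X\rangle=\langle Y\rangle$, so any finite type-$n$ spectrum represents a single well-defined Bousfield class, denoted $F(n)$.

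For the second assertion, I would invoke the Hopkins--Smith periodicity theorem directly. Given a choice of finite type-$n$ spectrum $F(n)$, one constructs a candidate self-map $f\colon \Sigma^d F(n)\to F(n)$ whose image under $K(n)$-homology is multiplication by a power of $v_n$; the nilpotence theorem then shows that a sufficiently high iterate $f^{p^k}$ is a genuine $v_n$-self map, i.e.\ acts as a unit times $v_n^{p^ki}$ on $K(n)_*F(n)$ and as zero on $K(m)_*F(n)$ for $m\neq n$. The telescope
\[
T(n):=\mathrm{hocolim}\bigl(F(n)\xrightarrow{f}\Sigma^{-d}F(n)\xrightarrow{f}\Sigma^{-2d}F(n)\to\cdots\bigr)
\]
then has a well-defined Bousfield class, independent of the choices of $F(n)$ and $f$ up to iteration, again by an application of the thick subcategory theorem to the pair of finite spectra carrying any two such self-maps.

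The genuinely hard input, which the plan defers, is the nilpotence theorem: one must rule out that the candidate $v_n$ detection is nilpotent, and one must prove the thick subcategory classification. Both are accomplished in \cite{DHS,HS98} using the Morava $K$-theories and the $MU$-based nilpotence criterion. Granting these deep inputs, the present lemma is essentially a translation of their conclusions into the language of Bousfield classes and telescopes used throughout the rest of this paper.
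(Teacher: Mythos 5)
The paper does not give a proof of this lemma; it is cited directly from Hopkins--Smith \cite{HS98} as background. Your reconstruction is correct and is precisely the standard derivation: the Bousfield-equivalence of all type-$n$ finite spectra follows because the annihilator condition $\{W : W\otimes Z\simeq *\}$ is thick and the thick subcategory theorem shows any two type-$n$ finite $p$-local spectra generate the same thick subcategory $\cat C_{p,n}$; the existence of the $v_n$-self map is the periodicity theorem. Both rest on the nilpotence theorem exactly as you indicate, so your proposal faithfully supplies the argument that the paper leaves to the reference.
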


Let $M(p^{i_0}, v_{1}^{i_1}, \cdots, v_{n-1}^{i_{n-1}})$ be a finite spectrum with
$$\pi_*(BP\wedge M(p^{i_0}, v_{1}^{i_1}, \cdots, v_{n-1}^{i_{n-1}}))=BP_*/(p^{i_0}, v_{1}^{i_1}, \cdots, v_{n-1}^{i_{n-1}}).$$
Such spectra are of type $n$ and are called $\textit{generalized}$ $\textit{Moore}$ $\textit{spectra}$. $M(p^{i_0}, v_{1}^{i_1}, \cdots, v_{n-1}^{i_{n-1}})$ are guaranteed to exist for sufficiently large multi-indices
$I =(i_0, \cdots, i_{n-1})$ by the periodicity theorem of Smith \cite{HS98}, written up in \cite[Section 6.4]{Ra92}.

We use the notation $X^{\wedge}_{I_{n}}$ for the completion of $X$ with respect to the ideal $I_n=(p,v_1,\cdots,v_{n-1})\subset BP_*$. More
precisely, the construction is
\begin{align}\label{com}
X^{\wedge}_{I_{n}}=\lim_{\longleftarrow \atop (i_0,i_1,\cdots,i_{n-1})}(X\wedge M(p^{i_0},v_1^{i_1},\cdots,v_{n-1}^{i_{n-1}})),
\end{align}
where the inverse limit is taken over maps
$$M(p^{j_0},v_1^{j_1},\cdots,v_{n-1}^{j_{n-1}})\rightarrow M(p^{i_0},v_1^{i_1},\cdots,v_{n-1}^{i_{n-1}})$$
commuting with inclusion of the bottom cell. Such maps are easily constructed by courtesy of the nilpotence theorem of \cite{HS98} (see for example \cite[Proposition 3.7]{HS98} for existence of these maps and some uniqueness properties). By \cite[Definition 1.4]{Ra84}, for any spectrum $E$ there is an $E$-$\textit{localization}$ $\textit{functor}$ $L_E: \SH(e)\rightarrow \SH(e)$. The following theorem says that localization with respect to $F(n)$ is completion at $I_n$.
\begin{Thm}(Hovey, \cite[Theorem 2.1]{Ho95})\label{Lfn}
For any spectrum $X$, the map $X\rightarrow \underleftarrow{\lim} (X\wedge M(p^{i_0}, v_1^{i_1}, \cdots, v_{n-1}^{i_{n-1}}))$ is a $F(n)$-localization, namely $L_{F(n)}X=X^{\wedge}_{I_{n}}$.
\end{Thm}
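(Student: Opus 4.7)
The plan is to verify the two defining properties of Bousfield localization at $F(n)$: that $X^{\wedge}_{I_{n}}$ is $F(n)$-local, and that the natural map $\eta\colon X\to X^{\wedge}_{I_{n}}$ is an $F(n)$-equivalence. These together characterize $L_{F(n)}X$ uniquely, so the theorem will follow.

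First I would establish $F(n)$-locality of $X^{\wedge}_{I_{n}}$. Since homotopy inverse limits of $F(n)$-local spectra are $F(n)$-local (a short Milnor $\lim^{1}$ argument handles the passage from each stage to the limit), it suffices to prove each $X\wedge M(p^{i_{0}},v_{1}^{i_{1}},\cdots,v_{n-1}^{i_{n-1}})$ is $F(n)$-local. For any $F(n)$-acyclic $W$, Spanier--Whitehead duality on the finite spectrum $M(I):=M(p^{i_{0}},v_{1}^{i_{1}},\cdots,v_{n-1}^{i_{n-1}})$ gives
$$[W,\,X\wedge M(I)]\;\cong\;[W\wedge DM(I),\,X].$$
The dual $DM(I)$ is again a finite spectrum of type $n$, so by Lemma \ref{HSle} its Bousfield class equals $\langle F(n)\rangle$. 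Hence $W\wedge DM(I)=0$ and the Hom group vanishes. The dual argument on $\Sigma W$ kills the relevant $\lim^{1}$ term.

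Second, I would show $F(n)\wedge\eta$ is an equivalence. Because $F(n)$ is finite, smashing with it commutes with the homotopy limit, so
$$F(n)\wedge X^{\wedge}_{I_{n}}\;\simeq\;\operatorname*{\underleftarrow{\lim}}_{I}\bigl(F(n)\wedge X\wedge M(I)\bigr).$$
The key step is that this tower is pro-constant with value $F(n)\wedge X$. To see this I would use that $F(n)$ has type $n$, so by Landweber's theorem together with the Devinatz--Hopkins--Smith nilpotence theorem the module $BP_{*}F(n)$ is $I_{n}$-nilpotent; consequently each of $p,v_{1},\cdots,v_{n-1}$ acts nilpotently on $\pi_{*}(F(n)\wedge X)$ (this uses that $v_{i}$ can be realized as a self-map on a cofinal subfamily of the $M(I)$). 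Once a multi-index $J$ is large enough that $p^{i_{0}},v_{1}^{i_{1}},\cdots,v_{n-1}^{i_{n-1}}$ all act trivially on $F(n)\wedge X$ for $I\ge J$, the inductively constructed cofibrations defining $M(I)$ split after smashing with $F(n)\wedge X$, producing a canonical inclusion $F(n)\wedge X\hookrightarrow F(n)\wedge X\wedge M(I)$ as a wedge summand. Tracking the bonding maps of the tower, which are induced from the canonical quotients $M(I')\to M(I)$ and which preserve the bottom-cell inclusion, shows the pro-system is pro-equivalent to the constant pro-system on $F(n)\wedge X$, so the homotopy limit is $F(n)\wedge X$ and $F(n)\wedge\eta$ is identified with the identity.

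The main obstacle is the second step: promoting termwise nilpotence to pro-equivalence of the bonding maps requires the structure maps $M(I')\to M(I)$ to be compatible with the splitting on the $F(n)\wedge X$ summand in a way that sends bottom cells to bottom cells. This compatibility is exactly what is guaranteed by the construction of the inverse system in Formula \eqref{com} using maps that commute with inclusion of the bottom cell (whose existence and essential uniqueness rests on the nilpotence theorem, as recalled after \eqref{com}). Once that compatibility is invoked, the pro-triviality of the cofibers in the tower follows formally, completing the proof.
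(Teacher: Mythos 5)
Your proof is correct and follows the same two-step strategy as the cited source (Hovey): establish $F(n)$-locality of each $X\wedge M(I)$ by Spanier--Whitehead duality together with the type-$n$ classification of $DM(I)$, then establish the $F(n)$-equivalence by commuting the finite smash past the homotopy limit and showing the tower $\{F(n)\wedge X\wedge M(I)\}$ is pro-constant on $F(n)\wedge X$ via nilpotence and compatibility with the bottom-cell inclusions built into Formula~\eqref{com}. One phrasing should be tightened: for $i\geq 1$ the classes $v_i$ are not elements of $\pi_*S$, so ``$v_i$ acts nilpotently on $\pi_*(F(n)\wedge X)$'' should be read as saying the $v_i$-self-maps of the generalized Moore spectra become nilpotent after smashing with the type-$n$ spectrum $F(n)$ (all $K(j)$-homologies of these smashed self-maps vanish, so the nilpotence theorem applies), which is exactly what forces the complementary wedge summands of $F(n)\wedge X$ inside $F(n)\wedge X\wedge M(I)$ to die under the bonding maps.
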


If $E$ is $p$-local and complex oriented, then there is a unique map $f:BP\rightarrow E$ such that
$$f^*:BP^*(\mathbb{C}P^{\infty})\cong BP^*\psb{x_{BP}}\rightarrow E^*(\mathbb{C}P^{\infty})\cong E^*\psb{x_{E}}$$
maps the $BP$-orientation $x_{BP}$ to the $E$-orientation $x_{E}$. And there is a homomorphism
$${f\wedge 1_{M(p^{i_0},v_1^{i_1},\cdots,v_{n-1}^{i_{n-1}})}}_*:\pi_*(BP\wedge M(p^{i_0},v_1^{i_1},\cdots,v_{n-1}^{i_{n-1}}))\rightarrow \pi_*(E\wedge M(p^{i_0},v_1^{i_1},\cdots,v_{n-1}^{i_{n-1}}))$$
and we still use $v_i$ denote ${f\wedge 1_{M(p^{i_0},v_1^{i_1},\cdots,v_{n-1}^{i_{n-1}})}}_*(v_i)$.

\begin{Def}(Greenlees--Sadofsky's $v_n$-periodic, \cite[Definition 1.3]{GS96})\label{de1}
Let $E$ be a $p$-local and complex oriented spectrum, $E$ is called $v_n$-$\textit{periodic}$ if $v_n$ is a unit on the nontrivial spectrum
$E\wedge M(p^{i_0},v_1^{i_1},\cdots,v_{n-1}^{i_{n-1}})$ for sufficiently large multi-indices $I=(i_0,i_1,\cdots,i_{n-1})$.
\end{Def}
\begin{Rem}
\begin{enumerate}
\item[\rm (i)] The above definition is independent of the choice of multi-index $I$ and of the spectrum $M(p^{i_0},v_1^{i_1},\cdots,v_{n-1}^{i_{n-1}})$.
By Theorem \ref{Lfn}, the equivalent definition of $v_n$-periodic for $E$ is that $v_n$ is a unit on the nontrivial spectrum
$L_{F(n)}E$.
\item[\rm (ii)] If a $p$-local and complex oriented spectrum $E$ is $v_n$-periodic, then $n$ is unique.
\end{enumerate}
\end{Rem}

There is another definition of $v_n$-periodic due to Hovey \ref{De1}, and we refine the definition as follows
\begin{Def}\label{Rd}
Let $E$ be a $p$-local and complex oriented spectrum.
\begin{enumerate}
\item[\rm (i)] $E$ is called $\textit{at}$ $\textit{most}$ $v_n$-$\textit{periodic}$ if $v_n$ is a unit on $E^*/I_{n}$, by the exactness of
$$\CD E^*/I_{n} @>\cdot v_n>> E^*/I_{n} @>>>E^*/I_{n+1}\endCD,$$
which is equivalent to $E^*/I_{n+1}=0$.
\item[\rm (ii)] $E$ is called $\textit{at}$ $\textit{least}$ $v_n$-$\textit{periodic}$ if $E^*/I_{n}\neq 0$.
\end{enumerate}
$E$ is $v_n$-periodic if and only if $E^*/I_{n+1}=0$ and $E^*/I_{n}\neq 0$.
\end{Def}

If we say some spectrum is $v_n$-periodic, we mean it in the sense of Hovey's definition, namely Definition \ref{De1}.

The following proposition says that Hovey's $v_n$-$periodic$ (Definition \ref{De1}) implies that Greenlees-Sadofsky's $v_n$-$periodic$ (Definition \ref{de1}).
\begin{Prop}\label{pepr}
Let $E$ be a $p$-local and complex oriented spectrum. If $v_n$ is a unit of $E^*/I_{n}\neq 0$, then $v_n$ is a unit on some nontrivial spectrum
$E\wedge M(p^{i_0},v_1^{i_1},\cdots,v_{n-1}^{i_{n-1}})$.
\end{Prop}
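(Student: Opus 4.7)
The plan is to exhibit a multi-index $I=(i_0,\ldots,i_{n-1})$ for which $M:=M(p^{i_0},v_1^{i_1},\ldots,v_{n-1}^{i_{n-1}})$ exists, and then verify (a) that $v_n$ acts invertibly on $\pi_*(E\wedge M)$ and (b) that $E\wedge M$ is nontrivial. Writing $v_0=p$, set $E(k)=E\wedge M(p^{i_0},\ldots,v_{k-1}^{i_{k-1}})$ so that $E(0)=E$ and $E(n)=E\wedge M$. The iterated cofiber sequences $E(k)\xrightarrow{v_k^{i_k}}E(k)\to E(k+1)$ produce short exact sequences
$$0\to \pi_*(E(k))/v_k^{i_k}\to \pi_*(E(k+1))\to \pi_{*-1}(E(k))[v_k^{i_k}]\to 0.$$

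First I would show by induction on $m$ that $v_k^{2^{m-k}i_k}$ annihilates $\pi_*(E(m))$ for each $k<m$. The key observation is that $v_k^{i_k}$ acts as zero on the submodule $\pi_*(E(k))/v_k^{i_k}$ of $\pi_*(E(k+1))$ (because we have quotiented by it) and on the quotient $\pi_{*-1}(E(k))[v_k^{i_k}]$ (by definition of torsion), so on the extension $\pi_*(E(k+1))$ a single multiplication by $v_k^{i_k}$ lands in the submodule and a second kills it; subsequent coning by $v_m^{i_m}$ for $m>k$ simply doubles the exponent by the same mechanism. At $m=n$ this tells us that each generator of $I_n=(p,v_1,\ldots,v_{n-1})$ acts as a nilpotent operator on $\pi_*(E\wedge M)$, and hence $I_n$ itself is a nilpotent ideal of operators.

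The hypothesis $v_nu\equiv 1\pmod{I_n}$ writes as $v_nu=1+\epsilon$ in $E^*$ with $\epsilon\in I_n$; since $\epsilon$ acts as a nilpotent endomorphism of $\pi_*(E\wedge M)$, the element $1+\epsilon$ has the finite geometric inverse $\sum_j(-\epsilon)^j$, so $v_n$ is a unit on $\pi_*(E\wedge M)$. For nontriviality, I would compute $K(n)$-homology: since $M$ is a finite type-$n$ complex with $BP_*M=BP_*/J$ and $K(n)_*$ is a graded field, $K(n)_*M\cong K(n)_*$, so $K(n)\wedge M\simeq K(n)$ and $K(n)\wedge E\wedge M\simeq K(n)\wedge E$. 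For complex-oriented $E$ with $v_n$ already a unit in $E^*/I_n\neq 0$, one has $K(n)_*E\cong v_n^{-1}(E^*/I_n)=E^*/I_n\neq 0$, so $K(n)\wedge E\not\simeq *$ and therefore $E\wedge M\not\simeq *$.

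The main obstacle I expect is the identification $K(n)_*E\cong v_n^{-1}(E^*/I_n)$ for a merely complex-oriented $E$: this is automatic under Landweber exactness but in general requires knowing that the relevant Künneth/Tor obstructions vanish, which should follow from the $BP$-module structure on $E$ together with the flatness of $K(n)_*=BP_*/I_n[v_n^{-1}]$ over $BP_*/I_n$. If one wishes to avoid $K(n)$-homology, the nontriviality can alternatively be extracted by tracking the subquotients $E^*/(p^{i_0},\ldots,v_{k-1}^{i_{k-1}})$ of $\pi_*(E(k))$ directly through the SESs above, but that route requires controlling possible $v_k$-divisibility in the resulting extensions and is more delicate.
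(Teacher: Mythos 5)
Your argument for invertibility of $v_n$ on $\pi_*(E\wedge M)$ is correct and takes a genuinely different route from the paper. You show, by doubling the annihilation exponent through the tower of cofibre short exact sequences, that each $v_k$ with $k<n$ acts nilpotently on $\pi_*(E\wedge M)$, hence $I_n$ acts nilpotently, and then you invert $1+\epsilon$ by a finite geometric series. The paper instead inverts $v_n$ in the completion $(E^*)^{\wedge}_{I_n}$ and then appeals to Hovey's identification $L_{F(n)}E\simeq E^{\wedge}_{I_n}$ (Theorem \ref{Lfn}) before passing from the inverse limit to the individual terms of the tower. Your argument works directly at the finite stage $E\wedge M$, bypasses the citation of Theorem \ref{Lfn} and the $\lim$/$\lim^1$ subtleties latent in going from the limit to a term of the tower, and is self-contained and cleaner.

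The nontriviality half, however, has a genuine gap, and your own caveat correctly locates it --- but the proposed repair is wrong. You invoke ``the flatness of $K(n)_*=BP_*/I_n[v_n^{-1}]$ over $BP_*/I_n$.'' In fact $BP_*/I_n[v_n^{-1}]=\mathbb F_p[v_n^{\pm1},v_{n+1},v_{n+2},\dots]$, while $K(n)_*=\mathbb F_p[v_n^{\pm1}]$ is the further quotient by $(v_{n+1},v_{n+2},\dots)$; so $K(n)_*$ is \emph{not} a localization of $BP_*/I_n$ and is not flat over it, and the Tor obstructions to $K(n)_*E\cong v_n^{-1}(E^*/I_n)$ do not vanish for that reason. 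That identification is indeed available when $E$ is Landweber exact, but for a bare $p$-local complex-oriented $E$ a different argument is needed. Your sketched fallback --- tracking the subquotients $E^*/(p^{i_0},\dots,v_{k-1}^{i_{k-1}})$ through the tower --- runs into exactly the same obstruction: the injection $\pi_*(E(k))/v_k^{i_k}\hookrightarrow\pi_*(E(k+1))$ need not carry a copy of $E^*/(p^{i_0},\dots,v_k^{i_k})$ unless one controls divisibility, which again is a regularity/Landweber-type hypothesis. For comparison, the paper's own proof also glosses over this point: it asserts $\pi_*(E\wedge M)=E^*/(p^{i_0},\dots,v_{n-1}^{i_{n-1}})$ outright, which requires $(p^{i_0},\dots,v_{n-1}^{i_{n-1}})$ to be a regular sequence on $E^*$ and so tacitly uses a hypothesis of that kind. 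Your instinct that nontriviality is the delicate step is thus vindicated, but as written the $K(n)$-homology route does not close the gap without an additional hypothesis on $E$.
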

\begin{proof}
Suppose $v_n\equiv u \mod I_{n}$ for some unit $u$ of $E^*/I_{n}$, then there exists an element $t\in I_{n}$ such that
$v_n= u+t$. Since $u^{-1}-u^{-2}t+u^{-3}t^2-\cdots$ is a power series that converges in $(E^*)^{\wedge}_{I_{n}}$, $v_n$ is a unit of
$(E^*)^{\wedge}_{I_{n}}$. By Theorem \ref{Lfn}, $v_n$ is a unit in $\pi_*(L_{F(n)}E)=(E^*)^{\wedge}_{I_{n}}$.

Since there exists a generalized Moore spectrum $M(p^{i_0},v_1^{i_1},\cdots,v_{n-1}^{i_{n-1}})$ of type $n$ with large enough multi-index
$I=(i_0,i_1,\cdots,i_{n-1})$, from the construction \ref{com} for $E$, it follows that $v_n$ is a unit in
$$\pi_*(E\wedge M(p^{i_0},v_1^{i_1},\cdots,v_{n-1}^{i_{n-1}}))=E^*/(p^{i_0},v_1^{i_1},\cdots,v_{n-1}^{i_{n-1}}).$$
This completes the proof.
\end{proof}

\subsection{Landweber exactness}

The Brown-Peterson spectrum $BP$ is a ring spectrum with the product map $\mu_{BP}:BP\wedge BP\rightarrow BP$ and the unit map $\eta_{BP}:S\rightarrow BP$. The spectrum $E$ is called a $BP$-$\textit{module}$ $\textit{spectrum}$ if there is a $BP$-module map $\nu:BP\wedge E\rightarrow E$ such that the following diagrams commute.
$$\xymatrix{
BP\wedge BP\wedge E\ar[d]_{1_{BP}\wedge \nu} ~~~\ar[r]^{\mu_{BP}\wedge 1_E} &BP\wedge E\ar[d]_{\nu} \\
BP\wedge E \ar[r]^{\nu} &E,}
\xymatrix{
S\wedge E\ar[d]_{\simeq} \ar[r]^{\eta_{BP}\wedge 1_E} & BP\wedge E\ar[d]_{\nu} \\
E\ar[r]^{1_E} & E.}$$
A particular good kind of $BP$-module spectrum is the Landweber exact spectrum \cite{La76}.

\begin{Prop}(The Landweber exact functor, \cite{La76})\label{LEdef}
Let $F$ be a formal group law, $p$ be a prime, and $\tilde{v}_i$ be the coefficient of $x^{p^i}$ in
$$[p]_F(x)=\tilde{v}_0x+\tilde{v}_1x^p+\cdots+\tilde{v}_ix^{p^i}+\cdots.$$
If for each $i$ multiplication by $\tilde{v}_i$ is monic on $ \mathbf{Z}_{(p)}[\tilde{v}_1,\tilde{v}_2,\cdots]/(\tilde{v}_0,\tilde{v}_1,\cdots,\tilde{v}_{i-1})$, then $F$ is $\textit{Landweber}$ $\textit{exact}$ and hence gives a cohomology theory
$E^*(-)=BP^*(-)\otimes_{BP^*} \mathbf{Z}_{(p)}[\tilde{v}_1,\tilde{v}_2,\cdots]$. By Brown representation theorem \cite{Br62}, this defines a spectrum and the spectra arising this way are called $\textit{Landweber}$ $\textit{exact}$ $\textit{spectra}$.
\end{Prop}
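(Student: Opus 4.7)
The plan is to verify that the functor $E_*(-) := BP_*(-) \otimes_{BP_*} M$ (where $M = \mathbf{Z}_{(p)}[\tilde v_1,\tilde v_2,\ldots]$ viewed as a $BP_*$-module via the classifying map of $F$) is a homology theory, and then apply Brown representability. The only non-formal axiom to check is exactness: since $BP_*BP$ is flat over $BP_*$, any cofiber sequence of spectra gives a short exact sequence of $BP_*BP$-comodules $0 \to BP_*X \to BP_*Y \to BP_*Z \to 0$, so I need $-\otimes_{BP_*} M$ to carry such sequences to short exact sequences. Equivalently, it suffices that $\mathrm{Tor}^{BP_*}_1(N,M) = 0$ for every $BP_*BP$-comodule $N$ that appears as the image of $BP_*$ on some spectrum.

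The central ingredient I would use is the Landweber filtration theorem for $BP_*BP$-comodules: every finitely presented $BP_*BP$-comodule $N$ admits a finite filtration $0 = N_0 \subset N_1 \subset \cdots \subset N_r = N$ such that each successive quotient $N_i/N_{i-1}$ is (up to a degree shift) of the form $BP_*/I_{n_i}$ for one of the invariant prime ideals $I_n = (p,v_1,\ldots,v_{n-1})$. Using the long exact sequence of $\mathrm{Tor}$ repeatedly on this filtration, the needed Tor-vanishing reduces to checking $\mathrm{Tor}^{BP_*}_1(BP_*/I_n,M) = 0$ for all $n \geq 0$. Since the sequence $p,v_1,v_2,\ldots$ is regular in $BP_*$, one obtains short exact sequences $0 \to BP_*/I_n \xrightarrow{\cdot v_n} BP_*/I_n \to BP_*/I_{n+1} \to 0$, and their associated Tor long exact sequences inductively identify the desired Tor group with the kernel of multiplication by $v_n$ on $M/I_nM$. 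Therefore Tor vanishes precisely under the hypothesis that each $\tilde v_n$ (the image of $v_n$) acts injectively on $M/(\tilde v_0,\tilde v_1,\ldots,\tilde v_{n-1})M$, which is exactly the assumption imposed.

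Having verified exactness, $E_*(-) = BP_*(-) \otimes_{BP_*} M$ satisfies the other Eilenberg--Steenrod axioms automatically (additivity, suspension, homotopy invariance) because $BP_*(-)$ does, and tensoring with a fixed module commutes with direct sums and preserves these properties. By Brown representability, there is a spectrum $E$ with $E_*(-)$ as its associated homology theory, and the complex orientation of $BP$ descends to make $E$ complex oriented with formal group law $F$. The step I expect to be the main obstacle is the Landweber filtration theorem itself, whose proof requires a nontrivial analysis of the invariant prime ideal spectrum of $(BP_*,BP_*BP)$ and a careful noetherian induction on finitely presented comodules; given the depth of this result, in practice I would simply cite \cite{La76} rather than reprove it. The remaining bookkeeping (relating the hypothesis stated in terms of the $p$-series to the regularity of the $v_n$-sequence on $M$) is routine once one notes that the classifying map $BP_* \to M$ sends $v_n$ to $\tilde v_n$ modulo the ideal $(\tilde v_0,\ldots,\tilde v_{n-1})$.
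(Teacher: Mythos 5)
Your outline is the standard argument from Landweber's original paper, and the paper itself gives no proof here: it states this as a recalled theorem with a citation to \cite{La76} (and to \cite{Br62} for representability), exactly as you indicate. Your reduction via the Landweber filtration theorem, the iterated Tor long exact sequences collapsing the condition to injectivity of $v_n$ on $M/I_nM$, and the final appeal to Brown representability (together with Spanier--Whitehead duality to pass from the homology theory to a cohomology theory on finite spectra) is the correct and essentially complete sketch of what the citation defers to.
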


Recall a lemma due to Ravenel.
\begin{Lem}(Ravenel, \cite[Lemma 1.34]{Ra84})\label{Rale}
Let $X$ be a non-equivariant spectrum and $f:\sum^d X\rightarrow X$ be a self-map of $X$ with cofiber $Y$. Let $T(X)$ denote the $\textit{telescope}$ $\underrightarrow{\lim}\sum^{-id} X$ of $f$. Then
$$\langle X\rangle= \langle T(X)\rangle\vee\langle Y\rangle.$$
\end{Lem}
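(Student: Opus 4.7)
The plan is to prove this by unwinding the defining property of Bousfield classes: we must show that for every spectrum $W$, the vanishing $X \wedge W \simeq *$ is equivalent to the simultaneous vanishing of $T(X) \wedge W \simeq *$ and $Y \wedge W \simeq *$. The two ingredients are that smashing with $W$ is exact (it preserves the cofiber sequence $\Sigma^d X \xrightarrow{f} X \to Y$) and that smashing with $W$ commutes with sequential homotopy colimits (so $T(X)\wedge W$ is the telescope of $f\wedge 1_W$).

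First I would handle the forward direction, which is essentially trivial: if $X\wedge W \simeq *$, then smashing the cofiber sequence with $W$ gives $Y\wedge W \simeq *$, and the telescope $T(X)\wedge W = \mathrm{colim}(\Sigma^{-id}X\wedge W)$ is a colimit of contractible spectra, hence contractible. So $\langle X\rangle \geq \langle T(X)\rangle \vee \langle Y\rangle$.

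For the reverse direction, assume $Y\wedge W \simeq *$ and $T(X)\wedge W \simeq *$. The first assumption, together with the cofiber sequence $\Sigma^d X \wedge W \xrightarrow{f\wedge 1_W} X\wedge W \to Y\wedge W$, forces $f\wedge 1_W$ to be an equivalence. Consequently every structure map in the telescope $T(X)\wedge W = \mathrm{colim}\bigl(X\wedge W \xrightarrow{f\wedge 1_W} \Sigma^{-d}X\wedge W \xrightarrow{f\wedge 1_W} \cdots\bigr)$ is an equivalence, so the canonical map $X\wedge W \to T(X)\wedge W$ is an equivalence. Combining with $T(X)\wedge W \simeq *$ yields $X\wedge W \simeq *$, which gives the opposite inequality and hence the claimed equality of Bousfield classes.

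There is no real obstacle here; the only subtle point worth stating carefully is that the homotopy colimit defining $T(X)$ commutes with $-\wedge W$, which is standard for spectra since smash product is a left adjoint (in the symmetric monoidal model). I would simply cite this fact rather than rederive it, and present the two directions above as a short two-paragraph verification.
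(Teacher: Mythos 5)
Your argument is correct and is the standard verification: smash the cofiber sequence and the telescope with an arbitrary $W$, use exactness and commutation of $-\wedge W$ with sequential colimits, and observe in the reverse direction that $Y\wedge W\simeq *$ forces $f\wedge 1_W$ to be an equivalence, hence $T(X)\wedge W\simeq X\wedge W$. The paper does not reprove this lemma but simply cites Ravenel's Lemma 1.34, and your proof is essentially the argument one finds there.
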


For two non-equivariant spectra $E$ and $F$, recall that $\langle F\rangle \leq \langle E\rangle$ if for any spectrum $X\in \SH(e)$, $E_*X = 0 \Rightarrow F_*X = 0$.
The Landweber exact spectrum with the assumption of periodicity determines its Bousfield class.
\begin{Lem}\label{}
Let $E$ be a Landweber exact spectrum.
\begin{enumerate}
\item[\rm (i)] If $E$ is at most $v_n$-periodic, then $\langle E\rangle \leq \langle E(n)\rangle$;
\item[\rm (ii)] if $E$ is at least $v_n$-periodic, then $\langle E\rangle \geq \langle E(n)\rangle$.
\end{enumerate}
\end{Lem}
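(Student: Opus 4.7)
The key input is Ravenel's identity $\langle E(n) \rangle = \bigvee_{i=0}^{n} \langle K(i) \rangle$ from \cite[Theorem 2.1]{Ra84}, together with the Landweber exact description $E_*X \cong BP_*X \otimes_{BP_*} E_*$ valid for every spectrum $X$. This reduces both inequalities to statements comparing $E$ to the Morava $K$-theories $K(0),\ldots,K(n)$.

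For part (i), assume $E_*/I_{n+1}=0$, and suppose $K(i)_*X = 0$ for all $0\le i\le n$. The plan is to exploit the hypothesis $E_*/I_{n+1} = 0$, which together with the Landweber regularity of $v_0, v_1, \ldots, v_{n-1}$ on $E_*$ says that $E_*$ is a $BP_*$-module supported in chromatic heights $\leq n$. Concretely, I would construct a finite filtration of $E_*$ by $BP_*$-submodules using the short exact sequences $0 \to E_*/I_k \xrightarrow{v_k} E_*/I_k \to E_*/I_{k+1} \to 0$ and telescoping with $v_k$, producing associated graded pieces of the form $v_k^{-1}(E_*/I_k)$ for $0\le k\le n$. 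Each such piece is a $K(k)_*$-module, so after tensoring with $BP_*X$ the assumption $K(k)_*X=0$ forces every graded piece (and hence $E_*X$) to vanish.

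For part (ii), assume $E_*/I_n\neq 0$ and let $X$ be an $E$-acyclic spectrum. By Landweber regularity of $v_0,\dots,v_{n-1}$ on $E_*$, the quotient $E_*/I_i$ is nonzero for every $0\le i\le n$. I claim this forces $E\wedge K(i)\neq 0$ for each such $i$: indeed, $\pi_*(E\wedge K(i))=BP_*(K(i))\otimes_{BP_*}E_*$, and the nonzero $BP_*$-module $v_i^{-1}(E_*/I_i)$ contributes a nonzero summand via the unit $BP\to BP\wedge K(i)$. Since $K(i)$ is a graded field, every nonzero $K(i)$-module spectrum is a wedge of suspensions of $K(i)$, so $K(i)$ is a retract of $E\wedge K(i)$. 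Smashing with $X$ and using $E\wedge X=0$ gives $K(i)\wedge X=0$, i.e., $K(i)_*X=0$. Combined with Ravenel's identity this yields $E(n)_*X=0$, establishing $\langle E\rangle\geq\langle E(n)\rangle$.

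The main obstacle is rigorously constructing the filtration of $E_*$ in part (i) so that tensoring with an arbitrary (possibly infinite) spectrum $X$ preserves the vanishing of its associated graded. For finite $X$, this is immediate from Landweber's filtration theorem applied to $BP_*X$ and induction on type; but the general case requires additional care, either through a chromatic convergence argument or by invoking the Hopkins--Ravenel smashing theorem for $L_{E(n)}$ to reduce the Bousfield-class statement to the finite setting.
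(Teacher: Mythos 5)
Your part (ii) is essentially the paper's argument, modulo how one verifies $E\wedge K(i)\neq 0$: you read it off directly from the Landweber-exact description $E_*(K(i))=BP_*(K(i))\otimes_{BP_*}E_*$, while the paper first deduces $E\wedge T(j)\neq 0$ from the injectivity of $v_j$ on $E_*/I_j$ and then uses $\langle E\wedge T(j)\rangle=\langle E\wedge K(j)\rangle$; both then invoke the field-spectrum retract observation, which the paper phrases as ``for any $F$, $\langle F\wedge K(j)\rangle$ is either $0$ or $\langle K(j)\rangle$.''

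Your part (i), however, has a gap beyond the infinite-$X$ issue you flag: the claimed finite filtration of $E_*$ by $BP_*$-submodules with associated graded pieces $v_k^{-1}(E_*/I_k)$, $0\le k\le n$, does not exist in general. For $E=E(1)$ the ring $E_*=\mathbb{Z}_{(p)}[v_1^{\pm1}]$ is $p$-torsion-free, yet one of your pieces would be $v_1^{-1}(E_*/I_1)\cong\mathbb{F}_p[v_1^{\pm1}]$, which is $p$-torsion, and the other, $v_0^{-1}(E_*/I_0)\cong\mathbb{Q}[v_1^{\pm1}]$, is $p$-divisible; neither can occur as a $BP_*$-submodule of $\mathbb{Z}_{(p)}[v_1^{\pm1}]$, so no two-step filtration with these graded pieces exists. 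What you are reaching for is a chromatic tower of spectra, not a filtration of the coefficient ring, and the passage to arbitrary $X$ would then indeed require chromatic convergence or the smashing theorem, as you note.

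The paper sidesteps both problems by never filtering $E_*$ at all. It applies Ravenel's Lemma \cite[Lemma 1.34]{Ra84} iteratively to the $v_k$-self maps of finite type-$k$ spectra to obtain the Bousfield-class identity $\langle S^0\rangle=\langle T(0)\rangle\vee\cdots\vee\langle T(n)\rangle\vee\langle F(n+1)\rangle$, smashes with $E$, and then uses that $E$ (Landweber exact, hence a $BP$-module spectrum, hence a retract of $BP\wedge E$) together with Hovey's $\langle BP\wedge T(j)\rangle=\langle K(j)\rangle$ from \cite[Theorem 1.9]{Ho95} to rewrite this as $\langle E\rangle=\langle E\wedge K(0)\rangle\vee\cdots\vee\langle E\wedge K(n)\rangle\vee\langle BP\wedge E\wedge F(n+1)\rangle$. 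Part (i) then reduces to showing $E\wedge F(n+1)=0$, which follows from $E_*/I_{n+1}=0$ via Proposition \ref{pepr}. This is a formal manipulation of Bousfield classes requiring no control over $E_*X$ for general $X$, which is exactly where your filtration argument gets stuck.
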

\begin{proof}
Applying Lemma \ref{Rale} repeatedly using $v_n$-self map \ref{HSle}, we get
$$\langle S^0\rangle=\langle T(0)\rangle\vee\cdots \vee\langle T(n)\rangle \vee \langle F(n+1)\rangle.$$
Smashing with $E$, we have
$$\langle E\rangle=\langle E\wedge T(0)\rangle\vee\cdots \vee\langle E\wedge T(n)\rangle \vee \langle E\wedge F(n+1)\rangle.$$
Since $E$ is Landweber exact, $E$ is a $BP$-module spectrum, so $E$ is a retract of $BP\wedge E$, then
$$\langle E\rangle=\langle BP\wedge E\rangle=\langle BP\wedge E\wedge T(0)\rangle\vee\cdots \vee\langle BP\wedge E\wedge T(n)\rangle \vee \langle BP\wedge E\wedge F(n+1)\rangle.$$
By Hovey's theorem \cite[Theorem 1.9]{Ho95} that $\langle BP\wedge T(n)\rangle=\langle K(n)\rangle,$ we have
$$\langle E\rangle=\langle E\wedge K(0)\rangle\vee\cdots \vee\langle E\wedge K(n)\rangle \vee \langle BP\wedge E\wedge F(n+1)\rangle.$$

If $E$ is at most $v_n$-periodic, then by Proposition \ref{pepr}, we have $E\wedge F(n+1)=0$ and
$$\langle E\rangle=\langle E\wedge K(0)\rangle\vee\cdots \vee\langle E\wedge K(n)\rangle\leq \langle K(0)\rangle\vee\cdots \vee\langle K(n)\rangle=\langle E(n)\rangle.$$

If $E$ is at least $v_n$-periodic, that is $E^*/I_{n}\neq 0$, then we get $E^*/I_{j}\neq 0$ for $j\leq n$. And by Proposition \ref{pepr}, we have $E\wedge F(j)\neq 0$ for $j\leq n$. Since $E$ is Landweber exact, the map $E^*/I_{j}\rightarrow v_j^{-1}E^*/I_{j}$ is injective, so $v_j^{-1}E^*/I_{j}\neq 0$ and $E\wedge T(j)\neq 0$ for $j\leq n$. Note that $\langle E\wedge T(j)\rangle=\langle E\wedge K(j)\rangle$ and for any $F\in \SH(e)$, $\langle F\wedge K(j)\rangle$ is either $0$ or $\langle K(j)\rangle$, then we have $\langle E\wedge K(j)\rangle=\langle K(j)\rangle$ for $j\leq n$ and
$$\langle E\rangle=\langle K(0)\rangle\vee\cdots \vee\langle K(n)\rangle\vee \langle BP\wedge E\wedge F(n+1)\rangle\geq \langle K(0)\rangle\vee\cdots \vee\langle K(n)\rangle=\langle E(n)\rangle.$$
\end{proof}

\begin{Thm}(Hovey, \cite[Corollary 1.12]{Ho95})\label{En}
If $E$ is a $v_n$-periodic and Landweber exact spectrum, then
$$\langle E\rangle=\langle E(n)\rangle=\langle K(0)\vee\cdots \vee K(n)\rangle.$$
\end{Thm}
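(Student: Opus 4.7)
The theorem follows essentially by combining the two halves of the preceding lemma together with the definition of $v_n$-periodic. My plan is as follows.

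First, I would invoke Definition \ref{Rd}: a Landweber exact spectrum $E$ is $v_n$-periodic precisely when $E^*/I_{n+1}=0$ and $E^*/I_n\neq 0$, i.e.\ $E$ is simultaneously \emph{at most} $v_n$-periodic and \emph{at least} $v_n$-periodic. This is the crucial translation that lets me feed the hypothesis into the preceding lemma in both directions.

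Second, I would apply the two parts of the preceding lemma. The ``at most $v_n$-periodic'' half gives the upper bound $\langle E\rangle \leq \langle E(n)\rangle$, while the ``at least $v_n$-periodic'' half gives the lower bound $\langle E\rangle \geq \langle E(n)\rangle$. Since the Bousfield poset is antisymmetric for these two inequalities, the two bounds combine to yield $\langle E\rangle = \langle E(n)\rangle$.

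Finally, for the second equality $\langle E(n)\rangle = \langle K(0)\vee\cdots\vee K(n)\rangle$, I would simply cite Ravenel's theorem \cite[Theorem 2.1]{Ra84}, which is the standard wedge decomposition of the Bousfield class of the $n$-th Johnson--Wilson theory and has already been recalled earlier in this paper (in Section \ref{Mot}). There is no real obstacle here: the entire content was already packed into the preceding lemma, so this theorem is essentially a corollary that records the cleanest statement when both periodicity conditions hold simultaneously.
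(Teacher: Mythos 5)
Your proposal is correct and matches the paper's implicit argument exactly: the preceding unnamed lemma was set up precisely so that its parts (i) and (ii), combined via the refined Definition \ref{Rd}, sandwich $\langle E\rangle$ between $\langle E(n)\rangle$ from both sides, and the second equality $\langle E(n)\rangle = \langle K(0)\vee\cdots\vee K(n)\rangle$ is Ravenel's result already recalled in Section \ref{Mot}. The paper itself cites the theorem to Hovey rather than spelling out a proof, but the lemma immediately before it carries all the content, and you have correctly identified that.
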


\begin{Lem}\label{LE}
If $E$ is Landweber exact, then ${\cat T}_{A,C}(E)$ is Landweber exact.
\end{Lem}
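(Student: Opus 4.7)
The plan is to show that the coefficient ring $\pi_*(\cat T_{A,C}(E))$ is a flat $E^*$-module and then observe that Landweber's criterion (Proposition \ref{LEdef}) transfers through any flat extension of a Landweber exact base. This reduces the lemma to verifying flatness, and the explicit description of $\pi_*(\cat T_{A,C}(E))$ supplied by Theorem \ref{bgcoh} makes that verification very concrete.

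First, I would invoke Theorem \ref{bgcoh} to write
$$\pi_*(\cat T_{A,C}(E)) \cong L_C^{-1}\bigl(E^*\psb{x_1,\ldots,x_m}/([p^{i_1}]_E(x_1),\ldots,[p^{i_m}]_E(x_m))\bigr).$$
Applying the Weierstrass Preparation Theorem (Theorem \ref{WPT}) variable by variable, exactly as already done in Lemma \ref{GCA} and Proposition \ref{idwp}, identifies the power-series quotient with the polynomial quotient $E^*[x_1,\ldots,x_m]/(g_{i_1}(x_1),\ldots,g_{i_m}(x_m))$, where $g_{i_k}$ is the Weierstrass polynomial of $[p^{i_k}]_E(x_k)$. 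This algebra is a free $E^*$-module (with an explicit monomial basis of rank $p^{n(i_1+\cdots+i_m)}$) and hence flat. Since localization is exact and therefore preserves flatness, the subsequent inversion of $L_C$ leaves flatness intact, so $\pi_*(\cat T_{A,C}(E))$ is a flat $E^*$-module.

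Second, I would use this flatness to transfer Landweber exactness. The composition $BP_* \to E^* \to \pi_*(\cat T_{A,C}(E))$ equips the latter with a $BP_*$-module structure classifying the induced formal group law. For each $n \geq 0$, Landweber exactness of $E$ gives the injectivity of multiplication by $v_n$ on $E^*/I_n E^*$, and tensoring this injection over $E^*$ with the flat module $\pi_*(\cat T_{A,C}(E))$ preserves injectivity. Since
$$\pi_*(\cat T_{A,C}(E))/I_n \pi_*(\cat T_{A,C}(E)) \cong (E^*/I_n E^*)\otimes_{E^*} \pi_*(\cat T_{A,C}(E)),$$
this shows that $v_n$ is a non-zero-divisor on the quotient for every $n$, which is exactly the criterion of Proposition \ref{LEdef} for $\cat T_{A,C}(E)$. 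The only subtlety worth flagging—hence the natural candidate for the ``main obstacle''—is checking that the Weierstrass reduction really does deliver a free $E^*$-module across all $m$ variables simultaneously; this is handled cleanly by applying Theorem \ref{WPT} one variable at a time and invoking the Künneth-type decomposition already used in Lemma \ref{GCA}, so no genuinely new work beyond what the paper develops is required.
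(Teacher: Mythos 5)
Your proof is correct and follows essentially the same route as the paper's: both arguments hinge on $E^*(BA_+)$ being finite free over $E^*$ (via Weierstrass preparation) and on the exactness of localization, so that Landweber exactness of $E$ transfers. You package this as ``$\pi_*(\cat T_{A,C}(E))$ is flat over $E^*$, so tensor the injections $v_i\colon E^*/I_i \hookrightarrow E^*/I_i$,'' whereas the paper propagates the regular-sequence short exact sequences on $E^*(BA_+)$ directly through the localization — the same computation phrased slightly differently.
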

\begin{proof}
Note that $E^*(BA_+)$ is a finite free module over $E^*$. Since $E$ is Landweber exact, $v_0,\cdots ,v_i$ form a regular sequence of $E^*(BA_+)$ for all $p$ and $i$. Hence for all $i$ there are short exact sequences
$${\small\CD
  0 @>>>E^*(BA_+)/I_{i} @>\cdot v_{i}>> E^*(BA_+)/I_{i}@>>> E^*(BA_+)/I_{i+1} @>>> 0.
\endCD}$$

By Theorem \ref{bgcoh}, we know that
$\pi_*({\cat T}_{A,C}(E))$ is a localization of $E^*(BA_+)$. Note that $E^*(BA_+)/I_{i+1}$ is an $E^*(BA_+)/I_i$-module and the localization functor is exact, we have
short exact sequences
$${\small\CD
  0 @>>>\pi_{*}(\cat T_{A,C}(E))/I_{i} @>\cdot v_{i}>> \pi_{*}(\cat T_{A,C}(E))/I_{i}@>>> \pi_{*}(\cat T_{A, C}(E))/I_{i+1} @>>> 0.
\endCD}$$
This deduces that $v_0,\cdots ,v_i$ form a regular sequence of $\pi_*({\cat T}_{A,C}(E))$ for all $p$ and $i$. This finishes the proof.
\end{proof}
\section{Generalized Tate construction lowers Bousfield class}
In this section, we prove the following theorem.
\begin{Thm}($\mathbf{Generalized}$ $\mathbf{Tate}$ $\mathbf{construction}$ $\mathbf{lowers}$ $\mathbf{Bousfield}$ $\mathbf{class}$)\label{GTglbc}
Let $m$ be a positive integer and $E$ be a $p$-complete, complex oriented spectrum with an associated formal group of height $n$. Let $A$ be an abelian $p$-group of form $\mathbb{Z}/{p^{i_1}}\oplus \cdots \oplus \mathbb{Z}/{p^{i_m}}$ and $C$ be its subgroup $\mathbb{Z}/{p^{j_1}}\oplus \cdots \oplus \mathbb{Z}/{p^{j_m}}$ with $i_k\leq j_k$ for $1\leq k\leq m$. There is a group homomorphism $\phi$ from $A/C$ to $A$ as follows:
\begin{align*}
\phi:\mathbb{Z}/{p^{i_1-j_1}}\oplus \mathbb{Z}/{p^{i_2-j_2}}\oplus \cdots \oplus \mathbb{Z}/{p^{i_m-j_m}}&\rightarrow \mathbb{Z}/{p^{i_1}}\oplus\mathbb{Z}/{p^{i_2}}\oplus\cdots\oplus \mathbb{Z}/{p^{i_m}}\\
(w_1,w_2,\cdots,w_m)&\mapsto (p^{i_1-j_1}w_1,p^{i_2-j_2}w_2,\cdots, p^{i_m-j_m}w_m).
\end{align*} If $E$ is Landweber exact, then
\begin{enumerate}
\item[{\rm (i)}]$\cat T_{A,C}(E)$ is Landweber exact;
\item[{\rm (ii)}]$\cat T_{A,C}(E)$ is at least $v_{n-{\rm rank}_p(C)}$-periodic and at most $v_{n-t}$-periodic;
\item[{\rm (iii)}]$\langle\cat T_{A,C}(E)\rangle=\langle E(n-s_{A,C;E})\rangle$ for some integer $s_{A,C;E}$ with $t\leq s_{A,C;E}\leq {\rm rank}_p(C)$, When $k>n$, $ E(n-k)=*$.
\end{enumerate}
Where
$$t=\max_{j\in \mathbb{N}^+}{\lceil\frac{\log_p|V(p^j|A)|-\log_p|V(p^j|{\rm im}\phi(A/C))|}{j}\rceil}.$$
Especially, if $A$ is a finite abelian $p$-group and $C$ is its direct summand, then the blue-shift number $s_{A,C;E}={\rm rank}_p(C)$; $A=\mathbb{Z}/p^j$ and $C$ is a non-trivial subgroup, then the blue-shift number $s_{A,C;E}=1$. However, the upper bound $t$ does not always equal ${\rm rank}_p(C)$. For example, $A=\mathbb{Z}/p^2\oplus\mathbb{Z}/p^2\oplus\mathbb{Z}/p^2$ and
$C=\mathbb{Z}/p\oplus\mathbb{Z}/p\oplus\mathbb{Z}/p$, then $t=2$ but ${\rm rank}_p(C)=3$.
\end{Thm}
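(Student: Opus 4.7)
The plan is to establish the three assertions of the theorem in turn, drawing on the algebraic framework of Sections 3, 4, and 5.

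Part (i) is immediate: by Theorem \ref{bgcoh} the homotopy group $\pi_*(\cat T_{A,C}(E))$ is a localization of the free $E^*$-module $E^*(BA_+)$, so Lemma \ref{LE} transfers Landweber exactness from $E$ to $\cat T_{A,C}(E)$.

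For part (ii) I would handle the two bounds on periodicity separately, translating them via Lemma \ref{Max} and Definition \ref{Cri} into two-sided bounds on $s_{A,C;E}$. For the upper bound $s_{A,C;E}\leq {\rm rank}_p(C)$ --- equivalently, $\pi_*(\cat T_{A,C}(E))/I_{n-{\rm rank}_p(C)}\neq 0$ --- I would begin with $E^*/I_n\neq 0$ (the assumption that $E$ has height exactly $n$) and apply Lemma \ref{ind} inductively ${\rm rank}_p(C)$ times, using the Landweber exactness of $\cat T_{A,C}(E)$ from part (i) to ensure that each inductive step extends non-vanishing from the quotient modulo $I_{n-q}$ to the quotient modulo $I_{n-q-1}$.

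For the lower bound $s_{A,C;E}\geq t$ --- equivalently, $\pi_*(\cat T_{A,C}(E))/I_{n-t+1}=0$ --- I would invoke the Vanishing Ring Condition of Corollary \ref{vanish}(i). For each positive integer $j$, Theorem \ref{fpjr} identifies the roots of $[p^j]_E(y)$ in $\pi_*(\cat T_{A,C}(E))$ with the set $\{\alpha_w : w\in V(p^j|A)\}$, where $\alpha_w=[w_1]_E(x_1)+_F\cdots+_F[w_m]_E(x_m)$. By Proposition \ref{pps}(iv) we have $\alpha_w-\alpha_{w'}=\alpha_{w-w'}\cdot u$ for some unit $u$, and by Theorem \ref{bgcoh} this difference is inverted in $\pi_*(\cat T_{A,C}(E))$ exactly when $w-w'\notin {\rm im}\phi(A/C)$. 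Choosing one representative from each coset of $V(p^j|{\rm im}\phi(A/C))$ inside $V(p^j|A)$ therefore produces an invertible tuple of $[p^j]_E(y)$ of size $|V(p^j|A)|/|V(p^j|{\rm im}\phi(A/C))|$. Passing to $\pi_*(\cat T_{A,C}(E))/I_{n-t+1}$ and using Proposition \ref{idwp} to replace the power series $[p^j]_E(y)$ by its Weierstrass polynomial $g_j$, then choosing $j$ to realize the maximum in the definition of $t$, the tuple size dominates the effective degree and Corollary \ref{vanish}(i) forces the quotient to vanish.

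Finally, for part (iii), I would combine parts (i) and (ii) with Hovey's Theorem \ref{En}: a Landweber exact $v_k$-periodic spectrum has Bousfield class $\langle E(k)\rangle$, hence $\langle\cat T_{A,C}(E)\rangle=\langle E(n-s_{A,C;E})\rangle$ for some integer $s_{A,C;E}$ in the asserted range $t\leq s_{A,C;E}\leq {\rm rank}_p(C)$, with the convention $E(n-k)=*$ when $k>n$. The main obstacle will be the vanishing argument in the lower bound: one must carefully reconcile the tuple size $|V(p^j|A)|/|V(p^j|{\rm im}\phi(A/C))|$ with the effective Weierstrass degree of $g_j$ in $\pi_*(\cat T_{A,C}(E))/I_{n-t+1}$, and verify that the optimal choice of $j$ in the definition of $t$ is precisely what makes the tuple dominate the degree so that Corollary \ref{vanish}(i) applies.
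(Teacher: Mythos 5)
Your structure matches the paper's: Lemma~\ref{LE} for Landweber exactness, Lemma~\ref{ind} iterated ${\rm rank}_p(C)$ times for the upper bound on $\mathbf{s}_{A,C;E}$, and the tuple-plus-Corollary~\ref{vanish} machinery for the lower bound, with Theorem~\ref{En} finishing part~(iii). The coset-representative construction giving a tuple of size $|V(p^j|A)|/|V(p^j|{\rm im}\phi(A/C))|$ is exactly what the paper does via Lemma~\ref{Vpj} and Corollary~\ref{CpvpjAC}.

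The gap is precisely what you flag at the end as ``the main obstacle,'' and it is not a minor verification — it is where the numerics close. The Weierstrass polynomial $g_j$ of $[p^j]_E(x)$ over $E^*$ has degree $\deg_W[p^j]_E = p^{jn}$, and your tuple has size $|V(p^j|A)|/|V(p^j|{\rm im}\phi(A/C))| \le |A|$, which is in general far smaller than $p^{jn}$. Applying Corollary~\ref{vanish} directly to $g_j$ would therefore place you in case~(iii) of Theorem~\ref{Fac-tuple}, not case~(i), and gives nothing. The paper's argument hinges on the observation (Lemma~\ref{Rp}) that modulo $I_{n+1-q}$ one has $[p^j]_E(x) = g_{1,n+1-q}^j(x^{p^{j(n+1-q)}})$, where $g_{1,n+1-q}^j$ has degree only $p^{j(q-1)}$ in the new variable, with leading coefficient the unit $v_n^{1+p^n+\cdots+p^{(j-1)n}}$. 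One must then show that the elements $\widetilde\alpha_w^{\,p^{j(n+1-q)}}$ (not the $\widetilde\alpha_w$ themselves) form a tuple of $g_{1,n+1-q}^j$, i.e., that raising the Euler classes to the $p^{j(n+1-q)}$-th power preserves the non-zero-divisor property of differences. Only after this substitution does $q < \frac{\log_p|V(p^j|A)|-\log_p|V(p^j|{\rm im}\phi(A/C))|}{j}+1$ put you in case~(i) of Theorem~\ref{Fac-tuple} with $1$ in the coefficient ideal, forcing the vanishing. Without this reduction of the effective degree from $p^{jn}$ to $p^{j(q-1)}$, the tuple-vs-degree comparison fails and the lower bound does not follow.
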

The $\rm (i)$ of Theorem \ref{GTglbc} is proved by Lemma \ref{LE}. By Theorem \ref{En}, the $\rm (i)$ and $\rm (ii)$ of Theorem \ref{GTglbc} imply the $\rm (iii)$ of Theorem \ref{GTglbc}. It remains to prove the $\rm (ii)$ of Theorem \ref{GTglbc}, and by Lemma \ref{Max}, it is equivalent to $t\leq \mathbf{s}_{A,C;E}\leq {\rm rank}_p(C)$ where
$$t=\max_{j\in \mathbb{N}^+}{\lceil\frac{\log_p|V(p^j|A)|-\log_p|V(p^j|{\rm im}\phi(A/C))|}{j}\rceil}.$$
And we divide its proof into three cases:
\begin{enumerate}
\item[(1)]$A=C$ is any elementary abelian $p$-group;
\item[(2)]$A=C$ is any general abelian $p$-group;
\item[(3)]$A$ is any general abelian $p$-group and $C$ is its proper subgroup.
\end{enumerate}
Although $(1)$ is a special case of $(2)$, the whole proof for the case $(1)$ is inspiring and the proof for the upper bound of $\mathbf{s}_{A,A;E}$ is different from the corresponding proof for the case $(2)$. For all above three cases, the key proof lies in the looking for lower bounds of $\mathbf{s}_{A,C;E}$. If we could find some-tuple of $[p^j]_E(x)$ or its Weierstrass polynomial $g_j(x)$ (In this section, we do not distinguish between $[p^j]_E(x)$ and $g_j(x)$) in $\pi_*(\cat T_{A,C}(E))$, then by Corollary \ref{vanish} we get a lower bound of $\mathbf{s}_{A,C;E}$.

\subsection{Proof for the case (1) $A=C$ is an elementary abelian $p$-group \label{Ivrop}}
Let $A$ be an elementary abelian group with ${\rm rank}_p(A)=m$. From Proposition \ref{Shgg} and Theorem \ref{bgcoh}, it
follows that
$$\pi_{*}(\cat T_{A,A}(E))\cong L_A^{-1}E^*\psb{x_1,\cdots,x_{m}}/([p]_E(x_1),\cdots,[p]_E(x_m)),$$
where the multiplicatively closed set $L_A$ is generated by the set
$$M_A=\{\alpha_{(w_1,w_2,\cdots,w_m)}\mid(w_1,w_2,\cdots,w_m)\in A-\{e\}=A^*\}.$$
And we have
\begin{align*}
\pi_{*}(\cat T_{A,A}(E))/I_{n+1-q}\cong\widetilde{L}_{A,n+1-q}^{-1}E^*/I_{n+1-q}\psb{x_1,\cdots,x_{m}}/([p]_E(x_1),\cdots,[p]_E(x_m)),
\end{align*}
where the multiplicatively closed set $\widetilde{L}_{A,n+1-q}$ is mod $I_{n+1-q}$ reduction of $L_A$ and generated by the set
$$\widetilde{M}_{A,n+1-q}=\{\widetilde{\alpha}_{(w_1,w_2,\cdots,w_m)} \mid (w_1,w_2,\cdots,w_m)\in A^*\}.$$
Note that
$$[p]_{E}(x)=v_{n+1-q}x^{p^{n+1-q}}+v_{n+2-q}x^{p^{n+2-q}}+\cdots+v_{n}x^{p^n}\in \pi_{*}(\cat T_{A,A}(E))/I_{n+1-q}[x].$$
Let $g_{1,n+1-q}(x)=v_{n+1-q}x+v_{n+2-q}x^{p}+\cdots+v_{n}x^{p^{q-1}}$, then $[p]_{E}(x)=g_{1,n+1-q}(x^{p^{n+1-q}}) \mod I_{n+1-q}$. The following lemma gives a $p^m$-tuple of $[p]_{E}(x)$ in $\pi_{*}(\cat T_{A,A}(E))$ under the assumption that $\pi_{*}(\cat T_{A,A}(E))\neq 0$.
\begin{Lem}\label{nrp}
If $\pi_{*}(\cat T_{A,A}(E))\neq 0$, then ${}_{p\!}F(\pi_{*}(\cat T_{A,A}(E)))$ is a $p^m$-tuple of $[p]_{E}(x)$ in $\pi_{*}(\cat T_{A,A}(E))$. Furthermore, follow the notation in \cite[Lemma 6.3]{HKR}, for $a, b\in\pi_{*}(\cat T_{A,A}(E))$, we will write $a\sim b$ if $a=\varepsilon\cdot b$ where $\varepsilon$ is a unit in $\pi_{*}(\cat T_{A,A}(E))$, let ${}_{p\!}F(\pi_{*}(\cat T_{A,A}(E)))/\sim$ denote the set of all equivalent classes, then ${}_{p\!}F(\pi_{*}(\cat T_{A,A}(E)))/\sim$ is an abelian group.
\end{Lem}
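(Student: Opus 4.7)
The plan is to combine the explicit enumeration of $p$-torsion roots provided by Theorem~\ref{fpjr} with the localization formula for $\pi_{*}(\cat T_{A,A}(E))$ from Theorem~\ref{bgcoh}, and to convert ordinary differences of roots into formal-group differences using Proposition~\ref{pps}(iv).

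First, I would apply Theorem~\ref{fpjr} with $j=1$. Because $A$ is elementary abelian of rank $m$, every element of $A$ is $p$-torsion, so $V(p\mid A)=A$ and
$${}_{p\!}F(E^{*}(BA_{+}))\;\cong\;\{\alpha_{(w_{1},\ldots,w_{m})} : (w_{1},\ldots,w_{m})\in A\}$$
is a set of exactly $p^{m}$ roots of $[p]_{E}(x)$ in $E^{*}(BA_{+})$. The localization map $E^{*}(BA_{+})\to\pi_{*}(\cat T_{A,A}(E))$ sends these expressions to roots of $[p]_{E}(x)$ downstairs, and the same argument used in Theorem~\ref{fpjr} shows that its image exhausts ${}_{p\!}F(\pi_{*}(\cat T_{A,A}(E)))$. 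The remaining content of the first claim is therefore that these $p^{m}$ images are pairwise distinct and that the difference of any two of them is neither zero nor a zero divisor.

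For this, I would use Proposition~\ref{pps}(iv) to write
$$\alpha_{(w_{1},\ldots,w_{m})}-\alpha_{(w_{1}',\ldots,w_{m}')}\;=\;\bigl(\alpha_{(w_{1},\ldots,w_{m})}-_{F}\alpha_{(w_{1}',\ldots,w_{m}')}\bigr)\cdot\varepsilon^{-1},$$
where $\varepsilon$ is a unit power series in the two arguments. The formal-group difference on the right coincides with $\alpha_{(w_{1}-w_{1}',\ldots,w_{m}-w_{m}')}$, and whenever the two indexing tuples are distinct in $A=(\mathbb{Z}/p)^{m}$ the difference vector lies in $A^{*}=A-\{0\}$, so $\alpha_{(w_{1}-w_{1}',\ldots,w_{m}-w_{m}')}\in M_{A}\subseteq L_{A}$ is invertible in $\pi_{*}(\cat T_{A,A}(E))$. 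Under the standing hypothesis $\pi_{*}(\cat T_{A,A}(E))\neq 0$, the original difference is then a unit, hence nonzero and not a zero divisor. This simultaneously delivers distinctness and the $p^{m}$-tuple condition from Definition~\ref{ntuple}.

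For the second claim, I would transport the abelian group structure on $({}_{p\!}F(\pi_{*}(\cat T_{A,A}(E))),+_{F})$ --- inherited from the Hopf algebra structure on $E^{*}\psb{x}/([p]_{E}(x))$ supplied by Proposition~\ref{idwp} --- to the set of $\sim$-classes, following the template of \cite[Lemma 6.3]{HKR}. The crucial input is that by the first part every nonzero root of $[p]_{E}(x)$ in $\pi_{*}(\cat T_{A,A}(E))$ is itself a unit, which lets one check that $\sim$ is a congruence for $+_{F}$: I would verify closure, identify $[0]$ as the neutral class, produce inverses via formal-group negation, and read off commutativity from the commutativity of $F$. The main obstacle I anticipate is precisely this congruence verification, since multiplication by an arbitrary unit does not interact formally with $+_{F}$; I expect to push the check through by exploiting the explicit shape of the roots $\alpha_{(w_{1},\ldots,w_{m})}$ and the translation $\alpha_{(w)}-\alpha_{(w')}=\alpha_{(w-w')}\cdot\varepsilon^{-1}$ established above, in the style of \cite[Lemma 6.3]{HKR}.
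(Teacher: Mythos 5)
Your proposal follows essentially the same path as the paper: identify the $p$-torsion roots via Theorem~\ref{fpjr} (with $V(p\mid A)=A$ since $A$ is elementary abelian), convert ordinary differences to formal-group differences via Proposition~\ref{pps}(iv) so that $\alpha_{(u)}-\alpha_{(w)}$ becomes a unit multiple of $\alpha_{(u-w)}\in M_A\subseteq L_A$, and transport the additive structure through $\alpha_{(u)}+_F\alpha_{(w)}=\alpha_{(u+w)}$ following the template of \cite[Lemma 6.3]{HKR}. The only cosmetic difference is that you spell out the localization map $E^*(BA_+)\to\pi_*(\cat T_{A,A}(E))$ explicitly and flag the congruence verification for $\sim$ as a worry to be checked, whereas the paper records the addition and inverse relations $\alpha_{(u)}+\alpha_{(w)}\sim\alpha_{(u+w)}$ and $-\alpha_{(w)}\sim\alpha_{(-w)}$ directly without further comment.
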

\begin{proof}
By Theorem \ref{fpjr}, we have
$${}_{p\!}F(\pi_{*}(\cat T_{A,A}(E)))\cong\{{\alpha_{(w_1,w_2,\cdots,w_m)}}\in\pi_{*}(\cat T_{A,A}(E))\mid(w_1,w_2,\cdots,w_m)\in A\}.$$
To prove that ${}_{p\!}F(\pi_{*}(\cat T_{A,A}(E)))$ is a $|{}_{p\!}F(\pi_{*}(\cat T_{A,A}(E)))|$-tuple of $[p]_{E}(x)$ in $\pi_{*}(\cat T_{A,A}(E))$, we first check that ${}_{p\!}F(\pi_{*}(\cat T_{A,A}(E)))$ is a set of roots of $[p]_{E}(x)$.
By Proposition \ref{pps}, we have for $(w_1,w_2,\cdots,w_m)\in A$, $(pw_1,pw_2,\cdots,pw_m)=0$ and
\begin{align*}
[p]_{E}(\alpha_{(w_1,w_2,\cdots,w_m)})=&[p]_{E}([w_1]_{E}(x_1)+_F [w_2]_{E}(x_2)+_F \cdots +_F [w_m]_{E}(x_m))\\
=&[pw_1]_{E}(x_1)+_F [pw_2]_{E}(x_2)+_F \cdots +_F [pw_m]_{E}(x_m)=0.
\end{align*}
Then we check that the difference of any two elements of ${}_{p\!}F(\pi_{*}(\cat T_{A,A}(E)))$ is not a zero divisor in $\pi_{*}(\cat T_{A,A}(E))$.
From the formula $x-_F y=(x-y)\cdot \varepsilon(x,y)$, where $x,y\in {}_{p\!}F(\pi_{*}(\cat T_{A,A}(E)))$, $\varepsilon(x,y)$ is a unit in $\pi_{*}(\cat T_{A,A}(E))$, it follows that
\begin{align*}
&(\alpha_{(u_1,u_2,\cdots,u_m)}-\alpha_{(w_1,w_2,\cdots,w_m)})\cdot\varepsilon(\alpha_{(u_1,u_2,\cdots,u_m)},\alpha_{(w_1,w_2,\cdots,w_m)})\\
=&\alpha_{(u_1,u_2,\cdots,u_m)}-_F \alpha_{(w_1,w_2,\cdots,w_m)}=\alpha_{(u_1-w_1,u_2-w_2,\cdots,u_m-w_m)},
\end{align*}
where $\varepsilon(\alpha_{(u_1,u_2,\cdots,u_m)},\alpha_{(w_1,w_2,\cdots,w_m)})$ is a unit in $\pi_{*}(\cat T_{A,A}(E))$.
Since $\pi_{*}(\cat T_{A,A}(E))\neq 0$ and $(u_1,u_2,\cdots,u_m)\neq(w_1,w_2,\cdots,w_m)$, $\alpha_{(u_1-w_1,u_2-w_2,\cdots,u_m-w_m)}\in L_A$ is not zero or zero-divisor in $\pi_{*}(\cat T_{A,A}(E))$. So ${}_{p\!}F(\pi_{*}(\cat T_{A,A}(E)))$ is a $p^m$-tuple of $[p]_{E}(x)$ in $\pi_{*}(\cat T_{A,A}(E))$.

Finally, we give ${}_{p\!}F(\pi_{*}(\cat T_{A,A}(E)))/\sim$ an abelian group structure:
\begin{enumerate}
\item[{\rm (i)}]Addition: $\alpha_{(u_1,u_2,\cdots,u_m)}+\alpha_{(w_1,w_2,\cdots,w_m)}\sim\alpha_{(u_1+w_1,u_2+w_2,\cdots,u_m+w_m)}$;
\item[{\rm (ii)}]Inverse: $-\alpha_{(w_1,w_2,\cdots,w_m)}\sim\alpha_{(-w_1,-w_2,\cdots,-w_m)}$.
\end{enumerate}
This completes the proof.
\end{proof}

The following lemma gives a $p^m$-tuple of $g_{1,n+1-q}(x)$ in $\pi_{*}(\cat T_{A,A}(E))/I_{n+1-q}$ under the assumption that $\pi_{*}(\cat T_{A,A}(E))/I_{n+1-q}\neq 0$.
\begin{Lem}\label{ntp}
Let ${}_{p\!}F(\pi_{*}(\cat T_{A,A}(E))/I_{n+1-q})^{p^{n+1-q}}$ denote the subset
$$\{\widetilde{\alpha}^{p^{n+1-q}}_{(w_1,w_2,\cdots,w_m)}\in \pi_{*}(\cat T_{A,A}(E))/I_{n+1-q}\mid (w_1,w_2,\cdots,w_m)\in  A\}.$$
If $\pi_{*}(\cat T_{A,A}(E))/I_{n+1-q}\neq 0$, then ${}_{p\!}F(\pi_{*}(\cat T_{A,A}(E))/I_{n+1-q})^{p^{n+1-q}}$ is a $p^m$-tuple of $g_{1,n+1-q}(x)$ in $\pi_{*}(\cat T_{A,A}(E))/I_{n+1-q}$, and ${}_{p\!}F(\pi_{*}(\cat T_{A,A}(E))/I_{n+1-q})^{p^{n+1-q}}/\sim$ is an abelian group.
\end{Lem}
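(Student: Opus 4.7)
The plan is to reduce Lemma \ref{ntp} to Lemma \ref{nrp} by transporting the $p^m$-tuple $\{\widetilde{\alpha}_{(w_1,\ldots,w_m)}\}$ through the Frobenius $x \mapsto x^{p^{n+1-q}}$. The crucial algebraic input is the identity $[p]_E(x) = g_{1,n+1-q}(x^{p^{n+1-q}})$ in $\pi_{*}(\cat T_{A,A}(E))/I_{n+1-q}[x]$, recorded just before the lemma: from it, any root $\widetilde{\alpha}_{(w)}$ of $[p]_E(x)$ in $\pi_{*}(\cat T_{A,A}(E))/I_{n+1-q}$ gives rise to a root $\widetilde{\alpha}^{p^{n+1-q}}_{(w)}$ of $g_{1,n+1-q}(x)$. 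Since Lemma \ref{nrp} applied to the (nontrivial by hypothesis) quotient ring supplies the $p^m$ candidate roots $\widetilde{\alpha}_{(w_1,\ldots,w_m)}$ of $[p]_E(x)$, the set ${}_{p\!}F(\pi_{*}(\cat T_{A,A}(E))/I_{n+1-q})^{p^{n+1-q}}$ automatically consists of $p^m$ roots of $g_{1,n+1-q}(x)$.

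Next I would verify the tuple condition, i.e.\ that $\widetilde{\alpha}^{p^{n+1-q}}_{(u)} - \widetilde{\alpha}^{p^{n+1-q}}_{(w)}$ is neither zero nor a zero-divisor for distinct $(u),(w) \in A$. If $n+1-q \geq 1$ then $p = v_0 \in I_{n+1-q}$, so $\pi_{*}(\cat T_{A,A}(E))/I_{n+1-q}$ has characteristic $p$ and the Frobenius is a ring endomorphism, giving
$$\widetilde{\alpha}^{p^{n+1-q}}_{(u)} - \widetilde{\alpha}^{p^{n+1-q}}_{(w)} = \bigl(\widetilde{\alpha}_{(u)} - \widetilde{\alpha}_{(w)}\bigr)^{p^{n+1-q}}.$$
(The case $n+1-q=0$ is trivial, since the map is then the identity and the claim is exactly Lemma \ref{nrp}.) By Proposition \ref{pps}(iv) and the computation in Lemma \ref{nrp}, $(\widetilde{\alpha}_{(u)} - \widetilde{\alpha}_{(w)}) \cdot \varepsilon = \widetilde{\alpha}_{(u-w)}$ with $\varepsilon$ a unit, and $\widetilde{\alpha}_{(u-w)} \in \widetilde{L}_{A,n+1-q}$ is invertible in the localization; hence $\widetilde{\alpha}_{(u)} - \widetilde{\alpha}_{(w)}$ is a unit, so its $p^{n+1-q}$-th power is also a unit. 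This simultaneously ensures the differences are not zero-divisors and that the $p^m$ elements $\widetilde{\alpha}^{p^{n+1-q}}_{(w)}$ are pairwise distinct, confirming the tuple has cardinality $p^m$.

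For the abelian group structure on ${}_{p\!}F(\pi_{*}(\cat T_{A,A}(E))/I_{n+1-q})^{p^{n+1-q}}/\!\sim$, the Frobenius induces a surjection
$$\Psi \colon {}_{p\!}F(\pi_{*}(\cat T_{A,A}(E))/I_{n+1-q})/\!\sim \; \longrightarrow \; {}_{p\!}F(\pi_{*}(\cat T_{A,A}(E))/I_{n+1-q})^{p^{n+1-q}}/\!\sim,$$
which is injective by the previous step, and I would transport the group law from Lemma \ref{nrp} (applied to the quotient ring) along $\Psi$: define $[\widetilde{\alpha}^{p^{n+1-q}}_{(u)}] + [\widetilde{\alpha}^{p^{n+1-q}}_{(w)}] := [\widetilde{\alpha}^{p^{n+1-q}}_{(u+w)}]$ with inverse $[\widetilde{\alpha}^{p^{n+1-q}}_{(-w)}]$; well-definedness is immediate from the corresponding statement in Lemma \ref{nrp} together with the fact that Frobenius respects products. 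The main obstacle is the characteristic-$p$ input needed to view Frobenius as a ring endomorphism of the quotient; once $p \in I_{n+1-q}$ is in hand, the rest is a mechanical transport of Lemma \ref{nrp}.
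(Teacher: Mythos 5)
Your proof is correct and follows essentially the same route as the paper's: transport the roots through the map $x\mapsto x^{p^{n+1-q}}$ using the identity $g_{1,n+1-q}(\widetilde{\alpha}^{p^{n+1-q}}_{(w)})=[p]_E(\widetilde{\alpha}_{(w)})$, invoke Frobenius in characteristic $p$ to factor the differences, observe these land in $\widetilde{L}_{A,n+1-q}$ and so are not zero divisors, and define the group law by transport from Lemma \ref{nrp}. The only divergence is cosmetic: you explicitly isolate the boundary case $n+1-q=0$, where the ring need not have characteristic $p$ but the exponent is $1$ so the claim reduces directly to Lemma \ref{nrp}; the paper's appeal to $\mathbb{F}_p$-coefficients silently assumes $q\le n$, so your remark is a small but genuine tightening of the same argument.
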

\begin{proof}
Note that
$$g_{1,n+1-q}(\widetilde{\alpha}^{p^{n+1-q}}_{(w_1,w_2,\cdots,w_m)})=[p]_{E}(\alpha_{(w_1,w_2,\cdots,w_m)}) \mod I_{n+1-q},$$
so $\{\widetilde{\alpha}^{p^{n+1-q}}_{(w_1,w_2,\cdots,w_m)}\mid (w_1,w_2,\cdots,w_m)\in A\}$ is a set of roots of $g_{1,n+1-q}(x)$ in $\pi_{*}(\cat T_{A,A}(E))/I_{n+1-q}$. For any two different elements $\widetilde{\alpha}^{p^{n+1-q}}_{(u_1,u_2,\cdots,u_m)}, \widetilde{\alpha}^{p^{n+1-q}}_{(w_1,w_2,\cdots,w_m)}\in {}_{p\!}F(\pi_{*}(\cat T_{A,A}(E))/I_{n+1-q})^{p^{n+1-q}}$, we have
$$0\neq\widetilde{\alpha}^{p^{n+1-q}}_{(u_1,u_2,\cdots,u_m)}
-\widetilde{\alpha}^{p^{n+1-q}}_{(w_1,w_2,\cdots,w_m)}=(\widetilde{\alpha}_{(u_1,u_2,\cdots,u_m)}-\widetilde{\alpha}_{(w_1,w_2,\cdots,w_m)})^{p^{n+1-q}}$$
for the coefficient $\mathbb{F}_{p}$. Since $\pi_{*}(\cat T_{A,A}(E))/I_{n+1-q}\neq 0$ and
\begin{align*}
\widetilde{\alpha}_{(u_1,u_2,\cdots,u_m)}-\widetilde{\alpha}_{(w_1,w_2,\cdots,w_m)}=\varepsilon^{-1}(\widetilde{\alpha}_{(u_1,u_2,\cdots,u_m)},
\widetilde{\alpha}_{(w_1,w_2,\cdots,w_m)})\cdot\widetilde{\alpha}_{(u_1-w_1,u_2-w_2,\cdots,u_m-w_m)} \in \widetilde{L}_{A,q},
\end{align*}
$(\widetilde{\alpha}_{(u_1,u_2,\cdots,u_m)}-\widetilde{\alpha}_{(w_1,w_2,\cdots,w_m)})^{p^{n+1-q}}$ is not zero or zero-divisor in $\pi_{*}(\cat T_{A,A}(E))/I_{n+1-q}$. Therefore, ${}_{p\!}F(\pi_{*}(\cat T_{A,A}(E))/I_{n+1-q})^{p^{n+1-q}}$ is a $p^m$-tuple of $g_{1,n+1-q}(x)$ in $\pi_{*}(\cat T_{A,A}(E))/I_{n+1-q}$.

${}_{p\!}F(\pi_{*}(\cat T_{A,A}(E))/I_{n+1-q})^{p^{n+1-q}}/\sim$ has an abelian group structure:
\begin{enumerate}
\item[{\rm (i)}]Addition: $\widetilde{\alpha}^{p^{n+1-q}}_{(u_1,u_2,\cdots,u_m)}+\widetilde{\alpha}^{p^{n+1-q}}_{(w_1,w_2,\cdots,w_m)}\sim\widetilde{\alpha}^{p^{n+1-q}}_{(u_1+w_1,u_2+w_2,\cdots,u_m+w_m)}$;
\item[{\rm (ii)}]Inverse: $-\widetilde{\alpha}^{p^{n+1-q}}_{(w_1,w_2,\cdots,w_m)}\sim\widetilde{\alpha}^{p^{n+1-q}}_{(-w_1,-w_2,\cdots,-w_m)}$.
\end{enumerate}
This completes the proof.
\end{proof}

For any $q\leq n+1$, there is a surjective map $\theta_q:A \rightarrow {}_{p\!}F(\pi_{*}(\cat T_{A,A}(E))/I_{n+1-q})^{p^{n+1-q}}$ that maps $(w_1,w_2,\cdots,w_m)$ to $\widetilde{\alpha}^{p^{n+1-q}}_{(w_1,w_2,\cdots,w_m)}$, then we have
\begin{Lem}\label{ijnt}
$\theta_q$ is a bijection if and only if $\pi_{*}(\cat T_{A,A}(E))/I_{n+1-q}\neq 0$.
\end{Lem}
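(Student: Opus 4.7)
The map $\theta_q$ is surjective by construction: by definition of the set ${}_{p\!}F(\pi_{*}(\cat T_{A,A}(E))/I_{n+1-q})^{p^{n+1-q}}$, every element of the codomain has the form $\widetilde{\alpha}^{p^{n+1-q}}_{(w_1,\ldots,w_m)}$ for some $(w_1,\ldots,w_m)\in A$, which is precisely $\theta_q(w_1,\ldots,w_m)$. Consequently the claim reduces to asking when $\theta_q$ is injective, and since $|A|=p^m$, this is equivalent to asking when the image consists of exactly $p^m$ distinct elements.

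For the implication ``$\pi_{*}(\cat T_{A,A}(E))/I_{n+1-q}\neq 0\Rightarrow\theta_q$ is a bijection'', I will invoke Lemma \ref{ntp} directly. That lemma asserts that, under the non-vanishing hypothesis, ${}_{p\!}F(\pi_{*}(\cat T_{A,A}(E))/I_{n+1-q})^{p^{n+1-q}}$ forms a $p^m$-tuple of $g_{1,n+1-q}(x)$. By Definition \ref{ntuple}, a $p^m$-tuple in a commutative ring consists of $p^m$ elements whose pairwise differences are neither zero nor zero-divisors; in particular, the $p^m$ elements are pairwise distinct. Combining this distinctness with the surjectivity of $\theta_q$ and the equality $|A|=p^m$, $\theta_q$ must be a bijection.

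For the converse, I will argue by contrapositive. Suppose $\pi_{*}(\cat T_{A,A}(E))/I_{n+1-q}= 0$. Then every element of this ring equals zero, so the entire image of $\theta_q$ collapses to $\{0\}$. Since $|A|=p^m\geq p>1$ under the standing assumption that $A$ is nontrivial, $\theta_q$ fails to be injective and hence cannot be a bijection.

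I do not anticipate a substantive obstacle here: both directions are essentially unpacking of Lemma \ref{ntp} together with the counting identity $|A|=p^m$. The only point requiring a moment's attention is matching the combinatorial content of ``$p^m$-tuple'' in the sense of Definition \ref{ntuple} with the injectivity of the indexing map $\theta_q$, which follows from the non-zero-divisor condition on differences.
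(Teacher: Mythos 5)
Your proof is correct. For the forward implication you argue the contrapositive (ring $=0$ forces $\theta_q$ to collapse to a singleton, so it cannot be injective when $|A|=p^m>1$), which is logically the same as the paper's direct argument that a bijection forces two distinct images and hence a nonzero ring. For the converse you take a genuinely different route: rather than re-verifying injectivity from scratch, you invoke Lemma \ref{ntp} as a black box — it asserts that the image is a $p^m$-tuple, which by Definition \ref{ntuple} consists of $p^m$ pairwise-distinct elements, and then the counting argument $|A|=p^m$ together with surjectivity gives bijectivity. The paper instead re-derives injectivity directly: for $(u_1,\dots,u_m)\neq(w_1,\dots,w_m)$ in $A$ it shows the difference $\widetilde{\alpha}^{p^{n+1-q}}_{(u_1,\dots,u_m)}-\widetilde{\alpha}^{p^{n+1-q}}_{(w_1,\dots,w_m)}$ is a unit times the Frobenius power of an inverted Euler class, hence nonzero in a nonzero ring. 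The two are logically equivalent, since the cardinality claim in Lemma \ref{ntp} already encodes the injectivity of $\theta_q$; your version is more economical in that it reuses what has already been established, while the paper's version keeps Lemma \ref{ijnt} self-contained and does not lean on Lemma \ref{ntp}. Both are correct and there is no circularity, since Lemma \ref{ntp} is proved independently of Lemma \ref{ijnt}.
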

\begin{proof}
$\Rightarrow:$ Since $\theta_q$ is a bijection, then for $(u_1,u_2,\cdots,u_m)\neq(w_1,w_2,\cdots,w_m)\in A$,
$$0\neq\widetilde{\alpha}^{p^{n+1-q}}_{(u_1,u_2,\cdots,u_m)}-\widetilde{\alpha}^{p^{n+1-q}}_{(w_1,w_2,\cdots,w_m)}\in\pi_{*}(\cat T_{A,A}(E))/I_{n+1-q},$$
which implies that $\pi_{*}(\cat T_{A,A}(E))/I_{n+1-q}\neq 0$.

$\Leftarrow:$ We only have to prove that $\theta_q$ is injective. Since $\pi_{*}(\cat T_{A,A}(E))/I_{n+1-q}\neq 0$, then for any $(w_1,w_2,\cdots,w_m)\in A^*$, $0\neq \widetilde{\alpha}^{p^{n+1-q}}_{(w_1,w_2,\cdots,w_m)}\in  \widetilde{L}_{A,q}$. So if $(u_1,u_2,\cdots,u_m)\neq(w_1,w_2,\cdots,w_m)\in A$, then
$$\widetilde{\alpha}^{p^{n+1-q}}_{(u_1,u_2,\cdots,u_m)}-\widetilde{\alpha}^{p^{n+1-q}}_{(w_1,w_2,\cdots,w_m)}=(\varepsilon^{-1}(\widetilde{\alpha}_{(u_1,u_2,\cdots,u_m)},
\widetilde{\alpha}_{(w_1,w_2,\cdots,w_m)})\cdot\widetilde{\alpha}_{(u_1-w_1,u_2-w_2,\cdots,u_m-w_m)})^{p^{n+1-q}}\neq 0,$$
thus $\theta_q$ is injective.
\end{proof}

When $q=n+1$, $I_0=(0)$ and ${}_{p\!}F(\pi_{*}(\cat T_{A,A}(E))/I_{n+1-q})^{p^{n+1-q}}={}_{p\!}F(\pi_{*}(\cat T_{A,A}(E)))$.
\begin{Lem}\label{}
${}_{p\!}F(\pi_{*}(\cat T_{A,A}(E)))$ is an abelian group and $\theta_{n+1}$ is an abelian group homomorphism. If $n< m$, then $\theta_{n+1}$ is trivial and ${}_{p\!}F(\pi_{*}(\cat T_{A,A}(E)))\cong e$.
\end{Lem}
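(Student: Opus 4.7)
The plan is to prove the three assertions in order. First, I will invoke the canonical abelian-group structure on ${}_{p\!}F(R) = {\rm Hom}_{E^*-alg}(E^*\psb{x}/([p]_E(x)), R)$ coming from the Hopf-algebra structure on $E^*\psb{x}/([p]_E(x)) \cong E^*(B\mathbb{Z}/p_+)$, the coalgebra of which is isolated in Proposition \ref{idwp}. After identifying $f^*$ with the root $f^*(x)\in R$, this group operation is simply $f^*(x) +_F g^*(x)$; the result remains in ${}_{p\!}F(R)$ because $[p]_E(-)$ is a homomorphism for $+_F$, and the group axioms descend from those of the formal group law $F$.

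Second, I will verify that $\theta_{n+1}$ is a group homomorphism. The key identity $[u+w]_E(x) = [u]_E(x)+_F[w]_E(x)$ follows from the definition of $[m]_E(x)$ together with Theorem \ref{LT}; combined with commutativity and associativity of $+_F$ this gives
$$\alpha_{(u_1+w_1,\ldots,u_m+w_m)} = \alpha_{(u_1,\ldots,u_m)} +_F \alpha_{(w_1,\ldots,w_m)},$$
so $\theta_{n+1}$ transports the sum in $A$ to the Hopf-algebraic sum in ${}_{p\!}F$.

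Third and most substantive: under the hypothesis $n<m$, I will show $\pi_*(\cat T_{A,A}(E)) = 0$, which immediately implies ${}_{p\!}F(\pi_*(\cat T_{A,A}(E))) = \{*\} \cong e$ (the unique $E^*$-algebra map into the zero ring is trivial) and makes $\theta_{n+1}$ the zero homomorphism. I argue by contradiction: if $\pi_*(\cat T_{A,A}(E)) \neq 0$, then Lemma \ref{ijnt} with $q=n+1$ (so $I_0 = (0)$) shows $\theta_{n+1}$ is bijective, so $|{}_{p\!}F(\pi_*(\cat T_{A,A}(E)))| = |A| = p^m$. By Lemma \ref{nrp}, this is a $p^m$-tuple of $[p]_E(x)$; translating to the polynomial setting via the isomorphism $\eta$ of Proposition \ref{idwp} produces a $p^m$-tuple of the monic Weierstrass polynomial $g_1(x)$ of degree $p^n$. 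Since $p^m > p^n$ and the leading coefficient $1$ of $g_1$ lies trivially in the ideal generated by its coefficients, Corollary \ref{vanish}(i) forces $\pi_*(\cat T_{A,A}(E)) = 0$, contradicting the assumption.

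The main obstacle is the third step, specifically the bookkeeping that funnels Lemma \ref{ijnt}, Lemma \ref{nrp}, and Proposition \ref{idwp} into Corollary \ref{vanish}(i). Conceptually, \emph{too many roots of a monic polynomial} forces the ambient ring to collapse, and the generalized relations between roots and coefficients of Theorem \ref{Fac-tuple} (via its corollary) make this precise; the remaining work is just verifying that the $p^m$ points $\alpha_{(w_1,\ldots,w_m)}$ satisfy the tuple condition before the contradiction is extracted, which is already recorded in Lemma \ref{nrp}.
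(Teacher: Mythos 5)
Your proposal is correct and follows essentially the same route as the paper: identify the group structure on ${}_{p\!}F(\pi_{*}(\cat T_{A,A}(E)))$ with $+_F$, observe $\theta_{n+1}$ is a homomorphism via $\alpha_{(u_1+w_1,\ldots)}=\alpha_{(u_1,\ldots)}+_F\alpha_{(w_1,\ldots)}$, and when $n<m$ assume nonvanishing, use Lemma \ref{ijnt} to get a bijection (hence a $p^m$-tuple of $[p]_E(x)$), and invoke Corollary \ref{vanish}(i) since $\deg_W[p]_E(x)=p^n<p^m$. The only cosmetic difference is your phrasing that the leading coefficient $1$ of the monic Weierstrass polynomial trivially lies in the coefficient ideal, whereas the paper cites $1\in(p,v_1,\ldots,v_n)$; both are correct instantiations of the hypothesis of Corollary \ref{vanish}(i).
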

\begin{proof}
The group structure of ${}_{p\!}F(\pi_{*}(\cat T_{A,A}(E)))$ is induced by the formal group law of $E$, and for any two elements $\alpha_{(u_1,u_2,\cdots,u_m)},\alpha_{(w_1,w_2,\cdots,w_m)}\in {}_{p\!}F(\pi_{*}(\cat T_{A,A}(E)))$, their sum is defined by
$$\alpha_{(u_1,u_2,\cdots,u_m)}+_F\alpha_{(w_1,w_2,\cdots,w_m)}=\alpha_{(u_1+w_1,u_2+w_2,\cdots,u_m+w_m)}.$$
Then $\theta_{n+1}$ is an abelian group homomorphism.

If $n<m$, we assume that $\pi_{*}(\cat T_{A,A}(E))\neq 0$. By Lemma \ref{ijnt}, $\theta_{n+1}$ is a bijection and $|{}_{p\!}F(\pi_{*}(\cat T_{A,A}(E)))|=p^m$. Then ${}_{p\!}F(\pi_{*}(\cat T_{A,A}(E)))$ is a $p^m$-tuple of $[p]_{E}(x)$ in $\pi_{*}(\cat T_{A,A}(E))$. Note that $1\in (p,v_1,\cdots,v_n)$ and $\deg_W [p]_{E}(x)=p^n < p^m$. By Corollary \ref{vanish}, we have $\pi_{*}(\cat T_{A,A}(E))=0$. Then $\theta_{n+1}$ is trivial and ${}_{p\!}F(\pi_{*}(\cat T_{A,A}(E)))\cong e$.
\end{proof}

\begin{Cor}\label{lbTAA}
$\pi_{*}(\cat T_{A,A}(E))/I_{n+1-q}=0$ for $q< m+1$, which implies that
$\mathbf{s}_{A,A;E}\geq m$.
\end{Cor}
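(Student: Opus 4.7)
The plan is to argue by contradiction in the regime $1\leq q\leq m$, using the bijection $\theta_q$ to upgrade the algebraic picture into a root-count versus degree comparison, and then killing the result with Landweber exactness.

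First, I would assume that some $q$ with $q\leq m$ satisfies $\pi_{*}(\cat T_{A,A}(E))/I_{n+1-q}\neq 0$. Lemma \ref{ijnt} then makes $\theta_q$ a bijection onto ${}_{p\!}F(\pi_{*}(\cat T_{A,A}(E))/I_{n+1-q})^{p^{n+1-q}}$, so by Lemma \ref{ntp} this target is a genuine $p^{m}$-tuple of the polynomial
$$g_{1,n+1-q}(x)=v_{n+1-q}x+v_{n+2-q}x^{p}+\cdots+v_{n}x^{p^{q-1}}$$
in $\pi_{*}(\cat T_{A,A}(E))/I_{n+1-q}$, which has degree $p^{q-1}$.

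Next, since $q\leq m$ forces $p^{q-1}<p^{m}$, Theorem \ref{Fac-tuple}(i) applies and says that \emph{every} coefficient of $g_{1,n+1-q}(x)$ vanishes in the quotient. In particular, the linear coefficient $v_{n+1-q}$ is zero. But Lemma \ref{LE} ensures that $\cat T_{A,A}(E)$ is Landweber exact, so $v_{0},v_{1},\ldots$ is a regular sequence on $\pi_{*}(\cat T_{A,A}(E))$; equivalently, multiplication by $v_{n+1-q}$ is injective on $\pi_{*}(\cat T_{A,A}(E))/I_{n+1-q}$. Since $1\neq 0$ in the assumed nonzero quotient, $v_{n+1-q}=0$ is incompatible with this injectivity, yielding a contradiction.

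Hence $\pi_{*}(\cat T_{A,A}(E))/I_{n+1-q}=0$ for every $q<m+1$, and Definition \ref{Cri} directly gives $\mathbf{s}_{A,A;E}\geq m$. I expect no real obstacle: both the degree count $p^{q-1}<p^{m}$ and the regular-sequence injectivity are already in hand from the preceding lemmas, and the genuine novelty of the argument is the clean packaging of the ``more roots than the degree'' phenomenon via Theorem \ref{Fac-tuple}(i), which is substantially shorter than any direct manipulation of power series.
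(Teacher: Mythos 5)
Your argument is correct, and it follows the paper's plan through the key steps: you assume $\pi_{*}(\cat T_{A,A}(E))/I_{n+1-q}\neq 0$, use Lemma~\ref{ijnt} to make $\theta_q$ a bijection, conclude from Lemma~\ref{ntp} that ${}_{p\!}F(\pi_{*}(\cat T_{A,A}(E))/I_{n+1-q})^{p^{n+1-q}}$ is a $p^{m}$-tuple of $g_{1,n+1-q}(x)$, and then invoke Theorem~\ref{Fac-tuple}(i) via the inequality $p^{q-1}<p^{m}$ to kill all coefficients. The one place you diverge is the closing step: you pick the \emph{linear} coefficient $v_{n+1-q}$, note by Lemma~\ref{LE} that Landweber exactness of $\cat T_{A,A}(E)$ makes $v_{n+1-q}$ a non-zero-divisor on $\pi_{*}(\cat T_{A,A}(E))/I_{n+1-q}$, and derive the contradiction from the zero map being injective on a nonzero module. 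The paper instead applies Corollary~\ref{vanish}(i), using that the \emph{leading} coefficient $v_n$ is a unit in $E^*/I_{n+1-q}$ (because $E^*$ is local with maximal ideal $I_n$ and $v_n\notin I_n$), so $1\in(v_{n+1-q},\ldots,v_n)$ forces the ring to vanish. Both contradictions are valid; the paper's version is marginally more economical since it does not require invoking Landweber exactness of $\cat T_{A,A}(E)$ inside this corollary, only the height-$n$ and locality hypotheses. Since Landweber exactness is in force throughout Theorem~\ref{GTglbc} anyway, your variant causes no loss of generality in context.
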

\begin{proof}
Assume that there exists $q_0< m+1$ such that $\pi_{*}(\cat T_{A,A}(E))/I_{n+1-q_0}\neq 0$. By Lemma \ref{ijnt}, $\theta_{q_0}$ is a bijection and hence $|{}_{p\!}F(\pi_{*}(\cat T_{A,A}(E))/I_{n+1-q_0})^{p^{n+1-q_0}}|=p^m$. Then ${}_{p\!}F(\pi_{*}(\cat T_{A,A}(E))/I_{n+1-q_0})^{p^{n+1-q_0}}$ is a $p^m$-tuple of $g_{1,n+1-q_0}(x)$ in $\pi_{*}(\cat T_{A,A}(E))/I_{n+1-q_0}$. Note that $p^m> \deg g_{1,n+1-q_0}(x)=p^{q_0-1}$ and $1\in (v_{n+1-q_0},\cdots,v_n)$. So by Corollary \ref{vanish}, we have
$\pi_{*}(\cat T_{A,A}(E))/I_{n+1-q_0}=0$.
\end{proof}

Although by Corollary \ref{lbTAA} and the exactness of
$$\CD \pi_{*}(\cat T_{A,A}(E))/I_{n-m} @>\cdot v_{n-m}>> \pi_{*}(\cat T_{A,A}(E))/I_{n-m} @>>>\pi_{*}(\cat T_{A,A}(E))/I_{n+1-m}\endCD,$$
we know that $v_{n-m}$ is a unit in $\pi_{*}(\cat T_{A,A}(E))/I_{n-m}$. To achieve our main idea, here we give another proof of this fact by using Theorem \ref{Fac-tuple}. Let $q=m+1$, we have
\begin{Lem}\label{So}
Let $n\geq m$, then
\begin{enumerate}
\item[\rm (i)] $$v_{n-m}=(-1)^{p^m-1}v_n \prod_{(w_1,w_2,\cdots,w_m)\in A^*}{\widetilde{\alpha}^{p^{n-m}}_{(w_1,w_2,\cdots,w_m)}},$$
\item[\rm (ii)] $$0=(-1)^{p^m-2}v_n \sum_{w^{(1)}\neq w^{(2)}\neq\cdots\neq w^{(p^m-2)}\in A^*}
    {\widetilde{\alpha}^{p^{n-m}}_{w^{(1)}}\widetilde{\alpha}^{p^{n-m}}_{w^{(2)}}\cdots \widetilde{\alpha}^{p^{n-m}}_{w^{(p^m-2)}}},$$
 $$\vdots$$
\item[\rm (iii)]
%$$v_{n-k+1}=(-1)^{p^k-p+1}v_n \prod_{(m_1,m_2,\cdots,m_k)\in(\mathbb{Z}/p)^k-0}{\alpha^{p^{n-k}}_{(m_1,m_2,\cdots,m_k)}}
 %   \sum_{(m^{(1)}_1,m^{(1)}_2,\cdots,m^{(1)}_k),(m^{(2)}_1,m^{(2)}_2,\cdots,m^{(2)}_k),\cdots,(m^{(p-1)}_1,m^{(p-1)}_2,\cdots,m^{(p-1)}_k)
   % \in(\mathbb{Z}/p)^k-0}{\alpha^{-p^{n-k}}_{(m^{(1)}_1,m^{(1)}_2,\cdots,m^{(1)}_k)}\alpha^{-p^{n-k}}_{(m^{(2)}_1,m^{(2)}_2,\cdots,m^{(2)}_k)}
   % \cdots \alpha^{-p^{n-k}}_{(m^{(p-1)}_1,m^{(p-1)}_2,\cdots,m^{(p-1)}_k)}},$$
    $$v_{n-i}=(-1)^{p^m-p^{m-i}}v_n  \sum_{w^{(1)}\neq w^{(2)}\neq\cdots\neq w^{p^{m-i}}\in A^*}
    {\widetilde{\alpha}^{p^{n-m}}_{w^{(1)}}\widetilde{\alpha}^{p^{n-m}}_{w^{(2)}}\cdots \widetilde{\alpha}^{p^{n-m}}_{w^{p^{m-i}}}},$$
$$\vdots$$
\item[\rm (iv)]$$0=-v_n \sum_{(w_1,w_2,\cdots,w_m)\in A^*}{\widetilde{\alpha}^{p^{n-m}}_{(w_1,w_2,\cdots,w_m)}},$$
\end{enumerate}
and the right side of the top equality is invertible in $\pi_{*}(\cat T_{A,A}(E))/I_{n-m}$.
\end{Lem}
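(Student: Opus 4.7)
The strategy is to apply the generalized Vieta relations of Theorem \ref{Fac-tuple}(ii) to a polynomial of exact degree $p^m-1$ derived from $[p]_E$, using the $(p^m-1)$-tuple of nonzero roots supplied by Lemma \ref{ntp}, and then to deduce invertibility of the right-hand side of (i) from Corollary \ref{lbTAA} together with Landweber exactness. The work takes place in $\pi_*(\cat T_{A,A}(E))/I_{n-m}$, in which by the identity $[p]_E(x)\equiv g_{1,n-m}(x^{p^{n-m}})\pmod{I_{n-m}}$ with $g_{1,n-m}(y)=v_{n-m}y+v_{n-m+1}y^p+\cdots+v_n y^{p^m}$ we can factor out $y$ and set
\begin{equation*}
h(y)\;=\;\frac{g_{1,n-m}(y)}{y}\;=\;v_{n-m}+v_{n-m+1}y^{p-1}+\cdots+v_n y^{p^m-1},
\end{equation*}
a polynomial of exact degree $p^m-1$ with leading coefficient $v_n$.

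First I would identify the $(p^m-1)$-tuple of roots of $h$. By Lemma \ref{ntp}, the set $\{\widetilde{\alpha}^{p^{n-m}}_{(w_1,\ldots,w_m)}:(w_1,\ldots,w_m)\in A\}$ is a $p^m$-tuple of roots of $g_{1,n-m}$. Discarding the zero element (coming from $(0,\ldots,0)$), the remaining $p^m-1$ elements indexed by $A^*$ are invertible, since each $\alpha_{(w_1,\ldots,w_m)}$ with $(w_1,\ldots,w_m)\in A^*$ lies in the multiplicatively closed set $L_A$ that was inverted in the construction of $\pi_*(\cat T_{A,A}(E))$. They are therefore non-zero-divisors, hence genuine roots of $h$, and their pairwise differences remain non-zero-divisors (inherited from the $p^m$-tuple property for $g_{1,n-m}$), so they form a $(p^m-1)$-tuple of $h$ in the sense of Definition \ref{ntuple}.

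Next I would apply Theorem \ref{Fac-tuple}(ii) to $h(y)$: its degree $p^m-1$ equals the size of the tuple, so for each $0\leq i\leq p^m-2$ the coefficient $a_i$ of $y^i$ in $h$ satisfies
\begin{equation*}
a_i\;=\;(-1)^{p^m-1-i}\,v_n\!\!\sum_{1\leq k_1\neq\cdots\neq k_{p^m-1-i}\leq p^m-1}\!\!\widetilde{\alpha}^{p^{n-m}}_{w^{(k_1)}}\cdots\widetilde{\alpha}^{p^{n-m}}_{w^{(k_{p^m-1-i})}},
\end{equation*}
where $w^{(1)},\ldots,w^{(p^m-1)}$ enumerates $A^*$. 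Since in $h$ only the coefficients at $y^{p^k-1}$ for $0\leq k\leq m-1$ are nonzero (equal to $v_{n-m+k}$) while all other intermediate coefficients vanish, reading this identity off at $i=0$, $i=1$, $i=p^{m-j}-1$ for general $j$, and $i=p^m-2$ recovers equations (i), (ii), (iii), and (iv) respectively.

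Finally, for the invertibility of the right-hand side of (i): by Corollary \ref{lbTAA} applied with $q=m$ we have $\pi_*(\cat T_{A,A}(E))/I_{n+1-m}=0$, so thanks to the Landweber-exact short exact sequence of Lemma \ref{LE},
\begin{equation*}
0\to \pi_*(\cat T_{A,A}(E))/I_{n-m}\xrightarrow{\cdot v_{n-m}}\pi_*(\cat T_{A,A}(E))/I_{n-m}\to \pi_*(\cat T_{A,A}(E))/I_{n+1-m}\to 0,
\end{equation*}
multiplication by $v_{n-m}$ is surjective, so $v_{n-m}$ is a unit in $\pi_*(\cat T_{A,A}(E))/I_{n-m}$; combined with equation (i) and the invertibility of each factor $\widetilde{\alpha}^{p^{n-m}}_{(w_1,\ldots,w_m)}$, this forces the right-hand side of (i) (and in particular $v_n$) to be a unit. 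The main obstacle I anticipate is not a conceptual one but the combinatorial bookkeeping: aligning the index conventions of Theorem \ref{Fac-tuple}(ii) with the sparse structure of $h$ and being careful that all of the non-Vieta intermediate coefficients really do vanish, which is where equations (ii) and (iv) come from as ``extra'' relations beyond the explicit formulas for $v_{n-m+k}$.
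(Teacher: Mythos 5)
Your proof is essentially correct and tracks the paper's argument closely for the Vieta part, with two deviations worth flagging.

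For the coefficient relations, the paper applies Theorem~\ref{Fac-tuple}(ii) \emph{directly} to $g_{1,n-m}(x)$, which has degree $p^m$, using the full $p^m$-tuple ${}_{p\!}F(\pi_{*}(\cat T_{A,A}(E))/I_{n-m})^{p^{n-m}}$ (which includes the zero element coming from $(0,\ldots,0)$); it concludes $g_{1,n-m}(x)=v_n\prod_{w\in A}(x-\widetilde\alpha^{p^{n-m}}_w)$ and reads off (i)--(iv). You instead factor out $y$ to get $h=g_{1,n-m}/y$ of degree $p^m-1$ and use only the $p^m-1$ nonzero roots. Both give the same identities, because in the paper's formulas every Vieta term involving the zero root vanishes, so your step is a harmless repackaging — it saves one degree at the cost of an extra argument that the nonzero roots are genuine roots of $h$.

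On the invertibility assertion, however, your route runs against the paper's intent and is also more work than needed. The paragraph immediately preceding Lemma~\ref{So} says explicitly that Corollary~\ref{lbTAA} together with the Landweber-exact short exact sequence already shows that $v_{n-m}$ is a unit, and that Lemma~\ref{So} is offered as an \emph{alternative} proof of that same fact via Theorem~\ref{Fac-tuple}. By invoking Corollary~\ref{lbTAA} to prove the invertibility, you are reusing the argument that the lemma was meant to replace — your argument is not wrong (both Corollary~\ref{lbTAA} and Lemma~\ref{LE} are proved independently, so there is no logical circle), but it undercuts the point. The intended, and simpler, observation is that the right-hand side of (i) is already visibly a product of units: each $\widetilde\alpha^{p^{n-m}}_{w}$ with $w\in A^*$ belongs to the inverted multiplicative set $\widetilde L_{A,n-m}$, and $v_n$ is a unit in $E^*$ because $E^*$ is local with maximal ideal $I_n$ and the height-$n$ hypothesis forces $v_n\notin I_n$. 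That directly shows the right side is a unit, and \emph{then} (i) delivers that $v_{n-m}$ is a unit — the logical arrow goes the opposite way from yours. One further small omission: Lemma~\ref{ntp} only produces the $p^m$-tuple under the hypothesis $\pi_{*}(\cat T_{A,A}(E))/I_{n-m}\ne 0$; the paper dispatches the complementary case with a one-line ``if it is zero, the statement is trivial,'' which you should include.
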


\begin{Rem}\label{}
Since $\pi_{*}(\cat T_{A,A}(E))/I_{n-m}$ may be $0$, the fact that $v_{n-m}$ is invertible in $\pi_{*}(\cat T_{A,A}(E)))/I_{n-m}$ does not imply that $\cat T_{A,A}(E))$ is $v_{n-m}$-periodic, but implies
that $\cat T_{A,A}(E)$ is at most $v_{n-m}$-periodic.
\end{Rem}
\begin{proof}
If $\pi_{*}(\cat T_{A,A}(E))/I_{n-m}=0$, obviously this is true; if $\pi_{*}(\cat T_{A,A}(E))/I_{n-m}\neq 0$, then by Lemma \ref{ijnt}, we obtain that $\theta_{m+1}$ is a bijection and $|{}_{p\!}F(\pi_{*}(\cat T_{A,A}(E))/I_{n-m})^{p^{n-m}}|=p^m$. So $\pi_{*}(\cat T_{A,A}(E))/I_{n-m}$ has a $p^m$-tuple ${}_{p\!}F(\pi_{*}(\cat T_{A,A}(E))/I_{n-m})^{p^{n-m}}$ of $g_{1,n-m}(x)$. Then by Theorem \ref{Fac-tuple}, we have
$$v_{n-m}x+v_{n-m+1}x^p+\cdots+v_{n}x^{p^m}=v_n\prod_{(w_1,w_2,\cdots,w_m)\in A}(x-\widetilde{\alpha}^{p^{n-m}}_{(w_1,w_2,\cdots,w_m)})\in \pi_{*}(\cat T_{A,A}(E))/I_{n-m}[x].$$
\end{proof}

We get the upper bound $m$ of $\mathbf{s}_{A,A;E}$ by using Lemma \ref{ind}, and delay its proof. Then by Corollary \ref{lbTAA}, we have
\begin{Thm}\label{}
Let $A$ be a elementary abelian $p$-group with ${\rm rank}_p(A)=m$, then $\mathbf{s}_{A,A;E}=m$.
\end{Thm}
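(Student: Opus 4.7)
My plan splits the asserted equality $\mathbf{s}_{A,A;E}=m$ into two inequalities. The lower bound $\mathbf{s}_{A,A;E}\ge m$ is already in hand from Corollary \ref{lbTAA}, whose proof exhibited a $p^m$-tuple of $g_{1,n+1-q}(x)$ in $\pi_*(\cat T_{A,A}(E))/I_{n+1-q}$ via Lemma \ref{ntp} and then applied the Vanishing ring condition (Corollary \ref{vanish}) for every $q\le m$; through Definition \ref{Cri} this reads exactly as $\mathbf{s}_{A,A;E}\ge m$, so nothing further is required from that side.

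For the upper bound $\mathbf{s}_{A,A;E}\le m$ the task is to exhibit a single non-vanishing quotient, namely $\pi_*(\cat T_{A,A}(E))/I_{n-m}\ne 0$. The plan is to run the forthcoming Lemma \ref{ind} inductively, starting from the base case $E^*/I_n\ne 0$ that accompanies the $v_n$-periodicity of $E$. The Landweber exactness of $\cat T_{A,A}(E)$ supplied by Lemma \ref{LE} furnishes, for every $i$, the short exact sequence
$$0\longrightarrow \pi_*(\cat T_{A,A}(E))/I_i\xrightarrow{\cdot v_i}\pi_*(\cat T_{A,A}(E))/I_i\longrightarrow\pi_*(\cat T_{A,A}(E))/I_{i+1}\longrightarrow 0,$$
which shows that non-vanishing propagates freely downward in the index $i$ but not upward a priori. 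The role of Lemma \ref{ind} is precisely to transport non-vanishing upward from $E^*/I_n$ all the way to $\pi_*(\cat T_{A,A}(E))/I_{n-m}$, by controlling at each step the effect of inverting the multiplicatively closed set $\widetilde L_{A,n-q}$ on the unlocalised quotient $(E^*/I_{n-q})[x_1,\dots,x_m]/([p]_E(x_1),\dots,[p]_E(x_m))$.

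The main obstacle will be the calibration of Lemma \ref{ind}: at each step $q\le m$ one has to rule out that some generator of $\widetilde L_{A,n-q}$ becomes a zero divisor. The explicit formulas of Lemma \ref{So} are the key input here, because they show that whenever $\pi_*(\cat T_{A,A}(E))/I_{n-m}$ is non-zero the element $v_{n-m}$ is automatically a unit there, expressed as a product, up to sign and a unit, of the powers $\widetilde\alpha^{p^{n-m}}_{(w_1,\dots,w_m)}$ of the inverted Euler classes; any potential nilpotent among those Euler classes would therefore conflict with the injectivity of multiplication by $v_{n-q}$ in an earlier Landweber-exact short exact sequence, closing the induction. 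Once the non-vanishing $\pi_*(\cat T_{A,A}(E))/I_{n-m}\ne 0$ is secured, Definition \ref{Cri} yields $\mathbf{s}_{A,A;E}\le m$, and combining with Corollary \ref{lbTAA} delivers the stated equality $\mathbf{s}_{A,A;E}=m$.
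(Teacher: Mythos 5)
Your high-level decomposition matches the paper exactly: the lower bound $\mathbf{s}_{A,A;E}\ge m$ is indeed Corollary \ref{lbTAA}, and the upper bound is obtained by showing $\pi_{*}(\cat T_{A,A}(E))/I_{n-m}\neq 0$ through an inductive application of Lemma \ref{ind} starting from $E^*/I_n\neq 0$. That is precisely what the paper does, so the structure of your argument is sound.

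Where the proposal goes astray is in the account of how Lemma \ref{ind} actually closes the induction. You cast Lemma \ref{So} as ``the key input'' and describe the step as ruling out nilpotent Euler classes; neither is correct. Lemma \ref{So} only shows that $v_{n-m}$ is a unit in $\pi_{*}(\cat T_{A,A}(E))/I_{n-m}$, i.e., that $\cat T_{A,A}(E)$ is at most $v_{n-m}$-periodic ($\pi_{*}(\cat T_{A,A}(E))/I_{n-m+1}=0$). That is a consequence of the lower bound you already have from Corollary \ref{lbTAA}, and the remark immediately following Lemma \ref{So} in the paper warns explicitly that it cannot distinguish whether $\pi_{*}(\cat T_{A,A}(E))/I_{n-m}$ is zero, so it cannot supply the non-vanishing you need. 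The actual mechanism of Lemma \ref{ind}(i) is different and makes no reference to Lemma \ref{So}: one adjoins a single cyclic factor at a time; the hypothesis $\pi_{*}(\cat T_{A',C'}(E))/I_{n-k}\neq 0$ together with Landweber exactness forces $v_{n-k-1}$ to be a non-unit in $\pi_{*}(\cat T_{A',C'}(E))/I_{n-k-1}$; the injective comparison $L^{-1}_{C}Bp^{*}$ into $\widetilde L^{-1}_{C,n-k-1}E^*(BA'_+)\psb{x_m}/I_{n-k-1}$ transports this non-unit-ness; and then $[p^{i_m}]_E(x_m)$, whose leading coefficient mod $I_{n-k-1}$ is a power of $v_{n-k-1}$, cannot be a unit, so the Gysin short exact sequence forces the quotient $\pi_{*}(\cat T_{A,C}(E))/I_{n-k-1}$ to be nonzero. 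If you wanted an alternative route to the upper bound (for $E=E(n)$), the paper's other option is the splitting of Theorem \ref{splite} and Corollaries \ref{isomk}, \ref{ubTAA} (a generalized Ando--Morava--Sadofsky argument), not Lemma \ref{So}.
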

To show an application of our linear equation theory  over a commutative ring in Section \ref{GRRC}, we give another way to get the upper bund of $\mathbf{s}_{A,A;E}$ for the case $E=E(n)$. Using the approach in Section \ref{GRRC}, we generalize Ando--Morava--Sadofsky's theorem \cite[Proposition 2.3]{AMS} from $\mathbb{Z}/p$ to any elementary abelian $p$-group.
\begin{Thm}\label{splite}
\begin{align*}
\pi_*(\cat T_{A,A}(BP\langle n\rangle))&\cong_{\phi} L_A'^{-1}BP\langle n-m \rangle_*\psb{x_{1},\cdots,x_{m}},
\end{align*}
where $\phi$ is the ring isomorphism constructed in the following proof, and the multiplicatively closed set $L_A'$ is generated by the set
\begin{align*}
\{&\phi(\alpha_{(w_1,\cdots,w_m)})\mid\alpha_{(w_1,\cdots,w_m)}=[w_1]_{BP\langle n \rangle}(x_1)+_F \cdots +_F [w_m]_{BP\langle n \rangle}(x_m), (w_1,\cdots,w_m)\in A^*\}.
\end{align*}
\end{Thm}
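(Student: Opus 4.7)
The plan is to generalize Ando--Morava--Sadofsky's single-variable argument~\cite{AMS} by using the $m$ relations $[p]_{BP\langle n\rangle}(x_i)=0$ to eliminate the top generators $v_{n-m+1},\ldots,v_n$ from the coefficient ring. First, Theorem~\ref{bgcoh} applied with $A=C=(\mathbb{Z}/p)^m$ gives
\begin{equation*}
\pi_*(\cat T_{A,A}(BP\langle n\rangle))\cong L_A^{-1}\, BP\langle n\rangle_*\psb{x_1,\ldots,x_m}/\bigl([p](x_1),\ldots,[p](x_m)\bigr),
\end{equation*}
and since $[p]_{BP\langle n\rangle}(x)=px+v_1x^p+\cdots+v_nx^{p^n}$, the $i$-th defining relation rearranges to the linear equation
\begin{equation*}
\sum_{k=1}^{m}v_{n-m+k}\,x_i^{p^{n-m+k}}=-\bigl(px_i+v_1x_i^p+\cdots+v_{n-m}\,x_i^{p^{n-m}}\bigr),\quad i=1,\ldots,m,
\end{equation*}
viewed as a linear system in the unknowns $v_{n-m+1},\ldots,v_n$ with $m\times m$ coefficient matrix $M=\bigl(x_i^{p^{n-m+k}}\bigr)_{1\le i,k\le m}$.

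The crucial step is to apply the linear equation theory of~\cite{RYY23} to show that $\det M$ is a non-zero-divisor (in fact a unit) in $L_A^{-1}BP\langle n\rangle_*\psb{x_1,\ldots,x_m}$. The substitution $y_i=x_i^{p^{n-m+1}}$ turns $M$ into the classical Moore matrix $(y_i^{p^{k-1}})_{i,k}$, and in characteristic~$p$ its determinant factors as a product of linear combinations $c_1y_1+\cdots+c_my_m$ indexed by projective classes $[(c_1,\ldots,c_m)]\in\mathbb{P}^{m-1}(\mathbb{F}_p)$. Using the formal group law identities of Proposition~\ref{pps} together with the relations $[p](x_i)=0$, each such linear combination lifts to an Euler class $\alpha_{(c_1,\ldots,c_m)}\in M_A$ up to an explicit unit coming from the leading coefficients of the $p$-series, so $\det M$ becomes invertible after localizing at $L_A$.

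Once $\det M$ is a unit, Cramer's rule yields canonical expressions $v_{n-m+k}=\det M_k/\det M$ in $L_A^{-1}\mathbb{Z}_{(p)}[v_1,\ldots,v_{n-m}]\psb{x_1,\ldots,x_m}$, where $M_k$ replaces the $k$-th column of $M$ by the right-hand side. I then define $\phi:\pi_*(\cat T_{A,A}(BP\langle n\rangle))\to L_A'^{-1}BP\langle n-m\rangle_*\psb{x_1,\ldots,x_m}$ by $\phi(v_j)=v_j$ for $j\le n-m$, $\phi(x_i)=x_i$, and $\phi(v_{n-m+k})=\det M_k/\det M$, declaring $L_A'$ to be the multiplicatively closed set generated by the images $\phi(\alpha_{(w_1,\ldots,w_m)})$ of the Euler classes; a manifest two-sided inverse sends $v_j\mapsto v_j$ and $x_i\mapsto x_i$ and extends by the universal property of localization. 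The main obstacle is the middle paragraph: rigorously lifting the characteristic-$p$ Moore determinant factorization to an identity up to units in $L_A^{-1}BP\langle n\rangle_*\psb{x_1,\ldots,x_m}$, because the naive Moore identity only holds in characteristic~$p$ and must be reconciled with the formal group law structure of the Euler classes via a careful $p$-adic argument.
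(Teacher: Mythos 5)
Your proposal follows the same route as the paper's proof: apply Theorem~\ref{bgcoh}, rewrite the $m$ relations $[p]_{BP\langle n\rangle}(x_i)=0$ as a linear system in the unknowns $v_{n-m+1},\ldots,v_n$, and construct $\phi$ by solving that system while keeping $v_j$ ($j\le n-m$) and the $x_i$ fixed. The only cosmetic difference is that the paper phrases the solving step as ``Gaussian elimination,'' whereas you phrase it as Cramer's rule and therefore make explicit the claim that must be verified, namely that $\det M$ with $M=(x_i^{p^{n-m+k}})_{1\le i,k\le m}$ is a unit in $L_A^{-1}BP\langle n\rangle_*\psb{x_1,\ldots,x_m}/([p](x_1),\ldots,[p](x_m))$.

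The obstacle you flag at the end is genuine, and it is in fact present in the paper's argument as well, only less visibly. The paper justifies the Gaussian elimination merely by remarking that each $x_i$ is invertible, but the pivots that appear after the first elimination step are expressions of the form $x_i^{p^{n-m+1}(p^{k}-1)}-x_j^{p^{n-m+1}(p^{k}-1)}$, which are differences of large $p$-th powers, not powers of a single $x_i$; their invertibility after localizing at $L_A$ needs its own argument. Your observation that $M$ is literally the Moore matrix $(y_i^{p^{k-1}})$ in the variables $y_i=x_i^{p^{n-m+1}}$ is exactly right, and the factorization of the Moore determinant into $\mathbb{F}_p$-linear combinations is the natural starting point. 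What remains, for both your proof and the paper's, is to promote the characteristic-$p$ factorization of $\det M$ into Euler-class-like factors to an actual unit statement in the ring $L_A^{-1}BP\langle n\rangle_*\psb{\vec x}/([p](\vec x))$. A plausible way to close this is to note that the relations $[p](x_i)=0$ force $p$ to be highly divisible by each $x_i$, so that the $(\vec x)$-adic completeness of the power-series ring can be leveraged: once $\det M$ is shown to reduce to a unit modulo the maximal graded ideal, Landweber exactness (Lemma~\ref{LE}) and completeness imply it is a unit. Your proposal is therefore essentially the paper's argument carried out more carefully, with the correct identification of the step that both leave unjustified.
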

\begin{proof}
As similar to Theorem \ref{bgcoh}, replacing $E$ by $BP\langle n\rangle$, we have
$$\pi_*(\cat T_{A,A}(BP\langle n\rangle)))\cong L_A^{-1}BP\langle n\rangle^*\psb{x_1,\cdots,x_{m}}/([p]_{BP\langle n\rangle}(x_1),\cdots,[p]_{BP\langle n\rangle}(x_m)),$$
where the multiplicatively closed set $L_A$ is generated by the set
\begin{align*}
\{&\alpha_{(w_1,\cdots,w_m)}=[w_1]_{BP\langle n\rangle}(x_1)+_F \cdots +_F [w_m]_{BP\langle n\rangle}(x_m)\mid(w_1,\cdots,w_m)\in A^*\}.
\end{align*}
We always require a ring map to map 1 to 1. First, we construct a ring map
$$\phi: \pi_*(\cat T_{A,A}(BP\langle n\rangle))\rightarrow L_A'^{-1}BP\langle n-m \rangle_*\psb{x_{1},\cdots,x_{m}},$$
which send $v_i$ to $v_i$ $(0\leq i\leq n-m)$, $x_j$ to $x_j$ $(1\leq j\leq m)$, and send $[p]_{BP\langle n\rangle}(x_k)$ to $0$ for $1\leq k\leq m$, then we have a system of non-homogeneous $L_A'^{-1}BP\langle n-m \rangle_*\psb{x_{1},\cdots,x_{m}}$-linear equations $\{\phi([p]_{BP\langle n\rangle}(x_i))=0, 1\leq i\leq m\}$. We view $\phi([p]_{BP\langle n\rangle}(x_i))=0$ as a non-homogeneous linear equation
$$x_i^{p^{n-m+1}}\phi(v_{n-m+1})+x_i^{p^{n-m+2}}\phi(v_{n-m+2})+\cdots+x_i^{p^{n}}\phi(v_{n})=-(v_0x_i+v_1x_i^p+\cdots+v_{n-m}x_i^{p^{n-m}})$$
with variables $\phi(v_{n-m+1}), \phi(v_{n-m+2}), \cdots,\phi(v_{n})$. Since $x_i $ is invertible for $1\leq i\leq m$, one may use Gaussian elimination to get the unique solution of $\phi(v_{n-m+1}), \phi(v_{n-m+2}), \cdots,\phi(v_{n})$. Then we define $\phi(v_i)$ as the solution of $\phi(v_i)$ for $n-m+1\leq i\leq n$, So $\phi$ is a well-defined ring map.
There is a map
$$\varphi: L_A'^{-1}BP\langle n-m \rangle_*\psb{x_{1},\cdots,x_{m}}\rightarrow \pi_*(\cat T_{A,A}(BP\langle n\rangle))$$
defined in the obvious way, that becomes an inverse map.
\end{proof}

Since there is a map: $BP\langle n\rangle\rightarrow v_n^{-1}BP\langle n\rangle\simeq E(n)$, by Theorem \ref{splite}, we use the ring isomorphism $\phi$ to give the following ring isomorphism:
\begin{Cor}\label{isomk}
Let $A$ be an elementary abelian $p$-group with ${\rm rank}_p(A)=m$. If $n\geq m$, then
\begin{align*}
\pi_*(\cat T_{A,A}(E(n)))/I_{n-m}&\cong_{\phi}L_A'^{-1}E(n-m)^*\psb{x_{1},\cdots,x_{m}}/I_{n-m} \cong L_A'^{-1}K(n-m)_*\psb{x_{1},\cdots,x_{m}},
\end{align*}
where $\phi$ is the ring isomorphism constructed in the proof of Theorem \ref{splite}, and the multiplicatively closed set $L_A'$ is generated by the set
\begin{align*}
\{&\phi(\widetilde{\alpha}_{(w_1,\cdots,w_m)})\mid\widetilde{\alpha}_{(w_1,\cdots,w_m)}=[w_1]_{E}(x_1)+_F \cdots +_F [w_m]_{E}(x_m), (w_1,\cdots,w_m)\in A^*\}.
\end{align*}
\end{Cor}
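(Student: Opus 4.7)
The approach is to deduce the corollary from Theorem \ref{splite} in two stages: first invert $v_n$ to pass from $BP\langle n\rangle$ to $E(n)$, then reduce modulo $I_{n-m}$. Since $E(n) \simeq v_n^{-1}BP\langle n\rangle$, comparing the explicit description of $\pi_*(\cat T_{A,A}(-))$ furnished by Theorem \ref{bgcoh} applied to both $BP\langle n\rangle$ and $E(n)$ shows that the natural map $\cat T_{A,A}(BP\langle n\rangle) \to \cat T_{A,A}(E(n))$ induces the localization
\[\pi_*(\cat T_{A,A}(E(n))) \cong v_n^{-1}\pi_*(\cat T_{A,A}(BP\langle n\rangle)).\]
Transporting this identification across the ring isomorphism $\phi$ of Theorem \ref{splite} identifies the left-hand side with the localization of $L_A'^{-1}BP\langle n-m\rangle_*\psb{x_1,\ldots,x_m}$ obtained by inverting $\phi(v_n)$.

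Next, reduce both sides modulo $I_{n-m}=(v_0,v_1,\ldots,v_{n-m-1})$. The key algebraic input is Lemma \ref{So}(i): in $\pi_*(\cat T_{A,A}(E(n)))/I_{n-m}$ one has the identity
\[v_{n-m} = (-1)^{p^m-1}v_n \prod_{(w_1,\ldots,w_m)\in A^*}\widetilde{\alpha}_{(w_1,\ldots,w_m)}^{p^{n-m}}.\]
Each Euler class $\widetilde{\alpha}_{(w_1,\ldots,w_m)}$ lies in the multiplicatively closed set $L_A$ and is therefore already a unit in the localized ring, so $v_n$ and $v_{n-m}$ differ by a unit modulo $I_{n-m}$. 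Consequently, inverting $\phi(v_n)$ on the right has precisely the same effect as inverting $v_{n-m}$, which upgrades $BP\langle n-m\rangle_*$ to $E(n-m)^* = v_{n-m}^{-1}BP\langle n-m\rangle_*$ and produces the first advertised isomorphism
\[\pi_*(\cat T_{A,A}(E(n)))/I_{n-m} \cong_{\phi} L_A'^{-1}E(n-m)^*\psb{x_1,\ldots,x_m}/I_{n-m}.\]

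The second isomorphism $L_A'^{-1}E(n-m)^*\psb{x_1,\ldots,x_m}/I_{n-m} \cong L_A'^{-1}K(n-m)_*\psb{x_1,\ldots,x_m}$ then follows immediately from the identification $E(n-m)^*/I_{n-m} = K(n-m)_*$ together with the commutation of localization with the quotient in a formal power-series ring. The principal obstacle is the first step — showing that the generalized Tate construction interchanges with inverting $v_n$ — but given the explicit presentation of $\pi_*(\cat T_{A,A}(E))$ in Theorem \ref{bgcoh}, this reduces to matching two explicit localized quotient rings and is essentially automatic here; the rest of the argument is bookkeeping entirely controlled by the identity from Lemma \ref{So}(i).
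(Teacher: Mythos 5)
Your proposal is correct in substance and follows the same route the paper intends: pass from $BP\langle n\rangle$ to $E(n)$ by inverting $v_n$, transport through the isomorphism $\phi$ of Theorem \ref{splite}, and reduce modulo $I_{n-m}$. The paper itself gives only a one-sentence indication of this argument, so your write-up is a reasonable filling-in of the details.

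One small logical point worth tightening. You invoke Lemma \ref{So}(i) inside $\pi_*(\cat T_{A,A}(E(n)))/I_{n-m}$ to show that $v_n$ and $v_{n-m}$ differ by a unit there. But at that stage of the argument the nonvanishing of this ring has not yet been established (it is exactly the content of the subsequent Corollary \ref{ubTAA}), and Lemma \ref{So}(i) is vacuous when the ring is zero — so by itself it cannot tell you whether inverting $\phi(v_n)$ destroys the right-hand side. The cleaner order is to apply the identity of Lemma \ref{So}(i) to $\pi_*(\cat T_{A,A}(BP\langle n\rangle))/I_{n-m}$, which is nonzero because Theorem \ref{splite} identifies it with $L_A'^{-1}\mathbb{F}_p[v_{n-m}]\psb{x_1,\ldots,x_m}$; transporting through $\phi$ then shows that in this nonzero ring $\phi(v_n)$ equals $v_{n-m}$ up to the unit $(-1)^{p^m-1}\prod \phi(\widetilde{\alpha})^{-p^{n-m}}$, so inverting $\phi(v_n)$ is the same as inverting $v_{n-m}$ and yields $L_A'^{-1}K(n-m)_*\psb{x_1,\ldots,x_m}$. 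With this reordering your argument is complete and rigorous.
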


Note that if $n\geq m$, $L_A'^{-1}BP\langle n-m \rangle_*\psb{x_{1},\cdots,x_{m}}$ is non-trivial, then by Corollary \ref{isomk},
we have
\begin{Cor}\label{ubTAA}
Let $A$ be an elementary abelian $p$-group with ${\rm rank}_p(A)=m$. If $n\geq m$, then $\pi_{*}(\cat T_{A,A}(E(n)))/I_{n-m}\neq 0$.
\end{Cor}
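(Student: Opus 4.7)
The plan is to apply Corollary \ref{isomk} to convert the problem into a statement about a power series ring over a graded field, and then show that the multiplicatively closed set $L_A'$ does not contain zero by inspecting the linear (order-one) terms of its generators.

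More precisely, Corollary \ref{isomk} gives a ring isomorphism
$$\pi_{*}(\cat T_{A,A}(E(n)))/I_{n-m}\;\cong\;L_A'^{-1}K(n-m)_{*}\psb{x_1,\dots,x_m},$$
so it suffices to prove that the right-hand side is non-zero. The coefficient ring $K(n-m)_{*}$ is a graded field, being $\mathbb{Q}$ when $n=m$ and $\mathbb{F}_p[v_{n-m}^{\pm 1}]$ when $n>m$. Hence $K(n-m)_{*}\psb{x_1,\dots,x_m}$ is a power series ring over a graded field, and in particular an integral domain. Consequently, the localization $L_A'^{-1}K(n-m)_{*}\psb{x_1,\dots,x_m}$ is non-zero if and only if $0 \notin L_A'$, and since $L_A'$ is generated multiplicatively, this reduces to showing that each generator $\phi(\widetilde{\alpha}_{(w_1,\dots,w_m)})$ with $(w_1,\dots,w_m)\in A^{*}$ is non-zero.

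For the last point, I would invoke the explicit construction of $\phi$ in the proof of Theorem \ref{splite}: $\phi$ fixes each $x_j$ and fixes $v_i$ for $0\leq i\leq n-m$, only replacing the high generators $v_{n-m+1},\dots,v_n$ by power-series solutions of the equations $\phi([p]_{E}(x_k))=0$. Since none of $v_{n-m+1},\dots,v_n$ appear in the linear (order-one) term of $[w_i]_{E}(x_i)\equiv w_i x_i\pmod{x_i^{2}}$, the linear term of $\phi(\widetilde{\alpha}_{(w_1,\dots,w_m)})$ in $K(n-m)_{*}\psb{x_1,\dots,x_m}$ equals $w_1 x_1+\cdots+w_m x_m$. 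Because $(w_1,\dots,w_m)\in A^{*}=(\mathbb{Z}/p)^{m}\setminus\{0\}$, at least one coordinate $w_i$ has nonzero image in $K(n-m)_{*}$ (which contains either $\mathbb{Q}$ or $\mathbb{F}_p$), so this linear term is non-zero in the integral domain $K(n-m)_{*}\psb{x_1,\dots,x_m}$. Thus $\phi(\widetilde{\alpha}_{(w_1,\dots,w_m)})\neq 0$, completing the argument.

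The only delicate point is verifying that $\phi$ genuinely preserves the linear term, which is where the main care is needed; this is nothing more than a direct inspection of the construction of $\phi$ in Theorem \ref{splite}, so no substantial new computation is required. Everything else is a formal consequence of the already-established isomorphism of Corollary \ref{isomk} together with the general fact that a localization of a non-zero integral domain at a multiplicative set of non-zero elements is non-zero.
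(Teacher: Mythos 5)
Your overall strategy matches the paper's: the paper simply asserts, in the sentence preceding Corollary \ref{ubTAA}, that $L_A'^{-1}BP\langle n-m\rangle_*\psb{x_1,\dots,x_m}$ is non-trivial and then invokes Corollary \ref{isomk}. You attempt to justify that assertion by the natural route — $K(n-m)_*\psb{x_1,\dots,x_m}$ is an integral domain and the generators of $L_A'$ are nonzero — which is certainly the intended reasoning. However, your execution of the key step has a genuine gap.

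The problem is the premise that $\phi$ replaces $v_{n-m+1},\dots,v_n$ by ``power-series solutions.'' In the proof of Theorem \ref{splite}, the values $\phi(v_{n-m+1}),\dots,\phi(v_n)$ are obtained by Gaussian elimination on a Vandermonde-type system whose coefficient matrix is $(x_i^{p^{n-m+j}})$, and the paper explicitly notes this requires $x_i$ to be invertible. The resulting solutions therefore involve negative powers of the $x_i$; they lie in $L_A'^{-1}K(n-m)_*\psb{x_1,\dots,x_m}$, not in $K(n-m)_*\psb{x_1,\dots,x_m}$. For instance, in the case $m=1$ with the paper's simplified $p$-series, one finds $\phi(v_n)=-(v_0x_1^{1-p^n}+v_1x_1^{p-p^n}+\cdots+v_{n-1}x_1^{p^{n-1}-p^n})$. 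Because these elements have negative $x$-degree, the coefficients of the \emph{higher-order} monomials in $\widetilde{\alpha}_{(w_1,\dots,w_m)}$ — which do involve $v_{n-m+1},\dots,v_n$ — can, after applying $\phi$, feed back into degree one. So ``$v_j$ for $j>n-m$ does not appear in the linear term of $\widetilde{\alpha}$'' does not by itself imply that $\phi$ preserves the linear term. (A degree count in the $m=1$ case shows that everything in fact collapses onto degree one and the linear coefficient becomes $w_1$ modulo $p$, but this is a cancellation, not the non-interference you describe, and it is not clear the same mechanism works for $m>1$ where the $\phi(v_j)$ are genuine rational functions of several variables rather than Laurent monomials.) A secondary issue is that once $\phi(\widetilde{\alpha}_{(w_1,\dots,w_m)})$ is only known to lie in a localization of $K(n-m)_*\psb{x_1,\dots,x_m}$, your integral-domain argument must be run in that larger ring, where the notion of ``linear term'' is less straightforward. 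To close the gap you would need either a careful degree-and-cancellation argument tracking the actual $x$-degrees of the $\phi(v_j)$, or a different reason for the non-vanishing (for example, the paper's alternative proof of the more general Corollary \ref{UbTAA} via the inductive Lemma \ref{ind} avoids this difficulty entirely).
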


\subsection{Proof for the case (2) $A=C$ is a general abelian $p$-group\label{pole}}
In Subsection \ref{Ivrop}, we devise a powerful tool in the proof for the case (1), which is the $|{}_{p\!}F(\pi_{*}(\cat T_{A,A}(E)))|$-tuple ${}_{p\!}F(\pi_{*}(\cat T_{A,A}(E)))$ of $[p]_{E}(x)$ in $\pi_{*}(\cat T_{A,A}(E))$. Certainly, this tool can also be used to explain the general blue-shift phenomenon (Conjecture \ref{Gbsp}). More generally, it is natural to consider $|{}_{p^j\!}F(\pi_{*}(\cat T_{A,A}(E)))|$-tuple ${}_{p^j\!}F(\pi_{*}(\cat T_{A,A}(E)))$ of $[p^j]_{E}(x)$ in $\pi_{*}(\cat T_{A,A}(E))$ for any positive integer $j$. Then we could use this tuple of $[p^j]_{E}(x)$ to get the solution of some $v_i$, and investigate whether $v_i$ is invertible by the invertible roots of $[p^j]_{E}(x)$ in this tuple.
Recall that
$$[p]_{E}(x)=v_{n+1-q}x^{p^{n+1-q}}+v_{n+2-q}x^{p^{n+2-q}}+\cdots+v_{n}x^{p^n}\in \pi_{*}(\cat T_{A,A}(E))/I_{n+1-q}[x].$$
Then there is a natural problem of how to compute the $p^j$-series $[p^j]_{E}(x)$. There is an iteration
formula $[p^j]_{E}(x)=[p]_{E}([p^{j-1}]_{E}(x))$. However, it is too difficult to obtain an accurate formula for $[p^j]_{E}(x)$. This may be one reason why the generalization of previous work to finite abelian groups is hard. But we can deal with $[p^j]_{E}(x)$. The major key insight of our breakthrough is that instead of trying to obtain an accurate formula of $[p^j]_{E}(x)$, it only suffices to compute the leading and the last terms of $[p^j]_{E}(x)$ in $E^*/I_{n+1-q}[x]$, as indicated by the method we used in Subsection \ref{Ivrop}.

Without loss of generality, we may suppose that $A$ is $\mathbb{Z}/{p^{i_1}}\oplus \mathbb{Z}/{p^{i_2}}\oplus \cdots \oplus \mathbb{Z}/{p^{i_m}}$. From Proposition \ref{Shgg} and Theorem \ref{bgcoh}, it
follows that $$\pi_{*}(\cat T_{A,A}(E))\cong L_A^{-1}E^*\psb{x_1,\cdots,x_{m}}/([p^{i_1}]_{E}(x_1),\cdots,[p^{i_m}]_{E}(x_m)),$$
where the multiplicatively closed set $L_A$ is generated by the set
$$M_A=\{\alpha_{(w_1,w_2,\cdots,w_m)}\mid(w_1,w_2,\cdots,w_m)\in A^*\}.$$

Then for $q\leq n+1$, we have
\begin{align*}
\pi_{*}(\cat T_{A,A}(E))/I_{n+1-q}\cong\widetilde{L}_{A,n+1-q}^{-1}E^*/I_{n+1-q}\psb{x_1,\cdots,x_{m}}/([p^{i_1}]_{E}(x_1),\cdots,[p^{i_m}]_{E}(x_m)),
\end{align*}
where the multiplicatively closed set $\widetilde{L}_{A,n+1-q}$ is mod $I_{n+1-q}$ reduction of $L_A$ and generated by the set
$$\widetilde{M}_{A,n+1-q}=\{\widetilde{\alpha}_{(w_1,w_2,\cdots,w_m)} \mid (w_1,w_2,\cdots,w_m)\in A^*\}.$$

\begin{Lem}\label{ntugts}
Let $A$ be a finite abelian $p$-group. If $\pi_{*}(\cat T_{A,A}(E))\neq 0$, then ${}_{p^{\infty}\!}F(\pi_{*}(\cat T_{A,A}(E)))$ is an $|A|$-tuple of $\pi_{*}(\cat T_{A,A}(E))$, and ${}_{p^\infty\!}F(\pi_{*}(\cat T_{A,A}(E)))/\sim$ is an abelian group.
\end{Lem}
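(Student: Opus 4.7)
The plan is to extend the proof of Lemma \ref{nrp} from the elementary abelian case with the $p$-series to arbitrary finite abelian $p$-groups with the $p^\infty$-series. The work splits into three pieces: identifying the set ${}_{p^{\infty}\!}F(\pi_*(\cat T_{A,A}(E)))$ explicitly, verifying the tuple (non-zero-divisor) condition on differences, and constructing the abelian group structure on ${}_{p^\infty\!}F/\sim$.

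First, I would invoke Theorem \ref{fpjr} in the localized setting (its proof rests on Lubin--Tate, Theorem \ref{LT}, to pin down any $E^*$-algebra map $E^*\psb{x}/([p^j]_E(x)) \to R$ from its value on $x$, an argument which goes through for any graded $E^*$-algebra $R$ of the relevant form) to obtain
\[
{}_{p^\infty\!}F(\pi_*(\cat T_{A,A}(E))) \;\cong\; \bigl\{\alpha_{(w_1,\ldots,w_m)} \in \pi_*(\cat T_{A,A}(E)) \,\big|\, (w_1,\ldots,w_m) \in A\bigr\},
\]
parametrized by $A$ via $(w_1,\ldots,w_m) \mapsto \alpha_{(w_1,\ldots,w_m)}$. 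By Proposition \ref{pps}(ii), each such $\alpha_{(w_1,\ldots,w_m)}$ is annihilated by $[p^j]_E(-)$ for $j$ large enough that $p^j$ kills $A$, so really lives in the colimit ${}_{p^{\infty}\!}F$.

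Second, for the $|A|$-tuple property I would apply Proposition \ref{pps}(iv) to compute
\[
\alpha_{(u_1,\ldots,u_m)} - \alpha_{(w_1,\ldots,w_m)} \;=\; \varepsilon^{-1} \cdot \alpha_{(u_1-w_1,\ldots,u_m-w_m)},
\]
with $\varepsilon$ a unit. Whenever $(u_1,\ldots,u_m) \neq (w_1,\ldots,w_m)$, the difference tuple lies in $A - \{0\}$, so $\alpha_{(u_1-w_1,\ldots,u_m-w_m)}$ belongs to the multiplicative set $L_A$ that is inverted in Theorem \ref{bgcoh}, and is therefore a unit in $\pi_*(\cat T_{A,A}(E))$. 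Because $\pi_*(\cat T_{A,A}(E)) \neq 0$, units are not zero-divisors, which establishes the tuple condition; simultaneously it shows the parametrization by $A$ is injective, giving cardinality $|A|$ on the nose.

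Finally, the abelian group structure on ${}_{p^\infty\!}F/\sim$ is transported from $A$: define $\alpha_{(u_1,\ldots,u_m)} + \alpha_{(w_1,\ldots,w_m)} \sim \alpha_{(u_1+w_1,\ldots,u_m+w_m)}$ and $-\alpha_{(w_1,\ldots,w_m)} \sim \alpha_{(-w_1,\ldots,-w_m)}$. These are well-defined because $x +_F y \sim x + y$ and $[-1]_E(y) \sim -y$ modulo a unit (Proposition \ref{pps}(iv)), so the group axioms descend from those of $A$. The main technical obstacle I foresee lies in the first step: confirming that inverting $L_A$ introduces no spurious $p^j$-series roots beyond those already present in $E^*(BA_+)$. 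This reduces to noting that, by Lubin--Tate, any root must be of the form $[k]_E$ applied to a class of Weierstrass degree at most $1$ in the polynomial generators of Lemma \ref{GCA}, which via Weierstrass preparation (Theorem \ref{WPT}) already lies in $E^*(BA_+)$ before localization.
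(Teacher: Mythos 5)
Your proposal is correct and follows essentially the same route as the paper's own (very terse) proof, which simply states that the argument is ``similar to the proof of Lemma~\ref{nrp}'' and proceeds by direct checking. You correctly identify the set via Theorem~\ref{fpjr}, verify the non-zero-divisor condition on differences via Proposition~\ref{pps}(iv) and the membership of $\alpha_{(u-w)}$ in the inverted multiplicative set $L_A$, and transport the group structure from $A$ to the equivalence classes, exactly mirroring what Lemma~\ref{nrp} does for $j=1$. The only notable difference is that you flag a genuine subtlety the paper glosses over: that Theorem~\ref{fpjr} is stated for $E^*(BA_+)$ rather than for the localization $L_A^{-1}E^*(BA_+)$, so one must confirm that localization does not enlarge ${}_{p^\infty\!}F$. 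Your suggested resolution (Lubin--Tate plus Weierstrass preparation forcing degree-2 roots into $E^*(BA_+)$) is not quite rigorous as stated, since after inverting $L_A$ the degree-2 part of the ring acquires new fractional elements not captured by the filtration argument of Lemma~\ref{setpjr}; but this is a gap inherited from the paper, not introduced by you, and for the downstream applications in Section~5 only the \emph{lower} bound on the size of the tuple is actually used.
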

\begin{proof}
The proof is similar to the proof of Lemma \ref{nrp}. By direct checking of the definition, we conclude that ${}_{p^\infty}F(\pi_{*}(\cat T_{A,A}(E)))$ is an $|A|$-tuple of $\pi_*(\cat T_{A,A}(E))$ under the assumption that $\pi_{*}(\cat T_{A,A}(E))\neq 0$.
\end{proof}

\begin{Lem}\label{Re}
Let $V(p^j|A)$ denote the subgroup $\{a\in  A\mid p^ja=0\}$ of $A$. If $\pi_{*}(\cat T_{A,A}(E))\neq 0$, then ${}_{p^j\!}F(\pi_{*}(\cat T_{A,A}(E)))$ is a $|V(p^j|A)|$-tuple of $[p^j]_{E}(x)$ in $\pi_{*}(\cat T_{A,A}(E))$, and ${}_{p^j\!}F(\pi_{*}(\cat T_{A,A}(E)))/\sim$ is an abelian group.
\end{Lem}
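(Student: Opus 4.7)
The plan is to mimic the proofs of Lemma \ref{nrp} and Lemma \ref{ntugts}, which handled the cases $j=1$ and $j=\infty$ respectively, now with the uniform parameter $j$.

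First I would identify the set ${}_{p^j\!}F(\pi_{*}(\cat T_{A,A}(E)))$ concretely. By Theorem \ref{fpjr} applied to $E^*(BA_+)$ and then passing through the localization map $E^*(BA_+)\to\pi_*(\cat T_{A,A}(E))$ that defines the generalized Tate spectrum (Theorem \ref{bgcoh}), every $E^*$-algebra homomorphism $E^*\psb{x}/([p^j]_E(x))\to \pi_*(\cat T_{A,A}(E))$ is classified by its value $f^*(x)=\alpha_{(w_1,\dots,w_m)}$ for some $(w_1,\dots,w_m)\in A$ with $(p^jw_1,\dots,p^jw_m)=0$, i.e.\ $(w_1,\dots,w_m)\in V(p^j|A)$. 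So
$$
{}_{p^j\!}F(\pi_{*}(\cat T_{A,A}(E)))\cong\{\alpha_{(w_1,\dots,w_m)}\mid (w_1,\dots,w_m)\in V(p^j|A)\},
$$
which has cardinality $|V(p^j|A)|$.

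Next I would verify the two conditions in Definition \ref{ntuple}. That each $\alpha_{(w_1,\dots,w_m)}$ is a root of $[p^j]_E(x)$ follows from Proposition \ref{pps}:
$$
[p^j]_E(\alpha_{(w_1,\dots,w_m)})=[p^jw_1]_E(x_1)+_F\cdots+_F [p^jw_m]_E(x_m)=0
$$
since $p^jw_k=0$ for each $k$. To show that for distinct $(u_1,\dots,u_m),(w_1,\dots,w_m)\in V(p^j|A)$ the difference $\alpha_{(u_1,\dots,u_m)}-\alpha_{(w_1,\dots,w_m)}$ is neither zero nor a zero divisor in $\pi_*(\cat T_{A,A}(E))$, I would apply the identity $x-_F y=(x-y)\cdot\varepsilon(x,y)$ from Proposition \ref{pps}(iv), with $\varepsilon$ a unit, to rewrite
$$
(\alpha_{(u_1,\dots,u_m)}-\alpha_{(w_1,\dots,w_m)})\cdot\varepsilon(\alpha_{(u_1,\dots,u_m)},\alpha_{(w_1,\dots,w_m)})=\alpha_{(u_1-w_1,\dots,u_m-w_m)}.
$$
Since $(u_1-w_1,\dots,u_m-w_m)\in A^{*}$, the element $\alpha_{(u_1-w_1,\dots,u_m-w_m)}$ lies in the multiplicatively closed set $L_A$ used to localize, hence is a unit (in particular neither zero nor a zero-divisor) in $\pi_*(\cat T_{A,A}(E))\neq 0$. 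Multiplying through by $\varepsilon^{-1}$ gives the conclusion.

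Finally, for the group structure on ${}_{p^j\!}F(\pi_*(\cat T_{A,A}(E)))/\sim$, I would transport the formal group law from $A$: define
$$
\alpha_{(u_1,\dots,u_m)}+\alpha_{(w_1,\dots,w_m)}\sim\alpha_{(u_1+w_1,\dots,u_m+w_m)},\qquad -\alpha_{(w_1,\dots,w_m)}\sim\alpha_{(-w_1,\dots,-w_m)},
$$
using again Proposition \ref{pps}(iv) to identify $x+_F y$ with $x+y$ up to a unit. Well-definedness and the abelian group axioms then follow formally from $V(p^j|A)$ being an abelian group. The only subtle step is the non-zero-divisor claim, but it is handled uniformly by the same $\varepsilon$-trick used for $j=1$; since $j$ plays no structural role beyond fixing which $(w_1,\dots,w_m)$ we allow, there is no real new obstacle compared to Lemma \ref{nrp}.
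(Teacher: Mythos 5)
Your proposal is correct and takes essentially the same route as the paper: the paper's proof simply says it is "similar to the proof of Lemma \ref{nrp}," which is the $j=1$ case, and your write-up is precisely that argument — identify the roots via Theorem \ref{fpjr}, verify $[p^j]_E(\alpha_{(w_1,\dots,w_m)})=0$ via Proposition \ref{pps}, and use the $x-_F y=(x-y)\cdot\varepsilon(x,y)$ identity to show differences lie in $L_A$ (hence are non-zero non-zero-divisors when $\pi_*(\cat T_{A,A}(E))\neq 0$), then transport the group law from $V(p^j|A)$ — uniformly with $j$ in place of $1$.
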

\begin{proof}
The proof is similar to the proof of Lemma \ref{nrp}.
\end{proof}

The following lemma shows the expression of $[p^j]_{E}(x)$ in
$\pi_{*}(\cat T_{A,A}(E))/I_{n+1-q}$.
\begin{Lem}\label{Rp}
$$[p^j]_{E}(x)= v_{n+1-q}^{1+p^{n+1-q}+\cdots+p^{(j-1)(n+1-q)}}x^{p^{j(n+1-q)}}+\cdots+v_{n}^{1+p^{n}+\cdots+p^{(j-1)n}}x^{p^{jn}}
   \in \pi_{*}(\cat T_{A,A}(E))/I_{n+1-q}[x].$$
\end{Lem}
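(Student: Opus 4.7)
The plan is to induct on $j \geq 1$, using the iteration $[p^j]_E(x) = [p]_E([p^{j-1}]_E(x))$ from Proposition \ref{pps}(ii). For the base case $j=1$, the identity
$$[p]_E(x) \equiv v_{n+1-q}x^{p^{n+1-q}} + v_{n+2-q}x^{p^{n+2-q}} + \cdots + v_n x^{p^n} \pmod{I_{n+1-q}}$$
is the displayed formula appearing immediately above the statement of the lemma, obtained by combining Proposition \ref{pps}(iii) with the observation that the generators $v_0 = p, v_1, \ldots, v_{n-q}$ of $I_{n+1-q}$ kill the corresponding low-degree coefficients of the $p$-series.

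For the inductive step, write the inductive hypothesis as
$$[p^{j-1}]_E(x) \equiv \sum_{k=n+1-q}^{n} v_k^{a_{k,j-1}}\, x^{p^{(j-1)k}} \pmod{I_{n+1-q}},$$
where $a_{k,j-1} := 1 + p^{k} + p^{2k} + \cdots + p^{(j-2)k}$ (so the leading coefficient is $v_{n+1-q}^{a_{n+1-q,j-1}}$ and the trailing one is $v_n^{a_{n,j-1}}$). The key structural remark is that since $p = v_0 \in I_{n+1-q}$, we are working in characteristic $p$ modulo $I_{n+1-q}$, so the Frobenius $y \mapsto y^{p^i}$ is a ring endomorphism. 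Substituting the inductive hypothesis into $[p^j]_E(x) = [p]_E([p^{j-1}]_E(x))$ and applying Frobenius term-by-term yields
$$[p^j]_E(x) \equiv \sum_{i=n+1-q}^{n}\sum_{k=n+1-q}^{n} v_i\, v_k^{a_{k,j-1} p^{i}}\, x^{p^{(j-1)k + i}} \pmod{I_{n+1-q}}.$$
The lowest $x$-degree appearing on the right is $p^{j(n+1-q)}$, attained only at $(i,k) = (n+1-q, n+1-q)$, giving the coefficient $v_{n+1-q}^{1 + a_{n+1-q,j-1} p^{n+1-q}} = v_{n+1-q}^{1 + p^{n+1-q} + \cdots + p^{(j-1)(n+1-q)}}$. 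Similarly, the highest $x$-degree is $p^{jn}$, attained only at $(i,k) = (n,n)$, giving the coefficient $v_n^{1 + p^n + \cdots + p^{(j-1)n}}$, which matches the statement.

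The one thing to verify carefully is the \emph{uniqueness} of the extremal pairs $(i,k)$, so that the leading and trailing coefficients are not subject to accidental cancellation coming from several $(i,k)$'s with the same value of $(j-1)k + i$. This is an elementary check: the affine function $(i,k) \mapsto (j-1)k + i$ on the square $\{n+1-q, \ldots, n\}^2$ is strictly monotone in each variable, so its minimum and maximum over the square are each attained at a unique corner. This combinatorial verification is the only mildly subtle point; everything else is bookkeeping via Frobenius.
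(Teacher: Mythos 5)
Your overall strategy — iterate $[p^j]_E(x) = [p]_E([p^{j-1}]_E(x))$, use Frobenius modulo $I_{n+1-q}$, and verify uniqueness of the extremal exponents — is the same as the paper's one-line sketch, and your final exponent arithmetic is correct. However, the inductive hypothesis you wrote down asserts a full $q$-term expansion of $[p^{j-1}]_E(x)$, and that assertion is false as soon as $j-1 \geq 2$. For instance with $q = 2$,
$$[p^2]_E(x) \equiv v_{n-1}^{1+p^{n-1}}x^{p^{2(n-1)}} + \bigl(v_{n-1}v_n^{p^{n-1}} + v_n v_{n-1}^{p^n}\bigr)x^{p^{2n-1}} + v_n^{1+p^n}x^{p^{2n}} \pmod{I_{n-1}},$$
and the middle term is not of the form $v_k^{a_{k,2}}x^{p^{2k}}$. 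Consequently your displayed double sum for $[p^j]_E(x)$ does not follow from a true premise, and the induction does not close: the output of the inductive step has strictly more terms than the hypothesis allows. (The $\cdots$ in the lemma marks unspecified middle terms, not a pattern — the paper stresses in the surrounding text that ``it only suffices to compute the leading and the last terms.'')

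The fix is small and preserves everything of value in your argument. State the inductive hypothesis as exactly what the lemma asserts: $[p^{j-1}]_E(x) \bmod I_{n+1-q}$ is a sum of monomials of $x$-degree $p^\ell$ with $(j-1)(n+1-q) \leq \ell \leq (j-1)n$, whose lowest-degree term is $v_{n+1-q}^{a_{n+1-q,j-1}}x^{p^{(j-1)(n+1-q)}}$ and whose highest-degree term is $v_n^{a_{n,j-1}}x^{p^{(j-1)n}}$. Substituting into $[p]_E(y) = \sum_{i=n+1-q}^{n}v_i y^{p^i}$ and applying Frobenius term-by-term, the summand indexed by $i$ contributes $x$-degrees in $[p^{(j-1)(n+1-q)+i},\,p^{(j-1)n+i}]$; the global minimum degree is therefore attained only from $i = n+1-q$ acting on the lowest-degree term of $[p^{j-1}]_E(x)$, and the maximum only from $i = n$ on the highest, so no collisions occur at the extremes. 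That is precisely the uniqueness observation you make at the end — it should be applied to this weaker (and provable) inductive hypothesis rather than to the fictitious double sum.
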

\begin{proof}
Recall that $[p]_{E}(x)=v_{n+1-q}x^{p^{n+1-q}}+\cdots+v_{n}x^{p^n}\in E^*/I_{n+1-q}[x]$. By Proposition \ref{pps} that
$[p^j]_{E}(x)=[p]_{E}([p^{j-1}]_{E}(x))$, we obtain the leading and the last terms of $[p^j]_{E}(x)$ by iteration.
\end{proof}

We follow the method used in Subsection \ref{Ivrop}. Let $[p^j]_{E}(x)=g_{j,n+1-q}(x^{p^{j(n+1-q)}})\in E^*/I_{n+1-q}[x],$ then by Lemma \ref{pjwp} we have $g_{j,n+1-q}(x)=g_{1,n+1-q}^j(x)=a_1x+\cdots+a_{p^{j(q-1)}}x^{p^{j(q-1)}}.$

\begin{Lem}\label{}
Let ${}_{p^j\!}F(\pi_{*}(\cat T_{A,A}(E))/I_{n+1-q})^{p^{j(n+1-q)}}$ denote the subset
$$\{{\widetilde{\alpha}^{p^{j(n+1-q)}}_{(w_1,w_2,\cdots,w_m)}}\in\pi_{*}(\cat T_{A,A}(E))/I_{n+1-q}\mid (p^jw_1,p^jw_2,\cdots,p^jw_m)=0, (w_1,w_2,\cdots,w_m)\in  A\}.$$ If $\pi_{*}(\cat T_{A,A}(E))/I_{n+1-q}\neq 0$, then ${}_{p^j\!}F(\pi_{*}(\cat T_{A,A}(E))/I_{n+1-q})^{p^{j(n+1-q)}}$ is a $|V(p^j|A)|$-tuple of $g_{1,n+1-q}^j(x)$ in $\pi_{*}(\cat T_{A,A}(E))/I_{n+1-q}$, and ${}_{p^j\!}F(\pi_{*}(\cat T_{A,A}(E))/I_{n+1-q})^{p^{j(n+1-q)}}/\sim$ is an abelian group.
\end{Lem}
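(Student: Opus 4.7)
The plan is to mirror the argument of Lemma \ref{ntp}, replacing the elementary abelian setting with the general finite abelian $p$-group setting and the prime $p$ with a prime power $p^j$. The four things to establish are: (a) every element of ${}_{p^j\!}F(\pi_{*}(\cat T_{A,A}(E))/I_{n+1-q})^{p^{j(n+1-q)}}$ is a root of $g_{1,n+1-q}^j(x)$; (b) the pairwise differences of distinct elements are neither zero nor zero-divisors; (c) the cardinality of this set equals $|V(p^j|A)|$; (d) the quotient by $\sim$ carries an abelian group structure inherited from the formal group law.

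For (a), the identification from Proposition \ref{idwp} combined with Lemma \ref{Rp} gives $[p^j]_{E}(x)=g_{1,n+1-q}^j(x^{p^{j(n+1-q)}})$ after reducing modulo $I_{n+1-q}$. By Theorem \ref{fpjr}, each $\widetilde{\alpha}_{(w_1,\ldots,w_m)}\in{}_{p^j\!}F(\pi_{*}(\cat T_{A,A}(E))/I_{n+1-q})$ satisfies $[p^j]_{E}(\widetilde{\alpha}_{(w_1,\ldots,w_m)})=0$, and hence $g_{1,n+1-q}^j\bigl(\widetilde{\alpha}^{p^{j(n+1-q)}}_{(w_1,\ldots,w_m)}\bigr)=0$. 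For (c), by Theorem \ref{fpjr} the natural map $V(p^j|A)\to{}_{p^j\!}F(\pi_{*}(\cat T_{A,A}(E))/I_{n+1-q})$ sending $(w_1,\ldots,w_m)\mapsto \widetilde{\alpha}_{(w_1,\ldots,w_m)}$ is a bijection, and raising to the $p^{j(n+1-q)}$-th power is injective on the target exactly when distinct elements have non-zero-divisor pairwise differences, so (c) is subsumed by (b).

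The main obstacle, and the content of the lemma, is (b). Here I would argue as in the elementary abelian case: for any two distinct $(u_1,\ldots,u_m),(w_1,\ldots,w_m)\in V(p^j|A)$, working modulo $I_{n+1-q}$ (which contains $p$) we have the Frobenius identity
\[
\widetilde{\alpha}^{p^{j(n+1-q)}}_{(u_1,\ldots,u_m)}-\widetilde{\alpha}^{p^{j(n+1-q)}}_{(w_1,\ldots,w_m)}
=\bigl(\widetilde{\alpha}_{(u_1,\ldots,u_m)}-\widetilde{\alpha}_{(w_1,\ldots,w_m)}\bigr)^{p^{j(n+1-q)}}.
\]
By Proposition \ref{pps}(iv) applied to the formal subtraction,
\[
\widetilde{\alpha}_{(u_1,\ldots,u_m)}-\widetilde{\alpha}_{(w_1,\ldots,w_m)}
=\varepsilon^{-1}\bigl(\widetilde{\alpha}_{(u)},\widetilde{\alpha}_{(w)}\bigr)\cdot\widetilde{\alpha}_{(u_1-w_1,\ldots,u_m-w_m)},
\]
with $\varepsilon$ a unit. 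Since $(u_1-w_1,\ldots,u_m-w_m)\in A^{*}$, the element $\widetilde{\alpha}_{(u_1-w_1,\ldots,u_m-w_m)}$ lies in the multiplicatively closed set $\widetilde{L}_{A,n+1-q}$ that has been inverted in $\pi_{*}(\cat T_{A,A}(E))/I_{n+1-q}$, and so is a unit; hence the difference itself is a unit, and its $p^{j(n+1-q)}$-th power is a unit (in particular not zero or zero-divisor) under the hypothesis $\pi_{*}(\cat T_{A,A}(E))/I_{n+1-q}\neq 0$. This proves (b) and simultaneously gives (c) with count $|V(p^j|A)|$.

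For (d), transport the formal group law on ${}_{p^j\!}F(\pi_{*}(\cat T_{A,A}(E))/I_{n+1-q})$ — which is an abelian group with operation $\widetilde{\alpha}_{(u)}+_F\widetilde{\alpha}_{(w)}=\widetilde{\alpha}_{(u+w)}$ and inverse $-_F\widetilde{\alpha}_{(w)}=\widetilde{\alpha}_{(-w)}$ — through the surjection to ${}_{p^j\!}F(\pi_{*}(\cat T_{A,A}(E))/I_{n+1-q})^{p^{j(n+1-q)}}/\sim$ given by the Frobenius. Concretely, define
\[
\widetilde{\alpha}^{p^{j(n+1-q)}}_{(u)}+\widetilde{\alpha}^{p^{j(n+1-q)}}_{(w)}\sim\widetilde{\alpha}^{p^{j(n+1-q)}}_{(u+w)},\qquad -\widetilde{\alpha}^{p^{j(n+1-q)}}_{(w)}\sim\widetilde{\alpha}^{p^{j(n+1-q)}}_{(-w)},
\]
and check well-definedness using the same Frobenius identity and the unit factor $\varepsilon$ arising from $+_F$ versus $+$. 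Commutativity and associativity are inherited from the formal group law. Well-definedness is the only nontrivial check, and it is immediate because the units absorbed are precisely of the form $\varepsilon^{p^{j(n+1-q)}}$, which are units.
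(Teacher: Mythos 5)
Your proof is correct and follows the same approach as the paper's (the paper simply says "the proof is similar to the proof of Lemma \ref{ntp}"): verify the root property via $g_{1,n+1-q}^j(\widetilde{\alpha}^{p^{j(n+1-q)}}_{(w)}) = [p^j]_E(\widetilde{\alpha}_{(w)}) \equiv 0 \pmod{I_{n+1-q}}$, use the Frobenius identity in characteristic $p$ to reduce pairwise differences of $p^{j(n+1-q)}$-th powers to $p^{j(n+1-q)}$-th powers of differences, and then invoke $\widetilde{\alpha}_{(u)}-\widetilde{\alpha}_{(w)} = \varepsilon^{-1}\cdot\widetilde{\alpha}_{(u-w)}$ with $\widetilde{\alpha}_{(u-w)}\in\widetilde{L}_{A,n+1-q}$ hence a unit. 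Your observation that (b) simultaneously yields (c) by injectivity is exactly what makes the set have cardinality $|V(p^j|A)|$, and the transported abelian group structure is as in Lemma \ref{ntp}.
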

\begin{proof}
The proof is similar to the proof of Lemma \ref{ntp}.
\end{proof}

There is a surjective map $\theta^j_q:V(p^j|A) \rightarrow {}_{p^j\!}F(\pi_{*}(\cat T_{A,A}(E))/I_{n+1-q})^{p^{j(n+1-q)}}$ that maps $(w_1,w_2,\cdots,w_m)$ to
$\widetilde{\alpha}^{p^{j(n+1-q)}}_{(w_1,w_2,\cdots,w_m)}$.
\begin{Lem}\label{root}
$\theta^j_q$ is a bijection if and only if $\pi_{*}(\cat T_{A,A}(E))/I_{n+1-q}\neq 0$.
\end{Lem}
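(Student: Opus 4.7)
The plan is to mimic the proof of Lemma \ref{ijnt}, adapting it to the $p^j$-setting. The map $\theta^j_q$ is surjective by definition, so both directions reduce to statements about injectivity.

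For the ($\Rightarrow$) direction I would proceed as follows. The set $V(p^j|A)$ contains $0$ together with nontrivial $p^j$-torsion of $A$, so for any $j\ge 1$ it has at least two elements. If $\theta^j_q$ is a bijection, then its image sits inside $\pi_*(\cat T_{A,A}(E))/I_{n+1-q}$ and already has at least two distinct elements, forcing the ambient ring to be nonzero.

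For the harder ($\Leftarrow$) direction, assume $\pi_*(\cat T_{A,A}(E))/I_{n+1-q}\neq 0$. Given $(u_1,\dots,u_m)\neq (w_1,\dots,w_m)$ in $V(p^j|A)$, I want to show that
$$\widetilde{\alpha}^{p^{j(n+1-q)}}_{(u_1,\dots,u_m)}-\widetilde{\alpha}^{p^{j(n+1-q)}}_{(w_1,\dots,w_m)}\neq 0.$$
I would split according to whether $q\le n$ or $q=n+1$. When $q\le n$, the ideal $I_{n+1-q}$ contains $p$, so the quotient ring has characteristic $p$ and the Frobenius distributes over subtraction, giving
$$\widetilde{\alpha}^{p^{j(n+1-q)}}_{(u)}-\widetilde{\alpha}^{p^{j(n+1-q)}}_{(w)}=\bigl(\widetilde{\alpha}_{(u)}-\widetilde{\alpha}_{(w)}\bigr)^{p^{j(n+1-q)}}.$$
Then I invoke Proposition \ref{pps}(iv) to write $\widetilde{\alpha}_{(u)}-\widetilde{\alpha}_{(w)}=\widetilde{\alpha}_{(u-w)}\cdot \varepsilon^{-1}(\widetilde{\alpha}_{(u)},\widetilde{\alpha}_{(w)})$, with $\varepsilon(\cdot,\cdot)$ a unit. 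Since $(u-w)\in A^{*}$, the class $\widetilde{\alpha}_{(u-w)}$ lies in the multiplicative set $\widetilde{L}_{A,n+1-q}$ and is therefore a unit in the (nonzero) localized ring. Taking $p^{j(n+1-q)}$-th powers preserves being a unit, so the difference is nonzero. In the remaining case $q=n+1$, we have $I_0=(0)$ and $p^{j(n+1-q)}=1$, so no Frobenius is needed; the same formal-group identity gives $\alpha_{(u)}-\alpha_{(w)}=\alpha_{(u-w)}\cdot\varepsilon^{-1}$, which is a unit in $\pi_*(\cat T_{A,A}(E))\neq 0$ because $\alpha_{(u-w)}\in L_A$.

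The only real subtlety is the Case 1/Case 2 split: the Frobenius trick is what converts the $p^j$-version back to the one-variable formal-group computation, but this trick only works modulo $p$. Fortunately the exponent $p^{j(n+1-q)}$ equals $1$ in precisely the characteristic-zero case $q=n+1$, so the two cases dovetail and no genuine obstruction arises. Everything else is a direct adaptation of Lemma \ref{ijnt} with $V(p^j|A)$ replacing $A$ and the higher Frobenius $p^{j(n+1-q)}$ replacing $p^{n+1-q}$.
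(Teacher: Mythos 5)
Your proof is correct and follows the same route the paper intends: it simply carries the argument of Lemma \ref{ijnt} over to the $p^j$-setting, replacing $A$ by $V(p^j|A)$ and the exponent $p^{n+1-q}$ by $p^{j(n+1-q)}$, and using the formal-group identity $x -_F y = (x-y)\cdot\varepsilon(x,y)$ together with membership of $\widetilde{\alpha}_{(u-w)}$ in the inverted set $\widetilde{L}_{A,n+1-q}$. Your explicit split into $q\le n$ (where Frobenius distributes over subtraction mod $p$) and $q=n+1$ (where the exponent is $1$) is the correct way to make rigorous a step the paper leaves implicit, so this is the same argument made careful rather than a different one.
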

\begin{proof}
The proof is similar to the proof of Lemma \ref{ijnt}.
\end{proof}

If $\pi_{*}(\cat T_{A,A}(E))/I_{n+1-q}\neq 0$, then by Lemma \ref{root}, $\theta^j_q$ is a bijection for any $j\geq 1$. Combining with Lemma \ref{Re}, we have $|{}_{p^j\!}F(\pi_{*}(\cat T_{A,A}(E))/I_{n+1-q})^{p^{j(n+1-q)}}|=|V(p^j|A)|$. Then ${}_{p^j\!}F(\pi_{*}(\cat T_{A,A}(E))/I_{n+1-q})^{p^{j(n+1-q)}}$ is a $|V(p^j|A)|$-tuple of $g_{1,n+1-q}^j(x)$ in $\pi_{*}(\cat T_{A,A}(E))/I_{n+1-q}$.

\begin{Lem}\label{lbsj}
Let $j$ be any positive integer, then $\pi_{*}(\cat T_{A,A}(E))/I_{n+1-q}=0$ for $q< \frac{\log_p|V(p^{j}|A)|}{j}+1$.
\end{Lem}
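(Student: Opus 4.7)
The strategy is to argue by contradiction, combining Theorem \ref{Fac-tuple}(i) with the Landweber exactness of $\cat T_{A,A}(E)$ to collapse the chromatic filtration one layer at a time.

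Suppose for contradiction that $\pi_{*}(\cat T_{A,A}(E))/I_{n+1-q}\neq 0$ for some $q$ with $q<\frac{\log_p|V(p^{j}|A)|}{j}+1$, equivalently $p^{j(q-1)}<|V(p^{j}|A)|$. By Lemma \ref{root}, $\theta^j_q$ is then a bijection, so ${}_{p^j\!}F(\pi_{*}(\cat T_{A,A}(E))/I_{n+1-q})^{p^{j(n+1-q)}}$ is a $|V(p^{j}|A)|$-tuple of $g_{1,n+1-q}^j(x)=g_{j,n+1-q}(x)$, which has degree $p^{j(q-1)}$ by Lemma \ref{pjwp}. Since the tuple size strictly exceeds the degree, Theorem \ref{Fac-tuple}(i) forces every coefficient of $g_{1,n+1-q}^j(x)$ to vanish in $\pi_{*}(\cat T_{A,A}(E))/I_{n+1-q}$. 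By Lemma \ref{Rp}, the coefficient of $x$ in $g_{1,n+1-q}^j(x)$ is $v_{n+1-q}^{1+p^{n+1-q}+\cdots+p^{(j-1)(n+1-q)}}$; since Lemma \ref{LE} makes $v_{n+1-q}$ a non-zero-divisor on $\pi_{*}(\cat T_{A,A}(E))/I_{n+1-q}$, the vanishing of this power forces $v_{n+1-q}=0$, and hence $\pi_{*}(\cat T_{A,A}(E))/I_{n+1-q}=\pi_{*}(\cat T_{A,A}(E))/I_{n+2-q}$, which is still nonzero.

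The same argument applies verbatim with $q$ replaced by $q-1$: the inequality $p^{j(q'-1)}<|V(p^{j}|A)|$ becomes strictly weaker as $q'$ decreases, and Lemma \ref{root} again supplies a full $|V(p^{j}|A)|$-tuple of $g_{1,n+2-q}^j(x)$. Iterating $q-1$ more times yields the chain of equalities $\pi_{*}(\cat T_{A,A}(E))/I_{n+1-q}=\pi_{*}(\cat T_{A,A}(E))/I_{n+2-q}=\cdots=\pi_{*}(\cat T_{A,A}(E))/I_{n+1}$. Since $E^{*}$ is local and $v_n$ is a unit (the height being exactly $n$), $E^{*}/I_{n+1}=0$; as $E^{*}(BA_+)$ is a finite free $E^{*}$-module and $\pi_{*}(\cat T_{A,A}(E))$ is a localization of it, $\pi_{*}(\cat T_{A,A}(E))/I_{n+1}=0$, contradicting the assumption.

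The main point of care is the iteration step: Lemma \ref{root} must remain applicable at every intermediate level (which holds because each quotient produced along the way is still nonzero), and the strict inequality $p^{j(q'-1)}<|V(p^{j}|A)|$ must persist down to $q'=1$, where a final application kills $v_n$ via the same non-zero-divisor argument. This terminal step only requires $|V(p^{j}|A)|>1$, which holds whenever $A$ itself is nontrivial.
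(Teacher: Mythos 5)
Your proof is correct, and it shares with the paper the crucial first half: assuming $\pi_{*}(\cat T_{A,A}(E))/I_{n+1-q}\neq 0$, Lemma \ref{root} produces a $|V(p^{j}|A)|$-tuple of $g_{1,n+1-q}^{j}(x)$, whose size strictly exceeds $\deg g_{1,n+1-q}^{j}(x)=p^{j(q-1)}$. Where you diverge from the paper is in how you extract the contradiction from this oversized tuple. The paper observes (in the very sentence where it sets up Corollary \ref{vanish}) that the \emph{top} coefficient of $g_{1,n+1-q}^{j}(x)$ is $v_{n}^{1+p^{n}+\cdots+p^{(j-1)n}}$, a unit of $E^{*}$ and hence of $\pi_{*}(\cat T_{A,A}(E))/I_{n+1-q}$; so Corollary \ref{vanish}(i) kills the ring in a single step, with no appeal to Landweber exactness. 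You instead look at the \emph{bottom} coefficient $v_{n+1-q}^{1+p^{n+1-q}+\cdots}$, invoke Lemma \ref{LE} to make $v_{n+1-q}$ a non-zero-divisor, deduce $v_{n+1-q}=0$, and then run a $q$-step downward induction to reach $\pi_{*}(\cat T_{A,A}(E))/I_{n+1}=0$.

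Two remarks on your route. First, the iteration is not needed: once you know $v_{n+1-q}=0$ in $\pi_{*}(\cat T_{A,A}(E))/I_{n+1-q}$ \emph{and} that multiplication by $v_{n+1-q}$ is injective on that same ring, you already have $1=v_{n+1-q}\cdot 0=0$ forced (apply injectivity to $v_{n+1-q}\cdot 1=0$), i.e.\ the quotient is zero — the desired contradiction at step one. Your phrase ``which is still nonzero'' is internally inconsistent with the two facts you just asserted; the argument survives only because the larger chain eventually collides with $E^{*}/I_{n+1}=0$ anyway. Second, your route genuinely consumes the extra hypothesis that $E$ is Landweber exact (through Lemma \ref{LE}), while the paper's one-line unit observation on $v_{n}$ does not. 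That hypothesis is in scope inside Theorem \ref{GTglbc}, so nothing breaks, but it makes your proof of this particular lemma strictly less self-contained than the paper's. Since Theorem \ref{Fac-tuple}(i) already tells you \emph{every} coefficient of $g_{1,n+1-q}^{j}(x)$ vanishes, the efficient move is the paper's: pick out the coefficient you already know to be a unit, namely the power of $v_{n}$, and conclude immediately via Corollary \ref{vanish}(i).
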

\begin{proof}
Assume that there exists $j_0$ and $q_0<\frac{\log_p|V(p^{j_0}|A)|}{j_0}+1$ such that $\pi_{*}(\cat T_{A,A}(E))/I_{n+1-q_0}\neq 0$. By Lemma \ref{root}, $\theta^{j_0}_{q_0}$ is a bijection and hence $|{}_{p^{j_0}\!}F(\pi_{*}(\cat T_{A,A}(E))/I_{n+1-q_0})^{p^{j_0(n+1-q_0)}}|=|V(p^{j_0}|A)|$. Then ${}_{p^{j_0}\!}F(\pi_{*}(\cat T_{A,A}(E))/I_{n+1-q_0})^{p^{j_0(n+1-q_0)}}$ is a $|V(p^{j_0}|A)|$-tuple of $g^{j_0}_{1,n+1-q_0}(x)$ in $\pi_{*}(\cat T_{A,A}(E))/I_{n+1-q_0}$. Note that the unit $v_{n}^{1+p^{n}+\cdots+p^{(j_0-1)n}}$ is the last coefficient of $g^{j_0}_{1,n+1-q_0}(x)$, and $q_0<\frac{\log_p|V(p^{j_0}|A)|}{j_0}+1$ implies that $|V(p^{j_0}|A)|> \deg g^{j_0}_{1,n+1-q_0}(x)=p^{j_0(q_0-1)}$. So by Corollary \ref{vanish}, we have
$\pi_{*}(\cat T_{A,A}(E))/I_{n+1-q_0}=0$, which contradicts to our assumption. This completes the proof.
\end{proof}

Recall that $A$ is $\mathbb{Z}/{p^{i_1}}\oplus \mathbb{Z}/{p^{i_2}}\oplus \cdots \oplus \mathbb{Z}/{p^{i_m}}$, then we have
\begin{Lem}\label{MD}
\[\lceil\frac{\log_p|V(p^{j}|A)|}{j}\rceil=\left\{\begin{array}{ll}
=m&~~\text{if $1\leq j\leq \min\{i_1, \cdots, i_m\}$},\\
\leq m\,&
~~\text{if $j>\min\{i_1, \cdots, i_m\}$}.
\end{array}\right.\]
\end{Lem}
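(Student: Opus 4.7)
The plan is to compute $|V(p^{j}|A)|$ explicitly using the direct sum decomposition of $A$, and then directly evaluate the ceiling. Since $V(p^{j}|-)$ is additive over direct sums of abelian groups, I would first note
\[
V(p^{j}|A) \;=\; V(p^{j}|\mathbb{Z}/p^{i_1}) \oplus \cdots \oplus V(p^{j}|\mathbb{Z}/p^{i_m}),
\]
and observe that for the cyclic summand $\mathbb{Z}/p^{i_k}$, the subgroup of elements killed by $p^{j}$ is generated by $p^{\max(i_k-j,0)}$, hence has order $p^{\min(j,i_k)}$. Multiplying gives
\[
|V(p^{j}|A)| \;=\; \prod_{k=1}^{m} p^{\min(j,i_k)} \;=\; p^{\sum_{k=1}^{m}\min(j,i_k)},
\]
so $\log_p|V(p^{j}|A)| = \sum_{k=1}^{m}\min(j,i_k)$.

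With this formula in hand, the two cases of the lemma are an immediate comparison. If $1 \leq j \leq \min\{i_1,\ldots,i_m\}$, then $\min(j,i_k) = j$ for every $k$, so the sum equals $mj$ and
\[
\frac{\log_p|V(p^{j}|A)|}{j} \;=\; \frac{mj}{j} \;=\; m,
\]
whose ceiling is $m$. If instead $j > \min\{i_1,\ldots,i_m\}$, then $\min(j,i_k) \leq j$ for every $k$ (with strict inequality for at least one index), so $\sum_{k=1}^{m}\min(j,i_k) \leq mj$, which gives $\tfrac{\log_p|V(p^{j}|A)|}{j} \leq m$ and hence $\lceil\,\cdot\,\rceil \leq m$ because $m$ is already an integer.

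There is no real obstacle: the only content is the standard identification of the $p^{j}$-torsion of a cyclic $p$-group, together with the additivity of the torsion subgroup functor under direct sums. The ceiling appears only to handle the second case when $j$ does not divide $\sum_k \min(j,i_k)$; since we are bounding by an integer $m$ from above, the ceiling is harmless. I would keep the proof to a few lines, presenting the factorization of $|V(p^{j}|A)|$ and then the two case distinctions.
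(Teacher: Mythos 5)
Your proof is correct and follows essentially the same path as the paper's: both reduce to bounding $|V(p^j|A)|$ by $p^{jm}$ with equality precisely when $j\le\min\{i_1,\dots,i_m\}$, and both then use that $m$ is an integer to absorb the ceiling. The paper argues via the abstract observation that $V(p^j|A)$ is a subgroup that embeds into $(\mathbb{Z}/p^j)^m$, while you compute $|V(p^j|A)|=p^{\sum_k\min(j,i_k)}$ outright from the direct-sum decomposition; your version is a bit more explicit and actually justifies the paper's unexplained ``equality holds if and only if'' claim, so this is a small improvement in transparency rather than a different method.
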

\begin{proof}
Note that $\log_p|V(p|A)|$ is exactly the number of $\mathbb{Z}/p$ factors in the maximal elementary abelian subgroup of $A$, then we have
$$\log_p|V(p|A)|={\rm rank}_p(A)=m.$$
Since $V(p^j|A)$ is a subgroup of $A$ and $\mathbb{Z}/{p^{j}}\oplus \cdots \oplus \mathbb{Z}/{p^{j}}$, we obtain that
$$|V(p^j|A)|\leq p^{j\log_p|V(p|A)|}~~\text{and}~~ \log_p|V(p^{j}|A)|\leq j\log_p|V(p|A)|,$$
where the equality holds if and only if $1\leq j\leq \min\{i_1, \cdots, i_m\}$. Since $\log_p|V(p|A)|$ is an integer, we have
$$\lceil\frac{\log_p|V(p^{j}|A)|}{j}\rceil\leq \log_p|V(p|A)|.$$
This completes the proof.
\end{proof}

When $q=n+1$, $I_0=(0)$ and ${}_{p^j\!}F(\pi_{*}(\cat T_{A,A}(E))/I_{n+1-q})^{p^{j(n+1-q)}}={}_{p^j\!}F(\pi_{*}(\cat T_{A,A}(E)))$.
\begin{Lem}\label{}
${}_{p^j\!}F(\pi_{*}(\cat T_{A,A}(E)))$ is an abelian group and $\theta^j_{n+1}$ is an abelian group homomorphism. If $n< m$, then $\theta^j_{n+1}$ is trivial and ${}_{p^j\!}F(\pi_{*}(\cat T_{A,A}(E)))\cong e$.
\end{Lem}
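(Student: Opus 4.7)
The plan is to run a direct analogue of the proof given just above for the case $j=1$, with $[p]_E(x)$ replaced by $[p^j]_E(x)$ throughout. First I would specify the abelian group structure on ${}_{p^j\!}F(R)$ for an arbitrary graded $E^*$-algebra $R$. Identifying each $f^*\in {\rm Hom}_{E^*-alg}(E^*\psb{x}/([p^j]_E(x)),R)$ with its image $f^*(x)\in R$ as in Remark \ref{derpj}, I declare the sum to be $f_1^*(x)+_F f_2^*(x)$. This is again a root of $[p^j]_E(-)$ because $[p^j]_E$ is a formal group endomorphism (Lubin--Tate, Theorem \ref{LT}, together with Proposition \ref{pps}), whence $[p^j]_E(f_1^*(x)+_F f_2^*(x))=[p^j]_E(f_1^*(x))+_F[p^j]_E(f_2^*(x))=0$. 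The identity is $0$, the inverse of $f^*(x)$ is $[-1]_F(f^*(x))$, and associativity and commutativity descend from $F$. This is the same group structure already used implicitly in Theorem \ref{fpjr}, specialized to $R=\pi_*(\cat T_{A,A}(E))$.

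Next I would verify that $\theta^j_{n+1}$ is a group homomorphism. By Lemma \ref{ideu}, we have
$$\alpha_{(w_1,\ldots,w_m)}=[w_1]_E(x_1)+_F\cdots+_F[w_m]_E(x_m).$$
Commutativity and associativity of $+_F$, combined with the coordinatewise identity $[u_k]_E(x_k)+_F[w_k]_E(x_k)=[u_k+w_k]_E(x_k)$, yield
$$\alpha_{(u_1,\ldots,u_m)}+_F\alpha_{(w_1,\ldots,w_m)}=\alpha_{(u_1+w_1,\ldots,u_m+w_m)},$$
which is exactly the homomorphism property of $\theta^j_{n+1}$ on the subgroup $V(p^j|A)$ of $A$.

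For the triviality statement when $n<m$, the cleanest route is to invoke Lemma \ref{lbsj} at $j=1$ and $q=n+1$: since $\log_p|V(p|A)|=m$, the hypothesis $q<\log_p|V(p|A)|/1+1$ reduces to $n+1<m+1$, i.e.\ $n<m$, so the lemma gives $\pi_*(\cat T_{A,A}(E))/I_0=\pi_*(\cat T_{A,A}(E))=0$. Over the zero ring the only $E^*$-algebra homomorphism out of $E^*\psb{x}/([p^j]_E(x))$ is the zero map, so ${}_{p^j\!}F(\pi_*(\cat T_{A,A}(E)))\cong e$ and $\theta^j_{n+1}$ must be trivial. There is essentially no real obstacle here: the only delicate point is well-definedness of $+_F$ on ${}_{p^j\!}F(R)$, and the vanishing for $n<m$ is not re-proved from scratch but inherited from the previously established $j=1$ lemma via Lemma \ref{lbsj}.
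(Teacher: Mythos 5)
Your proposal is correct and runs essentially the same course as the paper's own proof: the abelian group structure on ${}_{p^j\!}F(\pi_*(\cat T_{A,A}(E)))$ is given by the formal group law as you describe, and $\theta^j_{n+1}$ is a homomorphism by the coordinatewise identity $\alpha_{(u_1,\ldots,u_m)}+_F\alpha_{(w_1,\ldots,w_m)}=\alpha_{(u_1+w_1,\ldots,u_m+w_m)}$. For the triviality when $n<m$, the paper argues directly: under the assumption $\pi_*(\cat T_{A,A}(E))\neq 0$, Lemma \ref{root} makes $\theta^j_{n+1}$ a bijection, producing a $|V(p^j|A)|$-tuple of $[p^j]_E(x)$; specializing to $j=1$ one has a $p^m$-tuple of $g_1(x)$, which has degree $p^n<p^m$ with $1\in(p,v_1,\ldots,v_n)$, so Corollary \ref{vanish} forces $\pi_*(\cat T_{A,A}(E))=0$. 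Your route via Lemma \ref{lbsj} at $j=1$, $q=n+1$ packages this exact argument (Lemma \ref{lbsj} is itself proved from Lemma \ref{root} and Corollary \ref{vanish}), so it is a slightly cleaner pointer rather than a genuinely different method; it correctly avoids re-deriving the vanishing inline, and there is no circularity since Lemma \ref{lbsj} precedes this lemma and does not depend on it.
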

\begin{proof}
The group structure of ${}_{p^j\!}F(\pi_{*}(\cat T_{A,A}(E)))$ is induced by the formal group law of $E$, and for any two elements $\alpha_{(u_1,u_2,\cdots,u_m)},\alpha_{(w_1,w_2,\cdots,w_m)}\in {}_{p^j\!}F(\pi_{*}(\cat T_{A,A}(E)))$, their sum is defined by
$$\alpha_{(u_1,u_2,\cdots,u_m)}+_F\alpha_{(w_1,w_2,\cdots,w_m)}=\alpha_{(u_1+w_1,u_2+w_2,\cdots,u_m+w_m)}.$$
Then $\theta^j_n$ is an abelian group homomorphism.

If $n< m$, we assume that $\pi_{*}(\cat T_{A,A}(E))\neq 0$. By Lemma \ref{root}, $\theta^j_{n+1}$ is a bijection and hence $|{}_{p^j\!}F(\pi_{*}(\cat T_{A,A}(E)))|=|V(p^j|A)|$. Then ${}_{p^j\!}F(\pi_{*}(\cat T_{A,A}(E)))$ is a $|V(p^j|A)|$-tuple of $[p^j]_{E}(x)$ in $\pi_{*}(\cat T_{A,A}(E))$. Note that
$$1\in (p,v_1,\cdots,v_n)~~\text{and}~~\deg_W [p]_{E}(x)=p^{n} < |V(p|A)|=p^m.$$
By Corollary \ref{vanish},we have $\pi_{*}(\cat T_{A,A}(E))=0$. Then $\theta^j_{n+1}$ is trivial and ${}_{p^j\!}F(\pi_{*}(\cat T_{A,A}(E)))\cong e$.
\end{proof}

By Lemma \ref{lbsj} and Lemma \ref{MD}, we have
\begin{Cor}\label{LbTAA}
$\pi_{*}(\cat T_{A,A}(E))/I_{n+1-q}=0$ for $q< m+1$, which implies that
$\mathbf{s}_{A,A;E}\geq m$.
\end{Cor}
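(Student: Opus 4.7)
The plan is to combine Lemma \ref{lbsj} and Lemma \ref{MD} by making a judicious choice of the exponent $j$ that appears in Lemma \ref{lbsj}. Since $\mathbf{s}_{A,A;E}$ is characterized in Definition \ref{Cri} as the maximal $q$ for which $\pi_*(\cat T_{A,A}(E))/I_{n+1-q} = 0$, the goal reduces to exhibiting this vanishing for every $q \leq m$.

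First I would take $j=1$, which trivially satisfies $1 \leq \min\{i_1,\ldots,i_m\}$. By Lemma \ref{MD}, this choice gives
\[
\lceil \log_p |V(p \mid A)| \rceil \;=\; m,
\]
i.e.\ $\log_p|V(p\mid A)| = m = \mathrm{rank}_p(A)$. Plugging $j=1$ into the hypothesis of Lemma \ref{lbsj}, the lemma asserts
\[
\pi_*(\cat T_{A,A}(E))/I_{n+1-q} \;=\; 0 \quad \text{whenever } q \,<\, \log_p|V(p\mid A)| + 1 \,=\, m+1,
\]
which is exactly the stated vanishing for $q \leq m$.

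Finally I would deduce the lower bound on the blue-shift integer directly from Definition \ref{Cri}. Since $\pi_*(\cat T_{A,A}(E))/I_{n+1-q} = 0$ for every $q$ in the range $0 \leq q \leq m$, the integer $\mathbf{s}_{A,A;E}$, being the maximal such $q$, satisfies $\mathbf{s}_{A,A;E} \geq m$, as desired.

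There is essentially no obstacle here: the entire content was already carried by Lemma \ref{lbsj} (which used the $|V(p^j\mid A)|$-tuple of roots of $g_{1,n+1-q}^{j}(x)$ together with Corollary \ref{vanish}) and by Lemma \ref{MD} (an elementary size comparison of the $p^j$-torsion subgroup with the elementary abelian quotient). The present corollary is the packaging step: selecting $j=1$ extracts the sharpest lower bound from the family of vanishing statements indexed by $j$, and translates it into the language of Definition \ref{Cri}.
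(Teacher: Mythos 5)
Your proposal is correct and follows the paper's intended argument: the paper itself presents this corollary as an immediate consequence of Lemma \ref{lbsj} and Lemma \ref{MD}, and specializing to $j=1$ (where $|V(p\mid A)|=p^m$ so the bound $\frac{\log_p|V(p\mid A)|}{1}+1=m+1$ is exact) is the natural way to extract the stated vanishing. One very minor clarification: any $j$ with $1\leq j\leq \min\{i_1,\dots,i_m\}$ gives the same bound since $V(p^j\mid A)\cong(\mathbb{Z}/p^j)^m$ for such $j$, so $j=1$ is not strictly sharper than the others, merely the simplest choice.
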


To achieve our main idea, here we give another proof of the fact that $v_{n-m}$ is a unit in $\pi_{*}(\cat T_{A,A}(E))/I_{n-m}$ by using Theorem \ref{Fac-tuple}. Let $q=m+1$, we have
\begin{Lem}\label{fapj}
Let $n\geq m$. For $1\leq j\leq \min\{i_1, \cdots, i_m\}$, $v_{n-m}$ is a unit in $\pi_{*}(\cat T_{A,A}(E))/I_{n-m}$.
\end{Lem}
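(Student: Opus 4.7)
The plan is to reduce the invertibility of $v_{n-m}$ in $\pi_*(\cat T_{A,A}(E))/I_{n-m}$ to the vanishing $\pi_*(\cat T_{A,A}(E))/I_{n-m+1}=0$ (via Definition~\ref{Rd}), and then derive the latter by a size contradiction: the tuple of roots of $g_{j,n-m+1}(x)$ supplied by Lemma~\ref{root} will turn out to be strictly larger than the degree of $g_{j,n-m+1}(x)$ read off from Lemma~\ref{Rp}, which forces all its coefficients to vanish by Theorem~\ref{Fac-tuple}(i).

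Concretely, assume for contradiction that $\pi_*(\cat T_{A,A}(E))/I_{n-m+1}\neq 0$. Taking $q=m$ (so that $n+1-q=n-m+1$), Lemma~\ref{root} gives a bijection from $V(p^j|A)$ onto ${}_{p^j\!}F(\pi_*(\cat T_{A,A}(E))/I_{n-m+1})^{p^{j(n-m+1)}}$, and the latter is a $|V(p^j|A)|$-tuple of $g_{j,n-m+1}(x)=g_{1,n-m+1}^j(x)$ in this ring. I will take $j=1$: Lemma~\ref{Rp} gives $[p]_E(x)=v_{n-m+1}x^{p^{n-m+1}}+v_{n-m+2}x^{p^{n-m+2}}+\cdots+v_n x^{p^n}$ modulo $I_{n-m+1}$, whence
\[
g_{1,n-m+1}(y)=v_{n-m+1}y+v_{n-m+2}y^p+\cdots+v_n y^{p^{m-1}}
\]
has degree $p^{m-1}$; and since $1\leq\min\{i_1,\ldots,i_m\}$, the $p$-torsion subgroup $V(p|A)\cong(\mathbb{Z}/p)^m$ has order $p^m>p^{m-1}$.

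With the tuple size $p^m$ strictly exceeding the polynomial degree $p^{m-1}$, Theorem~\ref{Fac-tuple}(i) forces every coefficient of $g_{1,n-m+1}$ to vanish in $\pi_*(\cat T_{A,A}(E))/I_{n-m+1}$; that is, $v_{n-m+1},v_{n-m+2},\ldots,v_n$ are all zero in this ring. Combined with $p,v_1,\ldots,v_{n-m}\in I_{n-m+1}$, this means $I_{n+1}$ already acts as zero, so $\pi_*(\cat T_{A,A}(E))/I_{n-m+1}$ is a quotient of $\pi_*(\cat T_{A,A}(E))/I_{n+1}$. Because $E$ has formal group of height exactly $n$, $v_n$ is a unit in $E^*/I_n$, so $E^*/I_{n+1}=0$; and since $\pi_*(\cat T_{A,A}(E))/I_{n+1}$ is a localization of $E^*(BA_+)/I_{n+1}=(E^*/I_{n+1})\otimes_{E^*}E^*(BA_+)=0$, it too vanishes, contradicting our assumption.

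The only potentially delicate step is confirming that the set in question remains a genuine tuple after reducing modulo $I_{n-m+1}$, i.e.\ that pairwise differences of the $\widetilde{\alpha}^{p^{n-m+1}}_w$ continue to be non-zero-divisors in the quotient; but this is already packaged into Lemma~\ref{root} via the invertibility of the Euler classes generating $\widetilde{L}_{A,n-m+1}$, so once granted, the remainder of the argument is a clean size comparison driven by Theorem~\ref{Fac-tuple}(i).
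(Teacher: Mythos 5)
Your proof is correct but takes a genuinely different route from the paper's. The paper sets $q=m+1$: the tuple ${}_{p^j\!}F(\pi_*(\cat T_{A,A}(E))/I_{n-m})^{p^{j(n-m)}}$ then has exactly $p^{jm}=\deg g^j_{1,n-m}(x)$ elements, so Theorem~\ref{Fac-tuple}(ii) gives the explicit factorization $g^j_{1,n-m}(x)=v_n^{1+p^n+\cdots+p^{(j-1)n}}\prod_{w}\bigl(x-\widetilde{\alpha}^{p^{j(n-m)}}_{w}\bigr)$, and the coefficient of $x$ expresses a power of $v_{n-m}$ as a product of units in $\pi_*(\cat T_{A,A}(E))/I_{n-m}$. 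You instead set $q=m$ (and $j=1$): the tuple size $p^m$ now strictly exceeds $\deg g_{1,n-m+1}=p^{m-1}$, so Theorem~\ref{Fac-tuple}(i) forces all coefficients, including the unit $v_n$, to vanish, giving $\pi_*(\cat T_{A,A}(E))/I_{n-m+1}=0$ --- which is exactly the content of Lemma~\ref{lbsj} at $j_0=1$, $q_0=m$ --- and then the ring-theoretic fact that $r$ is a unit in a ring $R$ iff $R/(r)=0$ finishes. The paper explicitly acknowledges your route in the remark preceding Lemma~\ref{So} in the elementary-abelian case (``Although by Corollary~\ref{lbTAA} and the exactness of \ldots, we know that $v_{n-m}$ is a unit \ldots\ To achieve our main idea, here we give another proof \ldots'') and deliberately opts for the factorization argument to showcase case (ii) of Theorem~\ref{Fac-tuple}. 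Both are sound; yours is shorter, while the paper's buys an explicit closed formula for $v_{n-m}^{1+p^{n-m}+\cdots+p^{(j-1)(n-m)}}$ in terms of Euler classes, in keeping with the ``relations between roots and coefficients'' theme.
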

\begin{proof}
If $\pi_{*}(\cat T_{A,A}(E))/I_{n-m}=0$, obviously this is true; if $\pi_{*}(\cat T_{A,A}(E))/I_{n-m}\neq 0$, for $1\leq j\leq \min\{i_1, \cdots, i_m\}$, $V(p^j|A)\cong\mathbb{Z}/{p^{j}}\oplus \cdots \oplus \mathbb{Z}/{p^{j}}$ and $|V(p^j|A)|=p^{jm}$. Then by Lemma \ref{root}, we obtain that $\theta^j_{m+1}$ is a bijection and hence $|{}_{p^j\!}F(\pi_{*}(\cat T_{A,A}(E))/I_{n-m})^{p^{j(n-m)}}|=|V(p^j|A)|=p^{jm}$. So $\pi_{*}(\cat T_{A,A}(E))/I_{n-m}$ has a $p^{jm}$-tuple ${}_{p^j\!}F(\pi_{*}(\cat T_{A,A}(E))/I_{n-m})^{p^{j(n-m)}}$ of $g^j_{1,n-m}(x)$. Then by Theorem \ref{Fac-tuple}, we have
$$v_{n-m}^{1+p^{n-m}+\cdots+p^{(j-1)(n-m)}}x+\cdots+v_{n}^{1+p^{n}+\cdots+p^{(j-1)n}}x^{p^{jm}}=v_{n}^{1+p^{n}+\cdots+p^{(j-1)n}}
\prod_{(w_1,w_2,\cdots,w_m)\in V(p^j|A)}(x-\widetilde{\alpha}^{p^{j(n-m)}}_{(w_1,w_2,\cdots,w_m)}).$$
Then $$v_{n-m}^{1+p^{n-m}+\cdots+p^{(j-1)(n-m)}}=(-1)^{p^{jm}}v_{n}^{1+p^{n}+\cdots+p^{(j-1)n}}\prod_{(w_1,w_2,\cdots,w_m)\in V(p^j|A)^*}\widetilde{\alpha}^{p^{j(n-m)}}_{(w_1,w_2,\cdots,w_m)}\in\pi_{*}(\cat T_{A,A}(E))/I_{n-m}.$$
\end{proof}

By Lemma \ref{ind}, we have
\begin{Cor}\label{UbTAA}
Let $A$ be a finite abelian $p$-group with ${\rm rank}_p(A)=m$. If $n\geq m$, then $\pi_{*}(\cat T_{A,A}(E))/I_{n-m}\neq 0$.
\end{Cor}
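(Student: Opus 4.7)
The plan is to show non-vanishing $\pi_*(\cat T_{A,A}(E))/I_{n-m}\neq 0$, which together with Corollary \ref{LbTAA} will pin down $\mathbf{s}_{A,A;E}=m$. The overall strategy is a short induction on $i$, propagating the non-triviality of $M:=\pi_*(\cat T_{A,A}(E))$ up through the quotients $M/I_i$ for $0\leq i\leq n-m$. The engine is Landweber exactness of $\cat T_{A,A}(E)$ (Lemma \ref{LE}), which furnishes short exact sequences
$$0\longrightarrow M/I_i \xrightarrow{\ \cdot v_i\ } M/I_i \longrightarrow M/I_{i+1}\longrightarrow 0,$$
together with the graded/$I_n$-adic completeness of $M$ inherited from $E^*$.

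First, I would verify the base case $M\neq 0$. By Theorem \ref{bgcoh}, $M$ is the localization $L_A^{-1}E^*(BA_+)$, and since $E$ is Landweber exact the Euler classes in $L_A$ act injectively on $E^*(BA_+)$ (they form part of a regular sequence modulo $I_n$), so the localization is faithful and in particular $M\neq 0$. Next, I would invoke Lemma \ref{ind} — the inductive device of the paper — whose content I expect to be: \emph{under Landweber exactness, if $M/I_i\neq 0$ then $M/I_{i+1}\neq 0$ for $i\leq n-m$}. The proof of this inductive step proceeds by contradiction via the short exact sequence above: vanishing of $M/I_{i+1}$ forces $v_i$ to act surjectively on $M/I_i$. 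For $i\geq 1$ the class $v_i$ has positive cohomological degree, so surjectivity plus the fact that $M/I_i$ is bounded below in the appropriate grading yields $M/I_i=0$, a contradiction; for $i=0$ the argument uses the $p$-completeness of $E$ (hence of $M$) so that surjectivity of multiplication by $p$ forces $M=0$.

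Iterating Lemma \ref{ind} starting from $M\neq 0$, I obtain $M/I_i\neq 0$ for every $i\leq n-m$, which gives the statement. Note that Lemma \ref{fapj} is \emph{not} needed to establish non-vanishing; it is needed only to identify the blue-shift witness $v_{n-m}$ once non-vanishing is in hand, and in fact becomes fully operative only once Corollary \ref{UbTAA} is proved.

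The main obstacle is verifying the inductive step cleanly, that is, ruling out the possibility that $v_i$ acts surjectively on $M/I_i$ when $M/I_i\neq 0$. Surjectivity of a positive-degree element on a graded module is a strong condition, but one must be careful that the grading on $M$ (coming from the $E^*$-algebra structure of the localized cohomology of $BA$) really is bounded below in each homogeneous component, and that the localization at $L_A$ does not introduce unbounded negative-degree parts. The cleanest path is to note that $M$ is a finitely generated free $E^*$-module after inverting $L_A$ only in a controlled way (since $E^*(BA_+)$ is free of finite rank over $E^*$ by the Weierstrass description of Lemma \ref{GCA}), so the graded-Nakayama argument applies to each $M/I_i$. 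Once this is pinned down, the induction — and hence Corollary \ref{UbTAA} — follows cleanly.
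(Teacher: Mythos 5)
Your proposal rests on a misreading of Lemma~\ref{ind}, and the replacement argument you sketch has gaps that would cause the inductive step to fail. The induction in Lemma~\ref{ind} is \emph{not} on the index $i$ of the ideal $I_i$ with the group held fixed; it is on the number of cyclic factors of the pair $(A,C)$: one peels off a last summand $\mathbb{Z}/p^{i_m}$ from $A$ (and $\mathbb{Z}/p^{j_m}$ from $C$), compares $\cat T_{A',C'}(E)$ with $\cat T_{A,C}(E)$ via the Gysin sequence of $S^1\to BA\to B(A'\times U(1))$, and shows that adjoining a factor with $j_m>0$ costs exactly one step of the ideal filtration. Iterating $m$ times from the base $\pi_*(\cat T_{e,e}(E))/I_n=E^*/I_n\neq 0$ gives $\pi_*(\cat T_{A,A}(E))/I_{n-m}\neq 0$. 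The monotonicity you attribute to Lemma~\ref{ind} --- \emph{if $M/I_i\neq 0$ then $M/I_{i+1}\neq 0$} --- is not its content and is in fact delicate: by Lemma~\ref{fapj}, $v_{n-m}$ \emph{is} a unit in $M/I_{n-m}$ whenever that quotient is nonzero, so the implication must break at $i=n-m$; whether it holds for smaller $i$ is precisely what the Gysin-sequence device establishes, and it is not a formal consequence of Landweber exactness alone.

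Several of the supporting claims in your sketch are also incorrect. The Euler classes in $L_A$ are not nonzerodivisors on $E^*(BA_+)$: already for $A=\mathbb{Z}/p$, the orientation class $x$ kills $[p]_E(x)/x$ in $E^*\psb{x}/([p]_E(x))$, so the localization map is not injective and the base case $M\neq 0$ cannot be read off from ``faithfulness.'' Further, $M=L_A^{-1}E^*(BA_+)$ is not a finitely generated $E^*$-module --- localizing a free finite-rank module at non-units destroys finite generation --- and it is not bounded in either direction: the Tate-type spectrum $\cat T_{A,A}(E)$ is (at most) $v_{n-m}$-periodic, so its homotopy is periodic, hence unbounded. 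For this reason the graded-Nakayama / degree-shift argument you propose for ruling out surjectivity of $v_i$ on $M/I_i$ does not apply. The non-surjectivity that the paper actually needs is extracted quite differently in Lemma~\ref{ind}: one works inside $\widetilde{L}^{-1}_{C}E^*(BA'_+)\psb{x_m}/I_{n-k-1}$, observes (by Landweber exactness transported along the injection $L^{-1}_{C}Bp^*$) that $v_{n-k-1}$ is not a unit there, and concludes that multiplication by $[p^{i_m}]_E(x_m)$, whose leading coefficient modulo $I_{n-k-1}$ is a power of $v_{n-k-1}$, cannot be surjective. To repair your proposal you would essentially have to reconstruct this Gysin-sequence step; the short exact sequences from Landweber exactness by themselves do not supply it.
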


By Corollary \ref{LbTAA} and Corollary \ref{UbTAA}, we have
\begin{Thm}\label{}
Let $A$ be a finite abelian $p$-group with ${\rm rank}_p(A)=m$, then $\mathbf{s}_{A,A;E}=m$.
\end{Thm}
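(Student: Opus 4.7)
The plan is to sandwich $\mathbf{s}_{A,A;E}$ between two matching bounds, both of which have already been established in Corollaries \ref{LbTAA} and \ref{UbTAA}; the conclusion will follow by unwinding Definition \ref{Cri}. Specifically, that definition characterizes $\mathbf{s}_{A,A;E}$ simultaneously as the maximal integer $q$ with $I_{n+1-q}=\pi_{*}(\cat T_{A,A}(E))$ and as the minimal integer $q$ with $I_{n-q}\subsetneq \pi_{*}(\cat T_{A,A}(E))$. Each bound on $\mathbf{s}_{A,A;E}$ thus becomes a statement about the first value of $q$ at which the quotient $\pi_{*}(\cat T_{A,A}(E))/I_{n+1-q}$ becomes nonzero.

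For the lower bound, Corollary \ref{LbTAA} already gives $\pi_{*}(\cat T_{A,A}(E))/I_{n+1-q}=0$ for every $q<m+1$, so the maximality clause in Definition \ref{Cri} yields $\mathbf{s}_{A,A;E}\geq m$. This rests on Lemma \ref{lbsj}, which applies the vanishing ring condition (Corollary \ref{vanish}) to the $|V(p^j|A)|$-tuple of roots of the iterated Weierstrass polynomial $g_{1,n+1-q}^{j}(x)$ inside $\pi_{*}(\cat T_{A,A}(E))/I_{n+1-q}$, combined with the identity $\lceil\log_{p}|V(p^j|A)|/j\rceil=m$ for $j\leq\min\{i_{1},\ldots,i_{m}\}$ from Lemma \ref{MD}. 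For the upper bound, Corollary \ref{UbTAA}, invoking Lemma \ref{fapj} together with the inductive Lemma \ref{ind}, asserts $\pi_{*}(\cat T_{A,A}(E))/I_{n-m}\neq 0$ whenever $n\geq m$, and the minimality clause in Definition \ref{Cri} then forces $\mathbf{s}_{A,A;E}\leq m$. Combining the two bounds produces the desired equality $\mathbf{s}_{A,A;E}=m$.

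Since all substantive content has already been carried out in the preceding lemmas, there is no genuinely new obstacle here; the only thing to verify is that the two bounds meet precisely at $q=m$, which is immediate because $q=m$ is simultaneously the largest integer satisfying $q<m+1$ (used for the lower bound) and the smallest integer satisfying $q\leq m$ (used for the upper bound). The regime $n<m$, not covered by Corollary \ref{UbTAA}, is degenerate: the same tuple-counting argument forces $\pi_{*}(\cat T_{A,A}(E))=0$, which is consistent with the convention $E(n-k)=\ast$ for $k>n$ adopted in the introduction.
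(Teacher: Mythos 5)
Your proposal is correct and follows exactly the route the paper takes: the theorem is stated as an immediate consequence of Corollary \ref{LbTAA} (giving $\mathbf{s}_{A,A;E}\geq m$) and Corollary \ref{UbTAA} (giving $\mathbf{s}_{A,A;E}\leq m$), combined via Definition \ref{Cri}. One small attribution slip: the paper derives Corollary \ref{UbTAA} from the inductive Lemma \ref{ind} alone; Lemma \ref{fapj} is a side remark reproving that $v_{n-m}$ is a unit in $\pi_{*}(\cat T_{A,A}(E))/I_{n-m}$ and does not enter into the upper bound on $\mathbf{s}_{A,A;E}$.
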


\subsection{Proof for the case (3) $A$ is a general abelian $p$-group and $C$ is its proper subgroup.}
Without loss of generality, we may suppose that $A$ is $\mathbb{Z}/{p^{i_1}}\oplus \mathbb{Z}/{p^{i_2}}\oplus \cdots \oplus \mathbb{Z}/{p^{i_m}}$ with $i_1\leq i_2 \leq \cdots \leq i_m$ and $C$ is its subgroup $\mathbb{Z}/{p^{j_1}}\oplus \mathbb{Z}/{p^{j_2}}\oplus \cdots \oplus \mathbb{Z}/{p^{j_m}}$ with a group inclusion
\begin{align*}
\varphi:\mathbb{Z}/{p^{j_1}}\oplus \mathbb{Z}/{p^{j_2}}\oplus \cdots \oplus \mathbb{Z}/{p^{j_m}}&\rightarrow \mathbb{Z}/{p^{i_1}}\oplus \mathbb{Z}/{p^{i_2}}\oplus \cdots \oplus \mathbb{Z}/{p^{i_m}}\\
(w_1,w_2,\cdots,w_m)&\mapsto (p^{i_1-j_1}w_1,p^{i_2-j_2}w_2,\cdots, p^{i_m-j_m}w_m),
\end{align*}
otherwise we could replace a set of generators of $A$. There is also a group inclusion from $A/C$ to $A$ as follows:
\begin{align*}
\phi:\mathbb{Z}/{p^{i_1-j_1}}\oplus \cdots \oplus \mathbb{Z}/{p^{i_m-j_m}}&\rightarrow \mathbb{Z}/{p^{i_1}}\oplus \cdots \oplus \mathbb{Z}/{p^{i_m}}\\
(w_1,\cdots,w_m)&\mapsto (p^{i_1-j_1}w_1,\cdots, p^{i_m-j_m}w_m).
\end{align*}
From Theorem \ref{bgcoh}, it follows that
$$\pi_{*}(\cat T_{A,C}(E))\cong L_C^{-1}E^*\psb{x_1,\cdots,x_{m}}/([p^{i_1}]_{E}(x_1),\cdots,[p^{i_m}]_{E}(x_m)),$$
where the multiplicatively closed set $L_C$ is generated by the set
$$M_C=\{\alpha_{(w_1,w_2,\cdots,w_m)}\mid(w_1,w_2,\cdots,w_m)\in A-{\rm im}\phi(A/C)\}.$$
Then
\begin{align*}
\pi_{*}(\cat T_{A,C}(E))/I_{n+1-q}\cong\widetilde{L}_{C,n+1-q}^{-1}E^*/I_{n+1-q}\psb{x_1,\cdots,x_{m}}/([p^{i_1}]_{E}(x_1),\cdots,[p^{i_m}]_{E}(x_m)),
\end{align*}
where the multiplicatively closed set $\widetilde{L}_{C,n+1-q}$ is mod $I_{n+1-q}$ reduction of $L_C$ and generated by the set
$$\widetilde{M}_{C,n+1-q}=\{\widetilde{\alpha}_{(w_1,w_2,\cdots,w_m)} \mid (w_1,w_2,\cdots,w_m)\in A-{\rm im}\phi(A/C)\}.$$

To find tuples of $\pi_{*}(\cat T_{A,C}(E))$, we still focus on the Euler classes $\alpha_{(w_1,w_2,\cdots,w_m)}$ for $(w_1,w_2,\cdots,w_m)\in A$. Note that
$$\alpha_{(u_1,u_2,\cdots,u_m)}-\alpha_{(w_1,w_2,\cdots,w_m)}=\alpha_{(u_1-w_1,u_2-w_2,\cdots,u_m-w_m)}\cdot\varepsilon^{-1}(\alpha_{(u_1,u_2,\cdots,u_m)},\alpha_{(w_1,w_2,\cdots,w_m)}),$$
where $\varepsilon(\alpha_{(u_1,u_2,\cdots,u_m)},\alpha_{(w_1,w_2,\cdots,w_m)})$ is a unit in $\pi_{*}(\cat T_{A,C}(E))$. If $\pi_{*}(\cat T_{A,C}(E))\neq 0$ and $(u_1-w_1,u_2-w_2,\cdots,u_m-w_m)\in A-{\rm im}\phi(A/C)$, then $\alpha_{(u_1,u_2,\cdots,u_m)}-\alpha_{(w_1,w_2,\cdots,w_m)}$ is not a zero divisor in $\pi_{*}(\cat T_{A,C}(E))$. Since ${\rm im}\phi(A/C)$ is a subgroup of $A$, $A$ is the disjoint union $\bigsqcup_{1\leq i\leq |C|}\Bigl(a_i+{\rm im}\phi(A/C)\Bigr)$ of the cosets of ${\rm im}\phi(A/C)$, where $\{a_i\in A\mid 1\leq i\leq |C|\}$ is a complete set of coset representatives of ${\rm im}\phi(A/C)$ in $A$. Thus we have
\begin{Lem}
Let $A$ be a finite abelian $p$-group and $C$ be its subgroup. Let $[A:{\rm im}\phi(A/C)]$ denote a complete set of coset representatives of ${\rm im}\phi(A/C)$ in $A$, and $\mathbf{S}_{[A:{\rm im}\phi(A/C)]}$ denote the subset
$$\{\alpha_{(w_1,w_2,\cdots,w_m)}\in\pi_*(\cat T_{A,C}(E))\mid (w_1,w_2,\cdots,w_m)\in  [A:{\rm im}\phi(A/C)]\}.$$
If $\pi_{*}(\cat T_{A,C}(E))\neq 0$, then $\mathbf{S}_{[A:{\rm im}\phi(A/C)]}$ is a $|C|$-tuple of $\pi_{*}(\cat T_{A,C}(E))$.
\end{Lem}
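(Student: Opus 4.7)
The plan is to exploit the identity, derived from the formal-group difference formula in Proposition \ref{pps}(iv),
\[
\alpha_{(u_1,\ldots,u_m)} - \alpha_{(w_1,\ldots,w_m)} = \alpha_{(u_1-w_1,\ldots,u_m-w_m)} \cdot \varepsilon^{-1}\bigl(\alpha_{(u_1,\ldots,u_m)},\alpha_{(w_1,\ldots,w_m)}\bigr),
\]
holding in $\pi_*(\cat T_{A,C}(E))$ with $\varepsilon$ a unit (exactly as used in the proof of Lemma \ref{nrp}). The whole verification reduces to recognising that when $u$ and $w$ are distinct coset representatives of ${\rm im}\phi(A/C)$ in $A$, the factor $\alpha_{u-w}$ is one of the Euler classes that is explicitly inverted in $L_C$.

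First I would fix two distinct elements $a_i,a_j\in [A:{\rm im}\phi(A/C)]$. Because they represent distinct cosets, their coordinate-wise difference $a_i-a_j$ is nonzero and satisfies $a_i-a_j\notin {\rm im}\phi(A/C)$, i.e.\ $a_i-a_j\in A-{\rm im}\phi(A/C)$. Referring back to the description of
\[
M_C=\{\alpha_{(w_1,\ldots,w_m)}\mid (w_1,\ldots,w_m)\in A-{\rm im}\phi(A/C)\}
\]
from Theorem \ref{bgcoh}, the Euler class $\alpha_{a_i-a_j}$ is therefore one of the generators of $L_C$, and hence is a unit in the localisation $\pi_*(\cat T_{A,C}(E))=L_C^{-1}E^*\psb{x_1,\ldots,x_m}/([p^{i_1}]_E(x_1),\ldots,[p^{i_m}]_E(x_m))$. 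The identity above then writes $\alpha_{a_i}-\alpha_{a_j}$ as a product of two units, and so it is itself a unit.

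Second I would invoke the standing hypothesis $\pi_*(\cat T_{A,C}(E))\ne 0$: a unit in a nonzero commutative ring is automatically nonzero and not a zero-divisor. That is precisely the condition required by Definition \ref{ntuple}, so $\mathbf{S}_{[A:{\rm im}\phi(A/C)]}$ is a tuple. The cardinality is $|C|$ because the pairwise non-equality just established implies $|\mathbf{S}_{[A:{\rm im}\phi(A/C)]}|=|[A:{\rm im}\phi(A/C)]|$, and the standard index calculation (using that $\phi$ is injective, so $|{\rm im}\phi(A/C)|=|A/C|$) gives $[A:{\rm im}\phi(A/C)]=|A|/|A/C|=|C|$.

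There is no substantive obstacle here; the statement is essentially bookkeeping, whose content is packaged into the definition of $L_C$ appearing in Theorem \ref{bgcoh}. The only point worth flagging explicitly is the role of the assumption $\pi_*(\cat T_{A,C}(E))\ne 0$, which is what prevents the argument from becoming vacuous and turns ``unit'' into ``nonzero and not a zero-divisor''.
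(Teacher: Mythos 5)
Your proof is correct and follows essentially the same route the paper takes: the paper's own justification (in the paragraph immediately preceding the lemma) likewise applies the formal-group difference identity, observes that distinct coset representatives differ by an element of $A-{\rm im}\phi(A/C)$ whose Euler class lies in $L_C$, uses the hypothesis $\pi_*(\cat T_{A,C}(E))\neq 0$ to conclude the difference is not a zero-divisor, and counts the cosets to obtain $|C|$. You additionally flag that the differences are in fact units (so the set is an \emph{invertible} $|C|$-tuple in the sense of Definition \ref{ntuple}), which is a slightly stronger observation but matches the spirit of the argument.
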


\begin{Lem}
Let $\mathbf{S}_{[A:{\rm im}\phi(A/C)],j}$ denote the subset
$$\{\alpha_{(w_1,w_2,\cdots,w_k)}\in\pi_*(\cat T_{A,C}(E))\mid (p^jw_1,p^jw_2,\cdots,p^jw_m)=0, (w_1,w_2,\cdots,w_m)\in  [A:{\rm im}\phi(A/C)]\}.$$
If $\pi_{*}(\cat T_{A,C}(E))\neq 0$, then
$\mathbf{S}_{[A:{\rm im}\phi(A/C)],j}$ is an $|\mathbf{S}_{[A:{\rm im}\phi(A/C)],j}|$-tuple of $[p^j]_{E}(x)$ in $\pi_{*}(\cat T_{A,C}(E))$.
\end{Lem}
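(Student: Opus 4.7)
The plan is to verify the two defining conditions of an $n$-tuple of $[p^j]_E(x)$ from Definition \ref{ntuple}: that every element of $\mathbf{S}_{[A:{\rm im}\phi(A/C)],j}$ is a root of $[p^j]_E(x)$, and that pairwise differences of distinct elements are neither zero nor zero-divisors in $\pi_*(\cat T_{A,C}(E))$. Both parts will follow the pattern of Lemma \ref{Re} and the preceding lemma, except that the invertibility of differences now exploits the coset structure of ${\rm im}\phi(A/C)$ in $A$.

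First I would confirm that each element is a root. For $(w_1,\ldots,w_m)\in [A:{\rm im}\phi(A/C)]$ with $(p^jw_1,\ldots,p^jw_m)=0$, apply Proposition \ref{pps}(ii) and the formal group law iteratively:
\[
[p^j]_E(\alpha_{(w_1,\ldots,w_m)}) = [p^jw_1]_E(x_1) +_F \cdots +_F [p^jw_m]_E(x_m) = 0
\]
inside $\pi_*(\cat T_{A,C}(E))$, using the defining relations $[p^{i_k}]_E(x_k)=0$ together with the fact that $p^jw_k\equiv 0 \pmod{p^{i_k}}$.

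Next I would verify the tuple condition on differences. Take two distinct elements $\alpha_{(u_1,\ldots,u_m)}, \alpha_{(w_1,\ldots,w_m)}$ of $\mathbf{S}_{[A:{\rm im}\phi(A/C)],j}$. By Proposition \ref{pps}(iv),
\[
\alpha_{(u_1,\ldots,u_m)} - \alpha_{(w_1,\ldots,w_m)} = \alpha_{(u_1-w_1,\ldots,u_m-w_m)} \cdot \varepsilon^{-1}(\alpha_{(u_1,\ldots,u_m)},\alpha_{(w_1,\ldots,w_m)}),
\]
where $\varepsilon(-,-)$ is a unit. The key observation is that since $(u_1,\ldots,u_m)$ and $(w_1,\ldots,w_m)$ are distinct coset representatives of ${\rm im}\phi(A/C)$ in $A$, their difference $(u_1-w_1,\ldots,u_m-w_m)$ lies in $A-{\rm im}\phi(A/C)$; otherwise the two representatives would belong to the same coset. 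Hence $\alpha_{(u_1-w_1,\ldots,u_m-w_m)}$ is one of the generators of the multiplicatively closed set $L_C$ used in Theorem \ref{bgcoh}, so it is invertible in $\pi_*(\cat T_{A,C}(E))$. Under the standing assumption $\pi_*(\cat T_{A,C}(E))\neq 0$, this invertibility forces the difference above to be invertible, and in particular neither zero nor a zero-divisor.

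The main obstacle is precisely the coset-representative argument: one must check that distinct representatives of different cosets really do give a difference outside ${\rm im}\phi(A/C)$, which is an elementary group-theoretic point but is what distinguishes this case from the $A=C$ situation treated in Lemma \ref{Re}, where the relevant Euler classes run over all of $A^*$. Once this point is in place, combining it with the root property above gives exactly the definition of a $|\mathbf{S}_{[A:{\rm im}\phi(A/C)],j}|$-tuple of $[p^j]_E(x)$ in $\pi_*(\cat T_{A,C}(E))$, completing the proof.
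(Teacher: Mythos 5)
Your proof is correct and follows the same route the paper indicates: it combines the root verification from Proposition \ref{pps} and the iterated formal group law (the pattern of Lemma \ref{nrp}/\ref{Re}) with the coset observation — namely that differences of distinct coset representatives of ${\rm im}\phi(A/C)$ in $A$ land in $A - {\rm im}\phi(A/C)$, hence give generators of the multiplicatively closed set $L_C$ and are invertible after localization. This coset point is exactly what the paper spells out in the paragraph preceding this lemma before declaring the proof ``similar to Lemma \ref{Re},'' so your argument matches the intended one.
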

\begin{proof}
This proof is similar to the proof of Lemma \ref{Re}.
\end{proof}

\begin{Lem}\label{}
Let $\widetilde{\mathbf{S}}_{[A:{\rm im}\phi(A/C)],j}^{p^{j(n+1-q)}}$ denote the subset
$$\{{\widetilde{\alpha}^{p^{j(n+1-q)}}_{(w_1,w_2,\cdots,w_m)}}\in\pi_{*}( \cat T_{A,C}(E))/I_{n+1-q}\mid (p^jw_1,p^jw_2,\cdots,p^jw_m)=0, (w_1,w_2,\cdots,w_m)\in [A:{\rm im}\phi(A/C)]\}.$$
If $\pi_{*}( \cat T_{A,C}(E))/I_{n+1-q}\neq 0$, then $\widetilde{\mathbf{S}}_{[A:{\rm im}\phi(A/C)],j}^{p^{j(n+1-q)}}$ is an $|\widetilde{\mathbf{S}}_{[A:{\rm im}\phi(A/C)],j}^{p^{j(n+1-q)}}|$-tuple of $g_{1,n+1-q}^j(x)$ in $\pi_{*}( \cat T_{A,C}(E))/I_{n+1-q}$.
\end{Lem}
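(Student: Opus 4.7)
The plan is to verify the two defining conditions of an $n$-tuple (Definition \ref{ntuple}): first, that each listed element is a root of $g_{1,n+1-q}^{j}(x)$, and second, that every pairwise difference of distinct elements is neither zero nor a zero-divisor in $\pi_{*}(\cat T_{A,C}(E))/I_{n+1-q}$. The structure closely mirrors the analogous arguments in Subsection \ref{pole}, the new ingredient being the combinatorics of a fixed transversal $[A:\mathrm{im}\,\phi(A/C)]$.

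For the root condition, I would combine Lemma \ref{pjwp} with Lemma \ref{Rp} to express
$$[p^j]_E(x) \equiv g_{1,n+1-q}^{j}\bigl(x^{p^{j(n+1-q)}}\bigr) \pmod{I_{n+1-q}},$$
and then use the group-law description of Euler classes from Lemma \ref{ideu} together with Proposition \ref{pps}. For any $(w_1,\ldots,w_m)$ with $(p^{j}w_1,\ldots,p^{j}w_m) = 0$,
$$g_{1,n+1-q}^{j}\bigl(\widetilde{\alpha}^{p^{j(n+1-q)}}_{(w_1,\ldots,w_m)}\bigr) = [p^{j}]_E\bigl(\widetilde{\alpha}_{(w_1,\ldots,w_m)}\bigr) = \widetilde{\alpha}_{(p^{j}w_1,\ldots,p^{j}w_m)} = 0.$$

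For the tuple condition, the hypothesis $\pi_{*}(\cat T_{A,C}(E))/I_{n+1-q} \neq 0$ rules out the trivial case. When $q \leq n$, the ideal $I_{n+1-q}$ contains $p$, so the quotient is an $\mathbb{F}_p$-algebra, Frobenius is additive, and
$$\widetilde{\alpha}^{p^{j(n+1-q)}}_{u} - \widetilde{\alpha}^{p^{j(n+1-q)}}_{w} = \bigl(\widetilde{\alpha}_{u} - \widetilde{\alpha}_{w}\bigr)^{p^{j(n+1-q)}}.$$
By Proposition \ref{pps}(iv), $\widetilde{\alpha}_{u} - \widetilde{\alpha}_{w} = \widetilde{\alpha}_{u-w} \cdot \varepsilon^{-1}(\widetilde{\alpha}_{u},\widetilde{\alpha}_{w})$ with $\varepsilon$ a unit. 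The crucial combinatorial observation is that distinct coset representatives $u, w \in [A:\mathrm{im}\,\phi(A/C)]$ necessarily satisfy $u - w \in A - \mathrm{im}\,\phi(A/C)$, since otherwise they would represent the same coset. Hence $\widetilde{\alpha}_{u-w}$ lies in the generating set $\widetilde{M}_{C,n+1-q}$ of the multiplicatively closed set $\widetilde{L}_{C,n+1-q}$ that was inverted, so $\widetilde{\alpha}_{u-w}$ is a unit in $\pi_{*}(\cat T_{A,C}(E))/I_{n+1-q}$, and thus so is its $p^{j(n+1-q)}$-th power. The boundary case $q = n+1$ is even simpler: the exponent equals $1$ and no Frobenius is needed, so the difference itself is directly a unit by the same transversal argument.

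The main obstacle is conceptual rather than computational: one must recognize that the choice of a transversal $[A:\mathrm{im}\,\phi(A/C)]$ is precisely what forces the pairwise differences of indices to land outside $\mathrm{im}\,\phi(A/C)$ and hence to become units in the localization. Once this observation is in place, the proof proceeds in complete parallel with the corresponding lemma in the $A = C$ case, with no further technicalities.
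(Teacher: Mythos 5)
Your proof is correct and follows the same route the paper takes for the analogous lemmas (Lemma \ref{ntp} in the elementary abelian case and the preceding unlabelled lemma for $\mathbf{S}_{[A:{\rm im}\phi(A/C)],j}$); the paper omits a written proof for this particular statement, but your argument supplies exactly what is implied. The two ingredients you isolate — the Frobenius identity $\widetilde{\alpha}^{p^{j(n+1-q)}}_u - \widetilde{\alpha}^{p^{j(n+1-q)}}_w = (\widetilde{\alpha}_u - \widetilde{\alpha}_w)^{p^{j(n+1-q)}}$ over the $\mathbb{F}_p$-algebra (with the $q=n+1$ exponent-$1$ case noted separately), and the transversal observation that distinct representatives of $[A:{\rm im}\phi(A/C)]$ have difference landing in $A - {\rm im}\phi(A/C)$, so that $\widetilde{\alpha}_{u-w}$ lies in the inverted set $\widetilde{L}_{C,n+1-q}$ — are precisely the ones the paper sets up in the discussion immediately preceding these lemmas in Subsection 5.3.
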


Let $V(p^j|[A:{\rm im}\phi(A/C)])$ denote the set
$$\{(w_1,w_2,\cdots,w_m)\in [A:{\rm im}\phi(A/C)]\mid  (p^jw_1,p^jw_2,\cdots,p^jw_m)=0\},$$
then there is a surjective map $\theta^j_{q}:V(p^j|[A:{\rm im}\phi(A/C)])\rightarrow \widetilde{\mathbf{S}}_{[A:{\rm im}\phi(A/C)],j}^{p^{j(n+1-q)}}$ that maps $(w_1,w_2,\cdots,w_m)$ to $\widetilde{\alpha}^{p^{j(n+1-q)}}_{(w_1,w_2,\cdots,w_m)}$.
\begin{Lem}\label{aijnt}
$\theta^j_{q}$ is a bijection if and only if $\pi_{*}(\cat T_{A,C}(E))/I_{n+1-q}\neq 0$.
\end{Lem}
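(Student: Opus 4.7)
The plan is to mirror exactly the proof strategies used for the analogous statements in the $A=C$ case (Lemma \ref{ijnt} and Lemma \ref{root}), since $\theta^j_q$ is already surjective by construction; only the question of injectivity, in each direction, needs to be settled, and this is controlled by whether certain Euler-class differences vanish in $\pi_*(\cat T_{A,C}(E))/I_{n+1-q}$.

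For the ``$\Rightarrow$'' direction, I would argue by contrapositive: if $\pi_*(\cat T_{A,C}(E))/I_{n+1-q}=0$, then every element, and in particular any difference $\widetilde\alpha^{p^{j(n+1-q)}}_{(u_1,\dots,u_m)}-\widetilde\alpha^{p^{j(n+1-q)}}_{(w_1,\dots,w_m)}$, is $0$, so $\theta^j_q$ fails to separate points and cannot be a bijection (assuming $|V(p^j|[A:{\rm im}\phi(A/C)])|>1$; the degenerate case where the domain has a single element is vacuous and is subsumed by the $\Leftarrow$ direction anyway). The cleanest phrasing is just: if $\theta^j_q$ is bijective, then in particular the image contains a nonzero element, so the target ring $\pi_*(\cat T_{A,C}(E))/I_{n+1-q}$ is nonzero.

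For the ``$\Leftarrow$'' direction, which is the substantive one, I need injectivity of $\theta^j_q$. Pick two distinct representatives $(u_1,\dots,u_m)\neq (w_1,\dots,w_m)$ in $V(p^j|[A:{\rm im}\phi(A/C)])$. Because $[A:{\rm im}\phi(A/C)]$ is a complete set of coset representatives for the subgroup ${\rm im}\phi(A/C)$ in $A$, the crucial observation is that the coset-wise difference $(u_1-w_1,\dots,u_m-w_m)$ lies in $A-{\rm im}\phi(A/C)$; hence $\widetilde\alpha_{(u_1-w_1,\dots,u_m-w_m)}$ belongs to the multiplicatively closed set $\widetilde L_{C,n+1-q}$ and is therefore a unit in $\pi_*(\cat T_{A,C}(E))/I_{n+1-q}$. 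Using the standard formal-group identity
\[
\widetilde\alpha_{(u_1,\dots,u_m)}-\widetilde\alpha_{(w_1,\dots,w_m)}=\varepsilon^{-1}\!\bigl(\widetilde\alpha_{(u_1,\dots,u_m)},\widetilde\alpha_{(w_1,\dots,w_m)}\bigr)\cdot\widetilde\alpha_{(u_1-w_1,\dots,u_m-w_m)}
\]
with $\varepsilon$ a unit (from Proposition \ref{pps}(iv)), the right-hand side is a unit in the nonzero ring $\pi_*(\cat T_{A,C}(E))/I_{n+1-q}$, hence nonzero. Raising the whole equality to the $p^{j(n+1-q)}$-th power and invoking the fact that a power of a unit is a unit (so in particular nonzero) gives
\[
\widetilde\alpha^{p^{j(n+1-q)}}_{(u_1,\dots,u_m)}-\widetilde\alpha^{p^{j(n+1-q)}}_{(w_1,\dots,w_m)}\neq 0
\]
in characteristic $p$ (the $p$-th power is additive modulo $I_{n+1-q}$ when the coefficient ring has $v_0=p\in I_{n+1-q}$). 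This shows $\theta^j_q$ is injective, and combined with its surjectivity finishes the proof.

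The only genuine subtlety, which is the main obstacle to watch for, is the coset-theoretic claim that distinct representatives in $[A:{\rm im}\phi(A/C)]$ differ by an element outside ${\rm im}\phi(A/C)$: this is what makes $\widetilde\alpha_{(u_1-w_1,\dots,u_m-w_m)}$ lie in the set of \emph{inverted} Euler classes rather than in the ideal we are quotienting by, and it is precisely the reason why the proof of Lemma \ref{root} (the $A=C$ case) goes through verbatim after replacing $A$ by the coset system and $L_A$ by $L_C$. Beyond that, the arithmetic with the $p^{j(n+1-q)}$-th power and the Frobenius-type additivity mod $I_{n+1-q}$ is identical to Lemma \ref{ntp}, and no new ideas are required.
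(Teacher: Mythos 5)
Your proof is correct and matches the paper's intended argument, which is just the proof of Lemma \ref{ijnt} transported to the coset-representative setting: the paper itself records the proof of Lemma \ref{aijnt} with the single sentence ``The proof is similar to the proof of Lemma \ref{ijnt},'' and you have correctly identified the one point that must be verified in the transport, namely that distinct coset representatives in $[A:{\rm im}\phi(A/C)]$ differ by an element of $A-{\rm im}\phi(A/C)$, so that $\widetilde\alpha_{(u_1-w_1,\dots,u_m-w_m)}$ lies in $\widetilde L_{C,n+1-q}$ and is a unit in the localization. The Frobenius step is likewise fine, since $p=v_0\in I_{n+1-q}$ whenever $q\leq n$, and when $q=n+1$ the exponent $p^{j(n+1-q)}=1$ is trivial.
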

\begin{proof}
The proof is similar to the proof of Lemma \ref{ijnt}.
\end{proof}

\begin{Cor}\label{LbTAC}
Let $A$ be a finite abelian $p$-group and $C$ be its proper subgroup. Let $[A:{\rm im}\phi(A/C)]$ denote any complete set of coset representatives of ${\rm im}\phi(A/C)$ in $A$ and $j$ be any positive integer, then
$\pi_{*}(\cat T_{A,C}(E))/I_{n+1-q}=0$ for $q< \frac{\log_p|V(p^j|[A:{\rm im}\phi(A/C)])|}{j}+1$.
\end{Cor}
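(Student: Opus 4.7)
My plan is to mimic the argument used for Lemma \ref{lbsj} in the case $A=C$, but with the tuple $\widetilde{\mathbf{S}}_{[A:{\rm im}\phi(A/C)],j}^{p^{j(n+1-q)}}$ in place of ${}_{p^j\!}F(\pi_{*}(\cat T_{A,A}(E))/I_{n+1-q})^{p^{j(n+1-q)}}$, and then apply the first case of the Vanishing ring condition (Corollary \ref{vanish}) to reach a contradiction.

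More concretely, I would argue by contradiction. Suppose there exist a positive integer $j_0$ and an integer $q_0$ with $q_0<\frac{\log_p|V(p^{j_0}|[A:{\rm im}\phi(A/C)])|}{j_0}+1$ such that $\pi_{*}(\cat T_{A,C}(E))/I_{n+1-q_0}\neq 0$. Then by Lemma \ref{aijnt}, the map $\theta^{j_0}_{q_0}$ is a bijection, so
$$|\widetilde{\mathbf{S}}_{[A:{\rm im}\phi(A/C)],j_0}^{p^{j_0(n+1-q_0)}}|=|V(p^{j_0}|[A:{\rm im}\phi(A/C)])|,$$
and the immediately preceding lemma tells us that this set is a tuple of $g_{1,n+1-q_0}^{j_0}(x)$ in $\pi_{*}(\cat T_{A,C}(E))/I_{n+1-q_0}$ of that cardinality.

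Next I would compare the size of this tuple with the degree of the polynomial. By Lemma \ref{Rp} combined with the substitution $[p^{j_0}]_E(x)=g_{j_0,n+1-q_0}(x^{p^{j_0(n+1-q_0)}})$ and Lemma \ref{pjwp}, the polynomial $g_{1,n+1-q_0}^{j_0}(x)$ has degree $p^{j_0(q_0-1)}$ with leading coefficient $v_n^{1+p^n+\cdots+p^{(j_0-1)n}}$, which is a unit modulo $I_{n+1-q_0}$. The assumption $q_0-1<\frac{\log_p|V(p^{j_0}|[A:{\rm im}\phi(A/C)])|}{j_0}$ is exactly
$$p^{j_0(q_0-1)}<|V(p^{j_0}|[A:{\rm im}\phi(A/C)])|,$$
so the size of the tuple strictly exceeds the degree.

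The hard part, as in the proofs of Lemma \ref{lbsj} and Corollary \ref{LbTAA}, is to guarantee that the unit $v_n^{1+p^n+\cdots+p^{(j_0-1)n}}$ does actually appear among the coefficients $a_0,a_1,\dots,a_{p^{j_0(q_0-1)}}$ that feature in Corollary \ref{vanish}; this is precisely the point of identifying $g_{1,n+1-q_0}^{j_0}(x)$ with a polynomial of Weierstrass degree $p^{j_0(q_0-1)}$ via Proposition \ref{idwp}. Once this is in place, $1$ lies in the ideal of $\pi_*(\cat T_{A,C}(E))/I_{n+1-q_0}$ generated by the coefficients of $g_{1,n+1-q_0}^{j_0}(x)$, so the first case of Corollary \ref{vanish} forces $\pi_{*}(\cat T_{A,C}(E))/I_{n+1-q_0}=0$, contradicting the initial assumption. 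This completes the proof.
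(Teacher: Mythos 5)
Your proposal is correct and follows essentially the same path as the paper's proof: contradiction, Lemma \ref{aijnt} to get the bijection and hence the tuple cardinality $|V(p^{j_0}|[A:{\rm im}\phi(A/C)])|$, comparison with $\deg g_{1,n+1-q_0}^{j_0}(x)=p^{j_0(q_0-1)}$, observation that the leading coefficient $v_n^{1+p^n+\cdots+p^{(j_0-1)n}}$ is a unit, and then case (i) of Corollary \ref{vanish}. The only cosmetic difference is that you spell out explicitly that the unit is among the relevant coefficients via the Weierstrass identification, which the paper leaves implicit.
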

\begin{proof}
Assume that there exists a complete set $[A:{\rm im}\phi(A/C)]_0$, an integer $j_0$, and an integer $q_0< \frac{\log_p|V(p^{j_0}|[A:{\rm im}\phi(A/C)]_0)|}{j_0}+1$ such that $\pi_{*}(\cat T_{A,C}(E))/I_{n+1-q_0}\neq 0$. By Lemma \ref{aijnt}, $\theta^{j_0}_{q_0}$ is a bijection. Then $\widetilde{\mathbf{S}}_{[A:{\rm im}\phi(A/C)]_0,j_0}^{p^{n+1-q_0}}$ is an $|\widetilde{\mathbf{S}}_{[A:{\rm im}\phi(A/C)]_0,j_0}^{p^{n+1-q_0}}|$-tuple of $g^{j_0}_{1,n+1-q_0}(x)$ in $\pi_{*}(\cat T_{A,C}(E))/I_{n+1-q_0}$. Note that the unit $v_{n}^{1+p^{n}+\cdots+p^{(j_0-1)n}}$ is the last coefficient of $g^{j_0}_{1,n+1-q_0}(x)$. Since $C$ is a proper subgroup of $A$, we have
$$ |V(p^{j_0}|[A:{\rm im}\phi(A/C)]_0)|>\deg g^{j_0}_{1,n+1-q_0}(x)=p^{j_0(q_0-1)}.$$
So by Corollary \ref{vanish}, we have $\pi_{*}(\cat T_{A,C}(E))/I_{n+1-q_0}=0$, which contradicts to our assumption. This completes the proof.
\end{proof}
Note that $|V(p^j|[A:{\rm im}\phi(A/C)])|$ depends on the choice of $[A:{\rm im}\phi(A/C)]$. Let $[A:{\rm im}\phi(A/C)]^{\rm max}$ denote a complete set of coset representatives of ${\rm im}\phi(A/C)$ in $A$ such that $|V(p^j|[A:{\rm im}\phi(A/C)]^{\rm max})|$ is maximal. We first simplify $|V(p^j|[A:{\rm im}\phi(A/C)]^{\rm max})|$ by the following lemma.
\begin{Lem}\label{Sivpj}
Let $A$ be a finite abelian $p$-group and $C$ be its proper subgroup. Let $A'$ denote the minimal direct summand of $A$ that contains $C$, then
$$|V(p^j|[A:{\rm im}\phi(A/C)]^{\rm max})|=|V(p^j|[A':{\rm im}\phi(A'/C)]^{\rm max})|.$$
\end{Lem}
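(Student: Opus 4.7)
\emph{Proof sketch.} The plan is to exhibit a direct sum decomposition of $A$ adapted to $C$ and then reduce the question on $A$ to the same question on the minimal summand. After reindexing so that $j_1,\ldots,j_d > 0$ and $j_{d+1}=\cdots=j_m = 0$, set $A' = \bigoplus_{k=1}^d \mathbb{Z}/p^{i_k}$ and $A'' = \bigoplus_{k=d+1}^m \mathbb{Z}/p^{i_k}$, so that $A = A'\oplus A''$ and $C\subseteq A'$. Rank monotonicity for finite abelian $p$-groups (applied to the $p$-torsion $C[p]\hookrightarrow A'[p]$) gives $\mathrm{rank}_p(A')\geq \mathrm{rank}_p(C) = d$ for every direct summand containing $C$, so $A'$ realizes the minimal number of cyclic summands among direct summands of $A$ containing $C$; this is the sense of ``minimal'' used in the statement.

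The first substantive step is to re-express the quantity $|V(p^j|[A:I]^{\mathrm{max}})|$, where $I := {\rm im}\phi(A/C)$. Because a complete set of coset representatives may pick any element of each coset, the maximum number of representatives lying in the subgroup $V(p^j|A)$ equals the number of cosets of $I$ that meet $V(p^j|A)$, hence
$$|V(p^j|[A:I]^{\mathrm{max}})| \;=\; \bigl|(V(p^j|A)+I)/I\bigr| \;=\; \bigl|V(p^j|A)/(V(p^j|A)\cap I)\bigr|.$$
The identical formula applies to the pair $(A',{\rm im}\phi(A'/C))$ in place of $(A,I)$. I expect this reinterpretation to be the only non-formal point in the argument; once it is in hand, everything reduces to direct bookkeeping.

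To finish, feed the decomposition $A = A'\oplus A''$ into this quotient. On the $A''$-coordinates $j_k = 0$, so the defining formula $\phi_k(w_k) = p^{j_k}w_k$ acts as the identity there; consequently $I = {\rm im}\phi(A'/C) \oplus A''$ inside $A'\oplus A''$. Combined with the obvious splittings $V(p^j|A) = V(p^j|A') \oplus V(p^j|A'')$ and $V(p^j|A)\cap I = \bigl(V(p^j|A')\cap {\rm im}\phi(A'/C)\bigr) \oplus V(p^j|A'')$, the $V(p^j|A'')$-summand cancels from numerator and denominator, producing the isomorphism
$$V(p^j|A)/(V(p^j|A)\cap I) \;\cong\; V(p^j|A')/\bigl(V(p^j|A')\cap {\rm im}\phi(A'/C)\bigr).$$
Passing to cardinalities and re-applying the reinterpretation to the pair $(A',C)$ yields the desired equality.
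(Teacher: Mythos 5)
The paper states Lemma \ref{Sivpj} without any proof (it is apparently treated as routine and immediately combined with Lemma \ref{Vpj} to produce Corollary \ref{CpvpjAC}), so there is no ``paper's own proof'' to compare your argument against; your job here is to fill a genuine gap, and you do so correctly. Your central move is the reinterpretation $|V(p^j|[A:I]^{\mathrm{max}})| = |(V(p^j|A)+I)/I| = |V(p^j|A)/(V(p^j|A)\cap I)|$, which is sound: each coset of $I$ that meets $V(p^j|A)$ can contribute exactly one representative from $V(p^j|A)$, and each coset that misses $V(p^j|A)$ contributes none, so the optimum count is exactly the number of cosets of $I$ inside $V(p^j|A)+I$, then apply the second isomorphism theorem. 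After that, the identifications $I = I'\oplus A''$ (because $\phi$ is the identity on coordinates with $j_k=0$), $V(p^j|A)=V(p^j|A')\oplus V(p^j|A'')$, and $V(p^j|A)\cap I=(V(p^j|A')\cap I')\oplus V(p^j|A'')$ are all direct checks, and the $V(p^j|A'')$ factor cancels. This is a complete, correct proof.

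One structural remark worth being aware of: your first step is essentially the content of the paper's Lemma \ref{Vpj} plus the observation that the injection there is actually onto the cosets meeting $V(p^j|A)$. The paper's organization suggests it wanted Lemma \ref{Sivpj} to be independent of Lemma \ref{Vpj} and then cite both in Corollary \ref{CpvpjAC}; your proof makes that separation somewhat artificial, since it subsumes the formula $|V(p^j|[A:I]^{\mathrm{max}})| = |V(p^j|A)|/|V(p^j|I)|$ directly. This is not an error—it is arguably the cleaner route—but it means that, paired with your argument, Lemma \ref{Vpj} becomes redundant. One very minor caveat: your rank-monotonicity aside shows that any direct summand of $A$ containing $C$ has $p$-rank at least $\mathrm{rank}_p(C)=d$, which is necessary but not quite sufficient to certify that $A'=\bigoplus_{k:\,j_k>0}\mathbb{Z}/p^{i_k}$ is the ``minimal'' summand in the sense the paper intends. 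However, the actual proof you give never uses minimality—only the coordinate splitting $A=A'\oplus A''$ with $C\subseteq A'$ and $j_k=0$ on $A''$—so this does not affect the validity of the argument, and the coordinate identification of $A'$ is clearly what the paper has in mind.
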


\begin{Lem}\label{Cds}
Let $A$ be a finite abelian $p$-group and $C$ be its direct summand, then
$$|V(p^j|[A:{\rm im}\phi(A/C)]^{\rm max})|=|V(p^j|C)|.$$
\end{Lem}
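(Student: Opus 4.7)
The plan is to deduce Lemma \ref{Cds} as an essentially immediate consequence of the preceding Lemma \ref{Sivpj}. First I would observe that when $C$ is a direct summand of $A$, the minimal direct summand of $A$ containing $C$ is $C$ itself: writing $A = C \oplus D$, any direct summand of $A$ containing $C$ has order at least $|C|$, with equality precisely when it equals $C$, and $C$ is already such a summand. Hence the group $A'$ appearing in Lemma \ref{Sivpj} equals $C$.

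Applying Lemma \ref{Sivpj} with this $A'$ then gives
$$|V(p^j|[A:\mathrm{im}\,\phi(A/C)]^{\max})| = |V(p^j|[C:\mathrm{im}\,\phi(C/C)]^{\max})|.$$
But $C/C$ is trivial, so $\mathrm{im}\,\phi(C/C) = 0$, and any complete set of coset representatives of the trivial subgroup in $C$ is just $C$ itself. Therefore $[C:\mathrm{im}\,\phi(C/C)]^{\max} = C$ and the right-hand side is $|V(p^j|C)|$ by definition, which is what we want.

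As a sanity check avoiding the appeal to Lemma \ref{Sivpj}, I would give a direct argument. Since $C$ is a direct summand, we may choose the cyclic decomposition $A = \mathbb{Z}/p^{i_1} \oplus \cdots \oplus \mathbb{Z}/p^{i_m}$ so that $C = \mathbb{Z}/p^{i_1} \oplus \cdots \oplus \mathbb{Z}/p^{i_l} \oplus 0 \oplus \cdots \oplus 0$ for some $l$, i.e.\ $j_k = i_k$ for $k \leq l$ and $j_k = 0$ for $k > l$. Plugging into the defining formula for $\phi$ yields $\mathrm{im}\,\phi(A/C) = 0 \oplus \cdots \oplus 0 \oplus \mathbb{Z}/p^{i_{l+1}} \oplus \cdots \oplus \mathbb{Z}/p^{i_m}$, the complementary summand $D$. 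Taking $C$ itself as coset representatives shows $|V(p^j|[A:\mathrm{im}\,\phi(A/C)]^{\max})| \geq |V(p^j|C)|$, since the elements of $C$ of order dividing $p^j$ are exactly $V(p^j|C)$. Conversely, any representative of a coset $c+D$ (with unique $c \in C$) has the form $c+d$ for some $d \in D$, and $p^j(c+d)=0$ in the direct sum $A = C \oplus D$ forces $p^j c = 0$ in $C$; hence only cosets with $c \in V(p^j|C)$ can possibly contribute under any choice of representative, giving the reverse inequality.

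There is essentially no obstacle here: the nontrivial content has already been packaged into Lemma \ref{Sivpj}, and the role of Lemma \ref{Cds} is simply to name the resulting quantity as $|V(p^j|C)|$ in the important special case of a direct summand, which is the case that feeds into the sharp lower bound $\mathbf{s}_{A,C;E} \geq \mathrm{rank}_p(C)$ in Theorem \ref{GTglbc}.
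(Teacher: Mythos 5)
Your proof is correct. Your second, direct ``sanity check'' argument is essentially the paper's own proof, only more careful: the paper writes $A = C \oplus A/C$, picks the representatives with $a'_i = 0$ to realize $|V(p^j|C)|$, and simply asserts this choice is maximal, leaving implicit that $(p^j c_i, p^j a'_i) = 0$ forces $p^j c_i = 0$ and hence bounds the count by $|V(p^j|C)|$ for \emph{any} choice of representatives; you spell out this reverse inequality, which the paper really ought to have done. Your first route, via Lemma \ref{Sivpj}, is a genuinely different derivation that the paper does not use: you observe that a direct summand $C$ is its own minimal containing direct summand, reduce to $[C:{\rm im}\phi(C/C)]$, and note that ${\rm im}\phi(C/C)$ is trivial so the coset representatives are all of $C$. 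This is clean and reuses the previously stated reduction, at the small cost that Lemma \ref{Sivpj} is stated for $C$ a \emph{proper} subgroup of $A$, so the degenerate case $C = A$ would need to be dispatched separately (your direct argument covers it uniformly), and the paper offers no proof of Lemma \ref{Sivpj} itself. Either route is fine; the paper presumably prefers the direct calculation because this lemma is meant to be a quick bookkeeping check feeding into the sharp bound in Theorem \ref{GTglbc}.
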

\begin{proof}
Since $A=C\oplus A/C$, then $[A:{\rm im}\phi(A/C)]=\{a_i\mid 1\leq i\leq|C|\}$
where $a_i=(c_i, a'_i)$ for $c_i\in C$ and $a'_i\in A/C$. $V(p^j|[A:{\rm im}\phi(A/C)])=\{(c_i, a'_i)\mid 1\leq i\leq|C|, (p^jc_i, p^ja'_i)=0\}$, we choose $a'_i=0$ for $1\leq i\leq|C|$, then $|V(p^j|[A:{\rm im}\phi(A/C)]^{\rm max})|=|V(p^j|C)|.$
\end{proof}

To compute $|V(p^j|[A:{\rm im}\phi(A/C)]^{\rm max})|$, we need the following lemma.
\begin{Lem}\label{Vpj}
Let $A$ be a finite abelian $p$-group and $C$ be its proper subgroup. Then there is an injection of cosets
$$\bigsqcup_{1\leq i\leq\frac{|V(p^j|A)|}{|V(p^j|{\rm im}\phi(A/C))|}}\Bigl(b_i+V(p^j|{\rm im}\phi(A/C))\Bigr)\hookrightarrow \bigsqcup_{1\leq k\leq |C|}\Bigl(a_k+{\rm im}\phi(A/C)\Bigr)$$
induced by the inclusion $V(p^j|A)\hookrightarrow A$.
\end{Lem}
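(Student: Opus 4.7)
\textbf{Proof proposal for Lemma \ref{Vpj}.} The plan is to recognize the claimed injection as a concrete instance of the second isomorphism theorem for abelian groups. Specifically, I would rewrite both sides as coset spaces of an ambient group and then apply the standard inclusion of one quotient into another.

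First I would observe that the left-hand side is precisely the partition of $V(p^j|A)$ into cosets of the subgroup $V(p^j|{\rm im}\phi(A/C))$ of $V(p^j|A)$ (note the containment is clear since ${\rm im}\phi(A/C)\subseteq A$), while the right-hand side is the partition of $A$ into cosets of the subgroup ${\rm im}\phi(A/C)$. The orders match the statement because $[V(p^j|A):V(p^j|{\rm im}\phi(A/C))]=|V(p^j|A)|/|V(p^j|{\rm im}\phi(A/C))|$, and since $A/C\cong {\rm im}\phi(A/C)$ one has $[A:{\rm im}\phi(A/C)]=|A|/|A/C|=|C|$, matching the range of the index $k$.

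Next I would show the crucial identity
\[
V(p^j|{\rm im}\phi(A/C))=V(p^j|A)\cap {\rm im}\phi(A/C).
\]
The inclusion $\subseteq$ is immediate from the definitions; for $\supseteq$, if $a\in V(p^j|A)\cap {\rm im}\phi(A/C)$ then $p^ja=0$ and $a$ lies in ${\rm im}\phi(A/C)$, so by definition $a\in V(p^j|{\rm im}\phi(A/C))$. With this identity in hand, the map sending a coset $b+V(p^j|{\rm im}\phi(A/C))$ (with $b\in V(p^j|A)$) to the coset $b+{\rm im}\phi(A/C)$ of $A$ is induced by the inclusion $V(p^j|A)\hookrightarrow A$; it is well-defined because $V(p^j|{\rm im}\phi(A/C))\subseteq {\rm im}\phi(A/C)$, and it is injective because if $b_1,b_2\in V(p^j|A)$ satisfy $b_1-b_2\in {\rm im}\phi(A/C)$ then $b_1-b_2\in V(p^j|A)\cap {\rm im}\phi(A/C)=V(p^j|{\rm im}\phi(A/C))$.

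There is no serious obstacle here; the only care needed is to verify the intersection identity $V(p^j|{\rm im}\phi(A/C))=V(p^j|A)\cap {\rm im}\phi(A/C)$, which is what makes the second isomorphism theorem applicable. The conclusion is that, after choosing any system $\{b_i\}$ of coset representatives of $V(p^j|{\rm im}\phi(A/C))$ in $V(p^j|A)$, the $b_i$ can be extended to a complete system $\{a_k\}$ of coset representatives of ${\rm im}\phi(A/C)$ in $A$, which is exactly the injection required.
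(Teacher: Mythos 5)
Your argument is correct and is in essence the paper's own proof repackaged as an instance of the second isomorphism theorem: the key identity $V(p^j|{\rm im}\phi(A/C))=V(p^j|A)\cap {\rm im}\phi(A/C)$ that you isolate is exactly what the paper uses implicitly in the step $V(p^j|A)\setminus V(p^j|{\rm im}\phi(A/C))=V(p^j\mid A\setminus{\rm im}\phi(A/C))\subseteq A\setminus{\rm im}\phi(A/C)$, and the injectivity-by-contradiction argument there is just the standard proof of that theorem specialized to this situation. Your phrasing is a bit cleaner, but the mathematical content matches.
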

\begin{proof}
If $b_i\in a_{k}+{\rm im}\phi(A/C)$, then $b_i+V(p^j|{\rm im}\phi(A/C)) \subseteq a_{k}+{\rm im}\phi(A/C)$. So it suffices to prove that for any $1\leq k\leq |C|$, $a_k+{\rm im}\phi(A/C)$ contains at most one $b_i$ for  $1\leq i\leq\frac{|V(p^j|A)|}{|V(p^j|{\rm im}\phi(A/C))|}$. If $a_k+{\rm im}\phi(A/C)$ contains
$b_{i_1}$ and $b_{i_2}$ for $1\leq i_1\neq i_2\leq\frac{|V(p^j|A)|}{|V(p^j|{\rm im}\phi(A/C))|}$, then there are $a',a''\in {\rm im}\phi(A/C)$ such that
$b_{i_1}=a_k+a', b_{i_2}=a_k+a''$, which follows that $b_{i_1}- b_{i_2}=a'-a''$. Note that $a'-a''\in {\rm im}\phi(A/C)$, then $b_{i_1}- b_{i_2}\in {\rm im}\phi(A/C)$. Since
$$b_{i_1}- b_{i_2}\in V(p^j|A)-V(p^j|{\rm im}\phi(A/C))=V(p^j|A-{\rm im}\phi(A/C))\subseteq A-{\rm im}\phi(A/C),$$
this is a contradiction.
\end{proof}
By Lemma \ref{Vpj} and Lemma \ref{Sivpj}, we have
\begin{Cor}\label{CpvpjAC}
Let $A$ be a finite abelian $p$-group and $C$ be its proper subgroup. Let $A'$ denote the minimal direct summand of $A$ that contains $C$, then
$$|V(p^j|[A:{\rm im}\phi(A/C)]^{\rm max})|=\frac{|V(p^j|A)|}{|V(p^j|{\rm im}\phi(A/C))|}=\frac{|V(p^j|A')|}{|V(p^j|{\rm im}\phi(A'/C))|}$$
and
$$\max_{j\in \mathbb{N}^+}{[\frac{\log_p|V(p^j|[A:{\rm im}\phi(A/C)]^{\rm max})|}{j}]}=\max_{j\in \mathbb{N}^+}{[\frac{\log_p|V(p^j|A')|-\log_p|V(p^j|{\rm im}\phi(A'/C))|}{j}]}.$$
\end{Cor}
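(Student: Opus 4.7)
The plan is to derive both equalities from Lemma \ref{Vpj} (which produces enough coset representatives inside $V(p^j|A)$) and Lemma \ref{Sivpj} (which reduces from $A$ to $A'$), by a simple counting argument, and then deduce the maximum formula by taking logarithms.

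First I establish the identity $|V(p^j|[A:{\rm im}\phi(A/C)]^{\rm max})| = \frac{|V(p^j|A)|}{|V(p^j|{\rm im}\phi(A/C))|}$ as two inequalities. For the lower bound, Lemma \ref{Vpj} supplies elements $b_1,\ldots,b_r\in V(p^j|A)$, with $r=\frac{|V(p^j|A)|}{|V(p^j|{\rm im}\phi(A/C))|}$, that represent pairwise distinct cosets of ${\rm im}\phi(A/C)$ in $A$; hence we may extend $\{b_1,\ldots,b_r\}$ to a complete set of coset representatives $[A:{\rm im}\phi(A/C)]$, which gives a set with at least $r$ elements in $V(p^j|A)$. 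For the upper bound, suppose $a_{k_1},\ldots,a_{k_s}\in V(p^j|A)$ are representatives of distinct cosets of ${\rm im}\phi(A/C)$ in $A$. Then for $\alpha\neq\beta$, $a_{k_\alpha}-a_{k_\beta}\in V(p^j|A)$ while $a_{k_\alpha}-a_{k_\beta}\notin {\rm im}\phi(A/C)$, so these elements determine distinct cosets of the subgroup $V(p^j|A)\cap {\rm im}\phi(A/C)=V(p^j|{\rm im}\phi(A/C))$ inside $V(p^j|A)$. Thus $s\leq\frac{|V(p^j|A)|}{|V(p^j|{\rm im}\phi(A/C))|}$, which matches the lower bound.

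Next I combine this with Lemma \ref{Sivpj} to get the second equality. Lemma \ref{Sivpj} yields $|V(p^j|[A:{\rm im}\phi(A/C)]^{\rm max})|=|V(p^j|[A':{\rm im}\phi(A'/C)]^{\rm max})|$, and applying the first identity to the pair $(A',C)$ in place of $(A,C)$, we obtain
$$|V(p^j|[A':{\rm im}\phi(A'/C)]^{\rm max})|=\frac{|V(p^j|A')|}{|V(p^j|{\rm im}\phi(A'/C))|}.$$
Chaining the two equalities finishes this step. Finally the maximum formula is obtained by taking $\log_p$ and dividing by $j$ inside the already established equality $\frac{|V(p^j|A)|}{|V(p^j|{\rm im}\phi(A/C))|}=\frac{|V(p^j|A')|}{|V(p^j|{\rm im}\phi(A'/C))|}$, applying the rounding bracket, and then taking the maximum over $j\in\mathbb{N}^+$.

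I do not anticipate a genuine obstacle: the content is a clean bookkeeping argument, and the only point that requires a bit of care is the identification $V(p^j|A)\cap {\rm im}\phi(A/C)=V(p^j|{\rm im}\phi(A/C))$, which is immediate from the definition of $V(p^j|\,\cdot\,)$ as the $p^j$-torsion subgroup. Everything else is bookkeeping on the lemmas already proved.
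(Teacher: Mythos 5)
Your argument is correct and reconstructs precisely what the paper leaves implicit: the paper's ``proof'' is merely the citation of Lemma \ref{Vpj} and Lemma \ref{Sivpj}, and your two-inequality counting (the lower bound from the $b_i$ supplied by Lemma \ref{Vpj}, the upper bound from the identification $V(p^j|A)\cap{\rm im}\phi(A/C)=V(p^j|{\rm im}\phi(A/C))$) is exactly the bookkeeping those lemmas are designed to support, with the second equality and the max formula following as you say. One small point of hygiene: when $C$ is itself a direct summand of $A$ (so $A'=C$ and $C$ is not proper in $A'$), your ``first identity applied to $(A',C)$'' is the degenerate case ${\rm im}\phi(A'/C)=\{0\}$, where both sides reduce trivially to $|V(p^j|A')|$; it would be worth flagging this explicitly since the first identity was established under the hypothesis that $C$ is proper.
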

\begin{Rem}
$[\frac{\log_p|V(p^j|[A:{\rm im}\phi(A/C)]^{\rm max})|}{j}]$ reaches the maximum when $j\leq \log_p|A|$.
\end{Rem}

By Corollary \ref{LbTAC} and Corollary \ref{CpvpjAC}, we have
\begin{Cor}\label{LBTAC}
Let $A$ be a finite abelian $p$-group and $C$ be its proper subgroup, then
$$\pi_{*}(\cat T_{A,C}(E))/I_{n+1-q}=0~~\text{for}~~ q<\max_{j\in \mathbb{N}^+}{\frac{\log_p|V(p^j|A)|-\log_p|V(p^j|{\rm im}\phi(A/C))|}{j}}+1.$$
Which implies that
$$\mathbf{s}_{A,C;E}\geq\max_{j\in \mathbb{N}^+}{\lceil\frac{\log_p|V(p^j|A)|-\log_p|V(p^j|{\rm im}\phi(A/C))|}{j}\rceil}.$$
\end{Cor}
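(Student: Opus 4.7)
The plan is to simply combine the two preceding results, Corollary \ref{LbTAC} and Corollary \ref{CpvpjAC}, and then invoke Definition \ref{Cri} to translate vanishing of the quotients into a lower bound on $\mathbf{s}_{A,C;E}$. All the real work — constructing the tuple, applying the vanishing ring condition from Corollary \ref{vanish} to a Weierstrass polynomial, and choosing optimal coset representatives — has been accomplished in the preceding subsection, so this final corollary is essentially a bookkeeping step.

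First, I would fix an arbitrary positive integer $j$ and apply Corollary \ref{LbTAC} to the specific complete set of coset representatives $[A:{\rm im}\phi(A/C)]^{\rm max}$, for which $|V(p^j|[A:{\rm im}\phi(A/C)])|$ is maximized. This yields that $\pi_{*}(\cat T_{A,C}(E))/I_{n+1-q}=0$ whenever $q<\frac{\log_p|V(p^j|[A:{\rm im}\phi(A/C)]^{\rm max})|}{j}+1$. Then I would substitute the closed-form expression from Corollary \ref{CpvpjAC}, namely
$$|V(p^j|[A:{\rm im}\phi(A/C)]^{\rm max})|=\frac{|V(p^j|A)|}{|V(p^j|{\rm im}\phi(A/C))|},$$
to rewrite the condition as $q<\frac{\log_p|V(p^j|A)|-\log_p|V(p^j|{\rm im}\phi(A/C))|}{j}+1$.

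Next, because this vanishing statement holds for every positive integer $j$, I would take the maximum over $j\in\mathbb{N}^+$ on the right-hand side. Since the vanishing $\pi_{*}(\cat T_{A,C}(E))/I_{n+1-q}=0$ is preserved under passing to a larger lower bound for the threshold on $q$, this gives the first displayed conclusion of the corollary.

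Finally, to extract the bound on $\mathbf{s}_{A,C;E}$, I would invoke Definition \ref{Cri}: $\mathbf{s}_{A,C;E}$ is the maximal integer $q$ with $\pi_{*}(\cat T_{A,C}(E))/I_{n+1-q}=0$. Since $q$ must be an integer, the sharp consequence of the preceding inequality is that $\mathbf{s}_{A,C;E}$ is at least the ceiling of $\frac{\log_p|V(p^j|A)|-\log_p|V(p^j|{\rm im}\phi(A/C))|}{j}$ for every $j$, and hence at least the maximum of these ceilings. I do not expect any genuine obstacle here — the only mild subtlety is that the ceiling must be taken \emph{after} fixing $j$ and \emph{before} taking the maximum, which is compatible with the statement as written because the right-hand side of the first displayed formula is maximized over $j$ while the conclusion on $\mathbf{s}_{A,C;E}$ takes the maximum of ceilings.
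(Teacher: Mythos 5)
Your proposal is correct and follows exactly the route the paper takes: the paper's proof of Corollary~\ref{LBTAC} is simply the one-line observation ``By Corollary~\ref{LbTAC} and Corollary~\ref{CpvpjAC}, we have\ldots'', and you have fleshed out precisely that chain of reasoning, including the translation from $q<M+1$ (for integer $q$) to $\mathbf{s}_{A,C;E}\ge\lceil M\rceil$ via Definition~\ref{Cri}. The one thing worth tightening in your write-up is that the maximizing set of coset representatives $[A:{\rm im}\phi(A/C)]^{\rm max}$ may genuinely depend on $j$, so one should pick, for each $j$, its own optimal representative set before invoking Corollary~\ref{LbTAC}; this causes no problem since that corollary allows an arbitrary choice, and the vanishing conclusion on $\pi_*(\cat T_{A,C}(E))/I_{n+1-q}$ is independent of both $j$ and the chosen representatives. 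Also note that your final caveat about the order of ceiling versus maximum is actually moot, since $\max_j\lceil a_j\rceil=\lceil\max_j a_j\rceil$ by monotonicity of the ceiling function, so the two readings agree.
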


\begin{Lem}\label{ind}
Let $A$ be an abelian $p$-group and $C$ be its subgroup with an inclusion
\begin{align*}
\varphi:C=\mathbb{Z}/{p^{j_1}}\oplus\mathbb{Z}/{p^{j_2}}\oplus \cdots \oplus \mathbb{Z}/{p^{j_m}}&\rightarrow A=\mathbb{Z}/{p^{i_1}}\oplus \mathbb{Z}/{p^{i_2}}\oplus \cdots \oplus \mathbb{Z}/{p^{i_m}}\\
(w_1,w_2,\cdots,w_m)&\mapsto (p^{i_1-j_1}w_1,p^{i_2-j_2}w_2,\cdots, p^{i_m-j_m}w_m).
\end{align*}
Let $A'$ be the subgroup of $A$ with $A=A'\oplus\mathbb{Z}/{p^{i_m}}$ and $C'$ be the subgroup of $C$ with $C=C'\oplus\mathbb{Z}/{p^{j_m}}$.
If $E$ is Landweber exact and $\pi_{*}(\cat T_{A',C'}(E))/I_{n-k}\neq 0$, then
\begin{enumerate}
\item[\rm (i)] $\pi_{*}(\cat T_{A,C}(E))/I_{n-k-1}\neq 0$ if $j_m> 0$;
\item[\rm (ii)] $\pi_{*}(\cat T_{A,C}(E))/I_{n-k}\neq 0$ if $j_m=0$.
\end{enumerate}
\end{Lem}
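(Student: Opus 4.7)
The plan is to realize both cases via explicit ring homomorphisms derived from the presentation in Theorem \ref{bgcoh}, and to reduce case (i) to case (ii) through the intermediate pair $(A, C')$, where $C'$ is embedded in $A$ as $C' \oplus 0 \subset A' \oplus \mathbb{Z}/p^{i_m} = A$.

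For case (ii), where $j_m = 0$, a direct computation gives $\mathrm{im}\,\phi(A/C) = \mathrm{im}\,\phi(A'/C') \oplus \{0\}$. I would then define the ring homomorphism
$$E^*\psb{x_1,\ldots,x_m}/([p^{i_1}]_E(x_1),\ldots,[p^{i_m}]_E(x_m)) \to E^*\psb{x_1,\ldots,x_{m-1}}/([p^{i_1}]_E(x_1),\ldots,[p^{i_{m-1}}]_E(x_{m-1}))$$
sending $x_m \mapsto 0$ and fixing the other $x_i$; this is well-defined since $[p^{i_m}]_E(0) = 0$. The condition $(w_1,\ldots,w_m) \notin \mathrm{im}\,\phi(A/C)$ forces $(w_1,\ldots,w_{m-1}) \notin \mathrm{im}\,\phi(A'/C')$ by the explicit form above, so each generator $\alpha_{(w_1,\ldots,w_m)} \in L_C$ maps to $\alpha_{(w_1,\ldots,w_{m-1})} \in L_{C'}$; hence the map descends to a surjection $\pi_*(\cat T_{A,C}(E)) \twoheadrightarrow \pi_*(\cat T_{A',C'}(E))$. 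Passing to quotients modulo $I_{n-k}$ preserves surjectivity, and the non-vanishing of the target forces that of the source.

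For case (i), where $j_m > 0$, I would factor through the intermediate pair $(A, C')$ in three steps. First, $I_{n-k-1} \subseteq I_{n-k}$ yields $\pi_*(\cat T_{A',C'}(E))/I_{n-k-1} \neq 0$ for free. Second, the pair $(A, C')$ falls under case (ii) (with $k$ replaced by $k+1$, since $C'$ has no summand in the new $\mathbb{Z}/p^{i_m}$-factor), giving $\pi_*(\cat T_{A,C'}(E))/I_{n-k-1} \neq 0$. Third, I would compare $\mathrm{im}\,\phi(A/C')$ and $\mathrm{im}\,\phi(A/C)$: they agree on the first $m-1$ coordinates, while their last coordinates are $\{0\}$ and $p^{i_m-j_m}\mathbb{Z}/p^{i_m}$ respectively. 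Since $j_m > 0$ the latter is strictly larger, so the multiplicatively closed set for $(A,C')$ strictly contains that for $(A,C)$, exhibiting $\pi_*(\cat T_{A,C'}(E))$ as a further localization $S^{-1}\pi_*(\cat T_{A,C}(E))$ of $\pi_*(\cat T_{A,C}(E))$. Because localization commutes with taking quotients by ideals, the same relation holds modulo $I_{n-k-1}$; since localizing the zero ring yields the zero ring, the non-vanishing of $\pi_*(\cat T_{A,C'}(E))/I_{n-k-1}$ forces $\pi_*(\cat T_{A,C}(E))/I_{n-k-1} \neq 0$.

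The main conceptual obstacle is identifying the correct intermediate pair for case (i): shrinking $C$ to $C'$ (rather than altering $A$) puts the problem back into case (ii), and the passage from $(A,C)$ to $(A,C')$ corresponds to further \textit{localization}, not to a quotient, which is exactly what absorbs the one-step loss of periodicity distinguishing case (i) from case (ii). The Landweber exactness of $E$, via Lemma \ref{LE}, ensures that $\cat T_{A,C}(E)$ is itself Landweber exact, and this is what underlies the inductive use of this lemma in the proof of Theorem \ref{GTglbc}.
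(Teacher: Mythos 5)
Your case (ii) argument contains an arithmetic error that undermines the stated reasoning, though the conclusion survives: you claim $\mathrm{im}\,\phi(A/C) = \mathrm{im}\,\phi(A'/C') \oplus \{0\}$, but since $j_m = 0$ the $m$-th factor of $A/C$ is all of $\mathbb{Z}/p^{i_m}$ and $\phi$ acts by $p^{j_m}=1$ on it, so in fact $\mathrm{im}\,\phi(A/C) = \mathrm{im}\,\phi(A'/C') \oplus \mathbb{Z}/p^{i_m}$. With your formula, $(w_1,\ldots,w_m)\notin\mathrm{im}\,\phi(A/C)$ does \emph{not} force $(w_1,\ldots,w_{m-1})\notin\mathrm{im}\,\phi(A'/C')$ — one could have $(w_1,\ldots,w_{m-1})\in\mathrm{im}\,\phi(A'/C')$ with $w_m\neq 0$, and then your putative surjection sends the invertible element $\alpha_{(w_1,\ldots,w_m)}$ to a non-invertible one. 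The surjection $\pi_*(\cat T_{A,C}(E))\twoheadrightarrow\pi_*(\cat T_{A',C'}(E))$ induced by $x_m\mapsto 0$ does exist and does prove (ii), but only because the correct decomposition (not the one you wrote) makes the last coordinate unconstrained in the complement.

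Your case (i) argument has a fatal directional error. You assert that $L_{C'}$, the multiplicative set for the pair $(A,C')$, strictly contains $L_C$, and hence that $\pi_*(\cat T_{A,C'}(E))$ is a further localization of $\pi_*(\cat T_{A,C}(E))$. This is backwards. From Lemma \ref{Lmcs} and Theorem \ref{bgcoh}, $\mathrm{im}\,\phi(A/C)$ has $m$-th factor $p^{j_m}\mathbb{Z}/p^{i_m}$ while $\mathrm{im}\,\phi(A/C')$ has $m$-th factor $\mathbb{Z}/p^{i_m}$; since $j_m>0$, $\mathrm{im}\,\phi(A/C)\subsetneq\mathrm{im}\,\phi(A/C')$, so the complement $A-\mathrm{im}\,\phi(A/C)$ is \emph{larger}, so $L_C\supsetneq L_{C'}$. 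Enlarging $C$ inverts more Euler classes (heuristically, geometric fixed points for a larger subgroup kill more isotropy). Hence $\pi_*(\cat T_{A,C}(E))$ is a further localization of $\pi_*(\cat T_{A,C'}(E))$, and the implication you need — nonvanishing of $\pi_*(\cat T_{A,C'}(E))/I_{n-k-1}$ forcing nonvanishing of $\pi_*(\cat T_{A,C}(E))/I_{n-k-1}$ — fails: a nonzero ring can have a zero localization. No further localization argument in this direction can close the gap; the paper's proof of (i) instead establishes non-vanishing directly, by showing that the map $L^{-1}_{C}Bp^*$ from $\widetilde{L}^{-1}_{C',i}E^*(BA'_+)/I_i$ into $\widetilde{L}^{-1}_{C,i}E^*(BA'_+)\psb{x_m}/I_i$ is injective (via the section $i\colon A'\to A'\times U(1)$), then using Landweber exactness to conclude $v_{n-k-1}$ is not a unit there, and finally observing that $[p^{i_m}]_E(x_m)$ is not a unit mod $I_{n-k-1}$ because its leading coefficient is a power of $v_{n-k-1}$, so the Gysin quotient $\pi_*(\cat T_{A,C}(E))/I_{n-k-1}$ is nonzero. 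You would need an argument of this kind — working inside the polynomial ring before quotienting by $[p^{i_m}]_E(x_m)$ — to make case (i) go through.
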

\begin{proof}
We first prove the case (i): $j_m> 0$. If $E$ is Landweber exact, then by Lemma \ref{LE} we obtain that $\cat T_{A',C'}(E)$ is Landweber exact. Since $\pi_{*}(\cat T_{A',C'}(E))/I_{n-k}\neq 0$, by exactness of
$${\small\CD
  0 @>>>\pi_{*}(\cat T_{A',C'}(E))/I_{n-k-1} @>\cdot v_{n-k-1}>> \pi_{*}(\cat T_{A',C'}(E))/I_{n-k-1}@>>> \pi_{*}(\cat T_{A',C'}(E))/I_{n-k} @>>> 0,
\endCD}$$
we obtain that $v_{n-k-1}$ is not a unit in $\pi_{*}(\cat T_{A',C'}(E))/I_{n-k-1}\neq 0$. By Theorem \ref{bgcoh}, we have
$$\pi_{*}(\cat T_{A',C'}(E))\cong L^{-1}_{C'}E^*(BA'_+),$$
where the multiplicatively closed set $L_{C'}$ is generated by the set
\begin{align*}
M_{C'}=\{&\alpha_{(w_1,\cdots,w_m)}\in E^*(BA'_+)\mid (w_1,\cdots,w_m)\in A'-{\rm im}\phi(A'/C')\}.
\end{align*}
Let $\widetilde{L}_{C,i}$ denote the multiplicatively closed set generated by the set
\begin{align*}
\widetilde{M}_{C,i}=\{&\widetilde{\alpha}_{(w_1,\cdots,w_m)}\in E^*(BA'_+)\psb{x_{m}}/I_{i}\mid (w_1,\cdots,w_m)\in A-{\rm im}\phi(A/C)\}.
\end{align*}
Since $E$ is Landweber exact, by a similar proof of Lemma \ref{LE}, we deduce that for each $i$ multiplication by $v_i$ is monic on $ \widetilde{L}^{-1}_{C,i}E^*(BA'_+)\psb{x_{m}}/I_i$.

Note that $i:A'\hookrightarrow A' \times U(1)$ is the right inverse of $p:A' \times U(1)\to A'$, then the homomorphism $Bp^*:E^*(BA'_+)\to E^*(BA'_+)\psb{x_{m}}$ is injective. As $E^*(BA'_+)=E^*(BA'_+)\psb{x_{m}}/(x_m)$ is an $E^*(BA'_+)\psb{x_{m}}$-module with the module map induced by $Bi^*$, then we have
$$Bi^*(L^{-1}_{C}E^*(B(A'\times U(1)_+))= L^{-1}_{C'}E^*(BA'_+).$$
Since the localization functor is exact, there is an injective homomorphism
$$L^{-1}_{C}Bp^*: L^{-1}_{C'}E^*(BA'_+)\to L^{-1}_{C}E^*(BA'_+)\psb{x_{m}}.$$
Then we have the following commutative diagram
{\footnotesize$$\CD
  0 @>>> \widetilde{L}^{-1}_{C',i}E^*(BA'_+)/I_{i} @>\cdot v_{i}>>\widetilde{L}^{-1}_{C',i}E^*(BA'_+)/I_{i} @>>> \widetilde{L}^{-1}_{C',i+1}E^*(BA'_+)/I_{i+1} @>>> 0 \\
      @. @V  VV @V  VV @V  VV @. \\
  0 @>>> \widetilde{L}^{-1}_{C,i}E^*(BA'_+)\psb{x_{m}}/I_{i} @>\cdot v_{i}>> \widetilde{L}^{-1}_{C,i}E^*(BA'_+)\psb{x_{m}}/I_{i}  @>>> \widetilde{L}^{-1}_{C,i+1}E^*(BA'_+)\psb{x_{m}}/I_{i+1}  @>>> 0,
\endCD$$}
and deduce that the homomorphism $L^{-1}_{C}Bp^*:\widetilde{L}^{-1}_{C'}E^*(BA'_+)/I_{i}\rightarrow \widetilde{L}^{-1}_{C}E^*(BA'_+)\psb{x_{m}}/I_{i}$ is injective for each $i$. Since $\pi_{*}(\cat T_{A',C'}(E))/I_{n-k}=\widetilde{L}^{-1}_{C'}E^*(BA'_+)/I_{n-k}\neq 0$, we have $\widetilde{L}^{-1}_{C,n-k}E^*(BA'_+)\psb{x_{m}}/I_{n-k}\neq 0$. By exactness of
{\scriptsize$$\CD
 0 @>>>\widetilde{L}^{-1}_{C,n-k-1}E^*(BA'_+)\psb{x_{m}}/I_{n-k-1} @>\cdot v_{n-k-1}>> \widetilde{L}^{-1}_{C,n-k-1}E^*(BA'_+)\psb{x_{m}}/I_{n-k-1}@>>> \widetilde{L}^{-1}_{C,n-k}E^*(BA'_+)\psb{x_{m}}/I_{n-k} @>>> 0,
\endCD$$}
we obtain that $v_{n-k-1}$ is not a unit in $ \widetilde{L}^{-1}_{C,n-k-1}E^*(BA'_+)\psb{x_{m}}/I_{n-k-1}$.

Using the Gysin sequence of $S^1 \to BA\stackrel{B({\rm id}\times\rho_{\frac{1}{p^{i_m}}})}\to B(A'\times U(1))$, we have
$E^*(BA_+)\cong E^*(BA'_+)\psb{x_{m}}/([p^{i_m}]_{E}(x_m))$
and
$$\pi_{*}(\cat T_{A,C}(E))/I_{n-k-1}\cong \widetilde{L}^{-1}_{C,n-k-1}E^*(BA'_+)\psb{x_{m}}/(I_{n-k-1},[p^{i_m}]_{E}(x_m)).$$
Note that $E^*(BA_+)/I_{n-k-1}$ is an $E^*(B(A'\times U(1))_+)/I_{n-k-1}$-module and the localization functor is exact, we have a short exact sequence:
{\footnotesize$$\xymatrix@C=0.5cm{
  0 \ar[r] & \widetilde{L}^{-1}_{C,n-k-1}E^*(BA'_+)\psb{x_{m}}/I_{n-k-1}\ar[rr]^{\cdot [p^{i_m}]_{E}(x_m)} &&  \widetilde{L}^{-1}_{C,n-k-1}E^*(BA'_+)\psb{x_{m}}/I_{n-k-1}\ar[rr]^{} && \pi_{*}(\cat T_{A,C}(E))/I_{n-k-1}\ar[r] & 0 }.$$}
Now $[p^{i_m}]_{E}(x_m)$ is not a unit in $\widetilde{L}^{-1}_{C,n-k-1}E^*(BA'_+)\psb{x_{m}}/I_{n-k-1}$ since its leading coefficient $ v_{n-k-1}^{1+p^{n-k-1}+\cdots+p^{(i_m-1)(n-k-1)}}$ is not a unit. Therefore $ \pi_{*}(\cat T_{A,C}(E))/I_{n-k-1}\neq 0$.

Now we prove the case (ii): $j_m=0$, that is $C=C'$. Since $\pi_{*}(\cat T_{A',C'}(E))/I_{n-k}\neq 0$, we have $\widetilde{L}^{-1}_{C,n-k}E^*(BA'_+)\psb{x_{m}}/I_{n-k}\neq 0$. As
$$\pi_{*}(\cat T_{A,C}(E))/I_{n-k}\cong \widetilde{L}^{-1}_{C,n-k}E^*(BA'_+)\psb{x_{m}}/(I_{n-k},[p^{i_m}]_{E}(x_m)),$$
then we obtain a short exact sequence:
$$\xymatrix@C=0.5cm{
\widetilde{L}^{-1}_{C,n-k-1}E^*(BA'_+)\psb{x_{m}}/I_{n-k}\ar[rr]^{\cdot [p^{i_m}]_{E}(x_m)} &&  \widetilde{L}^{-1}_{C,n-k-1}E^*(BA'_+)\psb{x_{m}}/I_{n-k}\ar[rr]^{} && \pi_{*}(\cat T_{A,C}(E))/I_{n-k}\ar[r] & 0 }.$$
Since $x_{m}$ is not invertible in $\widetilde{L}^{-1}_{C,n-k-1}E^*(BA'_+)\psb{x_{m}}/I_{n-k}$, which implies that $\cdot [p^{i_m}]_{E}(x_m)$ is not surjective, thus $\pi_{*}(\cat T_{A,C}(E))/I_{n-k}\neq 0$.
\end{proof}

By inductively using Lemma \ref{ind}, we have
\begin{Cor}\label{UbTAC}
Let $A$ be a finite abelian $p$-group and $C$ be its proper subgroup. If $n\geq {\rm rank}_p(C)$, then $\pi_{*}(\cat T_{A,C}(E))/I_{n-{\rm rank}_p(C)}\neq 0$.
\end{Cor}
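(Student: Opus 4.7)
The plan is to prove Corollary \ref{UbTAC} by induction on $m$, the number of cyclic summands in the chosen decomposition $A = \mathbb{Z}/p^{i_1} \oplus \cdots \oplus \mathbb{Z}/p^{i_m}$, peeling off one factor at a time and applying Lemma \ref{ind} repeatedly.

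The base case is $m = 0$, i.e.\ $A = C = 0$ with ${\rm rank}_p(C) = 0$. Inspecting the definition of $\cat T_{G,N}$ for the trivial group, one has $\cat T_{0,0}(E) \simeq E$, so the claim reduces to $E^*/I_n \neq 0$. This is immediate from the hypothesis that $E$ is Landweber exact with associated formal group of height exactly $n$: by Proposition \ref{LEdef}, multiplication by $v_n$ is monic on $E^*/I_n$, and $v_n$ is nonzero modulo $I_n$, so in particular $E^*/I_n \neq 0$.

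For the inductive step, decompose $A = A' \oplus \mathbb{Z}/p^{i_m}$ and correspondingly $C = C' \oplus \mathbb{Z}/p^{j_m}$, where $C' \subseteq A'$ via the restriction of $\varphi$. The key bookkeeping observation is
\[
{\rm rank}_p(C) =
\begin{cases}
{\rm rank}_p(C') & \text{if } j_m = 0,\\
{\rm rank}_p(C') + 1 & \text{if } j_m > 0,
\end{cases}
\]
so in either case ${\rm rank}_p(C') \leq {\rm rank}_p(C) \leq n$, and the inductive hypothesis applies to the pair $(A',C')$ to give $\pi_*(\cat T_{A',C'}(E))/I_{n-{\rm rank}_p(C')} \neq 0$. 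Now set $k = {\rm rank}_p(C')$ and invoke Lemma \ref{ind}: in case $j_m = 0$, part (ii) yields $\pi_*(\cat T_{A,C}(E))/I_{n-k} = \pi_*(\cat T_{A,C}(E))/I_{n-{\rm rank}_p(C)} \neq 0$; in case $j_m > 0$, part (i) yields $\pi_*(\cat T_{A,C}(E))/I_{n-k-1} = \pi_*(\cat T_{A,C}(E))/I_{n-{\rm rank}_p(C)} \neq 0$. This completes the induction.

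The real content has already been absorbed into Lemma \ref{ind}, where the Gysin sequence for $S^1 \to BA \to B(A' \times U(1))$ and the Landweber exactness of $\cat T_{A',C'}(E)$ (Lemma \ref{LE}) are used to upgrade nonvanishing of a module to nonvanishing of a quotient by $[p^{i_m}]_E(x_m)$. Given that lemma, the present corollary is a purely formal induction whose only subtle point is the bookkeeping of how ${\rm rank}_p$ changes when a cyclic factor is split off; the hypothesis $n \geq {\rm rank}_p(C)$ is precisely what guarantees the inductive hypothesis remains applicable at each step. Thus I do not anticipate any genuine obstacle beyond what is already handled in Lemma \ref{ind}.
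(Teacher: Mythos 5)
Your proof is correct and is exactly the induction the paper intends (the paper simply says ``by inductively using Lemma \ref{ind}'' without spelling out the base case or the rank bookkeeping). Your identification of $\cat T_{0,0}(E)\simeq E$ for the base case and the observation that $E^*/I_n\neq 0$ follows from the height-$n$ hypothesis are the right way to anchor the induction, and the case split on $j_m$ matches the two cases of Lemma \ref{ind} precisely.
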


By Corollary \ref{LBTAC} and Corollary \ref{UbTAC}, we have
\begin{Thm}\label{BsAC}
Let $A$ be a finite abelian $p$-group and $C$ be its proper subgroup, then
$$t\leq \mathbf{s}_{A,C;E}\leq {\rm rank}_p(C)$$
where
$$t=\max_{j\in \mathbb{N}^+}{\lceil\frac{\log_p|V(p^j|A)|-\log_p|V(p^j|{\rm im}\phi(A/C))|}{j}\rceil}.$$
\end{Thm}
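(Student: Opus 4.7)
The plan is to combine the two bounds that the paper has already established, namely Corollary \ref{LBTAC} for the lower bound and Corollary \ref{UbTAC} for the upper bound, and to read the resulting information about $\pi_*(\cat T_{A,C}(E))/I_{n+1-q}$ through the definition of $\mathbf{s}_{A,C;E}$ in Definition \ref{Cri}.

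First I will handle the upper bound. By Corollary \ref{UbTAC}, if $n \geq \operatorname{rank}_p(C)$ we have $\pi_{*}(\cat T_{A,C}(E))/I_{n-\operatorname{rank}_p(C)} \neq 0$. Setting $q = \operatorname{rank}_p(C)$ in Definition \ref{Cri}, this means $I_{n-q} \subsetneq \pi_*(\cat T_{A,C}(E))$, and since $\mathbf{s}_{A,C;E}$ is defined as the minimal integer $q$ with this property, it follows that $\mathbf{s}_{A,C;E} \leq \operatorname{rank}_p(C)$. (When $n < \operatorname{rank}_p(C)$, the inequality $\mathbf{s}_{A,C;E} \leq \operatorname{rank}_p(C)$ is automatic, since by Definition \ref{Cri} we always have $\mathbf{s}_{A,C;E} \leq n+1 \leq \operatorname{rank}_p(C)$ in that regime.)

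Next I will handle the lower bound. By Corollary \ref{LBTAC}, $\pi_*(\cat T_{A,C}(E))/I_{n+1-q} = 0$ for every
$$q < \max_{j \in \mathbb{N}^+}\frac{\log_p|V(p^j|A)| - \log_p|V(p^j|\operatorname{im}\phi(A/C))|}{j} + 1.$$
Equivalently, for every integer $q \leq t$ we have $I_{n+1-q} = \pi_*(\cat T_{A,C}(E))$. Since $\mathbf{s}_{A,C;E}$ is the maximal such $q$ in Definition \ref{Cri}, we conclude $\mathbf{s}_{A,C;E} \geq t$.

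Combining the two steps yields $t \leq \mathbf{s}_{A,C;E} \leq \operatorname{rank}_p(C)$, which is the desired statement. There is no genuine obstacle here: the whole argument is a one-line deduction from the two corollaries, whose difficult content (the $|V(p^j|[A:\operatorname{im}\phi(A/C)]^{\rm max})|$-tuple analysis via Corollary \ref{vanish}, and the inductive exactness argument of Lemma \ref{ind}) has already been carried out earlier in the section.
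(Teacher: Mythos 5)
Your proposal is correct and follows essentially the same route as the paper, which derives Theorem \ref{BsAC} directly from Corollary \ref{LBTAC} (lower bound, already stated in the form $\mathbf{s}_{A,C;E}\geq t$) and Corollary \ref{UbTAC} (upper bound, read through Definition \ref{Cri}). Your explicit handling of the $n<\operatorname{rank}_p(C)$ edge case and the unwinding of the ceiling inequality are sound details that the paper leaves implicit.
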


By Lemma \ref{Cds}, Lemma \ref{MD} and Theorem \ref{BsAC}, we have
\begin{Cor}\label{}
Let $A$ be a finite abelian $p$-group and $C$ be its direct summand, then
$$\mathbf{s}_{A,C;E}= {\rm rank}_p(C).$$
\end{Cor}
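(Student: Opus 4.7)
The plan is to derive the corollary as a direct combination of Theorem \ref{BsAC} with the structural computation in Lemma \ref{Cds}, so no new homotopical input is required. By Theorem \ref{BsAC} we already have
$$t\leq \mathbf{s}_{A,C;E}\leq {\rm rank}_p(C),\qquad t=\max_{j\in \mathbb{N}^+}\lceil\tfrac{\log_p|V(p^j|A)|-\log_p|V(p^j|{\rm im}\phi(A/C))|}{j}\rceil,$$
so the task collapses to the numerical identity $t={\rm rank}_p(C)$ under the extra assumption that $C$ is a direct summand.

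First I would use Corollary \ref{CpvpjAC} to rewrite the numerator of $t$ as $\log_p|V(p^j|[A:{\rm im}\phi(A/C)]^{\rm max})|$. Then Lemma \ref{Cds}, which is exactly the place where the direct summand hypothesis enters (a splitting $A\cong C\oplus A/C$ lets one choose coset representatives inside $C$, realizing the maximum by $V(p^j|C)$), yields
$$t=\max_{j\in \mathbb{N}^+}\lceil\tfrac{\log_p|V(p^j|C)|}{j}\rceil.$$

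Next I would apply Lemma \ref{MD}, but now with $C$ itself playing the role of the abelian $p$-group. Writing $C\cong \mathbb{Z}/p^{j_1}\oplus\cdots\oplus\mathbb{Z}/p^{j_{{\rm rank}_p(C)}}$ with each $j_k\geq 1$, Lemma \ref{MD} ensures that $\lceil\log_p|V(p^j|C)|/j\rceil={\rm rank}_p(C)$ for every $1\leq j\leq \min\{j_1,\ldots,j_{{\rm rank}_p(C)}\}$, and is bounded above by ${\rm rank}_p(C)$ otherwise; in particular the choice $j=1$ already achieves $\log_p|V(p|C)|={\rm rank}_p(C)$. Hence the maximum over $j$ equals ${\rm rank}_p(C)$, giving $t={\rm rank}_p(C)$.

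Combining this with the sandwich from Theorem \ref{BsAC} forces $\mathbf{s}_{A,C;E}={\rm rank}_p(C)$. There is no real obstacle to overcome: all three cited facts, Theorem \ref{BsAC}, Corollary \ref{CpvpjAC}, and Lemmas \ref{Cds}, \ref{MD}, are already in place, and the proof is a short bookkeeping computation whose only conceptual content is to recognize that the direct summand hypothesis is precisely what makes the index $[A:{\rm im}\phi(A/C)]^{\rm max}$ maximally realize torsion from $C$, so the $p$-rank of $C$ appears cleanly on the lower-bound side. The edge case $C=A$ (a trivial direct summand decomposition) is consistent with the result of Subsection \ref{pole} and needs no separate argument.
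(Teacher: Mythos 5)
Your proposal is correct and is essentially the paper's own argument: the paper derives the corollary by directly citing Lemma \ref{Cds}, Lemma \ref{MD}, and Theorem \ref{BsAC}, exactly the chain of facts you invoke. The only difference is that you make explicit the intermediate use of Corollary \ref{CpvpjAC} to translate the expression $\log_p|V(p^j|A)|-\log_p|V(p^j|{\rm im}\phi(A/C))|$ in Theorem \ref{BsAC} into $\log_p|V(p^j|[A:{\rm im}\phi(A/C)]^{\rm max})|$ before applying Lemma \ref{Cds}, a step the paper leaves implicit.
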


\newpage
\addcontentsline{toc}{section}{\numberline {}{\bf References}}

\newcommand{\etalchar}[1]{$^{#1}$}

\bigskip

\newpage
\noindent {\sc Yangyang Ruan

\noindent
Beijing Institute of Mathematical Sciences and Applications\\ No. 544, Hefangkou Village, Huaibei Town, Huairou District, Beijing, 101408 \\P.R.China

\noindent
Academy of Mathematics and Systems Science, Chinese Academy of Sciences\\
No. 55, Zhongguancun East Road, Haidian District,
Beijing, 100190\\P.R.China
}\\
\vspace{-0.3cm}
\hspace{-0.1cm}\noindent{{\it E-mails:}}
\texttt{ruanyy@amss.ac.cn}

\end{sloppypar}
\end{document}